\tikzstyle{vtx}=[circle,draw=black,fill=black,inner sep=0,minimum size=5pt,text=white,font=\footnotesize]
\theoremstyle{plain}
\newtheorem{theorem}{Theorem}[section]
\newtheorem{corollary}[theorem]{Corollary}
\newtheorem{claim}[theorem]{Claim}
\newtheorem{lemma}[theorem]{Lemma}
\newtheorem{proposition}[theorem]{Proposition}
\theoremstyle{definition}
\newcommand{\cA}{\mathcal{A}}
\newcommand{\cB}{\mathcal{B}}
\newcommand{\cC}{\mathcal{C}}
\newcommand{\cE}{\mathcal{E}}
\newcommand{\cF}{\mathcal{F}}
\newcommand{\cH}{\mathcal{H}}
\newcommand{\cL}{\mathcal{L}}
\newcommand{\cP}{\mathcal{P}}
\newcommand{\cR}{\mathcal{R}}
\newcommand{\cV}{\mathcal{V}}
\newcommand{\bR}{\mathbb{R}}
\newcommand{\bZ}{\mathbb{Z}}
\newcommand{\E}{\mathbb{E}}
\newcommand{\Pb}{\mathbb{P}}
\newcommand{\eps}{\varepsilon}
\newcommand{\hide}[1]{}
\title{
\vspace{-0.8cm}
$C_4$-free subgraphs of high degree with geometric applications}
\author{
Zach Hunter \thanks{Department of Mathematics, ETH Z\"urich, Switzerland. Email: {\tt \{zach.hunter, aleksa.milojevic, benjamin.sudakov\}@math.ethz.ch}. Research supported in part by SNSF grant 200021-228014.}
\and Aleksa Milojevi\'c\footnotemark[1] \and Benny Sudakov \footnotemark[1]
\and 
Istv\'an Tomon\thanks{Ume\r{a} University, \emph{e-mail}: \texttt{istvantomon@gmail.com}, Research supported in part by the Swedish Research Council grant VR 2023-03375.}}
\date{}
\begin{document}

\maketitle

\begin{abstract}
The Zarankiewicz problem, a cornerstone problem in extremal graph theory, asks for the maximum number of edges in an $n$-vertex graph that does not contain the complete bipartite graph $K_{s,s}$. While the problem remains widely open in the case of general graphs, the past two decades have seen significant progress on this problem for various restricted graph classes -- particularly those arising from geometric settings -- leading to a deeper understanding of their structure.

In this paper, we develop a new structural tool for addressing Zarankiewicz-type problems. More specifically, we show that for any positive integer $k$, every graph with average degree $d$ either contains an induced $C_4$-free subgraph with average degree at least $k$, or it contains a $d$-vertex subgraph with $\Omega_k(d^2)$ edges. As an application of this dichotomy, we propose a unified approach to a large number of Zarankiewicz-type problems in geometry, obtaining optimal bounds in each case.
\end{abstract}

\section{Introduction}

In 1953, Zarankiewicz \cite{Zara} asked for the maximum number of edges in an $n$-vertex graph $G$ that contains no copy of $K_{s,s}$, the complete bipartite graph with two parts of size $s$. Just a year later, K\H{o}v\'ari, S\'os, and Tur\'an \cite{KST54} proved that such graphs have at most $O_s(n^{2 - 1/s})$ edges---a bound that remains unimproved to this day, although it is known to be tight only for $s \in \{2, 3\}$. On the other hand, a simple application of the probabilistic deletion method yields a lower bound of $\Omega_s(n^{2 - 2/(s+1)})$ for $s\geq 4$ (see, e.g., \cite{AS}), which, up to polylogarithmic factors, is the best known to date.

Over the past two decades, Zarankiewicz-type problems have been extensively studied in various restricted graph families, motivated by questions from both combinatorial geometry and structural graph theory. In many geometric incidence problems, it is natural to study $K_{s,s}$-free graphs, as this condition often corresponds to an inherent geometric constraint. For instance, the incidence graph of points and lines in $\mathbb{R}^2$ is always $K_{2,2}$-free, while in point-hyperplane incidence graphs, avoiding $K_{s,s}$ reflects a natural non-degeneracy assumption \cite{AS07, MST}.

More generally, refined bounds for Zarankiewicz-type problems have been obtained in a variety of settings: for intersection graphs of geometric objects \cite{FoxPach08,FoxPach10,FoxPach14,KS,TZ}, incidence graphs involving points and geometric shapes \cite{CH23,FPSSZ,KS,MST}, semialgebraic and semilinear graphs \cite{BCSTT,FPSSZ,MST}, graphs of bounded VC-dimension \cite{FPSSZ,JP}, and for visibility graphs \cite{AK25,DM24}. In all these cases, the classical K\H{o}vari-S\'os-Tur\'an bound can be substantially improved. For example, it was shown in \cite{FPSSZ} that a $K_{s,s}$-free graph of VC-dimension $d$ has at most $O_{d, s}(n^{2 - 1/d})$ edges -- a significant improvement over the $O(n^{2 - 1/s})$ bound when $d \ll s$. In some cases, the bound is even linear: for example, \cite{FoxPach08} showed that a $K_{s,s}$-free intersection graph of $n$ curves in the plane has at most $O_s(n)$ edges. Beyond merely improving the bounds, these works also bring together a rich toolkit of techniques such as partitioning methods, variants of $\eps$-nets and separator theorems, that have deepened our understanding of the underlying geometry of these problem.

These results reveal an interesting general phenomenon: in many restricted graph families, the extremal behavior is not governed by $s$, as it is in general graphs, but instead reflects intrinsic structural parameters such as the ambient dimension or the VC-dimension, while $s$ may only appear in the constant factor.

In parallel with this line of work, a similar phenomenon has been observed in structural graph theory. One of the first results in this direction is due to K\"uhn and Osthus \cite{KO04}, who showed that if a $K_{s, s}$-free $n$-vertex graph $G$  does not contain an induced subdivision of a fixed graph $H$, then $G$ has at most $O_{H,s}(n)$ edges. A \textit{subdivision} of the graph $H$ is any graph which can be obtained by replacing the edges of $H$ by disjoint paths. Thus, the answer to Zarankiewicz's problem changes substantially in the family of graphs avoiding induced subdivisions of $H$.

Since then, analogous behavior has been observed in other families of interest, such as graphs avoiding an induced copy of a tree $T$ \cite{HMST,SSS23}, and families defined by forbidding an induced bipartite graph \cite{BBCD,HMST}. 
In fact, the general phenomenon can be described more abstractly, as follows. If $\mathcal{F}$ is a family of graphs closed under taking induced subgraphs and $\mathcal{F}$ excludes at least one bipartite graph $H$, then there exists $\eps = \eps(\mathcal{F}) > 0$ such that every $K_{s,s}$-free $n$-vertex graph in $\mathcal{F}$ has at most $O_{s}(n^{2 - \eps})$ edges. This was implicitly proven by \cite{BBCD, FPSSZ, GH}, and the value of $\eps$ was later improved for various families $\cF$ by \cite{AZ, HMST}.

In this paper, we focus on Zarankiewicz-type problems in geometrically defined graphs, where the extremal number is expected to be linear or nearly linear. We uncover a new strucutural result about graphs avoiding high degree induced $C_4$-free subgraphs, which we use to improve and unify a wide range of Zarankiewicz-type results for geometric graph families, obtaining tight bounds in each case.

\subsection{Intersection graphs of curves}\label{sec:intr_curv}

The \emph{intersection graph} of a family of sets or geometric objects $\mathcal{F}$ is the graph whose vertices correspond to the elements of $\mathcal{F}$, with an edge between any two sets that intersect. A particularly interesting class of intersection graphs are \emph{string graphs}, which are defined as the intersection graphs of families of planar curves (which are sometimes also called strings). Here, a \emph{planar curve} stands for the image of a continuous function $\phi: [0,1] \rightarrow \mathbb{R}^2$.

The study of string graphs dates back to the 1960s, when they were introduced independently by Benzer \cite{Benzer}, in the context of genetic mutations, and by Sinden \cite{Sinden}, in connection with the layout of integrated circuits. Both works posed the basic question of determining which graphs can be represented as string graphs. It is elementary to show that all planar graphs are string graphs, and  Even, Erlich and Tarjan \cite{EET} observed that a subdivision of any non-planar graph is not a string graph. However, giving a complete characterization of string graphs is difficult. The main issue is that the involved curves can be arbitrarily complex, making it hard to show that the recognition problem is even decidable. Remarkably, the decidability was established \cite{PT02, SS04} only in 2002, and the problem was later shown to be NP-complete \cite{SSS03}. Despite -- or perhaps because of -- these results, understanding the structure of string graphs remains an active area of research.

A good example of such a structural result about string graphs is the extension of the classical Lipton--Tarjan separator theorem. This theorem states that any planar graph on $n$ vertices has a balanced separator of size $O(\sqrt{n})$, where a \textit{balanced separator} is a set of vertices whose removal splits the graph into two connected components of roughly equal size. In \cite{FoxPach10}, Fox and Pach showed that every string graph with $m$ edges admits a balanced separator of size at most $O(m^{3/4} \log m)$. This result has several interesting applications; for instance, it implies that a $K_{s,s}$-free string graph on $n$ vertices has at most $2^{O(s)}n$ edges. Since string graphs do not contain subdivisions of non-planar graphs as induced subgraphs, and in particular no subdivision of $K_5$, the theorem of K\"uhn and Osthus \cite{KO04} discussed above directly shows that the number of edges in $K_{s, s}$-free string graphs is $O_s(n)$. However, the point of the work of Fox and Pach, together with the subsequent works, is to pinpoint the correct dependence on $s$.

The dependence on $s$ in this bound was subsequently improved to $s (\log s)^{O(1)}$ by Fox and Pach \cite{FoxPach14}, using a refined bound on the size of balanced separators due to Matou\v{s}ek \cite{Mat14}. Ultimately, the optimal bound of $O(s \log s)$ was established by Lee \cite{Lee17}, alongside optimal bounds on the size of balanced separators in string graphs.

Although the existence of small separators is a powerful tool, with a wide range of theoretical and algorithmic applications, it is only available for a limited selection of graph families. In this paper, we provide an alternative proof of the optimal $O(s \log s)$ bound which relies on our key graph-theoretic tool, Theorem~\ref{thm:main}, and avoids the use of separators altogether.

\begin{theorem}\label{thm:string}
Let $G$ be an $n$-vertex $K_{s,s}$-free string graph. Then $G$ has $O(s(\log s)n)$ edges.
\end{theorem}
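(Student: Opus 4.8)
The plan is to run the dichotomy of Theorem~\ref{thm:main} with an \emph{absolute} constant $k$, exploiting the fact that $C_4$-free string graphs are genuinely sparse. Let $G$ be a $K_{s,s}$-free string graph on $n$ vertices, write $d=2e(G)/n$ for its average degree, and assume $d$ exceeds any fixed constant (otherwise $e(G)=O(n)=O(s(\log s)n)$ and there is nothing to prove). The first ingredient is the classical fact that a $C_4$-free --- equivalently $K_{2,2}$-free --- string graph on $N$ vertices has only $O(N)$ edges (this follows from the separator bounds of Fox and Pach~\cite{FoxPach08,FoxPach10}, or from Lee's bound~\cite{Lee17} specialized to $s=2$); in particular there is an absolute constant $C_0$ such that every $C_4$-free string graph has average degree at most $C_0$. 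Set $k:=C_0+1$.

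Now apply Theorem~\ref{thm:main} to $G$ with this value of $k$. The first alternative would supply an \emph{induced} $C_4$-free subgraph $H\subseteq G$ with average degree at least $k$; but induced subgraphs of string graphs are string graphs, so $H$ would be a $C_4$-free string graph of average degree $\ge k>C_0$, a contradiction. Hence the second alternative holds: $G$ contains a $d$-vertex induced subgraph $G'$ with at least $c\,d^2$ edges, where $c=c(k)>0$. Since $k$ is an absolute constant, so is $c$.

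Thus $G'$ is a string graph on $d$ vertices with at least $c\,d^2$ edges. The key geometric input is that such dense string graphs contain almost-linear bicliques: there is a constant $c'=c'(c)>0$ such that every string graph on $N$ vertices with at least $c\,N^2$ edges contains a copy of $K_{t,t}$ with $t\ge c'N/\log N$ (this can be extracted from the work of Fox and Pach on intersection patterns of curves~\cite{FoxPach08,FoxPach14}). Applying it to $G'$ produces a copy of $K_{t,t}$ inside $G$ with $t=\Omega(d/\log d)$.

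Since $G$ is $K_{s,s}$-free we must have $t<s$, i.e.\ $d/\log d=O(s)$, and a routine manipulation of this inequality gives $d=O(s\log s)$; therefore $e(G)=dn/2=O(s(\log s)n)$, as claimed. The main obstacle is the biclique-extraction step: one genuinely needs the dependence $t=\Omega(N/\log N)$ --- the logarithmic loss there is exactly what accounts for the $\log s$ factor, and it cannot be removed in view of Lee's matching construction --- whereas the trivial K\H{o}v\'ari--S\'os--Tur\'an bound applied to $G'$ only yields the far weaker estimate $d\le 2^{O(s)}$. Everything else, namely the reduction through Theorem~\ref{thm:main} and the final arithmetic, is soft.
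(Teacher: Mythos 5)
Your proof is correct and follows essentially the same approach as the paper: reduce via Theorem~\ref{thm:main} to a dense $d$-vertex induced string subgraph, then invoke the Fox--Pach theorem that dense string graphs contain bicliques of size $\Omega(N/\log N)$ to conclude $d=O(s\log s)$. The only small slip is the citation for the biclique-extraction step, which is Corollary~1.2 of Fox and Pach, \emph{String graphs and incomparability graphs} (Adv. Math. 2012), rather than \cite{FoxPach08,FoxPach14}.
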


For certain restricted families of curves, this bound can be further improved. A family of curves is called \emph{$k$-intersecting} if any two curves intersect in at most $k$ points. We further impose standard non-degeneracy conditions: no curve is self-intersecting, no three curves meet at a common point, and if two curves intersect, one properly passes to the other side of the other (i.e. there are no touching pairs of curves).

Fox and Pach \cite{FoxPach10} showed that a $K_{s, s}$-free intersection graph of an $n$-element $k$-intersecting family of curves has at most $O_k(sn)$ edges. Beyond separators, a key component of their proof is a result of Fox, Pach, and T\'oth \cite{FPT11}, which states that dense intersection graphs of $k$-intersecting families contain bicliques of linear size. Here, a \emph{biclique} is a complete bipartite graph with equal sized vertex classes. We prove a strengthening of this result, carrying it over to the bipartite setting, and give an alternative proof of the bound $O_k(sn)$. More precisely, we assume that $\cA, \cB$ are families of curves which are individually $k$-intersecting, and we make no assumptions on the number of intersections between a curve from $\cA$ and a curve from $\cB$. To state the following result, let us define the notion of the \emph{bipartite intersection graph} of two families $\cA$ and $\cB$, which is the bipartite graph with vertex classes $\cA$ and $\cB$, where there is an edge between $a\in\cA$ and $b\in \cB$ if $a$ and $b$ intersect.

\begin{theorem}\label{thm:stEH_kint}
Let $\mathcal{A}$ and $\mathcal{B}$ be $n$-element $k$-intersecting families of curves. If the bipartite intersection graph of $\cA$ and $\cB$ has at least $cn^2$ edges, then it contains a biclique of size $\Omega_{k,c}(n)$.
\end{theorem}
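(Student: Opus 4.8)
The plan is to pass to the bipartite intersection graph $H$ of $\cA$ and $\cB$ and feed it into Theorem~\ref{thm:main}. By hypothesis $H$ has $2n$ vertices and at least $cn^2$ edges, so its average degree $d$ satisfies $d\ge cn$. Applying Theorem~\ref{thm:main} with its parameter set to a large constant $k_0=k_0(k)$, either $H$ contains an induced $C_4$-free subgraph of average degree at least $k_0$, or it contains a subgraph $H_0$ on $d$ vertices with $\Omega_{k_0}(d^2)$ edges. The first alternative I would rule out by a geometric lemma to the effect that every induced $C_4$-free subgraph of the bipartite intersection graph of two $k$-intersecting families has average degree at most some $D=D(k)$; choosing $k_0=D+1$ then forces the second alternative. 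Since $H_0\subseteq H$ is bipartite with parts $\cA_0\subseteq\cA$ and $\cB_0\subseteq\cB$, the bound $e(H_0)=\Omega_k(d^2)$ with $d\ge cn$ forces $|\cA_0|,|\cB_0|=\Omega_{k,c}(n)$ and makes the pair $(\cA_0,\cB_0)$ have density $\Omega_k(1)$; so the problem reduces to finding a linear biclique inside a constant-density bipartite intersection graph of two $k$-intersecting families, each of linearly many curves.

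For this residual dense step — and for the geometric lemma above — I would adapt the partial-order method of Fox--Pach--T\'oth. Since a $k$-intersecting family of $m$ curves has an arrangement of complexity $O_k(m^2)$, the mutual position of any two curves of $\cA_0$ (and, separately, of $\cB_0$) can be encoded by a bounded number $r=r(k)$ of partial orders $\le_1,\dots,\le_r$, extracted from the cyclic order of the at most $k$ crossings of each pair together with the side on which one arc lies relative to the other. Next I would fix a curve $b\in\cB_0$ of large degree, order the $k$-intersecting subfamily $N_{\cA_0}(b)$ by the first crossing point along $b$, and argue that this ``$b$-order'' breaks into boundedly many chains with respect to the global orders $\le_i$ (a Dilworth-type statement). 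Averaging this over the high-degree curves of $\cB_0$, doing the symmetric thing on the $\cB_0$-side, and then invoking the bipartite multi-poset Dilworth theorem of Fox--Pach--T\'oth should yield $\cA''\subseteq\cA_0$ and $\cB''\subseteq\cB_0$ with $|\cA''|,|\cB''|=\Omega_{k,c}(n)$ that are homogeneous, in the sense that all of $\cA''$ lies ``before'' all of $\cB''$ in one fixed order. The final check is that such homogeneity forces every curve of $\cA''$ to cross every curve of $\cB''$, giving the biclique.

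The main obstacle is exactly the bipartite aspect: unlike in the original Fox--Pach--T\'oth setting, a curve of $\cA$ and a curve of $\cB$ may cross an unbounded number of times, so the adjacency relation of $H$ is not $k$-intersecting and cannot be recovered from $k$ crossing points. The crux is thus to certify ``$a$ crosses $b$'' using only finitely much order-type data coming from the within-$\cA$ and within-$\cB$ structure plus a bounded-complexity description of the single curve $b$, and to verify that homogeneous blocks are genuinely complete bipartite — this is what makes Theorem~\ref{thm:stEH_kint} a true strengthening and not a routine transfer, and I expect it to be the hardest part. The second sticking point is the geometric lemma on $C_4$-free induced subgraphs: a naive count of cherries gives only the K\H{o}v\'ari--S\'os--Tur\'an bound $O_k(m^{3/2})$, so one must use $k$-intersectingness in an essential way — presumably through the same arrangement-complexity or partial-order machinery, or else via a crossing-number argument on the union arrangement — to push this down to the linear bound $O_k(m)$.
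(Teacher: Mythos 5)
The opening step is vacuous. The intersection graph $H$ already has $2n$ vertices and $\ge cn^2$ edges, so its density is $\Omega(c)$ and its average degree $d$ is $\Omega(cn)$ by hypothesis. Feeding this into Theorem~\ref{thm:main}, outcome (b) hands you back a subgraph on $\Theta(n)$ vertices of constant density --- which is exactly what you started with. You then write that ``the problem reduces to finding a linear biclique inside a constant-density bipartite intersection graph of two $k$-intersecting families,'' which is the original Theorem~\ref{thm:stEH_kint}; no progress has been made. (Theorem~\ref{thm:main} is designed to be applied when $K_{s,s}$-freeness keeps the graph from having a dense spot; here there is no $K_{s,s}$-freeness hypothesis, so the dichotomy buys nothing.) The claimed geometric lemma ruling out outcome (a) --- that induced $C_4$-free bipartite intersection graphs of two $k$-intersecting families have bounded average degree --- would itself require a separate argument, and even granting it you would still land back in the dense case, i.e.\ back at the start.

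The substantive part of the proposal is the Fox--Pach--T\'oth partial-order plan for the dense case, and you correctly identify its fatal obstruction: the adjacency relation of $H$ is governed by $\cA$--$\cB$ crossings, which may be unbounded in number, so the bounded-complexity/Dilworth machinery has no $k$-intersecting structure to hang on. You flag this as ``the hardest part'' and ``the crux'' but leave it unresolved, which means the proof has a genuine gap exactly where the theorem earns its strength over the known one-sided versions. The paper's actual argument (Section~\ref{sect:k_int}) does not adapt the FPT posets at all; it partitions each $b\in\cB$ into $k+1$ subcurves, uses a Ramsey argument (Lemma~\ref{lemma:avoiding ell-tuples}) and a combinatorial coloring lemma (Lemma~\ref{lemma:coloring lemma}) to build a topological ``$(\mu,t)$-ladder'' between two curves $a_1,a_2\in\cA$, and finally extracts the biclique from a Jordan-curve region argument. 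The only use of $k$-intersectingness is \emph{within} each family, never between $\cA$ and $\cB$, which is precisely how the paper sidesteps the obstacle you ran into.
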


We note that a similar result to Theorem~\ref{thm:stEH_kint} is proved by Kor\'andi, Pach, and Tomon \cite{KPT}, under the addional assumption that $\cB$ is $1$-intersecting and that all curves are graphs of functions $f:[0, 1]\to \bR$. Also, recently, Fox, Pach, and Suk \cite{FPS23} proved the $k=1$ case of Theorem \ref{thm:stEH_kint} under the stronger assumption that $\mathcal{A}\cup\mathcal{B}$ is 1-intersecting. Therefore, Theorem \ref{thm:stEH_kint} also greatly strengthens both of these results. We can use Theorem~\ref{thm:stEH_kint} and our main graph theoretical lemma to give a simple proof of several results, including the $O_k(sn)$ bound for the number of edges in $K_{s, s}$-free intersection graphs of $k$-intersecting families.

\begin{theorem}\label{thm:curve_k_int}
    Let $G$ be the intersection graph of a $k$-intersecting family of curves. If $G$ is $K_{s,s}$-free, then $G$ has $O_{k}(sn)$ edges.
\end{theorem}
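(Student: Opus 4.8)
\medskip

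The plan is to combine the main graph-theoretic dichotomy (Theorem~\ref{thm:main}) with the bipartite biclique result for $k$-intersecting families (Theorem~\ref{thm:stEH_kint}). Suppose for contradiction that $G$ is a $K_{s,s}$-free intersection graph of a $k$-intersecting family of $n$ curves with more than $C_k s n$ edges, for a large constant $C_k$ depending only on $k$; then $G$ has average degree $d \geq 2 C_k s$. Apply Theorem~\ref{thm:main} with the threshold $\kappa$ chosen to be a suitable constant (depending only on $k$): either $G$ contains an induced $C_4$-free subgraph $H$ of average degree at least $\kappa$, or $G$ contains a subset $S$ of $d$ vertices spanning $\Omega_\kappa(d^2)$ edges.

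\medskip

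In the second case, consider the bipartite intersection graph obtained by taking two disjoint copies $\cA$, $\cB$ of the curves indexing $S$, with an edge between $a \in \cA$ and $b \in \cB$ whenever the corresponding curves intersect; this bipartite graph has $\Omega_\kappa(|S|^2) = \Omega_\kappa(d^2)$ edges on $|S| = d$ vertices per side, and both families are $k$-intersecting (being sub-families of the original). By Theorem~\ref{thm:stEH_kint}, it contains a biclique of size $\Omega_{k,\kappa}(d)$ — hence $G$ itself contains $K_{t,t}$ for $t = \Omega_{k,\kappa}(d)$. Since $d \geq 2 C_k s$ and $C_k$ is large, $t \geq s$, contradicting that $G$ is $K_{s,s}$-free.

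\medskip

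It remains to rule out the first case, and this is the step I expect to be the main obstacle: I need that a $C_4$-free intersection graph of a $k$-intersecting family cannot have large (constant, depending on $k$) average degree, i.e. such graphs have average degree $O_k(1)$ — equivalently $O_k(n)$ edges. The point is that a $C_4$-free intersection graph of a $k$-intersecting family is already very restrictive: $C_4$-freeness forbids two curves from both crossing two other curves, which together with the $k$-intersecting hypothesis should force sparsity via a Kővári–Sós–Turán-type count combined with the linear bound for $k$-intersecting families. More concretely, a $C_4$-free graph is in particular $K_{2,2}$-free, so one should be able to invoke Theorem~\ref{thm:curve_k_int} itself in the base case $s=2$ — but to avoid circularity I would instead establish directly that a $K_{2,2}$-free $k$-intersecting intersection graph on $m$ vertices has $O_k(m)$ edges, using the topological structure of curves (e.g. an argument in the spirit of Fox–Pach or of the separator-free counting underlying Theorem~\ref{thm:string}). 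Taking $\kappa$ larger than the constant in this $O_k(1)$ average-degree bound makes the first alternative impossible, completing the proof.
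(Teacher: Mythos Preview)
Your outline is essentially the paper's argument, which is packaged there as Corollary~\ref{thm:master}: weak degree-boundedness handles the $C_4$-free branch of Theorem~\ref{thm:main}, and the density-EH property (supplied by Theorem~\ref{thm:stEH_kint}) handles the dense branch.

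The genuine gap is exactly where you flag it. You gesture at ``Fox--Pach or separator-free counting'' for the $C_4$-free case but do not pin down an argument, and invoking the $s=2$ case of the theorem you are proving is, as you note, circular. The paper's route here is clean and does not use $k$ at all: any intersection graph of a $k$-intersecting family is in particular a string graph; string graphs contain no induced subdivision of the $1$-subdivision of $K_5$; hence by K\"uhn--Osthus \cite{KO04} every $C_4$-free string graph has average degree $O(1)$. So your threshold $\kappa$ can be an absolute constant, and the only dependence on $k$ enters through Theorem~\ref{thm:stEH_kint} in the dense branch.

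One smaller point in your dense step: when you take $\cA=\cB=S$ and apply Theorem~\ref{thm:stEH_kint}, the two sides $\cA',\cB'$ of the resulting biclique are subsets of the \emph{same} curve family and may overlap, so you do not immediately get a $K_{t,t}$ in $G$. The fix is short: if $|\cA'\cap\cB'|\ge t/2$ then this intersection is a clique of size $t/2$ in $G$ (every pair lies in $\cA'\times\cB'$), hence contains $K_{\lfloor t/4\rfloor,\lfloor t/4\rfloor}$; otherwise $\cA'\setminus\cB'$ and $\cB'\setminus\cA'$ are disjoint sets of size at least $t/2$ spanning a complete bipartite graph. Either way you recover a biclique of order $\Omega(t)$ in $G$. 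The paper makes the same implicit conversion when it asserts that $\cF_k$ has the density-EH property ``by Theorem~\ref{thm:stEH_kint}''.
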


Furthermore, the case when $G$ is the intersection graph of the union of two families of disjoint curves is also interesting -- for example, it includes the intersection graphs of vertical and horiztonal segments in the plane, which have been recently studied in \cite{CKS24, KS}. In this case, we also obtain a tight bound.

\begin{theorem}\label{thm:curve_disjoint}
Let $\mathcal{A}$ and $\mathcal{B}$ be two $n$ element families of pairwise disjoint curves, and let $G$ be the intersection graph of $\mathcal{A}\cup\mathcal{B}$. If $G$ is $K_{s,s}$-free, then $G$ has $O(sn)$ edges.
\end{theorem}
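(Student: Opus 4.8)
The plan is to apply the main dichotomy (Theorem~\ref{thm:main}) to $G$ with an appropriate absolute constant $k$, and argue that the second alternative — a $d$-vertex subgraph with $\Omega_k(d^2)$ edges, where $d$ is the average degree of $G$ — is impossible unless $d = O(s)$, while the first alternative — an induced $C_4$-free subgraph of large average degree — is also impossible because the intersection graph of $\cA \cup \cB$ has bounded "$C_4$-freeness" in a suitable sense. Let me spell this out. Suppose $G$ has $m$ edges, so its average degree is $d = 2m/(2n) = m/n$; it suffices to show $d = O(s)$. Fix $k$ to be a large absolute constant to be determined. By Theorem~\ref{thm:main}, either (i) $G$ has an induced $C_4$-free subgraph $G'$ with average degree at least $k$, or (ii) $G$ has a subgraph on $d$ vertices with $\Omega_k(d^2)$ edges.

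For case (ii): a $d$-vertex subgraph with $\Omega_k(d^2)$ edges contains (e.g.\ by Kővári–Sós–Turán, or directly by convexity/supersaturation) a copy of $K_{t,t}$ with $t = \Omega_k(d)$; hence if $d$ is larger than some constant multiple of $s$ we get a $K_{s,s}$ in $G$, a contradiction. So case (ii) forces $d = O_k(s) = O(s)$ since $k$ will be an absolute constant. For case (i): here is where the geometric hypothesis enters. The key point is that if $H$ is an induced subgraph of $G$ (with vertex set partitioned according to $\cA$ and $\cB$) that is $C_4$-free, then $H$ must be sparse. Indeed, the $\cA$–$\cA$ and $\cB$–$\cB$ parts of $H$ are induced subgraphs of intersection graphs of \emph{pairwise disjoint} curves, hence \emph{edgeless}; so all edges of $H$ lie between the $\cA$-side and the $\cB$-side, i.e.\ $H$ is bipartite. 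Now a $C_4$-free bipartite graph on $N$ vertices has $O(N^{3/2})$ edges by Kővári–Sós–Turán — but that only gives average degree $O(\sqrt N)$, which is not bounded. To get a genuine contradiction we need more than $C_4$-freeness: we need to use that a $C_4$-free bipartite intersection graph of curves is even sparser. This is exactly the content one extracts from Theorem~\ref{thm:stEH_kint} (a $C_4$, i.e.\ $K_{2,2}$, is a biclique of size $2$): if the bipartite intersection graph of two $k$-intersecting families on $N$ vertices had $\geq cN^2$ edges it would contain a biclique of size $\Omega_{k,c}(N) \geq 2$, hence a $C_4$; so a $C_4$-free bipartite intersection graph of $1$-intersecting (indeed, disjoint, which is the $k=0$ case) families has $o(N^2)$ edges — but again this only rules out density $\Theta(N^2)$, not bounded average degree. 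The honest route is that $C_4$-freeness of an intersection graph of curves forces it to be, up to constants, a planar-like graph: in the present setting the bipartite $C_4$-free intersection graph of curves has only $O(N)$ edges, because it is the intersection graph of a family of curves with no two curves crossing "twice in the same pattern", and such graphs are sparse by topological arguments in the spirit of the crossing-number / Fox–Pach machinery. Hence $G'$ has average degree $O(1)$, contradicting average degree $\geq k$ once $k$ is chosen larger than this absolute constant.

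Thus, choosing $k$ to exceed the absolute constant bounding the average degree of $C_4$-free induced subgraphs of $G$, case (i) is impossible, so case (ii) holds and $d = O(s)$, giving $m = dn = O(sn)$ as required.

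The main obstacle is case (i): making precise and proving that a $C_4$-free induced subgraph of the intersection graph of $\cA \cup \cB$ has bounded average degree. Since such a subgraph is automatically bipartite between (a subset of) $\cA$ and (a subset of) $\cB$, and the two sub-families are each pairwise disjoint, this reduces to showing that a $C_4$-free bipartite intersection graph of two families of pairwise disjoint curves is sparse (has $O(N)$ edges on $N$ vertices). I expect this to follow from a short topological argument — realize the configuration as a planar drawing and use that a $C_4$ would correspond to two curves from $\cA$ each crossing two curves from $\cB$ in a way that, combined with disjointness within each side, forces a forbidden crossing pattern or a planar graph of bounded density — but pinning down the exact combinatorial-topological lemma (and confirming it gives a bound depending only on an absolute constant, not on $s$) is the crux of the proof. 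Everything else (converting density to a $K_{s,s}$ via Kővári–Sós–Turán, and the bookkeeping of average degrees) is routine.
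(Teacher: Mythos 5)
Your overall skeleton — apply Theorem~\ref{thm:main} to $G$, rule out a high-average-degree $C_4$-free induced subgraph, and rule out a dense $d$-vertex subgraph — is exactly the approach the paper takes (it packages both steps as Corollary~\ref{thm:master}). However, both halves of your argument have genuine gaps, and one is an outright error.

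For case (ii) you claim that a $d$-vertex graph with $\Omega_k(d^2)$ edges contains $K_{t,t}$ with $t=\Omega_k(d)$ ``by K\H{o}v\'ari--S\'os--Tur\'an or supersaturation.'' This is false in general: a random graph on $d$ vertices with edge density $1/2$ has $\Omega(d^2)$ edges but its largest biclique has size only $O(\log d)$. (KST applied in reverse only forces $t = \Omega(\log d)$.) Finding a biclique of \emph{linear} size in a dense subgraph is precisely the content of the density-EH property, and for this family it is supplied by Theorem~\ref{thm:stEH_kint} applied with $k=0$: any dense bipartite intersection graph of two families of pairwise disjoint curves contains a linear-sized biclique. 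You mention Theorem~\ref{thm:stEH_kint} only in passing while discussing case (i), but it is in fact the essential ingredient for case (ii), not an optional alternative to KST.

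For case (i) you correctly observe that a $C_4$-free induced subgraph is bipartite between a subfamily of $\cA$ and a subfamily of $\cB$, and you correctly note that KST's $O(N^{3/2})$ bound is not enough, but you then leave the needed $O(N)$ bound as an unproven ``topological lemma.'' The paper's route here is cleaner and does not use bipartiteness at all: the intersection graph of \emph{any} family of curves is a string graph, string graphs contain no induced subdivision of the 1-subdivision of $K_5$ (a non-planar graph), and by K\"uhn--Osthus the family of graphs avoiding induced subdivisions of a fixed $H$ is degree-bounded, hence its $C_4$-free (= $K_{2,2}$-free) members have $O(N)$ edges. So case (i) follows from a black-box citation of K\"uhn--Osthus, with no new topological argument needed. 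In short: you found the right scaffolding, but you need Theorem~\ref{thm:stEH_kint} ($k=0$) for case (ii) rather than KST, and K\"uhn--Osthus via the string-graph observation for case (i) rather than a sparsity lemma you would have to prove from scratch.
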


Note that the difference between Theorems~\ref{thm:curve_k_int} and \ref{thm:curve_disjoint} is that the curves $a\in \cA, b\in \cB$ may intersect an arbitrary number of times in Theorem~\ref{thm:curve_disjoint}.
Finally, we can establish a similar result in the case of intersection graphs of convex sets.

\begin{theorem}\label{thm:curve_convex}
 Let $G$ be the intersection graph of a family of convex sets in the plane. If $G$ is $K_{s,s}$-free, then $G$ has $O(sn)$ edges.
\end{theorem}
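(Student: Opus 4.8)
The plan is to show that $G$ has average degree $O(s)$, which gives the edge bound at once. I would split $E(G)=E_{\mathrm{nest}}\cup E_{\mathrm{cross}}$, where $E_{\mathrm{nest}}$ collects the edges $\{C,D\}$ with $C\subseteq D$ or $D\subseteq C$, and $E_{\mathrm{cross}}$ the remaining edges, for which $C$ and $D$ intersect but neither contains the other --- equivalently, for which the boundaries $\partial C$ and $\partial D$ meet --- and bound the two parts separately.

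\textbf{Nesting edges.} Inclusion is a partial order on the family. If a convex set $C$ were contained in members $D_1,\dots,D_{2s-1}$, then $D_1,\dots,D_{2s-1}$ would pairwise intersect (each containing $C$), so $\{C,D_1,\dots,D_{2s-1}\}$ would induce $K_{2s}\supseteq K_{s,s}$ in $G$ --- a contradiction. Hence every convex set is contained in at most $2s-2$ others, so orienting each nesting edge towards the larger set exhibits a vertex ordering of out-degree at most $2s-2$; thus $|E_{\mathrm{nest}}|\le (2s-2)n$.

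\textbf{Crossing edges.} The graph $G':=(V(G),E_{\mathrm{cross}})$ is precisely the intersection graph of the boundary curves $\partial C$, which are closed planar curves, so $G'$ is a $K_{s,s}$-free string graph and Theorem~\ref{thm:string} already gives $|E_{\mathrm{cross}}|=O(s(\log s)n)$. To shave the logarithm I would use that these strings are moreover \emph{convex} curves, via Theorem~\ref{thm:main}. Apply Theorem~\ref{thm:main} to $G'$ with a parameter $k$: either $G'$ contains a subgraph on $d'$ vertices with $\Omega_k((d')^2)$ edges ($d'$ being the average degree of $G'$), or $G'$ contains an induced $C_4$-free subgraph of average degree at least $k$. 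In the first case the dense subgraph is an induced subgraph of $G'$, hence an intersection graph of $\Theta(d')$ convex curves of positive density, and a positive-fraction Ramsey-type statement for convex curves (in the spirit of Theorem~\ref{thm:stEH_kint}) should produce a biclique on $\Omega_k(d')$ vertices; since $G'\subseteq G$ is $K_{s,s}$-free this forces $d'=O_k(s)$. In the second case I would appeal to a key lemma: a $C_4$-free, $K_{s,s}$-free intersection graph of convex curves is $O(s)$-degenerate; taking $k$ above the implied bound rules this case out and leaves the first. Either way $d'=O(s)$, so $|E_{\mathrm{cross}}|=O(sn)$, and combining with the nesting estimate completes the proof.

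The hard part will be the key lemma on $C_4$-free intersection graphs of convex curves. Unlike pseudo-segments or pseudocircles, two boundaries of convex sets can cross arbitrarily many times (think of a circle and a finely zig-zagging inscribed convex polygon), so one cannot simply pass to a bounded-intersecting family of curves; controlling the crossing structure while retaining the \emph{linear} dependence on $s$ --- and balancing the choice of $k$ against the constants coming from Theorem~\ref{thm:main} and from the biclique statement --- is the delicate point. Establishing the positive-fraction biclique statement for dense intersection graphs of convex curves with a sufficiently strong constant is a secondary technical hurdle.
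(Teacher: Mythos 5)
Your overall strategy---invoke Theorem~\ref{thm:main} to reduce to a density-EH statement, exactly as Corollary~\ref{thm:master} packages it---is the right one, and it is what the paper does. However, there is a genuine gap: the ``positive-fraction biclique statement for convex curves,'' which you label a ``secondary technical hurdle,'' is in fact the entire substance of the argument, and you do not establish it. The paper resolves it by citing Theorem~15 of Fox--Pach--T\'oth~\cite{FPT10}, which proves precisely that dense intersection graphs of convex sets in the plane contain linear-sized bicliques (density-EH). With that reference in hand, your split into nesting and crossing edges is unnecessary: the [FPT10] theorem applies to the full intersection graph of the convex sets, and the weakly degree-bounded half of Corollary~\ref{thm:master} comes for free because intersection graphs of convex sets are string graphs (hence avoid induced subdivisions of the 1-subdivision of $K_5$, so K\"uhn--Osthus applies). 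The paper's proof is literally two sentences. Moreover, restricting to crossing boundaries does not make the density-EH question easier---you correctly observe that two convex boundaries can cross arbitrarily many times, so Theorem~\ref{thm:stEH_kint} (which needs a $k$-intersecting hypothesis) gives you no leverage, and you are left needing an independent argument.

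A smaller but real error: your ``key lemma'' for the $C_4$-free branch of Theorem~\ref{thm:main} is misstated. You write that a $C_4$-free, $K_{s,s}$-free intersection graph of convex curves is $O(s)$-degenerate, and propose taking $k$ above ``the implied bound.'' But $C_4$-free already implies $K_{s,s}$-free for $s\ge 2$, and more importantly the bound you need is an \emph{absolute} constant $k_0$ (not $O(s)$), so that $k>k_0$ and hence $c=c(k)$ in Theorem~\ref{thm:main} are independent of $s$. If $k$ depended on $s$, then $c$ and the resulting density-EH constant $\delta(c)$ would too, and the final inequality $s > \delta d$ would no longer give $d=O(s)$. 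The correct statement---$C_4$-free string graphs have bounded average degree---is exactly the weak degree-boundedness of string graphs, already known from K\"uhn--Osthus and used throughout the paper. Your nesting-edge argument ($K_{2s}$ from a set contained in $2s-1$ others) is fine as far as it goes, just redundant given the [FPT10] input.
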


\subsection{Pseudo-disks}

Given a ground set $X$ and a family of its subsets $\mathcal{F}\subseteq 2^X$, the \emph{incidence graph} of $(X,\mathcal{F})$ is the bipartite graph with vertex classes $X$ and $\mathcal{F}$, with an edge between $x\in X$ and $F\in \mathcal{F}$ if $x\in F$. Recently, Chan and Har-Peled \cite{CH23} presented a number of Zarankiewicz-type results for geometric incidence graphs, including incidence graphs of points and boxes, halfspaces, and pseudo-disks. A family of \emph{pseudo-disks} is a family of simple closed Jordan regions in the plane such that the boundaries of any two intersect in at most two points. Disks form a natural example, and more generally, any family of homothets of a fixed convex shape also constitutes a pseudo-disk family. In \cite{CH23}, it is proved that a $K_{s,s}$-free incidence graph of $n$ points and $n$ $y$-monotone pseudo-disks has at most $O(sn\log\log n)$ edges. They further posed the problem of improving this bound to $O_s(n)$. Here, a pseudodisk is called $y$-monotone if its intersection with any vertical line is either empty or it is an interval.

Soon afterward, Keller and Smorodinsky \cite{KS} improved their results as follows. Firstly, they showed that the number of edges in a $K_{s, s}$-free incidence graph of points and pseudodisks is bounded by $O(s^6 n)$. Furthermore, they extended this result by showing that if $\mathcal{A}$ and $\mathcal{B}$ are two $n$-element families of pseudo-disks, and the bipartite intersection graph $G$ between $\mathcal{A}$ and $\mathcal{B}$ is $K_{s,s}$-free, then $e(G)=O(s^6n)$. In the case of $y$-monotone pseudo-disks, we further improve these result by establishing the optimal dependence on $s$ as well.

\begin{theorem}\label{thm:pseudodisk optimal}
Let $\mathcal{A}$ and $\mathcal{B}$ be two $n$-element families of $y$-monotone pseudodisks. If the biparite intersection graph $G$ between $\mathcal{A}$ and $\mathcal{B}$ is $K_{s,s}$-free, then $G$ has $O(sn)$ edges.
\end{theorem}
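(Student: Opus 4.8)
The plan is to derive Theorem~\ref{thm:pseudodisk optimal} from our dichotomy Theorem~\ref{thm:main}, fed with two geometric inputs about $y$-monotone pseudodisks. Write $G$ for the bipartite intersection graph of $\mathcal{A}$ and $\mathcal{B}$, a graph on $2n$ vertices, and record the crucial closure property: since $G$ is bipartite with parts $\mathcal{A}$ and $\mathcal{B}$, every subgraph of $G$ is contained in the bipartite intersection graph of some subfamilies $\mathcal{A}'\subseteq\mathcal{A}$ and $\mathcal{B}'\subseteq\mathcal{B}$, which is again the bipartite intersection graph of two families of $y$-monotone pseudodisks. The two inputs I would use are: \emph{(i)} an absolute constant $D$ such that every $C_4$-free bipartite intersection graph of $y$-monotone pseudodisks on $N$ vertices has at most $DN$ edges; and \emph{(ii)} the analogue of Theorem~\ref{thm:stEH_kint} for this family, namely that for each $c>0$ there is $\gamma(c)>0$ so that if the bipartite intersection graph of families $\mathcal{A}'$, $\mathcal{B}'$ of $y$-monotone pseudodisks has at least $c|\mathcal{A}'||\mathcal{B}'|$ edges, then it contains a biclique with both parts of size at least $\gamma(c)\min(|\mathcal{A}'|,|\mathcal{B}'|)$.

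Granting \emph{(i)} and \emph{(ii)}, the deduction is short. Suppose $e(G)\ge Csn$ for a large absolute constant $C$ to be fixed, so the average degree of $G$ is $d:=e(G)/n\ge Cs$. Apply Theorem~\ref{thm:main} with $k:=2D+1$. If the first alternative holds, $G$ contains an induced $C_4$-free subgraph of average degree at least $2D+1$; by the closure property this subgraph is a $C_4$-free bipartite intersection graph of $y$-monotone pseudodisks, so input \emph{(i)} forces its average degree to be at most $2D$, a contradiction. If instead the second alternative holds, $G$ contains a subgraph $H'$ on $d$ vertices with at least $\varepsilon_k d^2$ edges, where $\varepsilon_k>0$ depends only on the now-fixed constant $k$. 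The two parts of $H'$ have sizes summing to $d$ with product at least $\varepsilon_k d^2$, hence both are $\Omega(\varepsilon_k d)$, and the edge density of $H'$ with respect to its parts is at least $\varepsilon_k$; by the closure property and input \emph{(ii)}, $H'$ contains a biclique with both parts of size $\Omega(\gamma(\varepsilon_k)\varepsilon_k d)=\Omega(\gamma(\varepsilon_k)\varepsilon_k Cs)$. Since $\gamma(\varepsilon_k)\varepsilon_k$ is an absolute constant, choosing $C$ large enough makes this at least $s$, so $K_{s,s}\subseteq G$, contradicting the hypothesis. Therefore $e(G)=O(sn)$.

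It remains to secure the two geometric inputs, and this is where the real work lies. For \emph{(ii)} I would split the edges of a dense bipartite intersection graph into \emph{crossing} edges, where $\partial A$ and $\partial B$ cross, and \emph{nesting} edges, where one of $A,B$ contains the other; one of these classes carries a constant fraction of the edges. The crossing edges can be analysed through Theorem~\ref{thm:stEH_kint}: the upper and lower boundary arcs of the members of $\mathcal{A}'$ (respectively $\mathcal{B}'$) form a family of curves any two of which cross at most twice, since the boundaries of any two pseudodisks cross at most twice in total, so a constant-density graph of crossings must contain a linear biclique. The nesting edges should be handled by an order-theoretic argument exploiting that containment among $y$-monotone pseudodisks refines containment of their $x$-projections. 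Input \emph{(i)} is the main obstacle: I expect to prove linearity of $C_4$-free bipartite intersection graphs of $y$-monotone pseudodisks by a left-to-right sweep that charges each edge $AB$ to a canonical feature of the vertical decomposition of the two-boundary arrangement $\partial A\cup\partial B$ over the common $x$-range of $A$ and $B$; the hypothesis that no two members of $\mathcal{A}$ share two common neighbours in $\mathcal{B}$ (and conversely), combined with the linear union complexity of pseudodisks, should bound the total charge by $O(N)$.
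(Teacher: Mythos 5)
Your high-level route matches the paper exactly: reduce to Corollary~\ref{thm:master} (equivalently, run Theorem~\ref{thm:main}) and supply two geometric inputs, namely degree-boundedness in the $C_4$-free case and the density-EH property. Your treatment of the crossing edges also matches: the boundaries of $y$-monotone pseudodisks are a $2$-intersecting family of curves, so Theorem~\ref{thm:stEH_kint} yields a linear biclique when the crossing edges are dense. Two points diverge.

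First, you treat input \emph{(i)} as ``the main obstacle'' and sketch a sweep-and-charge proof; this is unnecessary extra work, and the sketch would need to be carefully verified. The paper simply \emph{cites} that the family is weakly degree-bounded: Keller and Smorodinsky already proved the $O(s^6 n)$ bound for $K_{s,s}$-free bipartite pseudodisk intersection graphs, and alternatively Keszegh's result implies that these graphs contain no induced subdivision of $K_9$, whence K\"uhn--Osthus applies. Either citation settles \emph{(i)} without a new geometric argument.

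Second, and this is the genuine gap, your handling of the nesting edges does not go through as stated. The observation that containment of $y$-monotone pseudodisks refines containment of their $x$-projections is one-directional: $A\subseteq B$ forces $\mathrm{proj}_x(A)\subseteq\mathrm{proj}_x(B)$, but not conversely. So while a dense set of nesting edges gives a dense bipartite interval-containment graph, and such semilinear graphs do contain large bicliques, a biclique in the interval-containment graph does \emph{not} pull back to a biclique in $G$: two pseudodisks whose projections are nested need not intersect at all. The paper resolves this with a cutting lemma for pseudodisks (Chekuri--Clarkson--Har-Peled): replace each contained pseudodisk $A$ by a representative point, delete low-degree points, and apply the cutting with $r=\Theta(1/\eps)$ to find a cell $\Delta$ containing $\Omega(\eps^3 n)$ representative points while only $O(\eps n)$ boundaries from $\mathcal{B}$ cross $\Delta$; since every surviving point lies in $\Omega(\eps n)$ members of $\mathcal{B}$, a constant fraction of those members contain $\Delta$ entirely, and this pair of sets is the biclique. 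You would need to replace your order-theoretic sketch with an argument of this flavour (or otherwise argue that a geometric biclique, not merely an interval biclique, exists); as written the proof of \emph{(ii)} is incomplete.
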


\subsection{Semilinear graphs}

A graph $G$ is \emph{semialgebraic} of description complexity $(d,D,m)$ if the vertices of $G$ are points in $\mathbb{R}^d$, and whether $x$ and $y$ are joined by an edge is decided by a Boolean combination of $m$  inequalities $\{f_i(x,y)\leq 0\}$ for $i\in [m]$, where $f_i:\mathbb{R}^d\times \mathbb{R}^d\rightarrow\mathbb{R}$ is a polynomial of total degree at most $D$.
 Semilinear graphs form the special subfamily of semialgebraic graphs in which the defining polynomials are linear functions, that is, $D=1$. The study of Zarankiewicz problems for semilinear graphs was initiated by Basit, Chernikov, Starchenko, Tao, and Tran \cite{BCSTT}, and have recieved significant interest since then \cite{CH23,KS,T24,TZ}. In this paper, we establish optimal bounds for this problem, sharpening all previous results.
 
 Formally, a bipartite graph $\Gamma$ is \emph{semilinear} of dimension $(d_x,d_y)$ and complexity $(h,t)$ if the following holds. The vertex classes of $\Gamma$ are $X\subset \bR^{d_x}$ and $Y\subset \bR^{d_y}$, and there are $ht$ linear functions $f_{i, j}:\bR^{d_x}\times \bR^{d_y}\to \bR$, where $(i,j)\in [h]\times[t]$, such that two vertices $x\in X$ and $y\in Y$ are joined by an edge in $\Gamma$ if and only if 
 $$\bigvee_{j\in [t]}\bigwedge_{i\in [h]} \{f_{i, j}(x, y)\leq 0\}=\mbox{true}.$$
This definition might seem somewhat technical at first, so let us provide some examples. One of the most interesting families of semilinear graphs of bounded dimension and complexity are the incidence graphs of points and axis-parallel boxes in $\mathbb{R}^d$. Indeed, given a point $x=(x_1,\dots,x_d)$ and an axis parallel box $B=[a_1,b_1]\times\dots\times[a_d,b_d]$, we have $x\in B$ if $a_i-x_i\leq 0$ and $x_i-b_i\leq 0$ for every $i\in [d]$. Therefore, we can represent the boxes using $2d$-dimensional vectors, and so incidence graphs of points and boxes are semilinear of dimension $(d,2d)$ and complexity $(2d,1)$. Further examples include interval graphs, shift graphs, circle graphs, and intersection graphs of boxes.

Another semilinear graph of interest is the incidence graph of points and \emph{corners} in $\mathbb{R}^d$, where a corner is a set of the form $[-\infty,b_1]\times \dots\times [-\infty,b_d]$. These graphs are semilinear of dimension $(d,d)$ and complexity $(d,1)$. It is not difficult to show that a semilinear graph of dimension $(d_1,d_2)$ and complexity $(h,t)$ is the union of $t$ incidence graphs of points and corners in $\mathbb{R}^h$. Basit, Chernikov, Starchenko, Tao and Tran \cite{BCSTT} and Tomon and Zakharov \cite{TZ} proved that if the point-corner incidence graph has $n$ points and is $K_{s,s}$-free, then it has $O_{h,s}\big(n (\log n)^h\big)$ edges, which implies a similar bound for the number of edges of $K_{s,s}$-free $n$-vertex semilinear graph of complexity $(h,t)$.

In the case of incidence graphs of points and boxes in $\mathbb{R}^d$, the resulting upper bound $O_{d,s}(n(\log n)^{2d})$ was improved by Chan and Har-Peled \cite{CH23} to $O_{d}(sn(\log n/\log \log n)^{d-1})$, who also established that this bound is the best possible for $s=2$, see also \cite{T24} for a matching lower bound construction. They used this result to show that if an $n$-vertex $K_{s,s}$-free semilinear graph $\Gamma$ of complexity $(h,1)$ can be defined with linear functions $f_{1,1},\dots,f_{h,1}$ such that there are $\delta$ non-parallel halfspaces among the halfspaces $\{y:f_{i,1}(0,y)\leq 0\}$, then $\Gamma$ has $O_{\delta}(sn(\log n/\log \log n)^{\delta-1})$ edges. We greatly improve these results by showing that the exponent of the $\log $ term can be bounded by the dimension alone. 

\begin{theorem}\label{thm:semilinear zarankiewicz}
Let $\Gamma$ be an $n$-vertex $K_{s,s}$-free semilinear graph of dimension $(d_x, d_y)$, where $d_x\geq d_y\geq 1$, and complexity $(h, t)$. Then $$e(\Gamma)\leq O_{d_x, h}\left(t s n \Big(\frac{\log n}{\log\log n}\Big)^{d_y-1}\right).$$
\end{theorem}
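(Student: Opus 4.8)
The plan is to first strip away the parameter $t$, then recast the problem as a point/region incidence problem, prove the statement for $C_4$-free graphs by a recursion on the dimension in the spirit of Chan and Har-Peled \cite{CH23}, and finally bootstrap to arbitrary $s$ using Theorem~\ref{thm:main}.

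The first step is to reduce to $t=1$. Writing the defining condition as a disjunction over $j\in[t]$, $\Gamma$ is the union of $t$ semilinear graphs $\Gamma_1,\dots,\Gamma_t$, each of the same dimension and complexity $(h,1)$, and each $\Gamma_j$ is $K_{s,s}$-free; since $e(\Gamma)\le\sum_j e(\Gamma_j)$ it suffices to bound $e(\Gamma_j)$, so from now on $\Gamma$ has complexity $(h,1)$ with defining functions $f_1,\dots,f_h$. For a fixed $x\in X$ the condition $f_i(x,y)\le 0$ reads $b_i\cdot y\le\beta_i(x)$ for a fixed normal $b_i\in\bR^{d_y}$ and an affine function $\beta_i$ of $x$, so $N_\Gamma(x)$ is the trace on $Y$ of a polyhedron all of whose facet normals lie among $b_1,\dots,b_h$. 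After a linear change of coordinates on $\bR^{d_y}$ making a maximal independent subset of the $b_i$ equal to the first $d_y':=\dim\spn\{b_1,\dots,b_h\}\le d_y$ coordinate directions, and projecting $Y$ to those coordinates (which does not change the edge relation, even if it identifies some points of $Y$), we are left with the clean picture: $Y$ is a point multiset in $\bR^{d_y'}$ and $N_\Gamma(x)=\{y:\ y_i\le\beta_i(x),\ i\in[d_y']\}\cap\bigcap_{j>d_y'}\{y:\ b_j\cdot y\le\beta_j(x)\}$, an axis corner intersected with at most $h-d_y'$ further halfspaces. Since $d_y'\le d_y$ it suffices to prove the bound with $d_y'$ in place of $d_y$.

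The geometric heart is the $C_4$-free case: \emph{a $C_4$-free semilinear graph of dimension $(d_x,d_y)$ and complexity $(h,1)$ on $N$ vertices has $O_{d_x,h}\!\big(N(\log N/\log\log N)^{d_y-1}\big)$ edges}. I would prove this by induction on the (effective) dimension. When it is $1$, every $N_\Gamma(x)$ is an interval (an intersection of at most $h$ rays), so $\Gamma$ is an interval bigraph and $C_4$-freeness gives $O(N)$ edges. For the inductive step, sort $Y$ by the first coordinate and split it into $g=\Theta(\log N/\log\log N)$ consecutive groups of nearly equal (or suitably geometrically varying) size, as in \cite{CH23}. For a vertex $x$, the groups lying strictly below the threshold $\beta_1(x)$ satisfy the first constraint automatically, so the incidences between such groups and such $x$ are governed by the surviving constraints; sandwiching each remaining constraint $b_j\cdot y\le\beta_j(x)$ between its two parallel translates at the endpoints of the group in the first coordinate turns this into a semilinear incidence problem in the $d_y-1$ remaining coordinates whose complexity is still $O_h(1)$, to which the inductive hypothesis applies. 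The at most two boundary groups per $x$ (and, after the same device is run for the other $h-d_y'$ directions, the corresponding boundary pieces) are recursed on with the same dimension but at most $N/g$ vertices; unwinding the recursion produces the factor $(\log N/\log\log N)^{d_y-1}$, while $C_4$-freeness guarantees that at the bottom of the recursion every cell contributes $O(1)$ edges.

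Finally I would bootstrap to general $s$ using Theorem~\ref{thm:main}, inducting on the number of vertices so that the asserted bound may be assumed for all smaller graphs. Let $\Gamma$ be $K_{s,s}$-free with average degree $d$, and apply Theorem~\ref{thm:main} with $k\asymp(\log n/\log\log n)^{d_y-1}$. If it produces a $d$-vertex subgraph with $\Omega_k(d^2)$ edges, then this is a dense semilinear graph, so by its bounded VC-dimension (or the semialgebraic Erd\H{o}s--Hajnal property, cf.\ \cite{FPSSZ}) it contains a biclique of size proportional to $d$ up to a factor depending on $k$; with $K_{s,s}$-freeness this forces $d=O_{d_x,h}(s(\log n/\log\log n)^{d_y-1})$, as claimed. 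Otherwise it returns an induced $C_4$-free subgraph of average degree at least $k$, which is a $C_4$-free semilinear graph of the same dimension and complexity, contradicting the bound of the previous paragraph once the constant in $k$ is large enough. The main obstacle is the inductive step of the geometric heart: arranging the one-coordinate split so that it genuinely drops the ambient dimension while keeping the auxiliary halfspaces (whose normals mix the split coordinate) under control, and squeezing out the $\log/\log\log$ rather than $\log$ saving per dimension, which is the delicate part of the Chan--Har-Peled method adapted to arbitrary facet directions; a secondary point is calibrating $k$ (and hence the density in the dense case) so that the biclique extraction costs only a constant factor rather than a power of $\log n/\log\log n$.
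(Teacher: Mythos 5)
Your plan diverges from the paper's at the heart of the argument, and the bootstrap step you propose has a genuine gap.

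The paper proves Theorem~\ref{thm:semilinear zarankiewicz} by a pure reduction: after the same $t=1$ reduction and the same recasting as a point--polytope incidence problem, it proves Lemma~\ref{lemma:polytope decomposition}, which says that every $Q\in{\rm POL}(\cH)$ is a union of $O_{d,h}(1)$ parallelotopes drawn from a fixed set $\cH'$ of $O_{d,h}(1)$ directions. Picking a random type and a random parallelotope per polytope preserves a constant fraction of the incidences, and after an affine change of coordinates one lands in the already-solved point/axis-parallel-box problem, to which the Chan--Har-Peled bound $O_d\big(sn(\log n/\log\log n)^{d-1}\big)$ for \emph{$K_{s,s}$-free} incidence graphs applies as a black box. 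The $s$-dependence thus comes for free from the cited result; Theorem~\ref{thm:main} is not used here at all. By contrast, you propose to (a) re-derive the Chan--Har-Peled recursion from scratch for arbitrary facet directions and (b) bootstrap the resulting $C_4$-free bound up to general $s$ via Theorem~\ref{thm:main}.

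Step (b) does not work. The family of semilinear graphs with $d_y\ge 2$ is \emph{not} weakly degree-bounded, since its $C_4$-free members already have average degree $\Theta\big((\log n/\log\log n)^{d_y-1}\big)$; so you are forced, as you do, to apply Theorem~\ref{thm:main} with $k$ growing with $n$, namely $k\asymp(\log n/\log\log n)^{d_y-1}$. But then the density $c=c(k)$ in the dichotomy of Theorem~\ref{thm:main} degrades with $k$ (the proof in Section~\ref{sec:graph theory} passes through $k_1\gg k_2\gg k_3\gg k_4^{C_0 v^3}$ and $\eps\ll \tau^{-3v}$, so the dependence is at least exponential), and in turn the biclique constant $\alpha(c)$ from the semialgebraic density-EH (Lemma~\ref{lemma:semialgrabraic}) degrades with $1/c$. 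The conclusion you extract is $d\le s/\alpha\big(c(k)\big)$; to match the claimed $d=O\big(s(\log n/\log\log n)^{d_y-1}\big)=O(sk)$ you would need $1/\alpha(c(k))=O(k)$, which requires $c(k)=\Omega(1/k)$ and a linear-in-$c$ biclique guarantee — neither of which the cited results provide or can provide. What the argument actually yields is $d\le s\cdot F(k)$ for some $F$ far larger than $k$ (at least exponential in $k$), i.e.\ a superpolylogarithmic loss. You flag this as a matter of "calibrating $k$," but the issue is structural: Corollary~\ref{thm:master} only delivers linear-in-$s$ bounds for weakly degree-bounded families, and this family is not one. The paper avoids the problem entirely by reducing to a result whose $s$-dependence is already sharp; you should do the same (or else re-prove the Chan--Har-Peled recursion directly in the $K_{s,s}$-free form, carrying $s$ through the induction, rather than only in the $C_4$-free form and then trying to bootstrap).

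As for step (a): the sketch of the recursion is plausible but delicate, and you acknowledge this. The paper's parallelotope decomposition is precisely the device that lets one sidestep re-engineering that recursion for non-axis-aligned directions.
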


The bound in Theorem~\ref{thm:semilinear zarankiewicz} is very close to optimal. Let $m=2n/s$, then as mentioned above, there exist $K_{2,2}$-free $m$-vertex incidence graphs of points and boxes in $\mathbb{R}^d$ with $\Omega_d(m(\log /\log\log m)^{d-1})$ edges. Such graphs are semilinear of dimension $(d,2d)$ and complexity $(2d,1)$. Repeating each point and box $s/2$ times, and perturbing them slightly to avoid repetitions, we get an $n$-vertex $K_{s,s}$-free incidence graph with $\Omega_d\Big(s^2m\big(\frac{\log m}{\log\log m}\big)^{d-1}\Big)=\Omega_d\Big(sn\big(\frac{\log n/s}{\log\log n/s}\big)^{d-1}\Big)$ edges.

\subsection{Polygon visibility graphs}

The study of the following geometric Zarankiewicz problem was recently initiated by Du and McCarty \cite{DM24}. For a (closed) Jordan curve $K$ and a set of points $P\subset K$, the \emph{visibility graph} of $P$ with respect to $K$ is the graph with vertex set $P$, in which $x,y\in P$ are joined by an edge if the straight line segment connecting $x$ and $y$ is completely contained in the region bounded by $K$, which we denote by $K^*$. Du and McCarty \cite{DM24} conjecture that the maximum number of edges in a $K_{s, s}$-free polygon visibility graph is $O_s(n)$. This is partially motivated by the result of Davies, Krawczyk, McCarty, and Walczak \cite{DKMW} that this family is $\chi$-bounded. The notion of $\chi$-boundedness is central in structural graph theory, where a family $\cF$ is said to be $\chi$-bounded if the chromatic number $\chi(G)$ of any graph $G\in \cF$ is bounded by a function of the clique number $\omega(G)$, i.e. if there exists a function $f:\bZ_{>0}\to \bZ_{>0}$ such that $\chi(G)\leq f(\omega(G))$ for all $G\in \cF$.

The question of Du and McCarty is still open, but Ackerman and Keszegh \cite{AK25} established a number of partial results towards it. Say that a Jordan curve is \emph{$x$-monotone}, if every vertical line intersects $K$ in at most 2 points, and say that it is \emph{star-shaped}, if there is a point $c\in K^*$ such that every half-line starting at $c$ intersects $K$ in one point. In \cite{AK25}, it is proved that if $G$ is an $n$-vertex visibility graph with respect to an $x$-monotone or star-shaped curve, and $G$ is $K_{s,s}$-free, then $G$ has at most $O_s(n)$ edges. We improve both of these results as follows.

\begin{theorem}\label{thm:polygon_visibility}
Let $G$ be an $n$-vertex visibility graph with respect to a Jordan curve $K$ that is either $x$-monotone or star shaped. If $G$ contains no $K_{s,s}$, then $G$ has $O(sn)$ edges. 
\end{theorem}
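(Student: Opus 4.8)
The plan is to combine Theorem~\ref{thm:main} with two structural facts about visibility graphs with respect to an $x$-monotone or star-shaped Jordan curve: \textbf{(i)} every $C_4$-free such visibility graph on $m$ vertices has $O(m)$ edges, with an \emph{absolute} implied constant; and \textbf{(ii)} these families have the strong Erd\H{o}s--Hajnal property, in the sense that an $m$-vertex such visibility graph with at least $\delta m^2$ edges contains a biclique of size $\Omega_\delta(m)$. The reason to route through Theorem~\ref{thm:main} is to decouple two regimes: a $K_{s,s}$-free visibility graph can still be locally dense, so there is no clean structure theorem for it, whereas the $C_4$-free version is rigid and admits a linear edge bound free of any dependence on $s$ --- and it is precisely this $s$-independence that upgrades Ackerman--Keszegh's $O_s(n)$ to $O(sn)$.

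Granting \textbf{(i)} and \textbf{(ii)}, the deduction is short. First, the class of visibility graphs of a fixed type is closed under taking induced subgraphs, since deleting points changes neither the curve $K$ nor its being $x$-monotone or star-shaped. Now let $G$ be $K_{s,s}$-free of the relevant type with average degree $d$; we must show $d = O(s)$. Let $c_0$ be the absolute constant from \textbf{(i)} and apply Theorem~\ref{thm:main} with $k := 2c_0 + 1$. The first alternative cannot occur: an induced $C_4$-free subgraph $H$ of $G$ is itself a $C_4$-free visibility graph of the relevant type, so $e(H) \le c_0 |V(H)|$ by \textbf{(i)} and its average degree is at most $2c_0 < k$. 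Hence the second alternative holds, giving a subgraph $G'$ of $G$ on $d$ vertices with $\Omega_k(d^2) = \Omega(d^2)$ edges. As $G'$ is an induced subgraph of $G$ it is again a visibility graph of the relevant type, and its edge density exceeds an absolute constant $\delta > 0$, so by \textbf{(ii)} it contains $K_{t,t}$ with $t = \Omega_\delta(d)$; in particular $t \ge c_1 d$ for an absolute $c_1 > 0$. Since $G \supseteq G'$ is $K_{s,s}$-free we must have $c_1 d < s$, whence $d = O(s)$ and $e(G) = dn/2 = O(sn)$.

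The substance is therefore in \textbf{(i)} and \textbf{(ii)}, and this is the main obstacle. In both cases one realizes the visibility graph as the incidence graph of $P$ with the family of visibility regions $\operatorname{vis}(p) = \{q : \overline{pq} \subseteq K^*\}$, $p \in P$, each star-shaped from $p$. When $K$ is $x$-monotone, a short computation shows every $\operatorname{vis}(p)$ is $y$-monotone (its intersection with any vertical line is an interval); when $K$ is star-shaped with center $c$, the inclusion $\overline{pq} \subseteq K^*$ forces the whole triangle $cpq$ into $K^*$, so every $\operatorname{vis}(p)$ is in fact star-shaped from the common point $c$. These regions need not form a pseudo-disk family (their boundaries may cross many times), which is exactly why one passes to the $C_4$-free version: using the left-to-right order of $P$ in the $x$-monotone case, or the radial order of $P$ around $c$ in the star-shaped case, one argues by a charging/degeneracy argument that a $K_{2,2}$-free incidence structure of this kind has $O(m)$ edges, and similarly that a dense one contains a linear biclique. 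The delicate point --- and where the specific geometry of $x$-monotone and star-shaped curves has to be used rather than treating visibility abstractly --- is to show that mere $C_4$-freeness already forces an \emph{absolutely bounded} average degree, with no spurious logarithmic or $s$-dependent factor; the two curve types are handled separately and then combined.
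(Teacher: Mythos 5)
Your high-level strategy --- realize the theorem as an instance of Corollary~\ref{thm:master} by verifying that visibility graphs of the relevant type are weakly degree-bounded~(i) and have the density-EH property~(ii) --- is the same philosophy the paper follows, but the paper applies it to a different (and more tractable) auxiliary family, and the place where you wave your hands is precisely where the real work is. Concretely, the paper first splits $K$ by a vertical line / a line through the center $c$ into two arcs, separating $P$ into $P_\ell\cup P_u$ (resp.\ $P_1\cup P_2$). The cross-edges $e(G[P_\ell,P_u])$ are handled by Lemma~\ref{lemma:visibility1}: using that the visibility graph contains no bipartite-induced ``double cherry,'' the cross-edge graph is shown to be isomorphic to an intersection graph of vertical and horizontal segments, and Proposition~\ref{prop:vertical vs horizontal} gives $O(sn)$ outright. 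The same-side edges are then placed in a hereditary family of \emph{ordered} graphs forbidding one bipartite-induced matching in $G$ and another in its complement; Theorems~\ref{thm:matching_degree_bounded} and~\ref{thm:matching_EH} establish weak degree-boundedness and density-EH \emph{for that ordered family}, after which Corollary~\ref{thm:master} closes the argument. So Corollary~\ref{thm:master} is indeed invoked, but not on the visibility-graph family itself.

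The gap in your write-up is that (i) and (ii), which you acknowledge to be ``the main obstacle,'' are not actually proved. Your geometric observations are correct --- if $K$ is $x$-monotone then each $\operatorname{vis}(p)$ is $y$-monotone (the triangle $pq_1q_2$ lies in $K^*$ by slicing vertically), and if $K$ is star-shaped from $c$ then each $\operatorname{vis}(p)$ is star-shaped from the common point $c$ --- but these facts alone do not yield either (i) or (ii). The regions $\{\operatorname{vis}(p)\}$ are, as you note, not a pseudodisk family, they have unbounded description complexity (so Lemma~\ref{lemma:semialgrabraic} is unavailable), and there is no obvious degeneracy/charging argument giving an absolute-constant linear bound on $C_4$-free incidences with $y$-monotone or commonly star-shaped regions of arbitrary complexity. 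Likewise, no known black box gives the density-EH property for such an incidence structure. (For (i) alone one could at least cite \cite{AK25} at $s=2$, which you do not do; for (ii) I do not see a citation that would close the gap.) The paper's route exists precisely because the density-EH property is much easier to establish once one has rewritten the pieces as ordered graphs with forbidden bipartite-induced matchings, where Marcus--Tardos and bounded VC-dimension do the work. Without a concrete substitute for that machinery, your ``charging/degeneracy argument'' remains a placeholder and the proof is incomplete.
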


In order to prove this theorem, we establish general results about $K_{s, s}$-free ordered graphs avoiding some fixed ordered matching, which might be of independent interest.

\subsection{Key graph-theoretic tool: finding $C_4$-free subgraphs or dense patches}

Our main theoretical contribution, and the unifying theme of this work, is the following statement, which says that every graph $G$ either contains a $C_4$-free induced subgraph with large average degree or a dense subgraph, whose size is at least the average degree of $G$.

\begin{theorem}\label{thm:main}
For every positive integer $k$ there exists $c=c(k)>0$ such that the following holds. Every graph $G$ of average degree at least $d$ either contains an induced $C_4$-free subgraph with average degree at least $k$, or it contains a subgraph on $d$ vertices with at least $cd^2$ edges.
\end{theorem}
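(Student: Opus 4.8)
The plan is to prove the contrapositive: if $G$ has no $d$-vertex subgraph with $\ge cd^2$ edges (so $G$ is "locally sparse" at scale $d$) but has average degree $\ge d$, then $G$ contains a $C_4$-free induced subgraph of average degree $\ge k$. The natural strategy is an iterative cleaning argument: repeatedly delete low-degree vertices and "bad" configurations that create $C_4$'s, and argue that the process cannot destroy too many edges, so something with high average degree survives.

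More concretely, I would first pass to a subgraph $H \subseteq G$ with minimum degree $\ge d/2$. Within $H$, for each vertex $v$ consider its neighbourhood $N(v)$: a $C_4$ through $v$ corresponds to two vertices of $N(v)$ having a common neighbour other than $v$. The key local observation is that if $v$ lies in "many" $C_4$'s, one can extract from $N(v)$ (together with the codegree structure) a vertex set of size about $d$ with many edges, contradicting local sparsity — this is where the parameter $d$ enters, since $|N(v)| \ge d/2$ gives us exactly a set on the order of $d$ vertices to work with. So local sparsity forces every vertex to lie in few $C_4$'s, i.e., the total $C_4$ count is $O(c \cdot (\text{something}) )$ times the number of edges. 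Then a standard random-sampling / dependent-deletion argument (keep each vertex independently with a suitable probability, or directly delete one vertex from each surviving $C_4$) removes all $C_4$'s while retaining a positive fraction of the edges, yielding an induced $C_4$-free subgraph whose average degree is at least a constant (depending on $k$ via the choice of $c$) multiple of what we started with. Choosing $c = c(k)$ small enough makes this surviving average degree at least $k$.

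I expect the main obstacle to be the local step: showing that a vertex in many $C_4$'s yields a genuinely dense $d$-vertex set. The difficulty is that the $C_4$'s through $v$ are witnessed by codegrees between pairs in $N(v)$, but $N(v)$ might be much larger than $d$, so one cannot just take all of $N(v)$; and the "second endpoints" of these $C_4$'s may lie outside $N(v)$, so the dense set must be assembled carefully — perhaps by a weighting/averaging argument that selects a sub-collection of $\approx d$ vertices capturing a dense bipartite-like pattern, or by applying a Kővári–Sós–Turán-type count in reverse. One must also be careful that the deletion process in the global step does not lower the minimum degree too much; this likely requires iterating the min-degree cleanup and the $C_4$-removal together, or working with average rather than minimum degree and tracking edge counts directly. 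The interplay between the scale $d$ (fixed by the input) and the target degree $k$ (fixed in advance, controlling $c$) is the crux: $c$ must be small relative to $k$ so that "few $C_4$'s per edge" beats the loss incurred in making the graph $C_4$-free.
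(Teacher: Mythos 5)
The proposal takes a genuinely different route from the paper, and it has a gap that the paper's more elaborate argument is specifically designed to sidestep.

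\textbf{What the paper does.} The paper never counts $C_4$'s. Theorem~\ref{thm:main} is deduced from Theorem~\ref{thm:main2} (the $(c,t)$-sparse formulation), which in turn follows from the technical Lemma~\ref{lemma:1subdivision}. That lemma is proved in four stages: (i) a max-cut bipartization passing to $A_1\cup B_1$ with degree bounds only on the $A_1$-side (Lemma~\ref{lemma:step 1}); (ii) a random sparsification that, using the DGLLS upgrade from $(c,t)$-sparsity to $(1-\eps,\beta t)$-sparsity (Lemma~\ref{sparsity focusing}), eliminates cross-triangles and oriented $K_{2,v+1}$'s (Lemma~\ref{lemma:step 2}); (iii) a Kwan--Letzter--Sudakov--Tran-style extraction of a dense induced bipartite, hence $K_{v+1,v+1}$-free, subgraph (Lemma~\ref{lemma:step 3}); and (iv) the Du--Gir\~ao--Hunter--McCarty--Scott theorem (Theorem~\ref{thm:degree-bounded}), which converts $K_{v+1,v+1}$-freeness plus huge average degree into a $C_4$-free induced subgraph. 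Crucially, the whole thing rests on a dichotomy over whether $G$ contains an induced $H_v^{(h)}$: if yes, a projective-plane incidence graph sits inside it and we are done immediately; if no, the DGLLS upgrade applies.

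\textbf{Where your approach breaks.} You flag the ``local step'' as the main obstacle and you are right to, but the gap is deeper than stated. The bound on $C_4$'s through an edge $uv$ that the sparsity hypothesis gives is essentially $e(N(u),N(v))\le \eps\max(|N(u)|,d)\max(|N(v)|,d)$, which is only $O(\eps d^2)$ when both endpoints have degree $O(d)$. But vertices of degree $\gg d$ are perfectly consistent with the local-sparsity hypothesis (a vertex of degree $D$ gives a $d$-vertex set of density only $O(1/d)$), and such vertices can blow up both the $C_4$-count and the edge loss incurred when you delete one vertex per surviving $C_4$ in the uniform random sample. The paper's Lemma~\ref{lemma:step 1} only controls degrees on one side of the bipartition; the unbounded side is handled not by $C_4$-counting but by the degeneracy-driven random process of Lemma~\ref{lemma:step 2}, which is analysed through out-degrees in a $d$-degenerate orientation rather than through raw degree. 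Your ``keep each vertex independently with probability $p$'' also ignores this asymmetry: when degrees are unbalanced across sides, a symmetric deletion will not retain enough average degree, and a side-dependent sampling is needed. Finally, your plan implicitly assumes that destroying all $C_4$'s can be done in one inexpensive pass, but in fact even reaching $C_4$-freeness from $K_{s,s}$-freeness is the content of the deep DGHMS theorem, which the paper invokes as a black box rather than reprove via deletion — $K_{s,s}$-free graphs (e.g.\ balanced blowups of $C_4$-free graphs) can be riddled with $C_4$'s, so no amount of ``few $C_4$'s per edge'' reasoning applies at that step. To make your route work you would need, at minimum, a rigorous codegree/$C_4$-count bound that survives unbounded degrees, an asymmetric sampling scheme, and some substitute for Theorem~\ref{thm:degree-bounded}; as written, none of these is supplied.
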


In a typical application, we attempt to bound the average degree of a $K_{s, s}$-free graph from some geometrically defined class by $d\leq O(s)$, which corresponds to bounding the number of edges by $O(sn)$. To do this, the first step is to bound the number of edges of a $C_4$-free graph from this family by $Cn$, for some absolute constant $C$, which corresponds to addressing the case when $s=2$. Then, given a $K_{s, s}$-free graph $G$ from this geometric family, we apply our Theorem~\ref{thm:main} with any $k>C$, and observe that, since $G$ does not contain $C_4$-free subgraphs of average degree at least $k$, it must contain a subgraph on $d$ vertices with $cd^2$ edges. The second step is to show that in such dense graphs one can find bicliques of size $\Omega(d)$. In particular, this means that $s\geq \Omega(d)$, i.e. that $d\leq O(s)$.

Beyond the geometric applications discussed so far, Theorem~\ref{thm:main} has a tight connection with the notion of degree-boundedness in structural graph theory. A hereditary family $\mathcal{F}$ is called \emph{degree-bounded} if every $K_{s,s}$-free $n$-vertex member of $\mathcal{F}$ has $O_{\mathcal{F},s}(n)$ edges, or equivalently, average degree $O_{\mathcal{F},s}(1)$. For example, an equivalent way to phrase the aforementioned theorem of K\"uhn and Osthus \cite{KO04} is that the family of graphs without induced $H$-subdivisions is degree-bounded. As we previously discussed, degree-bounded families appear in a number of places, including the family of graphs avoiding an induced copy of a tree $T$ \cite{HMST,SSS03}, string graphs \cite{FoxPach08,FoxPach10,FoxPach14}, incidence graphs of points and halfspaces in $\mathbb{R}^3$ \cite{CH23}, and visibility graphs with respect to certain Jordan curves \cite{AK25}. 

This has lead to significant interest in understanding general properties of degree-bounded families. The first result in this direction is due to McCarty \cite{MC21}, who showed the following, perhaps surprising result. If $\mathcal{F}$ is a hereditary family for which there exists a positive integer $k$ such that every $C_4$-free member of $\mathcal{F}$ has average degree at most $k$, then $\mathcal{F}$ is degree-bounded. This was later strengthened by Gir\~ao and Hunter \cite{GH} (see also the related independent work by Bourneuf, Buci\'c, Cook and Davies \cite{BBCD}), who showed that if $\mathcal{F}$ is degree-bounded, then there exists $c=c(\mathcal{F})>0$ such that every $K_{s,s}$-free member of $\mathcal{F}$ has average degree at most $O_{\mathcal{F}}(s^c)$. In other words, these works say that (1) in order to show that $\mathcal{F}$ is degree-bounded, it is enough to consider $C_4=K_{2,2}$-free members of $\mathcal{F}$, and (2) if $\mathcal{F}$ is degree-bounded, then it is automatically \emph{polynomially} degree-bounded. 

In light of these results, Theorem \ref{thm:main} is equivalent with the statement that if $\mathcal{F}$ is a degree-bounded family, then there exist $c=c(\mathcal{F})>0$ such that if $G\in \mathcal{F}$ has average degree $d$, then $G$ contains a subgraph with $d$ vertices and edge density at least $c$. Here, the edge-density of a graph $G$ is defined as $e(G)/\binom{|V(G)|}{2}$, i.e. as the number of edges of $G$ divided by maximum number of edges in a graph with the same vertex set.

We now describe from another perspective how to apply Theorem \ref{thm:main} to get sharp bounds on Zarankiewicz problems. Say that a family $\mathcal{F}$ of graphs is \emph{weakly degree-bounded} if there exists $k$ such that every $C_4$-free member of $\mathcal{F}$ has average degree at most $k$. As we pointed out, for hereditary families being weakly degree-bounded is equivalent to being degree-bounded. However, we introduce this weaker notion to emphasize that it is enough to consider $C_4$-free members of the family. Furthermore, say that a family of graphs $\mathcal{F}$ has the \emph{density-EH property} (where EH is an abbreviation for Erd\H{o}s-Hajnal) if for every $c>0$ there exists $\delta=\delta(c)>0$ such that for every $G\in \mathcal{F}$, where $G$ has edge density at least $c$, $G$ contains a biclique of size at least $\delta v(G)$. 

\begin{corollary}\label{thm:master}
Let $\mathcal{F}$ be a hereditary family that is both weakly degree-bounded and has the density EH-property. Then there exists a constant $C=C(\mathcal{F})>0$ such that if $G\in\mathcal{F}$ is $K_{s,s}$-free, then the average degree of $G$ is at most $Cs$.
\end{corollary}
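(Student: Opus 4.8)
The plan is to feed a $K_{s,s}$-free member $G\in\cF$ directly into the dichotomy of Theorem~\ref{thm:main}: weak degree-boundedness rules out the $C_4$-free alternative, and the density-EH property converts the remaining ``dense patch'' into a large biclique, which in turn bounds the average degree of $G$ by $O(s)$. This is precisely the two-step strategy outlined after Theorem~\ref{thm:main}.

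To fix the constants: using weak degree-boundedness, choose a positive integer $k$ such that every $C_4$-free graph in $\cF$ has average degree at most $k$. Apply Theorem~\ref{thm:main} with the parameter $k+1$ --- the shift by one is essential, since it is what turns the first alternative into an outright contradiction rather than a borderline equality --- to obtain a constant $c=c(k+1)>0$, which we may assume satisfies $c<1$. Finally, let $\delta=\delta(c)>0$ be the constant supplied by the density-EH property of $\cF$ for this value of $c$. We will see that the corollary holds for a constant $C=C(\cF)$ built out of $k$ and $\delta$, together with a separate additive constant that handles graphs of bounded average degree.

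Now let $G\in\cF$ be $K_{s,s}$-free and let $d$ be its average degree; we may assume $d$ exceeds a large constant depending only on $\delta$, since otherwise $d\le Cs$ already. Apply Theorem~\ref{thm:main} to $G$ with parameter $k+1$. The first alternative is impossible: an induced $C_4$-free subgraph $H$ of $G$ of average degree at least $k+1$ would lie in $\cF$ by heredity, and would thus be a $C_4$-free member of $\cF$ of average degree exceeding $k$, contradicting the choice of $k$. Hence the second alternative holds, so $G$ has a subgraph $F$ on $\lfloor d\rfloor$ vertices with at least $cd^2$ edges. Replacing $F$ by the induced subgraph $G[V(F)]$ only increases the number of edges and, by heredity, keeps us inside $\cF$; thus $G[V(F)]$ is a member of $\cF$ on $\lfloor d\rfloor$ vertices with at least $cd^2$ edges, hence of edge density at least $2c$. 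The density-EH property, applied to $G[V(F)]$ with the constant $c$, now produces a biclique in $G[V(F)]$, and therefore in $G$, of size at least $\delta\lfloor d\rfloor$; that is, a copy of $K_{t,t}$ with $t\ge\lfloor\delta\lfloor d\rfloor\rfloor$. As $G$ is $K_{s,s}$-free we must have $t<s$, so $\delta\lfloor d\rfloor<s$ (because $s$ is a positive integer), which forces $d\le Cs$ for a suitable $C=C(\cF)$.

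I do not anticipate a serious obstacle: the corollary is essentially a formal consequence of Theorem~\ref{thm:main}, and all the genuine content lives in that theorem. The only points requiring care are (i) invoking Theorem~\ref{thm:main} with $k+1$ rather than $k$, so that the first alternative truly contradicts weak degree-boundedness; (ii) passing from the (not necessarily induced) subgraph supplied by Theorem~\ref{thm:main} to the induced subgraph on the same vertex set, in order to remain inside the hereditary family $\cF$ before applying the density-EH property; and (iii) ensuring that the edge density fed into the density-EH property and the size of the resulting biclique are controlled uniformly in $d$, with graphs of bounded average degree treated trivially and absorbed into the final constant $C$.
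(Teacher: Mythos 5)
Your proof is correct and follows essentially the same route as the paper's: invoke Theorem~\ref{thm:main} with a parameter chosen so that weak degree-boundedness rules out the $C_4$-free alternative, pass to the induced subgraph on the $d$-vertex set to stay inside the hereditary family $\mathcal{F}$, and then apply the density-EH property to extract a biclique of size $\Omega(d)$, forcing $d = O(s)$. Your extra care about $k$ versus $k+1$, $\lfloor d\rfloor$ versus $d$, and the subgraph-vs-induced-subgraph distinction are all harmless bookkeeping points that the paper elides.
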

\begin{proof}
As $\mathcal{F}$ is weakly degree-bounded, there exists $k$ such that every $C_4$-free member of $\mathcal{F}$ has average degree less than $k$. Let $G\in\mathcal{F}$ be a $K_{s,s}$-free graph, and let $d$ be the average degree of $G$. If $c=c(k)>0$ is the constant promised by Theorem \ref{thm:main}, then $G$ contains an induced subgraph $H$ on $d$ vertices with at least $cd^2$ edges. But as $\mathcal{F}$ has the density-EH propery,  there exists $\delta=\delta(c)>0$ such that $H$ contains a biclique of size at least $\delta v(H)=\delta d$. Therefore, $s>\delta d$, which gives $d\leq s/\delta$, showing that the theorem holds with $C=1/\delta$.
\end{proof}

Corollary \ref{thm:master} turns out to be highly applicable in geometric settings. One reason for this is that semialgebraic graphs have the density-EH property. It is proved by Fox, Gromov, Lafforgue, Naor and Pach \cite{FGLNP} that for any fixed triple $(d,D,m)$, the family of semialgrabraic-graphs of description complexity $(d,D,m)$ has the density-EH property. This provides one half of the requirements of Theorem \ref{thm:master}. However, these families are not necessarily weakly degree-bounded (e.g. incidence graphs of points and lines), so this property depends more on the specific geometric structure.

\subsection{Graphs without dense subgraphs}

Finally, we highlight a strengthening of Theorem \ref{thm:main}, which can be used to settle a recent conjecture of Fox, Nenadov, and Pham \cite{FNP}. A natural relaxation of $K_{s,s}$-freeness, introduced in~\cite{FNP}, is the notion of $(c, t)$-sparsity. A graph $G$ is said to be \textit{$(c, t)$-sparse} if for all subsets $A, B \subseteq V(G)$ with $|A|, |B| \ge t$, there are at most $e(A, B) \leq (1 - c)|A||B|$ edges between them. It is easy to see that a graph is $K_{s, s}$-free if and only if it is $(1/s^2, s)$-sparse. Another motivation for studying $(c, t)$-sparse graphs is that random graphs $G(n, p)$ are $(c, t)$-sparse for any $c < 1 - p$ and $t = \Omega_{p,c}(\log n)$.

In~\cite{FNP}, the authors showed that if $c > 0$ is a constant and $T$ is a tree, then every $(c, t)$-sparse graph with at least $Ctn$ edges contains an induced copy of $T$, where $C=C(T,c)$ is sufficiently large with respect to $T$ and $c$. They conjectured that the same statement holds if instead of a tree $T$, we seek an induced subdivision of a fixed graph $H$. We prove this conjecture.

\begin{theorem}
For every $c > 0$ and every graph $H$, there exists a constant $C$ such that if $G$ is a $(c, t)$-sparse graph with average degree at least $Ct$, then $G$ contains an induced subdivision of $H$.
\end{theorem}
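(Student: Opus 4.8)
Write $c_0$ for the sparsity parameter (renamed to avoid a clash with the constant supplied by Theorem~\ref{thm:main}). The plan is to deduce this from the theorem of Kühn and Osthus together with a strengthening of Theorem~\ref{thm:main} in which the ``dense subgraph'' alternative is upgraded to a \emph{near-complete bipartite pair}. Let $\mathcal{F}_H$ denote the hereditary family of graphs containing no induced subdivision of $H$. As recalled in the introduction, Kühn--Osthus is equivalent to the statement that $\mathcal{F}_H$ is degree-bounded, hence weakly degree-bounded: there is an integer $k=k(H)$ such that every $C_4$-free graph in $\mathcal{F}_H$ has average degree less than $k$. So it suffices to prove that for a suitable $C=C(c_0,H)>0$, every $(c_0,t)$-sparse graph $G$ of average degree at least $Ct$ contains an \emph{induced} $C_4$-free subgraph of average degree at least $k$: for if $G$ contained no induced subdivision of $H$, this subgraph would lie in $\mathcal{F}_H$ and therefore have average degree less than $k$, a contradiction; hence $G$ must contain an induced subdivision of $H$.

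To obtain this $C_4$-free subgraph, I would use the following strengthening of Theorem~\ref{thm:main}: \emph{for every positive integer $k$ and every $\eta\in(0,1)$ there is $c=c(k,\eta)>0$ such that every graph $G$ of average degree at least $d$ either contains an induced $C_4$-free subgraph of average degree at least $k$, or contains two disjoint sets $A,B\subseteq V(G)$ with $|A|,|B|\ge cd$ and $e(A,B)\ge(1-\eta)|A||B|$.} Granting this, fix the $k=k(H)$ above, set $\eta=c_0/2$, put $c=c(k,c_0/2)$ and $C=1/c$, and suppose $G$ is $(c_0,t)$-sparse of average degree $d\ge Ct$. If the first alternative holds we are done by the previous paragraph. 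Otherwise $G$ contains disjoint sets $A,B$ with $|A|,|B|\ge cd\ge cCt=t$ and $e(A,B)\ge(1-c_0/2)|A||B|>(1-c_0)|A||B|$, which directly contradicts $(c_0,t)$-sparsity (applied to this pair, both of size at least $t$). So the first alternative always holds, and the conjecture follows.

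The real content is the strengthening, and this is where I expect the main difficulty. It is not a formal consequence of Theorem~\ref{thm:main}: the $d$-vertex dense subgraph produced by the second alternative need not contain any large near-complete bipartite pair --- for instance $G(d,1/2)$ has constant edge density but no two linear-sized vertex sets spanning edge density close to $1$ --- and feeding it back into Theorem~\ref{thm:main} only yields ever smaller dense pieces. Instead one has to revisit the proof of Theorem~\ref{thm:main} and keep track of the structure of the dense object it manufactures, showing that --- under the hypothesis that $G$ has no $C_4$-free induced subgraph of average degree $\ge k$ --- this object can be taken bipartitely near-complete on $\Omega_k(d)$ vertices. A plausible route is to locate the dense pair between two consecutive layers of a breadth-first search from a vertex of near-maximum degree, and then to run a degree-boosting cleanup inside that bipartite piece, invoking at each step the fact that every induced subgraph of $G$ still lacks a $C_4$-free subgraph of average degree $\ge k$ (so mere density, which on its own would not suffice, is never used in isolation). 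Making this precise, with the correct dependence of $c$ on $k$ and $\eta$, is the bulk of the work; the reduction above is then immediate.
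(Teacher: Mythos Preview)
Your reduction is correct and is exactly what the paper does: the theorem is deduced immediately from K\"uhn--Osthus together with the statement that any $(c_0,t)$-sparse graph of large enough average degree contains an induced $C_4$-free subgraph of average degree at least $k$. Your ``strengthening of Theorem~\ref{thm:main}'' --- that every graph of average degree $d$ either has an induced $C_4$-free subgraph of average degree $\ge k$ or a pair $A,B$ of size $\Omega_{k,\eta}(d)$ with $e(A,B)\ge(1-\eta)|A||B|$ --- is precisely the contrapositive of Theorem~\ref{thm:main2} (take $t=cd$ and negate $(c_0,t)$-sparsity), so you have rediscovered the right intermediate result.

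Where your proposal is thin is exactly where you acknowledge it is: the proof of this strengthening. The paper's argument is not along the BFS-layer lines you suggest. Instead it proceeds via Lemma~\ref{lemma:1subdivision}: one fixes $v,h$ so that the $1$-subdivision $H_v^{(h)}$ already contains an induced $C_4$-free graph of average degree $k$ (a point-line incidence graph), and then shows that a $(c_0,t)$-sparse graph of huge average degree with no induced $H_v^{(h)}$ must contain an induced bipartite $C_4$-free subgraph of average degree $\ge k$. The latter uses a sparsity-focusing step (Lemma~\ref{sparsity focusing}) to pass from $(c_0,t)$-sparse to $(1-\eps,\beta t)$-sparse, a random sampling/cleanup (Lemma~\ref{lemma:step 2}) to kill the relevant $K_{2,v+1}$'s, a Kwan--Letzter--Sudakov--Tran type argument (Lemma~\ref{lemma:step 3}) to extract a bipartite induced subgraph, and finally Theorem~\ref{thm:degree-bounded}. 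Your BFS/degree-boosting sketch does not obviously supply any of these mechanisms, and in particular it is unclear how it would exploit the hypothesis ``no induced $C_4$-free subgraph of average degree $\ge k$'' rather than merely ``no large biclique'', which as you yourself note is insufficient. So the high-level plan is right, but the route you outline to the key lemma would need to be replaced by something substantially closer to the paper's machinery.
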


In particular, this theorem immediately follows from the following strengthening of Theorem \ref{thm:main}, after  recalling that $C_4$-free graphs of sufficiently large average degree contain and induced subdivision of $H$ (K\"uhn and Osthus \cite{KO04}). In Section \ref{sec:graph theory}, we show that the next theorem indeed implies Theorem \ref{thm:main}.

\begin{theorem}\label{thm:main2}
 Let $k\ge 1, c>0$, then there exists $C=C(k,c)$ such that the following holds. Let $G$ be a $(c,t)$-sparse graph $G$ with average degree at least $Ct$. Then $G$ contains a bipartite $C_4$-free induced subgraph of average degree at least $k$. 
\end{theorem}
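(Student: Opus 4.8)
The plan is to build the $C_4$-free bipartite subgraph greedily, growing it vertex by vertex while exploiting $(c,t)$-sparsity to guarantee that many vertices remain available at each step. Start by passing to a subgraph of large minimum degree: a standard argument shows $G$ contains a subgraph $G'$ with minimum degree at least $d/2 \ge Ct/2$. We will find the desired subdivision-friendly object inside $G'$. The key point is the following ``expansion'' phenomenon: in a $(c,t)$-sparse graph of large minimum degree, whenever we have picked a small set $S$ of vertices and we look at the common neighbourhood of any $t$-subset of $S$, sparsity forbids this common neighbourhood from being too large relative to $|S|$, but minimum degree forces individual neighbourhoods to be large. We want to turn this into a statement that lets us pick a new vertex $v$ so that the bipartite graph between the ``left'' and ``right'' sides we are building acquires no $C_4$.

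Concretely, I would maintain two disjoint sets $A$ (left) and $B$ (right), together with the bipartite graph $H = G'[A,B]$ of edges of $G'$ between them, maintaining the invariant that $H$ is $C_4$-free and that every vertex of $A \cup B$ has degree at least, say, $k$ in $H$ — and more importantly, that there is a large ``reservoir'' $R \subseteq V(G')$ of vertices each adjacent (in $G'$) to many vertices of, say, $B$, but not creating a $C_4$. To add a vertex $v$ to $A$: $v$ is forbidden only if it has two common neighbours in $B$ with some existing vertex of $A$; the number of such bad vertices is controlled because each pair in $B$ has a bounded common neighbourhood. Here is where $(c,t)$-sparsity does the real work: for any set $B' \subseteq B$ with $|B'| \ge t$, the number of vertices adjacent to more than, say, $(1-c)|B'|$ vertices of $B'$... — rather, the cleaner route is to bound, for a single pair $\{b,b'\} \subseteq B$, the common neighbourhood $|N(b)\cap N(b')|$; summing over the $\binom{|B|}{2}$ pairs bounds the total number of vertices that are ``$C_4$-dangerous'' with respect to $A$. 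The crux is showing this total is a small fraction of $|R|$, so a good choice of $v$ survives. To make the pair-wise common neighbourhoods small one iterates sparsity: if many pairs in $B$ had large common neighbourhoods, a counting (convexity) argument would produce sets $A_0, B_0$ of size $\ge t$ with edge density exceeding $1-c$, contradicting $(c,t)$-sparsity. This is essentially a bipartite Kővári–Sós–Turán argument run inside the sparsity hypothesis, with $s$ replaced by roughly $1/c$ and an additive loss of $t$.

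The skeleton of the iteration is then: alternately add vertices to $A$ and to $B$, each time choosing the new vertex from the reservoir to (i) avoid closing a $C_4$ and (ii) retain at least $\ge k$ neighbours on the opposite side. Condition (ii) is where large average/minimum degree $\ge Ct$ is spent — each new vertex has $\ge Ct/2$ neighbours in $G'$, all but at most $O(t/c)$ of which can be routed to the growing opposite side without violating $C_4$-freeness, so for $C$ large in terms of $k$ and $1/c$ we keep $\ge k$ of them. Run this for enough steps that both $|A|$ and $|B|$ exceed the number of vertices we actually need; then restrict to a balanced $C_4$-free bipartite subgraph of minimum degree $\ge k$, which after the usual min-degree clean-up has average degree $\ge k$ as required.

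The main obstacle I anticipate is \emph{maintaining the reservoir}: after adding $v$ to $A$, every vertex that now has $\ge 2$ common neighbours with $v$ in $B$ must leave the reservoir, and I need the cumulative damage over all $|A|$ insertions to stay below the reservoir's initial size. Balancing these two budgets — "how fast the reservoir shrinks" versus "how large $C/c$ must be" — is the delicate part, and it is precisely there that $(c,t)$-sparsity (rather than mere $K_{s,s}$-freeness) is used in an essential, quantitatively tight way. A secondary technical point is handling the additive ``$+t$'' slack in all the sparsity-based counting bounds, which forces the average-degree threshold to scale linearly in $t$ (hence the $Ct$ in the statement) rather than being an absolute constant.
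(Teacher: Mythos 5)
The central premise of your greedy argument is that $(c,t)$-sparsity bounds pairwise common neighbourhoods (or at least forces most pairs in $B$ to have small common neighbourhood), so that a new vertex can always be chosen from the reservoir without closing a $C_4$. This premise is false. The $(c,t)$-sparsity condition only constrains sets of size at least $t$; a single pair $\{b,b'\}$ can share almost all of $V(G)$ as common neighbours. Even the averaged statement fails: $(c,t)$-sparsity only forces the bipartite density between $B$ and $V(G)$ to be at most $1-c$, a constant, and by Jensen a typical pair in $B$ then has common neighbourhood of size about $(1-c)^2 n$, a constant fraction of all vertices. The convexity argument you invoke --- that many heavy pairs would yield two $t$-sets of density exceeding $1-c$ --- does not produce a contradiction, because density $1-c$ is entirely compatible with an abundance of heavy pairs. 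Concretely, take $G=G(n,1-c-\delta)$ for small $\delta>0$, which is $(c,\Theta(\log n))$-sparse and has average degree vastly exceeding $Ct$. Once some $a\in A$ retains a set $B_a\subseteq B$ of size about $2/(1-c)$, essentially every vertex of $G$ is adjacent to at least two vertices of $B_a$, so essentially every vertex is immediately ``bad'' and your reservoir collapses after a bounded number of steps.

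The missing idea is a dichotomy, and it is what makes the theorem true even for such random graphs. Either $G$ contains an induced copy of the sparse bipartite pattern $H_v^{(h)}$ --- and $H_v^{(h)}$ already contains an induced projective-plane incidence graph, which is bipartite, $C_4$-free and of average degree at least $k$ --- or $G$ avoids $H_v^{(h)}$. In the latter case a sparsity-upgrading lemma (Lemma~\ref{sparsity focusing}) promotes $(c,t)$-sparsity to $(1-\eps,\beta t)$-sparsity for arbitrarily small $\eps$, and only at that much stronger local-density scale do codegree-type counts become controllable. Even then the paper does not build the $C_4$-free graph greedily: after Lemma~\ref{lemma:step 1} it uses random sparsification (Lemmas~\ref{lemma:step 2} and~\ref{lemma:step 3}) to obtain a $K_{v+1,v+1}$-free bipartite induced subgraph of large average degree, and then applies Theorem~\ref{thm:degree-bounded} as a black box to extract the $C_4$-free subgraph. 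Without the dichotomy and the sparsity upgrade, your proposal has no way to bound the bad-vertex counts, and the rest of the scheme cannot be repaired.
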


\paragraph{Organization.} In Section~\ref{sec:short applications}, we begin by presenting how Theorem~\ref{thm:main} can be used to give short proofs of tight bounds for the Zarankiewicz problem for (1) intersection graphs of horizontal and vertical segments in the plane, (2) point-halfspace incidence graphs and (3) string graphs. Then, in Section~\ref{sec:graph theory}, we present the proof of Theorem~\ref{thm:main} and Theorem~\ref{thm:main2}. In Section~\ref{sec:curves}, we discuss the proofs of Theorems~\ref{thm:stEH_kint} and~\ref{thm:pseudodisk optimal}. In Section~\ref{sec:semilinear}, we show the proof of our bounds for the semilinear Zarankiewicz problem, i.e. we prove Theorem~\ref{thm:semilinear zarankiewicz}. We conclude by discussing polygon visibility graphs in Section~\ref{sec:polygon visibility} and proving Theorem~\ref{thm:polygon_visibility}.

\section{Short applications}\label{sec:short applications}

As a warm-up, we present a few simple applications of Theorem \ref{thm:main}, which will be proved in the next section. First, we show that given a collection of $m$ vertical and $n$ horizontal segments on the plane such that the intersection graph $G$ is $K_{s,s}$-free, then $G$ has $O(s(m+n))$ edges. This result may sound innocent at first, and it also follows from any of Theorems \ref{thm:curve_k_int},  \ref{thm:curve_disjoint}, or \ref{thm:curve_convex}, but we were unable to find a completely elementary proof. Also, this statement is quite useful -- for its applications see \cite{AK25, CKS24, KS}.

Then, we provide an alternate proof of the result of Chan and Har-Peled \cite{CH23} on incidence graphs of points and halfspaces in $\mathbb{R}^3$. Finally, we prove the Zarankiewicz-type results concerning intersection graphs of curves which were discussed in Section \ref{sec:intr_curv}.

As alluded in the introduction, in each of the applications, we need to address the $C_4$-free case and the dense case. While there are several ways to resolve the dense case, we found that applying the following result about semialgebraic graphs gives a unified approach, applicable to a wide variety of settings. Formally, a bipartite graph $G$ with vertex classes $X,Y$ is \emph{semialgebraic} of description complexity $(d,D,m)$ if  $X,Y\subset \mathbb{R}^d$, and there exist $m$ polynomials $f_1,\dots,f_m:\mathbb{R}^d\times\mathbb{R}^d\rightarrow\mathbb{R}$, each  of total degree at most $D$, and a Boolean function $\phi:\{\mbox{false,true}\}^m\rightarrow\{\mbox{false,true}\}$ such that for every $x\in X$ and $y\in Y$, $x$ and $y$ are joined by an edge if and only if 
$\phi(\{f_i(x,y)\leq 0\}_{i\in [m]})=\mbox{true}$. The following result first appeared in \cite{FGLNP} by Fox, Gromov, Lafforgue, Naor and Pach, and later quantitatively sharpened by Fox, Pach, Sheffer, Suk and Zahl \cite{FPSSZ}.

\begin{lemma}\label{lemma:semialgrabraic}
For every triple of integers $(d,D,m)$ and $\beta\in (0,1]$, there exists $\alpha=\alpha(d,D,m,\beta)$ such that the following holds. Let $G$ be an $n$-vertex semialgrebraic graph of description complexity $(d,D,m)$. If $e(G)\geq \beta n^2$, then $G$ contains a biclique of size $\alpha n$. 
\end{lemma}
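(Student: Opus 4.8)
The plan is to prove the lemma by polynomial partitioning together with an elementary double count; no reduction to the bipartite case is needed. Write $V=V(G)\subset\mathbb{R}^d$ with $|V|=n$, and let $f_1,\dots,f_m:\mathbb{R}^d\times\mathbb{R}^d\to\mathbb{R}$ and $\phi:\{\text{false},\text{true}\}^m\to\{\text{false},\text{true}\}$ witness that $G$ is semialgebraic of complexity $(d,D,m)$, so that $x\sim y$ iff $\phi\big(\{f_i(x,y)\le 0\}_{i\in[m]}\big)=\text{true}$. Fix a parameter $r\ge 1$, to be chosen later, and consider the at most $mn$ algebraic surfaces $S_{i,v}:=\{x\in\mathbb{R}^d:f_i(x,v)=0\}$, for $v\in V$ and $i\in[m]$, each of degree at most $D$. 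By the real semialgebraic cutting lemma --- a consequence of Guth--Katz polynomial partitioning followed by an induction on dimension to handle the part of the configuration lying on the partitioning variety --- there is a partition of $\mathbb{R}^d$ into $K=K(d,D,r)$ connected cells of bounded description complexity, where $K=O_{d,D}(r^d)$, such that every cell is crossed by at most $mn/r$ of the surfaces $S_{i,v}$.

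The use of this partition is a sign-constancy dichotomy. Fix a cell $\Delta$ and a vertex $v\in V$, and call $(\Delta,v)$ \emph{good} if none of $S_{1,v},\dots,S_{m,v}$ crosses $\Delta$. For a good pair, each $f_i(\cdot,v)$ keeps a constant sign on the connected set $\Delta$ (intermediate value theorem), so $x\mapsto\phi(\{f_i(x,v)\le 0\}_i)$ is constant on $\Delta$; thus either $\Delta\cap V\subseteq N(v)$ or $(\Delta\cap V)\cap N(v)=\emptyset$. Set
\[
V_\Delta:=\{\,v\in V:\ (\Delta,v)\text{ is good and }\Delta\cap V\subseteq N(v)\,\}.
\]
Then $V_\Delta\cap\Delta=\emptyset$, since $v\in V_\Delta\cap\Delta$ would force $v\in\Delta\cap V\subseteq N(v)$, impossible. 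Hence $\Delta\cap V$ and $V_\Delta$ are disjoint vertex sets spanning a complete bipartite graph, and it suffices to find a cell $\Delta$ with $\min\{|\Delta\cap V|,|V_\Delta|\}\ge\alpha n$.

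For the count, orient each edge of $G$ arbitrarily, getting $e(G)\ge\beta n^2$ oriented edges $u\to v$. As the cells partition $\mathbb{R}^d$, each $u$ lies in a unique cell $\Delta(u)$, and for each cell $\Delta$ at most $mn/r$ vertices $v$ make $(\Delta,v)$ bad (each contributes a distinct crossing surface $S_{i,v}$). So at most $\sum_\Delta|\Delta\cap V|\cdot(mn/r)=mn^2/r$ oriented edges have $(\Delta(u),v)$ bad; for every other oriented edge $(\Delta(u),v)$ is good, and since $u\in\Delta(u)\cap V$ is adjacent to $v$ the whole of $\Delta(u)\cap V$ is, i.e.\ $v\in V_{\Delta(u)}$, and distinct such edges give distinct pairs $(u,v)\in(\Delta(u)\cap V)\times V_{\Delta(u)}$. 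Hence $\sum_\Delta|\Delta\cap V|\cdot|V_\Delta|\ge\beta n^2-mn^2/r$, and choosing $r:=\lceil 2m/\beta\rceil$ gives $\sum_\Delta|\Delta\cap V|\cdot|V_\Delta|\ge\tfrac12\beta n^2$. Discarding the cells with $|\Delta\cap V|<\tfrac{\beta}{4K}n$ removes at most $K\cdot\tfrac{\beta}{4K}n\cdot n=\tfrac14\beta n^2$ from this sum (using $|V_\Delta|\le n$), so the surviving cells satisfy $\sum|\Delta\cap V|\cdot|V_\Delta|\ge\tfrac14\beta n^2$; since $|\Delta\cap V|\le n$ and there are at most $K$ cells, some surviving cell $\Delta$ has $|V_\Delta|\ge\tfrac{\beta}{4K}n$, and it also has $|\Delta\cap V|\ge\tfrac{\beta}{4K}n$. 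This yields a complete bipartite graph with both sides of size at least $\tfrac{\beta}{4K}n$, proving the lemma with $\alpha:=\tfrac{\beta}{4K}$ (or any slightly smaller constant, to absorb rounding), which is positive and polynomial in $\beta$ for fixed $(d,D,m)$.

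The only non-elementary ingredient, and what I expect to be the main obstacle, is the cutting lemma itself. Naive polynomial partitioning produces a polynomial $g$ of controlled degree so that each \emph{open} cell of $\mathbb{R}^d\setminus Z(g)$ meets few surfaces, but it controls neither the surfaces nor the vertices lying on $Z(g)$, which may be numerous; to get a genuine finite partition of all of $\mathbb{R}^d$ into boundedly many cells of bounded complexity with the stated crossing bound one must recurse on the $(d-1)$-dimensional variety $Z(g)$ (via a cylindrical algebraic decomposition, or a Milnor--Thom stratification), and it is in this step that the dependence of $\alpha$ on $(d,D,m)$ is incurred. Everything after the cutting lemma is the routine counting above. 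One could alternatively invoke the ultra-strong regularity lemma for semialgebraic graphs, which equipartitions $V(G)$ into $O_{d,D,m,\beta}(1)$ classes with all but a $\beta/2$-fraction of the pairs complete or empty, and then select a complete pair carrying $\Omega_\beta(n^2)$ edges --- but that lemma is itself proved by essentially the same partitioning machinery.
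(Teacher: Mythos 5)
The paper does not actually prove Lemma~\ref{lemma:semialgrabraic}: it is cited from Fox, Gromov, Lafforgue, Naor and Pach~\cite{FGLNP}, with the polynomial dependence on $\beta$ coming from~\cite{FPSSZ}, so your task was to supply a proof rather than reproduce one. Your strategy --- dualize to $mn$ degree-$\le D$ hypersurfaces $S_{i,v}$, take a $1/r$-cutting into $O_{d,D}(r^d)$ cells, exploit sign-constancy of the defining predicate on cells, and double-count --- is exactly the machinery behind the cited results, and you are right that the only non-elementary ingredient is the cutting lemma for bounded-degree algebraic varieties, which comes from polynomial partitioning together with a recursion onto the partitioning variety.

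There is, however, one genuine gap, and it sits precisely where you announce that a bipartite reduction is unnecessary. You set $V_\Delta=\{v\in V:(\Delta,v)\text{ is good and }\Delta\cap V\subseteq N(v)\}$ and correctly note that this forces $V_\Delta$ and $\Delta\cap V$ to be disjoint. But in the double count you then assert that every good oriented edge $u\to v$ yields $v\in V_{\Delta(u)}$. This fails whenever $v$ lies in $\Delta(u)$: then $v\in\Delta(u)\cap V$ while $v\notin N(v)$, so $\Delta(u)\cap V\not\subseteq N(v)$ and $v\notin V_{\Delta(u)}$ by your own definition. The cutting controls how many \emph{surfaces} cross each cell but says nothing about how many \emph{vertices} a cell may contain; nothing forbids the whole of $V$ from sitting inside a single cell $\Delta_0$, in which case $V_{\Delta_0}=\emptyset$ and $\sum_\Delta|\Delta\cap V|\cdot|V_\Delta|=0$ even though $G$ is dense. (In that extreme case $G$ is in fact near-complete, so a biclique certainly exists, but your count does not produce one.) The cleanest repair is the bipartite reduction you tried to skip: pass to a cut $V=A\cup B$ with $e(A,B)\ge e(G)/2$, count $\Delta\cap A$ on one side and define $V_\Delta\subseteq B$ on the other, so that $A\cap B=\emptyset$ forecloses the coincidence $v\in\Delta(u)$. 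Alternatively, use a partitioning polynomial that also equidistributes the $n$ points, so that internal edges contribute at most $\sum_\Delta|\Delta\cap V|^2=O(n^2/r')$ and can be absorbed into the error term. Either fix restores the inequality $\sum_\Delta|\Delta\cap V|\cdot|V_\Delta|\ge\beta n^2-mn^2/r$ (up to a constant), after which the rest of your computation goes through verbatim.
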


\subsection{Vertical and horizontal segments}

\begin{proposition}\label{prop:vertical vs horizontal}
Let $\cA$ be a family of vertical line segments and let $\cB$ be a family of horizontal line segments in $\bR^2$, such that $|\cA|=|\cB|=n$. If the bipartite intersection graph of $\cA$ and $\cB$ is $K_{s, s}$-free, then it has $O(sn)$ edges.
\end{proposition}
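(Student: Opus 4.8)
The plan is to deduce the proposition from Corollary~\ref{thm:master}. Let $\mathcal{F}$ denote the class of all graphs that arise as the bipartite intersection graph of a family of vertical segments together with a family of horizontal segments in $\mathbb{R}^2$. This class is hereditary, since deleting segments leaves a graph of the same type, so by Corollary~\ref{thm:master} it suffices to verify that $\mathcal{F}$ is weakly degree-bounded and has the density-EH property: these two facts give a constant $C$ such that every $K_{s,s}$-free $G\in\mathcal{F}$ on $N:=2n$ vertices has average degree at most $Cs$, hence $e(G)\le CsN/2=O(sn)$.

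\emph{Weak degree-boundedness.} We must exhibit an absolute constant $k$ such that every $C_4$-free graph in $\mathcal{F}$ has average degree at most $k$; this is where the geometry enters. After a standard modification of the configuration we may assume general position (no two vertical segments share an $x$-coordinate, no two horizontal segments share a $y$-coordinate, no vertical--horizontal crossing occurs at an endpoint), which does not change the graph; then the bipartite intersection graph of $\mathcal{A}$ and $\mathcal{B}$ coincides with the intersection graph of the planar curve family $\mathcal{A}\cup\mathcal{B}$, so every $G\in\mathcal{F}$ is a string graph. Since a subdivision of a non-planar graph is never a string graph \cite{EET}, no member of $\mathcal{F}$ contains a subdivision of $K_5$, and the theorem of K\"uhn and Osthus \cite{KO04}, applied with forbidden graph $K_5$ and with $s=2$, shows that every $C_4$-free graph in $\mathcal{F}$ on $N$ vertices has $O(N)$ edges, i.e.\ bounded average degree. (A self-contained geometric proof of this step is also possible, but this is the quickest route.)

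\emph{Density-EH property.} We check that $\mathcal{F}$ consists of semialgebraic --- indeed semilinear --- graphs of bounded description complexity and then appeal to Lemma~\ref{lemma:semialgrabraic}. Encode a vertical segment $\{x\}\times[y^-,y^+]$ by the point $(x,y^-,y^+)\in\mathbb{R}^3$ and a horizontal segment $[x^-,x^+]\times\{y\}$ by the point $(x^-,x^+,y)\in\mathbb{R}^3$; these two segments intersect if and only if the four linear inequalities $x^- - x\le 0$, $x - x^+\le 0$, $y^- - y\le 0$ and $y - y^+\le 0$ all hold. Thus every $G\in\mathcal{F}$ is a bipartite semialgebraic graph of description complexity $(3,1,4)$, and Lemma~\ref{lemma:semialgrabraic} yields, for each $c>0$, a constant $\delta=\delta(c)>0$ such that any $G\in\mathcal{F}$ of edge density at least $c$ contains a biclique of size at least $\delta\,v(G)$. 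Hence $\mathcal{F}$ has the density-EH property.

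With both hypotheses established, Corollary~\ref{thm:master} completes the proof. Concretely, the dichotomy runs as follows: taking $k$ as in the second paragraph and $c=c(k+1)$ as in Theorem~\ref{thm:main}, a $K_{s,s}$-free $G\in\mathcal{F}$ of average degree $d$ has no $C_4$-free induced subgraph of average degree $\ge k+1$, so by Theorem~\ref{thm:main} it contains an induced subgraph on $d$ vertices with at least $cd^2$ edges, hence of edge density at least $2c$; by the density-EH property this subgraph --- and therefore $G$ --- contains a biclique of size at least $\delta(2c)\,d$, forcing $d< s/\delta(2c)=O(s)$. The one genuinely nontrivial ingredient, and the step I expect to be the main obstacle, is weak degree-boundedness: it is the only place that uses the geometry of segments, and it is precisely the point the authors note resists a completely elementary treatment; everything else is routine bookkeeping around Theorem~\ref{thm:main} and Lemma~\ref{lemma:semialgrabraic}.
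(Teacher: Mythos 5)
Your proposal takes essentially the same route as the paper: both deduce the result from Theorem~\ref{thm:main} (you via Corollary~\ref{thm:master}, the paper directly), both establish weak degree-boundedness by viewing the graphs as string graphs and invoking K\"uhn--Osthus, and both obtain the density-EH property by encoding segments as points in $\mathbb{R}^3$ and applying Lemma~\ref{lemma:semialgrabraic}. The one imprecision is in the weak degree-boundedness step: you invoke ``a subdivision of a non-planar graph is never a string graph'' to conclude that no member of $\mathcal{F}$ contains an induced subdivision of $K_5$, and then apply K\"uhn--Osthus with forbidden graph $K_5$. But a trivial subdivision is a subdivision, and $K_5$ itself is a string graph, so the Ehrlich--Even--Tarjan fact only rules out subdivisions of $K_5$ in which \emph{every} edge is subdivided. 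The clean way to say this (and what the paper does) is to forbid induced subdivisions of the $1$-subdivision $K_5^{(1)}$: any subdivision of $K_5^{(1)}$ subdivides every edge of $K_5$, hence is not a string graph, so string graphs contain no induced subdivision of $K_5^{(1)}$, and K\"uhn--Osthus applied with $H=K_5^{(1)}$ gives the bounded average degree of $C_4$-free members of $\mathcal{F}$. With that substitution, your argument matches the paper's.
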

\begin{proof}
After a small perturbation, we may assume that no two elements of $\cA$ intersect, and no two elements of $\cB$ intersect. Let $G$ be the bipartite intersection graph of $\cA$ and $\cB$, and let $d$ be the average degree of $G$. By Theorem~\ref{thm:main}, for any integer $k$, $G$ either contains a $C_4$-free induced subgraph $G'\subseteq G$ with $d(G')\geq k$, or a dense induced subgraph $G'\subseteq G$ on $d$ vertices with $d(G')\geq cd$, for some small constant $c=c(k)>0$.

The family of intersection graphs of horizontal and vertical segments is a subfamily of the family of string graphs. As no string graph contains any subdivision of the 1-subdivision of $K_5$ as an induced subgraph (see e.g. \cite{EET, FoxPach10}), the result of K\"uhn and Osthus \cite{KO04} implies that any $C_4$-free intersection graph of vertical and horizontal segments has average degree at most $O(1)$. Hence, there exists an absolute constant $k$ such that $G$ contains no $C_4$-free induced sugbgraph of average degree at least $k$. 

By Theorem~\ref{thm:main}, there exists $c>0$ such that $G$ contains an induced subgraph $G'$ on $d$ vertices with $d(G')\geq cd$ for some $c>0$. But observe that $G$ and $G'$ are semialgebraic graphs. Indeed, a horizontal segment $s$ with endpoints $(a,h)$ and $(b,h)$ can be represented by the point $(a,b,h)\in \mathbb{R}^3$, while a vertical segment $t$ with endpoints $(v,x)$ and $(v,y)$ by $(v,x,y)\in\mathbb{R}^3$. Then $s$ and $t$ intersect if and only if the four inequalities $a-v\leq 0$, $v-b\leq 0$, $x-h\leq 0$, and $h-y\leq 0$ are all satisfied. Therefore, we can apply Lemma \ref{lemma:semialgrabraic} to conclude that there is some constant $\alpha>0$ such that $G'$ contains an $\alpha d$ sized biclique. Hence, $s\geq \alpha d$, giving the desired result.
\end{proof}

\subsection{Points and halfspaces}

A simple, yet interesting, class of geometric graphs are the incidence graphs of points and halfspaces in $\bR^d$. The Zarankiewicz problem for this class of graphs was studied by Chan and Har-Peled in \cite{CH23}, who showed that a $K_{s, s}$-free incidence graph of points and half-spaces in $\bR^3$ has at most $O(sn)$ edges. In this section, we give a proof of this result using our graph-theoretic lemma.

Chan and Har-Peled also considered the case $d>3$, for which they proved that if $G$ is a $K_{s,s}$-free incidence graph of $n$ points and $n$ halfspaces in $\mathbb{R}^d$, then $G$ has $O_s(n^{2-2/(\lfloor d/2\rfloor+1)})$ edges. A construction provided in their paper shows that for $d=5$, the resulting bound $O_s(n^{4/3})$ is the best possible. Hence, in case $d\geq 5$, the family $\cF_d$ of incidence graphs of points and halfspaces in $\mathbb{R}^d$ is not degree-bounded, in contrast with the case $d\leq 3$. This left the case of dimension 4 open, which we resolve at the end of this section, by showing that $\cF_4$ is not degree-bounded. More precisely, we construct $C_4$-free point-halfspace incidence graphs in $\bR^4$ with at least $\Omega(n\log n/\log\log n)$ egdes. We start with the case $d=3$.

\begin{theorem}\label{thm:point-halfspaces}
Let $\cP$ be a set of $n$ points and $\cH$ be a set of $n$ halfspaces in $\bR^3$. If the incidence graph of $\cP$ and $\cH$ is $K_{s, s}$-free, then there are at most $O(sn)$ incidences.
\end{theorem}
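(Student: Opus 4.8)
The plan is to follow the template of Proposition~\ref{prop:vertical vs horizontal}: reduce to the ``$C_4$-free case'' (the instance $s=2$), and then bootstrap to general $s$ using Theorem~\ref{thm:main} together with the semialgebraic biclique bound of Lemma~\ref{lemma:semialgrabraic}. Let $G$ be the incidence graph of $\cP$ and $\cH$ and let $d$ be its average degree; the goal is $d=O(s)$, which gives $e(G)=dn=O(sn)$ incidences. The key ambient observation is that $G$, and likewise every induced subgraph of $G$ (which is again the incidence graph of a subfamily of points and a subfamily of halfspaces), is a semialgebraic graph of bounded description complexity: a point of $\cP$ is a point of $\bR^3$, a halfspace $\{x\in\bR^3:\langle a,x\rangle\le b\}$ is encoded by $(a,b)\in\bR^4$, and incidence is the single polynomial inequality $\langle a,x\rangle-b\le 0$ of bounded degree.

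The first and main step is the geometric heart: one must show there is an absolute constant $k_0$ such that every $C_4$-free point--halfspace incidence graph in $\bR^3$ has average degree below $k_0$, equivalently $O(n)$ incidences when $|\cP|=|\cH|=n$. I would split $\cH$ into lower, upper, and vertical halfspaces; the vertical ones only see the $(x,y)$-projections of the points and thus reduce to a $C_4$-free point--halfplane incidence problem in $\bR^2$, which is no harder. For the lower halfspaces, replace each $h$ by its bounding plane $\pi_h$, so that the neighbours of a point $p$ are exactly the planes of the arrangement passing above $p$; then $C_4$-freeness forces every cell of this plane arrangement with at least two planes above it to contain at most one point of $\cP$, and the fact that the shallow ($\le k$) levels of an arrangement of $n$ planes in $\bR^3$ have only $O(n)$ complexity (equivalently, the existence of shallow cuttings of size $O(n/k)$, or the Upper Bound Theorem for $3$-polytopes) then caps the total number of incidences at $O(n)$. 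This is exactly the place where dimension $3$ is essential -- in $\bR^4$ the analogous level complexity is quadratic, in line with the fact, shown later in this section, that $\cF_4$ is not degree-bounded. I expect this estimate to be the main obstacle; in a pinch it can simply be quoted as the $s=2$ case of the theorem of Chan and Har--Peled~\cite{CH23}.

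With $k_0$ in hand, I would apply Theorem~\ref{thm:main} to $G$ with $k=k_0$, obtaining $c=c(k_0)>0$. Since no induced subgraph of $G$ is $C_4$-free of average degree at least $k_0$, the first alternative cannot occur, so $G$ has an induced subgraph $G'$ on $d$ vertices with at least $cd^2$ edges (pass to the induced subgraph on these $d$ vertices if Theorem~\ref{thm:main} returns a non-induced one). As noted above, $G'$ is semialgebraic of bounded description complexity and has $e(G')\ge c\,v(G')^2$, so Lemma~\ref{lemma:semialgrabraic} yields a biclique in $G'$ of size $\alpha d$ for some constant $\alpha=\alpha(c)>0$. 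Since $G$ is $K_{s,s}$-free this forces $\alpha d\le s$, hence $d\le s/\alpha$ and $e(G)=O(sn)$, as required; for $d$ below an absolute constant the bound is trivial, so we may assume $d$ is large enough for all of the above to make sense.
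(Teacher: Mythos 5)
Your overall scaffold coincides with the paper's: apply Theorem~\ref{thm:main} to obtain either an induced $C_4$-free subgraph of large average degree (ruled out by the $s=2$ case) or a $d$-vertex subgraph of density $\Omega(1)$, and then invoke Lemma~\ref{lemma:semialgrabraic} on the latter (point--halfspace incidence graphs are semialgebraic of description complexity $(4,1,1)$, or $(3,2,1)$ as the paper notes) to force $s=\Omega(d)$. That part matches the paper exactly, including the small step of passing to the induced subgraph on the $d$ vertices.

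The genuine divergence is in the $C_4$-free base case (Lemma~\ref{lemma:point-halfspaces C_4-free}), and here the paper does something quite different from your sketch. It perturbs coordinates so that every halfspace is upwards- or downwards-oriented (so there is no separate ``vertical'' class to handle), and then proves, via the $xy$-projection and the non-planarity of any graph containing a $K_5$-subdivision, that the incidence graph between points and upwards-oriented halfspaces contains no induced subdivision of $K_5$ rooted in the points. A point--halfspace polar duality then gives the same statement with roots in the halfspaces, so the incidence graph has no induced $K_9$-subdivision at all, and the K\"uhn--Osthus theorem finishes. This is self-contained (modulo K\"uhn--Osthus) and completely avoids arrangements and cuttings. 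By contrast, your shallow-cuttings sketch as written does not close: the statement that the $(\le k)$-level of $n$ planes in $\bR^3$ has ``only $O(n)$ complexity'' is not correct for $k\ge 1$ (it is $O(nk^2)$), and even with each level-$\ge 2$ cell carrying at most one point, it is not immediate from cell counts that $\sum_p \mathrm{level}(p)=O(n)$ -- naive double-counting over pairs of planes only gives $O(n^{3/2})$, so the extra three-dimensional structure really has to be exploited carefully, which is precisely the content of Chan--Har-Peled's argument. You flag this yourself and offer to quote their $s=2$ result, which makes your proposal correct as a whole, but it then relies on their (cuttings-based) machinery where the paper supplies an elementary replacement. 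The upshot: identical top-level strategy, materially different -- and in the paper's case, more self-contained and conceptually lighter -- treatment of the $C_4$-free step.
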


As discussed in the introduction, in order to establish Theorem~\ref{thm:point-halfspaces}, we need to show that the family of point-halfspace incidence graphs is degree-bounded and has the density-EH property. We first discuss the case of $C_4$-free incidence graphs.

\begin{lemma}\label{lemma:point-halfspaces C_4-free}
Let $\cP$ be a set of points and $\cH$ a set of halfspaces in $\bR^3$ such that the incidence graph of $\cP$ and $\cH$ is $C_4$-free. Then the incidence graph of $(\cP, \cH)$ has at most $O(|\cP|+|\cH|)$ edges. 
\end{lemma}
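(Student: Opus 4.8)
The plan is to use a duality/lifting argument to reduce the $C_4$-free point-halfspace incidence problem in $\bR^3$ to a statement about $C_4$-free incidences in the plane, where the classical Kővári–Sós–Turán bound $O(m^{1/2}n + m + n)$ for $K_{2,2}$-free bipartite graphs can be combined with geometry. Concretely, a halfspace in $\bR^3$ is $\{x : \langle a, x\rangle \le b\}$; by the standard point–hyperplane duality we may think of each halfspace as dual to a point and each point $p\in\cP$ as dual to a halfspace, so incidences $p \in H$ correspond to a symmetric "below/above" relation between two point sets in $\bR^3$. The key is that $C_4$-freeness means no two points lie in two common halfspaces, equivalently (after duality) the arrangement has a strong non-degeneracy: the "zone" or "conflict" structure is extremely sparse.

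First I would reduce to the case where all halfspaces are lower halfspaces $\{x_3 \le f(x_1,x_2)\}$ for an affine function $f$ — a halfspace whose bounding plane is vertical contains a point iff a $2$-dimensional halfplane does, and these vertical-plane incidences form a planar point–halfplane incidence graph, which is $C_4$-free and hence (by the planar case, essentially Kővári–Sós–Turán plus the fact that halfplanes are pseudolines) has $O(|\cP|+|\cH|)$ edges; so we may discard them. Then lift: map each point $p=(p_1,p_2,p_3)$ to the point $\hat p = (p_1,p_2) \in \bR^2$ with weight $p_3$, and each halfspace $x_3 \le a_1 x_1 + a_2 x_2 + b$ to the plane graph of the affine function $g_H(x_1,x_2) = a_1 x_1 + a_2 x_2 + b$ over $\bR^2$. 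Incidence $p \in H$ becomes $p_3 \le g_H(\hat p)$, i.e. "$\hat p$ lies below the plane of $H$." Now $C_4$-freeness says: there do not exist two planes $H, H'$ and two lifted points $\hat p, \hat q$ with all four "below" relations holding. Since two distinct non-vertical planes intersect in a single line $\ell_{H,H'}$, and "$\hat p$ below both" forces $\hat p$ to lie in one of the two halfplanes cut out by $\ell_{H,H'}$ (the side where the lower envelope of the two planes is the... — one needs to check which side), two points both below both planes must lie in a common halfplane determined by $\ell_{H,H'}$; the $C_4$-free condition then translates into a forbidden-configuration statement about the $\le 1$-level / lower envelope of the arrangement of planes.

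The cleanest route from here is to invoke the theory of lower envelopes and levels of planes in $\bR^3$: the number of incidences between $m$ points and $n$ planes where each point lies on the lower envelope of the subset of planes above it, under $C_4$-freeness, should be governed by the complexity of $\le k$-levels. I expect the main technical step — and the main obstacle — to be handling points that are strictly below many planes (high-degree vertices of $\cP$): a point below $t$ planes, combined with $C_4$-freeness, forces those $t$ planes to behave like a convex-position family (any two of them are "separated" over $\hat p$ in a consistent way), so locally around $\hat p$ the planes appear in convex position, and a point can see at most $O(1)$... no — rather, a charging argument shows $\sum_{p} \binom{\deg p}{2} \le $ (number of pairs of planes) $\cdot O(1)$ by $C_4$-freeness, which already gives $\sum_p \deg(p)^2 = O(n^2 + \text{edges})$, hence by convexity $e(G) = O(n^{3/2})$ — not good enough. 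To get the linear bound one must exploit that halfspaces in $\bR^3$ have VC-dimension $4$ and, more importantly, that the incidence graph being $C_4$-free makes the dual set system a "linear hypergraph"; I would then apply a Clarkson–Shor / random-sampling argument: take a random sample of halfspaces, bound incidences within cells of the resulting arrangement (each cell is an intersection of $O(1)$ halfspaces, so by the planar base case contributes linearly), and sum up. The crux is verifying that the recursion closes with an $O(|\cP|+|\cH|)$ bound rather than a polynomial excess, which relies crucially on the $C_4$-free hypothesis to kill the cross terms — this verification is the heart of the argument and where I would spend the most care.
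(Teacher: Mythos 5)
Your proposal does not constitute a proof: it stops precisely at the step you yourself identify as ``the heart of the argument.'' The charging argument you give (counting pairs of halfspaces through common points, exploiting $C_4$-freeness) only yields $e(G)=O(n^{3/2})$, which is the ordinary K\H{o}v\'ari--S\'os--Tur\'an bound for $K_{2,2}$-free graphs and ignores the geometry almost entirely. You then propose to recover the linear bound via a Clarkson--Shor random-sampling recursion, but you never verify that the recursion closes linearly --- you explicitly flag that this verification ``is where I would spend the most care.'' That verification is not a routine matter: $C_4$-freeness does not localize to cells of a cutting in any obvious way (a point in one cell can still be incident to halfspaces cut by that cell and to halfspaces containing the entire cell, and the ``cross'' and ``long'' terms do not trivially disappear), and the planar base case you invoke --- that $C_4$-free point--halfplane incidence graphs in $\bR^2$ have $O(m+n)$ edges --- is itself asserted, not proven, and again does not follow from KST alone.

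For contrast, the paper's argument is a structural graph-theory reduction rather than an incidence-geometry recursion. After choosing a generic coordinate system so that every halfspace is strictly upwards- or downwards-oriented, the paper shows (Claim~\ref{claim:no K_5 subdivision}) that the incidence graph between points and upwards-oriented halfspaces contains no subdivision of $K_5$ with all root vertices among the points: if it did, the auxiliary graph on $\cP$ joining two points covered by a common halfspace would contain a $K_5$-subdivision, hence be non-planar, so by projecting to the $xy$-plane one finds two disjoint ``edges'' whose projections cross; the upward orientation then forces one of the witnessing halfspaces to contain a third point, contradicting the degree-$2$ structure of subdivision vertices. Applying a polar duality swaps the roles of points and upwards halfspaces, so there is also no induced $K_5$-subdivision rooted in $\cH$, hence no induced $K_9$-subdivision at all, and the theorem of K\"uhn and Osthus~\cite{KO04} immediately gives $O(|\cP|+|\cH|)$ edges. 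Your plan, if completed, would be a genuinely different (more computational-geometric) route, but as written the essential step is missing, and it is not at all clear the recursion would close without a logarithmic or polynomial loss.
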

\begin{proof}
A halfspace $h$ with equation $ax+by+cz\geq d$ is \textit{upwards-oriented} if $c > 0$, and if  $c<0$, we call $h$ \textit{downwards-oriented}. By choosing a generic coordinate system, we ensure that all halfspaces are either upwards- or downwards-oriented. Due to symmetry, it suffices to bound the number of incidences of $\cP$ with the set $\cH_u$ of upwards-oriented halfspaces. 

Given a subdivision of a graph $H$, call the vertices corresponding to the vertices of $H$ as \emph{root vertices}. The following claim is the key geometric input of the proof.

\begin{claim}\label{claim:no K_5 subdivision}
If $\mathcal{P}'$ is a set of points and $\mathcal{H}'$ is a set of upwards-oriented halfpsaces, then the incidence graph of $(\mathcal{P}',\mathcal{H}')$ is not isomorphic to any subdivision of $K_5$, where all root vertices are contained in $\mathcal{P}'$.
\end{claim}
\begin{proof}
 Define an auxiliary graph $\Gamma$ on the vertex set $\cP'$, where we connect two points if there is a halfspace in $\cH'$ containing both. Then, $\Gamma$ contains a subdivision of $K_5$, and so it is not planar. In particular, if $\pi:\bR^3\to \bR^2$ denotes the projection map onto the $xy$ plane, then there exists a pair of disjoint edges $p_1q_1, p_2q_2\in E(\Gamma)$ such that $\pi(\overline{p_1q_1})$ and $\pi(\overline{p_2q_2})$ intersect. Hence, there are points $r_1\in \overline{p_1q_1}, r_2\in \overline{p_2q_2}$ for which $\pi(r_1)=\pi(r_2)$. Without loss of generality, we assume that $r_2$ lies above $r_1$. If $h\in \cH'$ denotes the upwards-oriented halfspace containing $p_1, q_1$, then $h$ contains $r_1$, and thus $r_2$ as well. Then, $h$ must contain one of $p_2$ and $q_2$, showing that $h$ is incident to at least three points of $\cP'$. Thus the incidence graph of $(\mathcal{P}',\mathcal{H}')$ is not isomorphic to any subdivision of $K_5$, where all root vertices are contained in $\mathcal{P}'$, otherwise every element of $\mathcal{H}'$ has degree 2.
\end{proof} 

The above claim shows that the incidence graph of $\cP$ and $\cH_u$ contains no induced subdivision of $K_5$ with all root vertices in $\cP$. Using duality, it is easy to conclude that the incidence graph of $\cP$ and $\cH_u$ also contains no induced subdivisions of $K_5$, where all root verties are in $\mathcal{H}_u$. 

Indeed, we may translate the whole configuration so that all points of $\cP$ have positive $z$ coordinates and so that each halfspace $h\in \cH_u$ is represented by the equation $ax+by+cz\geq 1$, without changing the incidence graph. By applying a duality transformation, we can map each point $x\in \mathcal{P}$ to the halfspace given by $\langle x, \cdot\rangle \geq 1$, and each halfspace given by $\langle \cdot, y\rangle \geq 1$ to the point $y$. This duality turns all points of $\cP$ into upwards-oriented halfspaces, and the halfspaces of $\cH_u$ into points, while maintaining the incidence structure. 

From this, we conclude that the incidence graph of $(\mathcal{P},\mathcal{H}_u)$ contains no induced subdivision of $K_9$. Otherwise, there is either an induced subdivsion of $K_5$ with all root vertices in $\cP$, or all root vertices in $\cH_u$. The proof now follows immediately from the result of K\"uhn and Osthus~\cite{KO04}, which states that the family of graph containing no induced subdivision of $K_t$ for any integer $t$ is degree-bounded. 
\end{proof}

Next, we observe that the family of point-halfspace incidence graphs have the density-EH property. Such graphs are semialgebraic of complexity $(3,2,1)$, so this follows immediately from Lemma \ref{lemma:semialgrabraic}.

\begin{lemma}\label{lemma:point-halfspaces dense}
Let $\cP$ be a set of at most $d$ points and $\cH$ a set of at most $d$ halfspaces in $\bR^3$, with at least $cd^2$ incidences. Then the incidence graph of $(P,\mathcal{H})$ contains $K_{s, s}$ for some $s\geq \Omega_c(d)$.
\end{lemma}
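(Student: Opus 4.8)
The plan is to observe that point--halfspace incidence graphs in $\bR^3$ are semialgebraic of bounded description complexity, and then to invoke Lemma~\ref{lemma:semialgrabraic} directly; there is essentially no content beyond setting up an honest bounded-complexity representation.

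First I would fix the parameterization. After a harmless generic perturbation of the configuration -- which changes neither the incidence graph nor the hypothesis $e\geq cd^2$ -- I may assume that no bounding plane of a halfspace is vertical, so each $h\in\cH$ has the form $\{(x,y,z):z\geq \ell_h(x,y)\}$ (``upward'') or $\{(x,y,z):z\leq \ell_h(x,y)\}$ (``downward''), for an affine function $\ell_h:\bR^2\to\bR$, and is thus encoded by a point $q_h\in\bR^3$ (its three coefficients) together with a binary label. Writing $\cH=\cH_u\cup\cH_d$ according to this label, one of the two -- say $\cH_u$, by the symmetry $z\mapsto -z$ -- accounts for at least $cd^2/2$ of the incidences, and $|\cH_u|\leq d$. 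Now a point $p=(x,y,z)$ is incident to an upward halfspace $h$ precisely when $z-\ell_h(x,y)\geq 0$, and as a function of $(p,q_h)\in\bR^3\times\bR^3$ this is a single polynomial of total degree $2$. Hence the incidence graph of $(\cP,\cH_u)$, regarded as a bipartite graph with both vertex classes realized in $\bR^3$, is semialgebraic of description complexity $(3,2,1)$.

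It then remains to check the density hypothesis of Lemma~\ref{lemma:semialgrabraic} and do the constant bookkeeping. This graph $G'$ has $n':=|\cP|+|\cH_u|\leq 2d$ vertices and $e(G')\geq cd^2/2\geq (c/8)(n')^2$ edges, so Lemma~\ref{lemma:semialgrabraic} applied with $\beta=c/8$ produces $\alpha=\alpha(3,2,1,c/8)>0$ and a biclique of size at least $\alpha n'$. On the other hand $e(G')\leq |\cP|\cdot|\cH_u|\leq (n'/2)^2$, which combined with $e(G')\geq cd^2/2$ forces $n'\geq\sqrt{c}\,d$. Therefore $G'$ -- and hence the incidence graph of $(\cP,\cH)$ -- contains $K_{s,s}$ with $s\geq \alpha\sqrt{c}\,d=\Omega_c(d)$.

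I do not expect a genuine obstacle: all the work is done by Lemma~\ref{lemma:semialgrabraic} (the semialgebraic biclique theorem of Fox--Gromov--Lafforgue--Naor--Pach, sharpened by Fox--Pach--Sheffer--Suk--Zahl), and the present lemma is just its specialization to point--halfspace incidences in a fixed dimension. The only mild care needed is the genericity assumption ruling out vertical bounding planes and the translation between the ``bipartite, classes of size $\leq d$'' picture and the ``$n'$-vertex graph'' form in which Lemma~\ref{lemma:semialgrabraic} is stated. If one prefers to avoid the upward/downward split, one may instead encode a halfspace $\{\langle a,\cdot\rangle\geq e\}$ by $(a,e)\in\bR^4$ and a point $p$ by $(p,0)\in\bR^4$, obtaining description complexity $(4,2,1)$ and the same conclusion with no case distinction.
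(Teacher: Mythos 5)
Your proposal is correct and follows the same approach as the paper, which simply observes that point--halfspace incidence graphs in $\bR^3$ are semialgebraic of description complexity $(3,2,1)$ and invokes Lemma~\ref{lemma:semialgrabraic}. You spell out the normalization (the upward/downward split so that halfspaces are encoded in $\bR^3$) and the density/constant bookkeeping that the paper leaves implicit.
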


\begin{proof}[Proof of Theorem~\ref{thm:point-halfspaces}.]
Let $G$ be the incidence graph of $\cP$ and $\cH$, and  assume for contradiction that the average degree of $G$ is at least $d=d(G)\geq Cs$, for a large constant $C$. By Theorem~\ref{thm:main}, for any integer $k$, $G$ either contains a $C_4$-free induced subgraph $G'\subseteq G$ with $d(G')\geq k$, or a dense induced subgraph $G'\subseteq G$ on $d$ vertices with $d(G')\geq cd$, for some $c=c(k)>0$.

If $k$ is sufficiently large, Lemma~\ref{lemma:point-halfspaces C_4-free} shows that no $C_4$-free incidence graph of points and halfspaces has average degree $k$. Thus, we must have the second outcome, i.e. there exists an induced subgraph $G'\subseteq G$ on $d$ vertices with $e(G')\geq cd^2$. From Lemma~\ref{lemma:point-halfspaces dense}, we get that $G'$ contains a biclique of size $\Omega(d)$, which is a contradiction to the assumption that $G$ is $K_{s, s}$-free and $d(G)\geq Cs$ for an appropriately chosen constant $C$. 
\end{proof}

We conclude the section by showing that the family of point-halfspace incidence graphs in $\bR^4$ is not degree-bounded.

\begin{claim}
For every integer $n$, there exists a $C_4$-free incidence graph of $n$ points and $n$ halfspaces in $\mathbb{R}^4$ with $\Omega(n\log n/\log\log n)$ edges.
\end{claim}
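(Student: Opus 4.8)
The plan is to \emph{lift} the known planar lower bound for point--box incidences up to $\mathbb{R}^4$. Recall that, by the $d=2$ case of the Chan--Har-Peled construction (see \cite{CH23}, and also \cite{T24}), there is a $K_{2,2}$-free incidence configuration of $N$ points $P_0$ and $N$ axis-parallel boxes $\mathcal{B}_0$ in the plane with $\Omega(N\log N/\log\log N)$ incidences. I would transfer this configuration into $\mathbb{R}^4$ so that the resulting point--halfspace incidence graph is still $K_{2,2}$-free and keeps a positive fraction of the incidences; since that graph has $N+N$ vertices and $\Omega(N\log N/\log\log N)$ edges, this proves the claim (with $n=N$).

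The transfer map is the coordinatewise parabola $\Phi\colon\mathbb{R}^2\to\mathbb{R}^4$ given by $\Phi(x,y)=(x,y,x^2,y^2)$, which is injective. The point of using $\Phi$ is that every axis-parallel ellipse is the $\Phi$-preimage of a halfspace: expanding the inequality $\frac{(x-c_1)^2}{\alpha^2}+\frac{(y-c_2)^2}{\beta^2}\le 1$ makes it \emph{affine} in the four quantities $x,y,x^2,y^2$, so there is a halfspace $H\subseteq\mathbb{R}^4$ with $\Phi(x,y)\in H$ if and only if $(x,y)$ lies in the ellipse. Hence it suffices to build a $K_{2,2}$-free incidence configuration of $N$ points and $N$ \emph{axis-parallel ellipses} in the plane with $\Omega(N\log N/\log\log N)$ incidences, and then apply $\Phi$ to the points and replace each ellipse by the corresponding halfspace.

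To produce such an ellipse configuration, I would replace each box $B\in\mathcal{B}_0$ by an axis-parallel ellipse $E_B$ inscribed in $B$. Since $E_B\subseteq B$, every incidence of the new configuration was already an incidence of $(P_0,\mathcal{B}_0)$, so the new incidence graph is a \emph{subgraph} of the old one and is therefore $K_{2,2}$-free automatically -- no argument is needed on that side. The only issue, and the main obstacle, is that we must not lose too many incidences: an incidence $(p,B)$ survives precisely when $p$ lies in $E_B$ and not in one of the four ``corner regions'' $B\setminus E_B$. One therefore needs the base construction to have a constant fraction of its incidences $(p,B)$ \emph{deep}, in the sense that $p$ lies in the sub-box of $B$ concentric with $B$ and of half the side lengths (which one checks is contained in the inscribed ellipse). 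I would isolate this as a lemma about the base construction: the planar point--box constructions in question are built from a dyadic hierarchy of boxes, and one can pass to the variant in which a point is counted incident to a box only when it lands in the central part of that box; this operation only deletes incidences -- hence cannot create a $K_{2,2}$ -- and, because of the recursive structure, retains a positive proportion of them. With the ``deep incidence'' property in hand, applying $\Phi$ to the resulting sub-configuration finishes the proof; I expect verifying that property (the one place where the fine structure of the planar construction is used) to be the crux, with everything else routine.
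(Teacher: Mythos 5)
Your lifting map $\Phi(x,y)=(x,y,x^2,y^2)$ (a coordinate permutation of the paper's $\tau(t,u)=(t^2,t,u^2,u)$) and the idea of replacing axis-parallel boxes by their inscribed ellipses are exactly what the paper does. The divergence is in how the incidence count and $C_4$-freeness are handled, and this is where your proposal has a real gap. You want to take the planar $K_{2,2}$-free point--box construction as a black box and then argue, via a ``deep incidence'' lemma, that a constant fraction of incidences land in the central half-box and hence survive the pass to inscribed ellipses. But $K_{2,2}$-freeness of the box configuration gives no control over \emph{where} inside each box the incident points sit: a priori, almost all of them could hug the corners $B\setminus E_B$, in which case passing to ellipses loses almost all edges. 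Verifying deep incidence is therefore not a routine post-processing step -- it requires opening up the planar construction and controlling the spatial distribution of points within boxes, which is essentially as much work as rebuilding the configuration.

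The paper avoids this by not black-boxing the planar construction. It builds the dyadic family of rectangles explicitly, passes to inscribed ellipses \emph{first}, chooses the point set $\cP_0$ uniformly at random in $[0,1]^2$, and computes the expected number of point--ellipse incidences directly (each ellipse has area $\pi/4m$, so the incidence count is clear by linearity). $C_4$-freeness is then restored by a separate pruning step: two random points $p,p'$ are ``close'' if the rectangle they span has area $\leq 1/um$, and one point of each close pair is deleted, killing every potential $C_4$ while losing only $o(n\log n/\log\log n)$ incidences. This is the converse bookkeeping to yours -- you inherit $C_4$-freeness automatically from the box configuration but must re-prove the incidence count, whereas the paper gets the incidence count almost for free from uniform randomness but must re-establish $C_4$-freeness by pruning. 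Both routes are viable, but your plan as written leaves its crux (the deep incidence lemma) unproved, and the lemma is not a formal consequence of anything you have cited; you would in effect have to reconstruct the paper's random-point argument to close it.
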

\begin{proof}[Proof sketch.]
This proof is based on the construction of incidence graphs of points and axis-parallel rectangles in $\mathbb{R}^2$ with $\Omega(n\log n/\log\log n)$ edges, which is originally due to Chazelle \cite{Cha90} and was later developed in \cite{BCSTT,CH23,T24}. We argue that the rectangles can be replaced by their inscribed axis-aligned ellipses without losing many incidences. Finally, we use a lifting trick to transform this configuration into an incidence graph of points and halfspaces in $\mathbb{R}^4$.

The first step of our construction is to obtain a collection $\cR$ of axis-parallel rectangles of somewhat large area and comparatively small intersections. To do this, we pick a parameter $1\leq m\leq n$ and set $u=(\log m)^4$ and $k=\log m/\log u$, noting that $u^k=m$. Then, we choose $m$ such that $km=n$. For $i=1,\dots,k$, let us tile the unit square $[0,1]^2$ with $m=u^k$ rectangles of size $u^{-i}\times u^{-(k-i)}$. If we denote by $\cR_i$ the collection of rectangles in this tiling and let $\cR=\bigcup_{i=1}^k \cR_i$, then we have $|\cR|=mk=n$. Furthermore, let $\cE$ be the set of axis-parallel inscribed ellipses of the elements of $\cR$. Note that each element of $\cR$ has area $u^{-k}=1/m$, and thus each element of $\cE$ has area $\pi/4m$. The key property of this collection is that the intersection of any two rectangles of $\cR$ has area at most $u^{-k-1}=1/um$, and the same is true for any two ellipses of $\cE$. 

Let $\cP_0$ be a set of $n$ points chosen from the uniform distribution on $[0,1]^2$. Then each ellipse $E\in \cE$ contains $n\cdot \pi /4m=\pi k/4$ elements of $\cP$ in expectation. Furthermore, note that if two points $p,p'\in\cP_0$ are contained in two different ellipses of $\cE$, then the smallest axis-parallel rectangle $R_{p, p'}$ containing $p$ and $p'$ span a rectangle of area at most $1/um$. Therefore, given $p, p'\in \cP_0$, let us say that $p$ and $p'$ are \emph{close} if the area of the rectangle spanned by $p$ and $p'$ is less than $1/um$. To bound the probability that two random points $p=(p_x, p_y)$ and $p'=(p'_x, p'_y)$ are close, we fix $p$ and bound the area of the set $\{(p_x', p_y')\in \bR^2 : |p_x'-p_x|\cdot |p_y'-p_y|\leq 1/um\}\cap [0, 1]^2$ (geometrically, this is the intersection of the unit square with the region bounded by two hyperbolas). By integrating, one can derive that the area of this region is $O(\log um/um)$, and so $\mathbb{P}(p\text{ and }p'\text{ are close})=O(\log um/um)=O(1/n\log n)$.

Let $\cP$ be a set of points we get by removing a point of each close pair, which ensures that the resulting incidence graphs is $C_4$-free, and adding some points to $\cP$ outside of $[0,1]^2$ to ensure that $\cP$ has $n$ elements. A simple expectation argument now implies that the incidence graph $G$ of $(\cP,\cE)$ has at least $\Omega(k n)=\Omega(n\log n/\log\log n)$ edges, which completes the first step of our construction.

Let us now present the lifting trick which turns this point-ellipse incidence graph into a point-halfspace incidence graph in $\bR^4$. Each axis-parallel ellipse $E\in \cE$ is given by $\{(x,y)\in \bR^2:a(x-x_0)^2+b(y-y_0)^2\leq 1\}$ for some $a,b,x_0,y_0\in \mathbb{R}$. Then, we can map $E$ to the halfspace in $\bR^4$ given by \[\pi(E)=\{(x,y,z,w)\in \bR^4:ax-2ax_0y+bz-2by_0w\leq 1-ax_0^2-by_0^2\}.\] Furthermore, we map every point $p=(t,u)\in \mathbb{R}^2$ to a point $\tau(p)=(t^2,t,u^2,u)\in \bR^4$.
The maps $\pi$ and $\tau$ have the property that $p\in E$ if and only if $\tau(p)\in \pi(E)$. Therefore, the incidence graph of the $n$ points $\tau(\cP)$ and $n$ halfspaces $\pi(\cE)$ is $C_4$-free with $\Omega(n\log n/\log\log n)$ incidences, completing the proof.
\end{proof}

\subsection{Intersection graphs of curves}

In this section, we present the proofs of Theorems \ref{thm:string}, \ref{thm:curve_k_int}, \ref{thm:curve_disjoint}, \ref{thm:curve_convex}. The proofs of Theorems \ref{thm:curve_k_int} and \ref{thm:curve_disjoint} further assume Theorem \ref{thm:stEH_kint}, which is proved in Section \ref{sect:k_int}. We summarize our statements as follows.

\begin{theorem}
Let $\mathcal{C}$ be a family of $n$ curves such that the intersection graph $G$ of $\mathcal{C}$ is $K_{s,s}$-free. 
\begin{enumerate}
    \item[(i)] $G$ has $O(s(\log s)n)$ edges.
    \item[(ii)] If $\mathcal{C}$ is $k$-intersecting, then $G$ has $O_k(sn)$ edges.
    \item[(iii)] If every element of $\mathcal{C}$ is convex, then $G$ has $O(sn)$ edges.
    \item[(iv)] If $\mathcal{C}=\mathcal{A}\cup\mathcal{B}$, where $\mathcal{A}$ and $\mathcal{B}$ are collections of disjoint curves, then $G$ has $O(sn)$ edges.
\end{enumerate}
\end{theorem}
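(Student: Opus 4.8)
The plan is to apply Corollary~\ref{thm:master} uniformly to each of the four families, which means in every case I only need to verify two things: that the family is hereditary and weakly degree-bounded (i.e.\ $C_4$-free members have bounded average degree), and that it has the density-EH property (i.e.\ linear-density subgraphs contain linear bicliques). Heredity is immediate in all four cases since taking a sub-collection of curves corresponds to taking an induced subgraph. The density-EH property will also be handled in a unified way via Lemma~\ref{lemma:semialgrabraic}: for parts (ii), (iii), (iv) the relevant graphs (or, after a bounded case distinction, their pieces) are semialgebraic of bounded description complexity, so any subgraph of edge density $\geq c$ contains a biclique of size $\Omega_c(n)$. For part (i), general string graphs are not semialgebraic of bounded complexity, so the density-EH input there must come from a genuinely different source --- the separator-type or biclique results for dense string graphs (e.g.\ via Fox--Pach or the Fox--Pach--T\'oth-style argument); alternatively one invokes Theorem~\ref{thm:stEH_kint}-type statements where available. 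Once both hypotheses of Corollary~\ref{thm:master} are checked, it outputs average degree $O_{\mathcal{F}}(s)$, hence $e(G)=O_{\mathcal{F}}(sn)$, which is exactly (ii), (iii), (iv); for (i) one gets an extra $\log s$ factor because the density-EH constant for dense string graphs degrades like $1/\log$, giving $e(G)=O(s\log s\cdot n)$.

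Carrying this out concretely: for the weak degree-boundedness step I would argue, as in Proposition~\ref{prop:vertical vs horizontal} and Lemma~\ref{lemma:point-halfspaces C_4-free}, that each family excludes some induced subdivision, so the K\"uhn--Osthus theorem~\cite{KO04} bounds the average degree of $C_4$-free members. For (i) and (ii) this is the classical fact that a subdivision of a non-planar graph is not a string graph~\cite{EET}, so string graphs (and $k$-intersecting families, being string graphs) contain no induced subdivision of $K_5$. For (iii), intersection graphs of convex sets in the plane are also string graphs (replace each convex set by its boundary curve, or slightly fatten), so the same applies; similarly for (iv), a union of two disjoint-curve families is a string graph. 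Thus in every case there is an absolute $k$ with no $C_4$-free induced subgraph of average degree $\geq k$, and Theorem~\ref{thm:main} forces the dense alternative: a $d$-vertex induced subgraph with $\Omega_k(d^2)$ edges.

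The main obstacle is handling the dense alternative with the \emph{right} dependence on $s$ in each case, i.e.\ establishing density-EH with the correct constant. For (iii) and (iv) and (ii) this is clean: the graphs are semialgebraic of bounded complexity --- a convex set in the plane and the incidence/intersection predicates are cut out by boundedly many polynomial inequalities of bounded degree (for $k$-intersecting families one uses that ``$a$ and $b$ intersect'' together with the $k$-intersection bound can be encoded semialgebraically, or one appeals directly to Theorem~\ref{thm:stEH_kint}), so Lemma~\ref{lemma:semialgrabraic} gives a biclique of size $\alpha d$, whence $s\geq\alpha d$ and $d=O_k(s)$ --- done via Corollary~\ref{thm:master} with $C=1/\alpha$. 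For (ii) and (iv) one may alternatively quote Theorem~\ref{thm:stEH_kint} directly in the $k$-intersecting case to get the linear biclique. The genuinely harder case is (i): a dense string graph need not be semialgebraic of bounded complexity, so to extract a biclique one must use the structure theory of string graphs. Here I would invoke the known result that a string graph on $m$ vertices with $\Omega(m^2)$ edges contains a biclique of size $\Omega(m/\log m)$ (which follows from Lee's optimal separator bound~\cite{Lee17}, or the Fox--Pach separator~\cite{FoxPach10,FoxPach14} with a slightly weaker constant). Feeding this into the dichotomy of Theorem~\ref{thm:main}: from a $d$-vertex induced subgraph with $\Omega(d^2)$ edges we get a biclique of size $\Omega(d/\log d)$, so $s\geq\Omega(d/\log d)$, i.e.\ $d=O(s\log s)$, and therefore $e(G)=O(s\log s\cdot n)$, proving (i). The only subtlety is bookkeeping the $\log$ factor --- one should check $d/\log d\leq s$ implies $d\leq O(s\log s)$, which is routine --- and making sure the dense subgraph given by Theorem~\ref{thm:main} is itself a string graph, which it is, being an induced subgraph of $G$.
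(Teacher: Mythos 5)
Your overall framework---weak degree-boundedness via K\"uhn--Osthus plus density-EH via Corollary~\ref{thm:master}---is exactly the paper's strategy, and your treatment of part (i) is correct and matches the paper's (the paper cites Corollary~1.2 of \cite{FoxPach12} for the $\Omega(m/\log m)$ biclique in dense string graphs; your Lee/Fox--Pach separator route gives the same statement). The fallback you mention for (ii) and (iv), namely quoting Theorem~\ref{thm:stEH_kint} directly, is also what the paper does (for (iv) it applies Theorem~\ref{thm:stEH_kint} with $k=0$).

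However, your \emph{primary} route for the density-EH step in (ii), (iii), (iv) contains a genuine error. You assert that intersection graphs of planar convex sets, $k$-intersecting families of curves, and unions of two disjoint-curve families are semialgebraic of bounded description complexity, so that Lemma~\ref{lemma:semialgrabraic} applies. This is false: general convex sets and general continuous curves are not algebraic objects and cannot be parameterized by a bounded number of real coordinates, nor can ``$a\cap b\neq\varnothing$'' be expressed by a bounded Boolean combination of bounded-degree polynomial inequalities in any finite-dimensional parametrization. This is precisely why the paper goes to the trouble of proving Theorem~\ref{thm:stEH_kint} (a genuinely new geometric density-EH result for $k$-intersecting families) rather than simply invoking Lemma~\ref{lemma:semialgrabraic}. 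For (ii) and (iv) your Theorem~\ref{thm:stEH_kint} fallback rescues the argument, but for (iii) the semialgebraic claim is your only argument and it does not work; the paper instead cites Theorem~15 of \cite{FPT10}, a dedicated density-EH result for intersection graphs of convex sets. As written, your proof of (iii) has a real gap that needs to be filled by an appropriate Ramsey-type theorem for convex sets.
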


\begin{proof}
String graphs do not contain any induced subdivision of the 1-subdivision of $K_5$. Therefore, by \cite{KO04}, the family of string graphs is weakly degree-bounded. 
\begin{itemize}
    \item[(i)] In particular, there exists an absolute constant $k$ such that any $C_4$-free string graph has average degree less than $k$. Let $c=c(k)>0$ be the constant promised by Theorem \ref{thm:main}, and let $d$ be the average degree of $G$. Then $G$ contains an induced subgraph $H$ on $d$ vertices with at least $cd^2$ edges.   By Corollary 1.2 in \cite{FoxPach12}, there exists $\delta=\delta(c)>0$ such that $H$ contains a biclique of size $\delta d/\log d$. As $G$ is $K_{s,s}$-free, we get $\delta d/\log d<s$, which gives the desired bound $d=O(s\log s)$.

    \item[(ii)] The family $\mathcal{F}_k$ of intersection graphs of $k$-intersecting families of curves has the density-EH property by Theorem \ref{thm:stEH_kint}. Thus Corollary \ref{thm:master} implies the desired result.

    \item[(iii)] The family $\mathcal{F}$ of intersection graphs of convex sets has the density-EH property by Theorem 15 in \cite{FPT10}. Therefore, we are done by Corollary \ref{thm:master}.

    \item[(iv)] Let $\mathcal{F}$ be the family of graphs $G$ that can be realized as the intersection graph of two families of disjoint curves. Then $\mathcal{F}$ is a subfamily of string graphs, so $\mathcal{F}$ is weakly degree-bounded. Moreover, by Theorem \ref{thm:stEH_kint} applied with $k=0$, $\mathcal{F}$ has the density-EH property. Therefore, we are done by Corollary~\ref{thm:master}.\qedhere
\end{itemize}
\end{proof}

\section{Graph-theoretic lemma}\label{sec:graph theory}

In this section, we prove Theorem \ref{thm:main2}, and use it to derive Theorem~\ref{thm:main}. Recall, Theorem~\ref{thm:main2} states that for any integers $k, t\geq 1$ and parameter $c\in (0,1)$, there exists a constant $C=C(k, c)$ such that any $(c, t)$-sparse graph $G$ of average degree $d(G)\geq Ct$ contains a bipartite $C_4$-free induced subgraph of average degree at least $k$.

We recall that a graph $G$ is $(c, t)$-sparse if for any two (not necessarily disjoint) subsets of the vertices $A,B\subset V(G)$ with $|A|, |B|\geq t$, the number $e(A,B)$ of pairs $(a,b)\in A\times B$ joined by an edge is at most $e(A, B)\leq (1-c)|A||B|$. A simple averaging argument shows that in order to check whether a graph is $(c, t)$-sparse, it suffices to verify the above condition for every pair of sets $A,B\subset V(G)$ with $|A|=|B|=t$.

We start by showing that Theorem~\ref{thm:main2} implies Theorem \ref{thm:main}. That is, for every $k\geq 1$ and every graph $G$ of average degree $d$, one can either find an induced $C_4$-free subgraph of average degree $k$ in $G$ or an induced subgraph on $d$ vertices and $\Omega_k(d^2)$ edges.

\begin{proof}[Proof of Theorem \ref{thm:main} assuming Theorem \ref{thm:main2}.] Let $c=1/2$, and let $C=C(k, c)$ be the constant given by Theorem \ref{thm:main2}. We may assume that $C>2$, so that if we set $t=\lfloor d/C\rfloor$, we have $t\leq d/2$. Theorem~\ref{thm:main2} then implies that $G$ either contains a $C_4$-free induced subgraph of average degree at least $k$, or $G$ is not $(c,t)$-sparse. In the first case, we are done, while if $G$ is not $(c,t)$-sparse, then there exist $A,B\subset V(G)$, $|A|=|B|=t$ such that $e(A,B)\geq (1-c)|A||B|=\frac{1}{2}t^2$. Let $U$ be an arbitrary $d$-element subset of $V(G)$ containing $A\cup B$. Then $e(G[U])\geq e(G[A\cup B])\geq \frac{1}{4}t^2\geq \frac{1}{8C^2}d^2$, showing that there exists an induced subgraph of $G$ with $d$ vertices and $\Omega_k(d^2)$ edges.
\end{proof}

The proof of Theorem~\ref{thm:main2} crucially relies on the following technical lemma. Given integers $v\geq h\geq 1$, the \emph{$1$-subdivision of $K_{v}^{(h)}$}, denoted by $H_v^{(h)}$, is the bipartite graph with vertex classes $A$ and $B$, where $|A|=v$, $|B|=\binom{v}{h}$, and for every $S\subset A$ of size $h$ there is a unique vertex $x_S\in B$ whose neighbourhood is exactly $S$. Note that any bipartite graph $H = (A,B; E)$ with $\deg a\le h-1$ for every $a\in A$ is contained in $H_{v}^{(h)}$ as an induced subgraph, where $v=|A|+|B|+h$. To see this, split the part of $H_v^{(h)}$ containing $v$ vertices into three parts $V_A, V_B, V_h$, whose sizes are $|A|, |B|$ and $h$, respectively, and embed the vertices of $B$ into $V_B$ in an arbitrary way. Then, embed the vertices $a\in A$ one by one, by choosing a vertex of $H_{v}^{(h)}$ adjacent to exactly the neighbours of $a$ among in $V_B$, and to an unused vertex in $V_A$ (this is needed to ensure that the vertices $a\in A$ with the exact same neighbourhood in $H$ get embedded to distinct vertices in $H_v^{(h)}$), and potentially to some dummy vertices in $V_h$ (to make its degree to $V_A\cup V_B\cup V_h$ exactly $h$). 

\begin{lemma}\label{lemma:1subdivision}
For any integers $k, v, h\ge 1$ and $c\in (0,1)$, there exists $K=K(k, v, h, c)$ such that the following holds. Every $(c,t)$-sparse graph $G$ of average degree $d(G)\ge Kt$ which does not contain an induced copy of $H_v^{(h)}$ must contain an induced $C_4$-free subgraph of average degree at least $k$.
\end{lemma}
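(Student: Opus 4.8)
\textbf{Proof proposal for Lemma~\ref{lemma:1subdivision}.}

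The plan is to iterate a ``clean-up'' argument: starting from a $(c,t)$-sparse graph $G$ of huge average degree that contains no induced $H_v^{(h)}$, we want to repeatedly pass to induced subgraphs that remain of large average degree but eventually become $C_4$-free. The obstruction to being $C_4$-free is the presence of many vertices with a common pair of neighbours; the obstruction to embedding $H_v^{(h)}$ is the presence of many ``low-degree'' vertices on one side over a small common ground set. The key observation, recorded in the paragraph before the lemma, is that any bipartite graph $H=(A,B;E)$ with $\deg_H(a)\le h-1$ for all $a\in A$ embeds as an induced subgraph of $H_{|A|+|B|+h}^{(h)}$; so if $G$ has no induced $H_v^{(h)}$, then $G$ contains no induced bipartite subgraph of the above form with $|A|+|B|+h\le v$. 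We will exploit this to force, after bounded-depth cleaning, a situation in which common neighbourhoods are so constrained that $C_4$'s cannot survive while the average degree is still at least $k$.

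First I would set up the main iteration. Fix a large threshold and, while the current graph $G'$ has average degree exceeding it, locate a ``witnessing'' structure for a $C_4$: by a standard dyadic/dependent-random-choice style argument, a graph of average degree $D$ contains a set $U$ of $\Omega(D)$ vertices such that every pair in $U$ has at least (say) two common neighbours, or alternatively many vertices sharing a pair of neighbours. In the $(c,t)$-sparse regime, dependent random choice (applied with the sparsity hypothesis replacing the usual $K_{s,s}$-freeness) produces a large set $W$, $|W|\ge t$, all of whose pairs have many common neighbours, \emph{and} with bounded codegree control coming from $(c,t)$-sparsity. Now I would split into cases according to whether the common neighbourhoods are ``spread out'' or ``concentrated''. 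If some small set $S$ of $\le v$ vertices has $\ge v$ vertices outside it each of whose neighbourhood \emph{inside} $\mathrm{N}(S)$-type sets has size $\le h-1$, we directly build the forbidden induced $H_v^{(h)}$ and are done by contradiction. Otherwise, every small vertex set has few such ``light'' vertices attached to it, and this is exactly the quantitative input needed to delete a bounded fraction of vertices/edges and strictly decrease the number of $C_4$'s (or the maximal codegree) while keeping average degree $\ge k$.

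For the bookkeeping, I would track a potential such as the maximum codegree $\Delta_2(G')=\max_{u\ne w}|\mathrm{N}(u)\cap \mathrm{N}(w)|$, and show each cleaning step either (a) produces an induced $H_v^{(h)}$ (contradiction), or (b) yields an induced subgraph on which $\Delta_2$ has dropped by at least $1$, at the cost of average degree multiplied by some factor $\rho=\rho(k,v,h,c)\in(0,1)$ and of the sparsity parameter $t$ growing by a bounded factor. Since $\Delta_2$ starts below $v(G')$ but we only need to reach $\Delta_2\le 1$ (which is precisely $C_4$-freeness), the number of iterations is bounded in terms of $v$ and $h$ — here the crucial point is that once codegrees are bounded by some $r=r(v,h)$, the absence of induced $H_v^{(h)}$ plus boundedness lets us reduce $r$ all the way to $1$ in $O_{v,h}(1)$ rounds, not $\Omega(v(G'))$ rounds. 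Choosing $K=K(k,v,h,c)$ large enough that $K\cdot \rho^{\,(\text{number of rounds})}\ge$ (the constant needed to guarantee average degree $\ge k$ after all the multiplicative losses, including the $t\to (\text{const})t$ blow-ups) completes the argument.

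The main obstacle I anticipate is exactly this last point: ensuring the \emph{depth} of the recursion is bounded by a function of $v,h$ alone and does not secretly depend on the size of $G$. Naively, reducing the maximum codegree one unit at a time from $\sim v(G')$ would give an unbounded recursion and a useless bound. The resolution has to be that a single application of dependent random choice, combined with the forbidden $H_v^{(h)}$, already forces \emph{all} codegrees down to a bound $r_0(v,h)$ in one shot (a vertex of too-high degree whose neighbourhood contains many low-degree-into-it vertices immediately gives the subdivision), after which only $O_{v,h}(1)$ further rounds are needed. Getting the quantitative dependence in dependent random choice right under the weaker $(c,t)$-sparsity hypothesis — in particular that the output set $W$ has size $\Omega(d(G)/t)\gg t$ so that the sparsity condition applies to it — is the technical heart, but it is a routine adaptation of the standard lemma and the rest is careful constant-chasing.
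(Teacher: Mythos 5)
Your proposal and the paper's proof share only the broad goal of killing all $C_4$'s while retaining large average degree; the mechanism you describe is genuinely different from the paper's, and in its present form it has a gap at exactly the point you flag yourself as ``the main obstacle,'' and I do not see how to close it. You assert that one application of dependent random choice, combined with the absence of an induced $H_v^{(h)}$ and with $(c,t)$-sparsity, forces all codegrees down to some constant $r_0(v,h)$ ``in one shot.'' This is not justified and I do not believe it is true as stated. The absence of an induced $H_v^{(h)}$ gives essentially no global codegree bound: building an induced $H_v^{(h)}$ requires an independent $v$-set $S$ together with an independent witness set $\{x_e: e\in\binom{S}{h}\}$ satisfying $N(x_e)\cap S=e$ \emph{exactly}, and a pair of vertices with a huge common neighbourhood does not by itself produce such a configuration. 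Likewise, $(c,t)$-sparsity controls densities only between sets of size at least $t$, and $t$ is not bounded in terms of $v,h,c$; it cannot push codegrees down to an absolute constant. What the paper actually extracts from these two hypotheses (after upgrading to $(1-\eps,\beta t)$-sparsity via Lemma~\ref{sparsity focusing}) is the much weaker and more carefully targeted Claim~\ref{claim:number_of_cliques}: for each vertex $a$ only an $\eps^{1/2}$-fraction of independent $v$-subsets of $N(a)$ have $\geq 2\eps d$ common neighbours, and only $\beta t$ vertices share $\geq 4\eps d$ neighbours with $a$. This is used solely to make a \emph{random sparsification} succeed, not to bound codegrees globally.

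The second gap is the tail of your argument: reducing codegree from $r_0$ to $1$ in $O_{v,h}(1)$ rounds while losing only a bounded factor of average degree. You give no mechanism for why any single such round is affordable, and in fact this step is precisely the content of Theorem~\ref{thm:degree-bounded} of Du, Gir\~ao, Hunter, McCarty and Scott, which the paper invokes as a black box (a $K_{s,s}$-free graph of average degree $k^{C_0 s^3}$ has an induced $C_4$-free subgraph of average degree $k$). An iteration of the kind you sketch would essentially have to reprove that theorem. The paper's actual route is structurally different and more indirect: pass to a $d$-degenerate near-bipartite subgraph with a controlled max-cut partition (Lemma~\ref{lemma:step 1}); randomly sparsify using the no-$H_v^{(h)}$ hypothesis to eliminate cross-triangles and cross-$K_{2,v+1}$'s while keeping linear degeneracy and edges (Lemma~\ref{lemma:step 2}); apply a Kwan--Letzter--Sudakov--Tran-style argument to extract a genuinely bipartite, hence $K_{v+1,v+1}$-free, induced subgraph of large average degree (Lemma~\ref{lemma:step 3}); and only then invoke Theorem~\ref{thm:degree-bounded} with $s=v+1$. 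The intermediate bipartite/triangle-free structure, which your plan does not address at all, is what makes the final $C_4$-cleaning step affordable with a cost depending only on $v$; a raw codegree-decrement iteration has no comparable handle on the recursion depth.
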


We first prove Lemma~\ref{lemma:1subdivision}, and then show how it can be used to derive Theorem~\ref{thm:main2}. As this proof is the most involved part of this section, we give a brief outline. We proceed in four steps, by finding induced subgraphs $G\supset G_1\supset G_2\supset G_3 \supset G_4$ with increasingly stronger properties. 

In the first step (Lemma~\ref{lemma:step 1}), we pass to an induced subgraph $G_1\subset G$ with a partition $V(G_1)=A_1\cup B_1$ such that $|A_1|\ge |B_1|/2$, $G_1$ is $d$-degenerate for some $d\geq d(G)$, $d_{G_1}(a)\le 4d$ for every $a\in A_1$, and every $a\in A_1$ has at least $d/4$ neighbours in $B_1$.

The main new contribution lies in the second step (Lemma~\ref{lemma:step 2}). By random sampling, we find sets $A_2\subseteq A_1$ and $B_2\subseteq B_1$, forming the graph $G_2=G_1[A_2\cup B_2]$, which does not contain any copy of $K_{2, v+1}$ in which the vertex class with two vertices lies in $A_2$, and the vertex class with $v+1$ vertices lies in $B_2$. Beyond that, we ensure some further useful properties in $G_2$ (e.g., there will be no triangles in $G_2$ containing vertices of both $A_2$ and $B_2$). 

So far, we found a subgraph of $G$ with no copies of $K_{2, v+1}$ as described above. However, the graph $G_2$ may still contain other copies of $K_{2, v+1}$, for example those fully contained in $A_2$. To eliminate all remaining copies of $K_{2, v+1}$, in the third step (Lemma~\ref{lemma:step 3}) we show that there exists a bipartite induced subgraph $G_3\subset G_2$ with high average degree, whose vertex partition $V(G_3)=A_3\cup B_3$ satisfies $A_3\subseteq A_2, B_3\subseteq B_2$. The ideas used in this step come from the work of Kwan, Letzter, Sudakov and Tran \cite{KLST20}, who showed that any triangle-free graph of average degree $d$ contains an induced bipartite subgraph of average degree $\Omega(\log d/\log\log d)$ (see also the nice write-up of Glock \cite{G20}). Note that the result of this step is an induced $K_{v+1, v+1}$-free subgraph $G_3\subset G_2$ of high average degree.

In the fourth step, we use a theorem of Du, Hunter, Gir\~ao, McCarty, and Scott \cite{DGHMS} which states that for every $k$ and $v$, every $K_{v+1, v+1}$-free graph of average degree $k^{C(v+1)^3}$ contains a $C_4$-free subgraph of average degree $k$, where $C$ is some absolute constant (Theorem~\ref{thm:degree-bounded}). A less quantitative version is proved earlier by McCarty \cite{MC21}, and a better dependence on $v$ for fixed $k$ was obtained by Hunter and Gir\~ao \cite{GH}. This theorem can be applied directly to the graph $G_3$, thus guaranteeing an induced subgraph $G_4\subseteq G$, completing the proof.

\begin{lemma}\label{lemma:step 1}
For every graph $G$, there is a real number $d\geq d(G)$ and an induced subgraph $G_1\subseteq G$ with the vertex set $V(G_1)=A_1\cup B_1$ such that $|A_1|\ge |B_1|/2$, $G_1$ is $d$-degenerate, and every $a\in A_1$ has $\deg(a)\leq 4d$ and at least $d/4$ neighbours in $B_1$.
\end{lemma}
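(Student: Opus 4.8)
\emph{Plan.} The statement has three ingredients to produce: a value $d\geq d(G)$, a subgraph $G_1$ that is $d$-degenerate, and a partition $V(G_1)=A_1\cup B_1$ witnessing the degree and size conditions. I would first pin down $d$ so that $d$-degeneracy comes for free. Let $D$ be the degeneracy of $G$; recall $D\geq d(G)/2$ (since $d(G)=2e(G)/|V(G)|\leq 2D$), and that $G$ has an induced subgraph $F$ with $\delta(F)=D$, and also an induced subgraph with minimum degree at least $d(G)/2$. I set $d:=\max(D,d(G))$, so $d\geq d(G)$ and, because $D\leq d$, \emph{every} subgraph of $G$ — in particular whatever $G_1$ I end up taking — is automatically $d$-degenerate. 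In both cases ($d=D$ or $d=d(G)$) one gets an induced subgraph $F\subseteq G$ with $\delta(F)\geq d/2$; fix such an $F$ and write $m=|V(F)|$. Since $F$ is $d$-degenerate, $e(F)\leq dm$, hence fewer than $m/2$ vertices of $F$ have degree exceeding $4d$.

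Next I would extract the bipartition from $F$. Put every vertex of degree $>4d$ (there are $<m/2$ of them) into $B_1$ at the outset. For the remaining vertices, all of which have degree in $[d/2,4d]$, take a maximum cut $(S,T)$ of $F$: every vertex has at least half of its $\geq d/2$ neighbours on the opposite side, hence at least $d/4$ neighbours there. Let $A_1$ consist of the remaining (low-degree) vertices lying on whichever of $S,T$ contains the majority of them, and push everything else into $B_1$. Setting $G_1:=F$, each $a\in A_1$ then has $\deg_{G_1}(a)\leq 4d$ and at least $d/4$ neighbours on the opposite cut side, which is contained in $B_1$; and $G_1$ is $d$-degenerate by the choice of $d$. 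Alternatively, one can replace the max-cut step by a random partition (each remaining vertex going to $B_1$ independently with a suitable probability) followed by deleting the few vertices that fail the $d/4$-neighbour condition — the two routes give the same structure.

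The one genuinely delicate point, and the step I expect to be the main obstacle, is forcing $|A_1|\geq |B_1|/2$ simultaneously with the degree conditions, since the constants $4d$ and $d/4$ leave little slack. By construction $|A_1|>m/4$; if this is not yet enough, note that the surplus in $B_1$ consists of vertices of degree $>4d$ (together with the minority-side low-degree vertices), and one may simply delete from $G_1$ a few such vertices lying on the \emph{same} cut side as $A_1$ — this shrinks $B_1$ without disturbing any $A_1$-vertex's neighbours, which are all accounted for on the opposite side — iterating until the balance $|A_1|\geq |B_1|/2$ holds. Care is needed in choosing which vertices to discard and in the regime where $d(G)$ and the degeneracy are both bounded, where it is cleanest to just take $G_1$ to be a single edge (or a small fixed gadget) with $d$ a suitable absolute constant. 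Once the counting in this last step is arranged, all four requirements of the lemma are met.
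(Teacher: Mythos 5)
Your overall plan — pass to a subgraph of minimum degree $\approx d/2$, take a max cut, and remove high-degree vertices — is in the same spirit as the paper, but the balance condition $|A_1|\geq|B_1|/2$ does not actually follow from your argument, and this is a genuine gap. Your construction puts \emph{all} high-degree vertices into $B_1$ and takes $A_1$ to be the low-degree vertices on one cut side. This gives $|A_1|>m/4$ (where $m=|V(F)|$), but $|B_1|$ can be as large as $3m/4$. Your proposed fix of deleting high-degree vertices on the same side as $A_1$ fails because there may be none: if the max cut is $(S,T)$ with $A_1=L\cap S$ (the low-degree vertices of $S$) and all high-degree vertices happen to sit in $T$, then after deletion $B_1=T$, whose size can approach $3m/4$, while $|A_1|$ can be as small as just over $m/4$ — a ratio of $1/3$, not the required $1/2$. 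Your fallback to a ``single edge gadget'' does not address this case, since the problem arises irrespective of whether $d(G)$ and the degeneracy are bounded.

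The paper's proof avoids this in two ways that you miss. First, it takes $G_0$ to be an induced subgraph of \emph{maximum} average degree $d$, so that $e(G_0)=dm/2$ \emph{exactly}; this gives the sharper bound that fewer than $m/4$ (not $m/2$) vertices have degree $>4d$, since $\sum_v \deg(v)=dm$. Your choice of $d=\max(D,d(G))$ only gives $e(F)\leq dm$, hence only the weaker $<m/2$ bound. Second, the paper chooses the \emph{larger} cut side $A_0$ and removes its high-degree vertices (leaving $B_1=B_0$ untouched, high-degree vertices and all). Then $|A_1|=|A_0|-|A'|>|A_0|-m/4\geq m/2-m/4=m/4\geq |B_0|/2=|B_1|/2$, and the arithmetic closes. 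In short: you need $d$ to be the maximum induced average degree (so the degree-sum is tight) and $A_1$ must come from the larger cut side with only \emph{its} high-degree vertices removed, rather than shovelling all high-degree vertices into $B_1$.
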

\begin{proof}
Let $G_0$ be an induced subgraph of $G$ of maximum average degree $d$. Then $G_0$ is $d$-degenerate, and it has minimum degree $d/2$. Otherwise, removing a vertex of degree less than $d/2$ gives a graph of larger average degree, contradicting the maximality of $G_0$. Let $V(G_0)=A_0\cup B_0$ be a partition that maximizes the number of edges in a cut, and assume that $|A_0|\ge |B_0|$.
    
Let $A'\subset A_0$ be the set of vertices $a$ of degree more than $4d$. Then $2d|A'|\leq e(G_0)=(|A_0|+|B_0|)d/2\leq |A_0|d$, which gives $|A'|\leq |A_0|/2$. Thus, if we set $A_1=A_0\setminus A'$ and $B_1=B_0$, then we have $|A_1|\geq |A_0|/2\geq |B_1|/2$ and $d_{G_0}(a)\leq 4d$ for all $a\in A_1$. 
On the other hand, using that $A_0, B_0$ maximizes the number of edges in a cut, we have $|B_0\cap N(a)|\geq d_{G_0}(a)/2\geq d/4$ for every $a\in A_0$. Otherwise, the cut $(A_0\setminus\{a\},B_0\cup\{a\})$ has more edges than the cut $(A_0,B_0)$. Therefore, the induced subgraph $G_1=G[A_1\cup B_1]$ with partition $A_1\cup B_1$ satisfies all conclusions of the lemma and the proof is complete. 
\end{proof}

In the second step, we use the following lemma, proved by Ding, Gao, Liu, Luan, and Sun \cite{DGLLS}.

\begin{lemma}[Lemma 3.1 in \cite{DGLLS}.]\label{sparsity focusing}
Let $H$ be a bipartite graph. For any $c, \eps>0$ there exists $\beta = \beta(c, H, \eps)$ such that every $(c, t)$-sparse graph $G$ which does not contain an induced copy of $H$ is also $(1-\eps, \beta t)$-sparse. 
\end{lemma}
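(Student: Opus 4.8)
The plan is to prove the contrapositive: if $G$ is $(c,t)$-sparse but fails to be $(1-\eps,\beta t)$-sparse (for a suitable $\beta=\beta(c,H,\eps)$ to be determined), then $G$ contains an induced copy of $H$. So suppose there are $A,B\subseteq V(G)$ with $|A|,|B|\ge \beta t$ and $e(A,B)>\eps|A||B|$. Passing to $G':=G[A\cup B]$ — which is again $(c,t)$-sparse — we obtain a graph on $n\ge\beta t$ vertices of edge density at least $\eps/4$. The observation that drives the whole argument is that $(c,t)$-sparsity becomes powerful once we only ever look at vertex sets of size at least $t$: between any two such sets the edge density is at most $1-c$, so a constant fraction of the pairs are non-edges, which is precisely what is needed to realise the non-edges of the bipartite graph $H$.

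Concretely, I would first apply Szemer\'edi's regularity lemma to $G'$ with a regularity parameter $\eps_0$ chosen small with respect to $c$, $\eps$ and $H$; since $H$ and $\eps$ are fixed, tower-type bounds on $\beta$ are harmless. Because the number of clusters is bounded by some $M_0=M_0(\eps_0)$, each cluster has size $\Omega_{\eps_0}(n)\ge t$ as soon as $\beta\ge 2M_0$ — this is exactly where it matters that $\beta$ may depend on everything except $t$. The reduced graph has edge density at least $\eps/8$, so by bipartite supersaturation it contains $\Omega(M^{a+b})$ copies of $K_{a,b}$ where $a=\lceil |V(H)|/2\rceil$ and $b=\lfloor |V(H)|/2\rfloor$; since the irregular pairs span only an $\eps_0$-fraction of all pairs, a routine deletion argument lets us select one such copy all of whose $\binom{a+b}{2}$ cluster pairs are $\eps_0$-regular. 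This yields disjoint sets $C_1,\dots,C_a,D_1,\dots,D_b$, each of size at least $t$, with $d(C_i,D_j)\ge\mu$ for all $i,j$ (for some $\mu=\mu(\eps)>0$) and with every pair among these sets $\eps_0$-regular.

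Now I would embed $H$ into this structure. Write $V(H)=X\cup Y$ with $|X|\le a$, $|Y|\le b$, assign the vertices of $X$ to distinct sets $C_i$ and those of $Y$ to distinct sets $D_j$, and look for one vertex from each used set inducing a copy of $H$. For a pair $C_i,D_j$ where $H$ has an edge we have $d(C_i,D_j)\in[\mu,1-c]$, and for a pair where $H$ has a non-edge — which includes \emph{every} pair inside $\{C_i\}$ and inside $\{D_j\}$, because $H$ is bipartite — the density is at most $1-c$ by $(c,t)$-sparsity, so its complement density is at least $c$. All these pairs are $\eps_0$-regular, so the induced counting lemma (with $\eps_0$ small relative to $\mu$, $c$ and $|V(H)|$) produces $\Omega\big(\prod_i|C_i|\prod_j|D_j|\big)>0$ induced copies of $H$ inside $G'$, hence inside $G$. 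This contradicts the hypothesis, and the proof is complete.

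I expect the main obstacle to be the parameter coordination rather than any single combinatorial step: any auxiliary structure we use must have size at least $t$ for $(c,t)$-sparsity to apply, yet $t=n/\beta$ is \emph{linear} in $n$, so we genuinely need linear-sized pieces. This is why the bounded number of clusters in the regularity lemma is the right tool here — a tool like dependent random choice would only deliver common neighbourhoods of sublinear size, which is useless for invoking sparsity. A secondary technical point, handled by the supersaturation-plus-deletion step above, is ensuring the chosen clusters are pairwise regular while still carrying the dense $K_{a,b}$ pattern.
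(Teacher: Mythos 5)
The paper does not actually prove this lemma; it is imported verbatim from \cite{DGLLS} (Lemma~3.1 there), so there is no in-paper argument for me to compare yours against. Judged on its own merits, your regularity-lemma strategy is sound, and the key insight is exactly right: once you pass to a dense piece of size $\geq \beta t$ and partition it into at most $M_0(\eps_0)$ clusters, every cluster has size at least $t$ (provided $\beta \gtrsim M_0/(1-\eps_0)$), so $(c,t)$-sparsity caps every inter-cluster density at $1-c$; combined with the reduced-graph density bound $\geq \mu$ on the pairs that became edges of the reduced graph, this supplies precisely the upper and lower density bounds the induced counting lemma requires. The parameter scheduling is also coherent, since $\eps_0$, hence $M_0$, hence $\beta$, depend only on $c$, $\eps$ and $H$, and the lower bound $m_0$ on the number of clusters can also be taken large enough to make supersaturation effective.

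There is, however, one small slip. You look for $K_{a,b}$ in the reduced graph with $a=\lceil |V(H)|/2\rceil$ and $b=\lfloor |V(H)|/2\rfloor$, and then write $V(H)=X\cup Y$ with $|X|\leq a$, $|Y|\leq b$. Since $|X|+|Y|=a+b$, these two inequalities force $|X|=a$ and $|Y|=b$, so the assignment only works if the bipartition of $H$ happens to be (nearly) balanced; for $H=K_{1,3}$, say, it fails outright. The remedy is trivial: take $a=b=|V(H)|$ (or, if you prefer, $a=|X|$, $b=|Y|$) so that each side of the found biclique is guaranteed to hold enough clusters, and the rest of the argument goes through unchanged. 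Note that this slack is genuinely needed rather than cosmetic: embedding two adjacent vertices of $H$ into two $D$-clusters would not work, because a pair $(D_j,D_{j'})$ is only known to be regular after the deletion step, not to have density bounded away from zero.
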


\begin{lemma}\label{lemma:step 2}
Given integers $\tau, v, h\geq 0$ and a real number $c>0$, there exists a constant $K=K(\tau, v, h, c)$ such that for every $d\geq Kt$ the following statement is true. 
Let $G_1$ be a graph satisfying the conclusions of Lemma~\ref{lemma:step 1}, i.e. a $d$-degenerate $(c, t)$-sparse graph on the vertex set $A_1\cup B_1$ such that $|A_1|\ge |B_1|/2$, and $|N(a)|\le 4d$, $|N(a)\cap B_1|\geq d/4$ for every $a\in A_1$. 

If $G_1$ contains no induced copy of $H_v^{(h)}$, there exist subsets $A_2\subseteq A_1$ and $B_2\subseteq B_1$ such that the graph $G_2=G_1[A_2\cup B_2]$ is $200\tau$-degenerate and $e(A_2, B_2)\geq \tau (|A_2|+|B_2|)$. Additionally, there is no triangle with vertices in both $A_2$ and $B_2$, and there are no copies of $K_{2, v+1}$ with the vertex class of two vertices in $A_2$, and the other vertex class in $B_2$.
\end{lemma}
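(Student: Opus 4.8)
We sketch our approach; we may assume $\tau\ge 1$, since for $\tau=0$ any independent set satisfies all the requirements. The plan is to build $A_2$ and $B_2$ through a single randomized selection in which, exposing the vertices one at a time, we simply refuse to add a vertex whenever doing so would create one of the configurations we must avoid; after a preliminary sparsification each such refusal occurs with small probability, so enough edges survive.

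\textbf{Sparsification.} Since $G_1$ is $(c,t)$-sparse and contains no induced copy of $H_v^{(h)}$, Lemma~\ref{sparsity focusing} applied with $H=H_v^{(h)}$ shows that $G_1$ is $(1-\eps,\beta t)$-sparse, where $\eps$ can be taken as small as we like (depending on $\tau,v,h,c$) and $\beta=\beta(c,v,h,\eps)$. Taking $K$ large so that $d\ge Kt\gg\beta t$, any two vertex sets of size at least $\beta t$ span an $\eps$-fraction of all pairs. In particular each neighbourhood $N(a)\cap B_1$, which has size between $d/4$ and $4d$, is internally of density at most $\eps$, and for every $a\in A_1$ we have $\sum_{a'\in A_1}|N(a)\cap N(a')\cap B_1|=e(N(a)\cap B_1,A_1)\le 4\eps d|A_1|$.

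\textbf{Randomized selection.} Fix a $d$-degeneracy ordering $v_1,\dots,v_N$ of $G_1$ and put $p=q=C_0\tau/d$ for a suitable constant $C_0$. Process the $v_i$ in order, maintaining a set $S$; for $v_i$ flip an independent coin, heads with probability $p$ if $v_i\in B_1$ and $q$ if $v_i\in A_1$, and add $v_i$ to $S$ exactly when the coin is heads and adding it would not create, among the vertices already in $S$: (a)~a vertex with more than $200\tau$ earlier neighbours in $S$; (b)~a triangle meeting both $S\cap A_1$ and $S\cap B_1$; (c)~a copy of $K_{2,v+1}$ with its $2$-part in $S\cap A_1$ and its $(v+1)$-part in $S\cap B_1$. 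Then $G_2=G_1[S]$ is automatically $200\tau$-degenerate (use the induced ordering) and free of configurations (b) and (c), so it only remains to check $e(A_2,B_2)\ge\tau(|A_2|+|B_2|)$ for $A_2=S\cap A_1$, $B_2=S\cap B_1$. Rule (a) blocks $v_i$ with conditional probability at most $1/4$, since its number of earlier neighbours in $S$ is dominated by $\mathrm{Bin}(\le d,p)$, of mean $C_0\tau$ (apply Markov). Rule (b) blocks $v_i$ with small probability, because the expected number of edges among the at most $200\tau$ earlier selected vertices inside any one neighbourhood is at most about $\eps(4d)^2p^2=16\eps C_0^2\tau^2$, which is $\ll 1$ once $\eps$ is small. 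Granting the analogous estimate for rule (c), each edge of $G_1$ between $A_1$ and $B_1$ survives in $G_2$ with probability $\Omega(p^2)$; since $e(A_1,B_1)\ge d|A_1|/4$ and $|A_1|\ge|B_1|/2$, a suitable $C_0$ makes $\E[e(A_2,B_2)]$ comfortably exceed $\tau\cdot\E[|A_2|+|B_2|]$, and concentration yields a good outcome.

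\textbf{The main obstacle.} The crux is bounding the probability that rule (c) blocks $v_i$, and this is exactly where the $H_v^{(h)}$-free hypothesis is indispensable: two vertices of $A_1$ can have $\Omega(d)$ common neighbours in $B_1$, so the relevant copies of $K_{2,v+1}$ really are abundant in $G_1$ and the crude first-moment estimate used for rule (b) is far too lossy. The plan is to show that if rule (c) fired with non-negligible probability, then with positive probability the selection would expose, inside the locally almost-edgeless graph $G_2$, a set of $v$ vertices of $A_2$ having, for each of their $h$-subsets, a private common neighbour in $B_2$ and no additional edges, that is, an induced copy of $H_v^{(h)}$, contradicting the hypothesis on $G_1$. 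Carrying this randomized embedding out while keeping all of $\eps$, $p=q$, the threshold $200\tau$ and the choice of $h,v$ mutually compatible with the edge accounting is the genuinely new ingredient, and where I expect nearly all of the work to lie. (If the embedding route turns out too lossy, one can instead drop rule (c) and, after the selection, delete a small hitting set of vertices for the surviving forbidden $K_{2,v+1}$'s, bounding the cost again via the $H_v^{(h)}$-free hypothesis and the boosted sparsity; either way these $K_{2,v+1}$'s are the real difficulty.)
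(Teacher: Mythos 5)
Your skeleton matches the paper's: boost the sparsity via Lemma~\ref{sparsity focusing}, sample vertices with probability $\Theta(\tau/d)$, discard vertices that would create high degeneracy, mixed triangles, or the forbidden $K_{2,v+1}$'s, and compare the expected edge count with the expected vertex count. (The paper samples $A',B'$ first and deletes afterwards rather than processing vertices online, but that distinction is cosmetic; and no concentration is actually needed — the expectation inequality $\E[e(A_2,B_2)-\tau(|A_2|+|B_2|)]\ge 0$ already produces an outcome.) You also correctly diagnose the one genuinely delicate point: controlling the loss from the $K_{2,v+1}$ rule, where the $H_v^{(h)}$-free hypothesis must be used.

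But that diagnosis is where your proposal stops, and it is exactly where the proof's real content lives, so there is a genuine gap. What the paper proves (Claim~\ref{claim:number_of_cliques}) is a deterministic counting statement about $G_1$: for every $a\in A_1$, at most an $\eps^{1/2}$-fraction of the independent $v$-subsets $S\subset N(a)\cap B_1$ are ``rich,'' meaning every $h$-subset of $S$ has at least $2\eps d$ common neighbours in $A_1$. The proof picks a uniform random $v$-subset of $N(a)$, sets up for each $h$-subset $e$ the bad event $\cE_e$ that more than half the common neighbours of $e$ (outside the small exceptional set $S_a$) are adjacent to some extra vertex of $S$, shows via Markov that all $\cE_e$ together occur with probability $O(\eps v^{h+1})$, and then argues that if $S$ is a rich clique in the auxiliary hypergraph and no $\cE_e$ holds, one can greedily pick one common neighbour $x_e$ per $h$-subset $e$ (using $(1-\eps,\beta t)$-sparsity to keep the $x_e$'s pairwise non-adjacent), producing an induced $H_v^{(h)}$ in $G_1$ — contradiction. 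This counting bound is what lets the union bound over rich $(v{+}1)$-sets work in bounding $\Pb[\text{rule (c) fires}]$; poor sets are handled by the observation that their common neighbourhood has size $<2\eps d$, so the probability a witness $a'$ is sampled is small.

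Your alternative sketch (``expose, inside $G_2$, a set of $v$ vertices of $A_2$ with private common neighbours in $B_2$'') has the parts of $H_v^{(h)}$ in the wrong classes relative to the paper (the paper's $v$-side lands in $B_1$ inside a single neighbourhood $N(a)$, with the $\binom{v}{h}$ leaves in $A_1$), and more importantly it entangles the selection randomness with the embedding randomness, which the paper deliberately decouples by proving the counting bound as a property of $G_1$ alone. Your fallback of deleting a hitting set for the surviving $K_{2,v+1}$'s is not developed and would still need an analogue of Claim~\ref{claim:number_of_cliques} to control the size of that hitting set. So the proposal correctly frames the problem and the auxiliary estimates, but the heart of the argument — the $H_v^{(h)}$-free counting lemma and its integration into the deletion probability — is missing.
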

\begin{proof}
Let $\eps>0$ be a sufficiently small parameter with respect to $\tau, v, h$ ($\eps\ll \tau^{-3v}, v^{-3h}$ suffices). Applying Lemma~\ref{sparsity focusing}, there exists $\beta=\beta(\eps,c,H)$ such that $G_1$ is $(1-\eps, \beta t)$-sparse, and we set $t_0=\beta t$. Finally, we choose $K$ such that $K \geq 4\binom{v}{h}\beta/\eps$. In particular, we hav $\eps d \geq 4\binom{v}{h}\beta t$.

We obtain the sets $A_2, B_2$ by randomly sampling subsets of $A_1, B_1$ and appropriately cleaning them to remove triangles and copies of $K_{2, v+1}$. However, in order to be able to analyze this random process, we need two short preliminary observations. The first observation is a simple consequence of the local sparsity condition, and it is in fact the only place in the proof where this condition is used. For a vertex $a\in A_1$, let $S_a:= \{v\in V(G_1): |N(a)\cap N(v)|> 4\eps d\}$ be the set of vertices which share many common neighbours with $a$. Then, we must have $|S_a|< t_0$, as otherwise, the number of edges between $N(a)$ and $S_a$ is at least $4\eps d|S_a|>\eps |N(a)||S_a|$, since $|N(a)|\leq 4d$. This contradicts the fact that $G_1$ is $(1-\varepsilon, t_0)$-sparse (since $|N(a)|\ge d/4\ge t_0$ for all $a\in A_1$). Thus, as $d/4\geq Kt/4\geq \beta t/\eps$,  we have $|S_a|< t_0=\beta t\leq \eps d$.

The second observation is that very few independet sets $S\subset N(a)$ of size $v$ have at least $2\eps d$ common neighbours in $A_1$.

    \begin{claim}\label{claim:number_of_cliques}
        For any vertex $a\in A_1$, there are at most $\eps^{1/2} \binom{|N(a)|}{v}$ subsets $S\subset N(a)\cap B_1$ of size $v$ which are independent and have at least $2\eps d$ common neighbours in $A_1$.
    \end{claim}
    \begin{proof}
        Let $\cH_a$ be an auxiliary $h$-uniform hypergraph on the vertex set $N(a)\cap B_1$, in which an $h$-element set $e\subset N(a)\cap B_1$ is an edge if $e$ is an independent set and $|\bigcap_{b\in e}N(b)\cap A_1|\geq 2\eps d$. Note that every independent set $S\subset N(a)$ with at least $2\eps d$ common neighbours is a $v$-clique in $\cH_a$, and therefore it suffices to bound the number of $v$-cliques in $\cH_a$. Thus, let us select $S$ uniformly at random among $v$-element subsets of $N(a)$, and bound the probability that it induces a clique in $\cH_a$.

        For an edge $e\in E(\cH_a)$, let $Z_e=(\bigcap_{b\in e}N(b)\cap A_1)\setminus S_a$, so $Z_e$ is the set of  common neighbours of $e$ whose codegree with $a$ is at most $4\eps d$. As $|\bigcap_{b\in e}N(b)\cap A_1|\geq 2\eps d$ and $|S_a|\leq \eps d$, we have $|Z_e|\ge \eps d$ for all $e$. Also, let $Z_e^*$ be the  set of those vertices $x\in Z_e$ which have $e\subsetneq N(x)\cap S$, and let $\cE_e$ be the event that $|Z^*_e|\ge |Z_e|/2$. Observe that $\cE_e$ does not happen if $e\not\subset S$, i.e. $\Pb[\cE_e|e\not\subset S]=0$. 
        
        On the other hand, for $e\in E(\cH_a)$, we claim $\mathbb{P}[\cE_e|e\subset S]\le 64\eps v$. Indeed, conditioning on the event $e\subset S$, we can write $S= e\cup S'$ for a random $v-h$ element set $S'$ chosen from the uniform distribution on $N(a)\cap B_1\setminus e$. For each $x\in Z_e$, we have that $$\mathbb{P}\big[x\in Z_e^*\big|e\subset S\big] = \mathbb{P}\big[N(x)\cap S'\neq \varnothing\big] \le (v-h)\frac{|N(x)\cap N(a)\cap B_1|}{|N(a)\cap B_1|-v}\le v\frac{4\eps d}{d/8}\le 32v\eps,$$ from which $\mathbb{E}\big[|Z_e^*|\big|e\subset S\big]\leq 32v\eps |Z_e|$. Therefore,  we get 
        $$\mathbb{P}\big[\cE_e\big|e\subset S\big]=\mathbb{P}\big[|Z_e^*|\geq |Z_e|/2\big|e\subset S\big]\leq \mathbb{P}\Bigg[Z_e^*\geq \frac{\mathbb{E}[Z_e^*|e\subset S]}{64v\eps}\Bigg|e\subset S\Bigg]\le 64 v\eps,$$
        where the last inequality is due to Markov's inequality. Hence, by the union bound,  $$\mathbb{P}\Big[\bigvee_{e\in E(\cH_a)}\cE_e\Big]\le \sum_{e\in E(\cH_a)} \Pb\big[\cE_e\big|e\subset S\big]\Pb[e\subset S]\leq  64 v \eps \sum_{e\in E(\cH_a)} \Pb[e\subset S]\leq  64 v \eps \cdot \binom{|S|}{h} \le 64 \eps v^{h+1},$$
        where we  used the linearity of expectation to say that $\sum_{e\in E(\cH_a)} \Pb[e\subset S]$ is the expected number of $h$-element subsets of $S$, that is $\binom{|S|}{h}$.
        
        We also claim that if $S$ induces a clique in $\cH_a$, then at least one of the events $\cE_e$ holds for some $e\in \cH_a$. This suffices to complete the proof, since it implies that the probability that $S$ is a clique in $\cH_a$ is at most $64 \eps v^{h+1}\leq \eps^{1/2}$. 
        
        Assume that for some $S$ no event $\cE_e$ holds, then we will find an induced copy of $H_v^{(h)}$ in $G_1$.   As $S$ induces a clique in $\cH_a$, $S$ is an independent set, and we can embed the side of $H_v^{(h)}$ of size $v$ into $S$ in an arbitrary way. Then, for every $h$-element set $e\subset S$, we choose a vertex $x_e\in Z_e\backslash Z_e^*$, that is, a vertex with the property that $N(x_e)\cap S = e$.
        Since $\cE_e$ does not hold, we have $|Z_e\backslash Z_e^*|\geq |Z_e|/2\geq \eps d/2$ for all $e\subset S$, and thus we can fix some set $T_e\subseteq Z_e\backslash Z_e^*$ of size exactly $\eps d/2$. Next, we find a vertex $x_e\in T_e$ for every edge $e$ such that $\{x_e\}_{e\in \binom{S}{h}}$ is an independent set. This gives an induced copy of $H_v^{(h)}$, finishing the proof. 

        To find such an independent set, we begin by eliminating from $T_e$ all vertices $x$ such that $|N(x)\cap T_{e'}|> \eps |T_{e'}|$ for some $e'\subset S$. Since $G_1$ is a $(1-\eps, t_0)$-sparse graph, for each $e'$ there are at most $t_0$ such vertices $x\in T_e$. Let $R_e$ denote the set of remaining vertices in $T_e$ after these deletions, for which we have $|R_e|\geq |T_e|-\binom{v}{h}t_0\geq \eps d/2 - \binom{v}{h}\beta t\geq \eps d/2-\eps d/4=|T_e|/2$. Now, we greedily pick vertices $x_e\in R_e$ one by one, noting that at each step the previously chosen vertices can be adjacent to at most $\binom{v}{h} \cdot \eps |T_{e}|<|T_e|/2\leq |R_e|$ vertices in $R_e$, thus showing that there is always at least one choice for $x_e$ not adjacent to any of the previously chosen vertices. But then $S$ together with $\{v_e : e\in \binom{S}{h}\}$ induces a copy of $H_v^{(h)}$ in $G_1$. This contradicts the assumption that no such copy exists in $G_1$.
    \end{proof}

With this claim, we can now define $A_2, B_2$. First, let $A'\subset A_1, B'\subset B_1$ be random subsets in which each element is included independently with probability $p=30\tau/d$. Since the graph $G_1$ is $d$-degenerate, it has an orientation $X$ in which every vertex has outdegree at most $d$. To obtain $B_2$, we delete from $B'$ all vertices $b$ with $|N^+_X(b)\cap (A'\cup B')|\geq 200\tau$. Similarly, to obtain $A_2$, we delete from $A'$ all vertices $a$ which satisfy one of the following three conditions:
\begin{itemize}
    \item[\textit{(i)}] $|N^+_X(a)\cap (A'\cup B')|\geq 200\tau$,
    \item[\textit{(ii)}] $a$ belongs to some triangle with vertices in $A'\cup B'$,
    \item[\textit{(iii)}] for some $a'\in A'\backslash\{a\}$, the set $N(a)\cap N(a')\cap B'$ contains an independent set of size $v+1$.
\end{itemize}

Observe that the sets $A_2$ and $B_2$ defined as above satisfy all necessary conditions, except perhaps $e(A_2, B_2)\geq \tau (|A_2|+|B_2|)$. Namely, since $X$ is an orientation of $G_2=G_1[A_2\cup B_2]$ in which every vertex has outdegree at most $200\tau$, the graph $G_2$ is $200\tau$-degenerate. Also, since all vertices of $A'$ contained in some triangle were deleted, there do not exist any triangles with vertices in both $A_2$ and $B_2$. Finally, observe that due to the lack of triangles, any copy of $K_{2, v+1}$ with the vertex class with two vertices in $A_2$ and the other vertex class in $B_2$ must be induced. But $N(a)\cap N(a')\cap B_2$ does not contain an independent set of size $v+1$ for any $a, a'\in A_2$, so $G_2$ does not contain any copies of $K_{2, v+1}$. 

To give a lower bound on the number of edges between $A_2$ and $B_2$, we  use the following claim.

\begin{claim}\label{claim:probability of survival}
If $a\in A_1$ and $b\in B_1$ are adjacent vertices such that $|N(a)\cap N(b)|\leq 4\eps d$ (so $b\notin S_a$), then $\Pb[a\in A_2, b\in B_2]\geq p^2/2$. 
\end{claim} 

Note that this claim is sufficient to complete the proof, since for each $a\in A_1$, there are at least $|N(a)\cap B_1|-|S_a|\geq d/4-\eps d\geq d/5$ vertices $b$ for which the claim applies. This allows us to lower bound the expected number of edges between $A_2$ and $B_2$ as follows:
\[\E\big[e(A_2, B_2)\big]\geq |A_1|\cdot \frac{d}{5}\cdot \frac{p^2}{2}\geq \Big(\frac{|A_1|}{3}+\frac{|B_1|}{3}\Big)\cdot \frac{d}{5} \cdot \frac{30\tau}{2d}p\geq \tau \big(|A_1|+|B_1|\big)p\geq \tau \big(\E\big[|A'|\big]+\E\big[|B'|\big]\big),\]
where we used that $|A_1|= |A_1|/3+2|A_1|/3\geq |A_1|/3+|B_1|/3$ since $|A_1|\geq |B_1|/2$.

As $|A_2|\leq |A'|$ and $|B_2|\leq |B'|$, we also have that  $\E\big[e(A_2, B_2)\big]\geq \tau (\E[|A_2|]+ \E[|B_2|])$. Therefore, there exists an outcome in which $e(A_2, B_2)\geq \tau (|A_2|+|B_2|)$, which is precisely what we wanted. Thus, to complete the proof, it remains only to prove Claim~\ref{claim:probability of survival}.

\begin{proof}[Proof of Claim~\ref{claim:probability of survival}.]
For $a, b$ to be in the sets $A_2, B_2$, respectively, it is necessary to have $a\in A'$ and $b\in B'$. These events are independent and each happens with probability $p$. Thus, the event $E$ that both $a\in A'$ and $b\in B'$ has probability $p^2$. In the rest of the proof, we show that, conditioning on the event $E$, we have $\Pb[a\notin A_2|E]\leq 1/3$ and $\Pb[b\notin B_2|E]\leq 1/6$. This is sufficient to complete the proof, since a union bound then implies that $\Pb\big[a\in A_2, b\in B_2\big|E\big]\geq 1/2$, and so we have \[\Pb[a\in A_2, b\in B_2]=\Pb\big[a\in A_2, b\in B_2\big|E\big]\cdot \Pb[E]\geq p^2/2.\]

We begin by showing $\Pb[b\notin B_2|E]\leq 1/6$. Given that $a\in A', b\in B'$, the probability that $b\notin B_2$ is the same as the probability of the event $|N^+_X(b)\cap (A'\cup B'\cup \{a\})|\geq 200\tau$. In other words, $b$ is deleted from $B'$ only if $|(N^+_X(b)\backslash \{a\})\cap (A'\cup B')|\geq 199\tau$. Thus, by Markov's inequality we have 
\begin{align*}
\Pb\big[b\notin B_2\big|E\big]&\leq \Pb\big[\big|(N^+_X(b)\backslash \{a\})\cap (A'\cup B')\big|\geq 199\tau\big]\\
&\leq \frac{1}{199\tau}\E\big[|N^+_X(b)\cap (A'\cup B')|\big]= \frac{p|N^+_X(b)|}{199\tau}=\frac{30\frac{\tau}{d} d}{199\tau}\leq \frac{1}{6}.
\end{align*}

We now show that $\Pb[a\notin A_2|E]\leq 1/3$. By the above argument, the probability that $a$ was deleted from $A'$ due to the condition \textit{(i)} is at most $1/6$.  The probability that $a$ is deleted from $A'$ due to the condition \textit{(ii)} is at most $1/12$. Indeed, given that $a\in A', b\in B'$, the probability $a$ is in a triangle of $A'\cup B'$ which contains $b$ is at most $p|N(a)\cap N(b)|\leq 4\eps d\cdot 30\tau/d\leq 120\eps \tau$. Also, the probability that $a$ is in a triangle which does not include $b$ is at most $e(G_1[N(a)])\cdot p^2\leq \eps |N(a)|^2 \cdot 30^2\tau^2/d^2\leq 1000 \eps \tau^2 \cdot (4d)^2/d^2\leq 2\cdot 10^4 \eps \tau^2$. Note that $e(G_1[N(a)]) \leq \eps |N(a)|^2$ follows from $(1-\eps, \beta t)$-sparsity. In total, the probability that $a'$ is deleted due to \textit{(ii)} is at most $120\eps \tau+2\cdot 10^4 \eps \tau^2\leq 1/12$, using that $\eps$ is sufficiently small compared to $\tau$. 

As the last step, we show that the probability $a$ is deleted due to condition \textit{(iii)} is at most $1/12$. In other words, we want to bound the probability that there is a set $S\subseteq N(a)$ of size $v+1$ such that $S\subseteq B'$ and $\bigcap_{b\in S} N(b)$ contains a vertex $a'\in A'\backslash\{a\}$. To do this, we use a union bound over all independent $(v+1)$-sets $S\subset N(a)$. We call an independent set $S$ \textit{rich} if has at least $2\eps d$ common neighbours in $A_1$ and \textit{poor} otherwise. 

By Claim~\ref{claim:number_of_cliques}, there are at most $\eps^{1/2}\binom{|N(a)|}{v}$ rich sets $S$ containing $b$, since $S\backslash\{b\}$ is an independent set of size $v$ with $2\eps d$ common neighbours. Similarly, there are at most $\eps^{1/2}\binom{|N(a)|}{v}\cdot |N(a)|$ rich sets in $N(a)$ overall, since each such set is obtained by adding an element to a $v$-set with at least $2\eps d$ common neighbours. Thus, the probability that any rich set in included in $B'$ is at most
\begin{align*}
\Pb[B'\text{ contains a rich set }&S|E]\leq \!\!\!\!\sum_{\substack{S\subset N(a)\\ S \text{ rich, }b\notin S} } \!\!\!\!\Pb[S\subseteq B'|E]+\!\!\!\!\sum_{\substack{S\subset N(a)\\ S \text{ rich, }b\in S} }\!\!\!\! \Pb[S\subseteq B'|E]\\
&\leq \eps^{1/2} |N(v)|^{v+1} p^{v+1} + \eps^{1/2} |N(v)|^{v} p^{v}\leq \eps^{1/2} (4d)^{v}\Big(\frac{30\tau}{d}\Big)^v \Big(4d\frac{30\tau}{d}+1\Big)\leq \frac{1}{24},
\end{align*}
since $\eps$ is chosen to be small enough as a function of $\tau$ and $v$.

Furthermore, note that if $S$ is poor, the probability that some $a'\in A_1\backslash\{a\}\cap \bigcap_{b\in S} N(b)$ is selected into $A'$ is at most $2\eps d\cdot p\leq 2\eps\cdot 30\tau\leq \eps^{1/2}$. Thus, using similar estimates as above, 
\begin{multline*}
\Pb\Big[\text{there is a poor $S\subset B$}\text{ and $a'\in A'\backslash\{a\}\cap \bigcap_{b\in S}N(b)$}\Big|E\Big]\\\leq \!\!\!\!\sum_{\substack{S\subset N(a)\\ S \text{ poor, }b\notin S} } \!\!\!\!\Pb[S\subset B'|E] \eps^{1/2}+\!\!\!\!\sum_{\substack{S\subset N(a)\\ S \text{ poor, }b\in S} }\!\!\!\! \Pb[S\subset B'|E] \eps^{1/2}
\leq \eps^{1/2} |N(v)|^{v+1} p^{v+1} + \eps^{1/2} |N(v)|^{v} p^{v}\leq \frac{1}{24}.
\end{multline*}
Thus, the probability that $a$ is deleted from $A'$ due to the condition \textit{(iii)} is at most $2/24=1/12$, completing the proof of Claim~\ref{claim:probability of survival}, and thus completing the second step as a whole.
\end{proof}
\end{proof}

Finally, to perform the third step, we need the following lemma. 

\begin{lemma}\label{lemma:step 3}
Let $\tau$ be a positive integer. Suppose $G_2$ is a $200\tau$-degenerate graph with the vertex set $V(G_2)=A_2\cup B_2$ such that $e(A_2, B_2)\geq \tau (|A_2|+|B_2|)$ and there is no triangle with vertices in both $A_2$ and $B_2$. Then, there are independent sets $A_3\subseteq A_2$, $B_3\subseteq B_2$ such that $G_2[A_3\cup B_3]$ has average degree at least $\Omega(\log \tau/\log\log \tau)$.
\end{lemma}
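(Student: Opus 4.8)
The plan is to adapt the argument of Kwan, Letzter, Sudakov and Tran~\cite{KLST20} for dense induced bipartite subgraphs of triangle-free graphs -- in the streamlined form of Glock~\cite{G20} -- with the $200\tau$-degeneracy of $G_2$ playing the role of the degree control available in their setting. First I would replace $G_2$ by the bipartite graph $H$ on $A_2\cup B_2$ consisting of the edges of $G_2$ running between the two sides; since $e(H)=e(A_2,B_2)\ge\tau(|A_2|+|B_2|)$, the graph $H$ has average degree at least $2\tau$, and after the usual cleaning we may assume $H$ has minimum degree at least $\tau$, with vertex classes still denoted $A_2,B_2$, with $G_2$ restricted to $A_2\cup B_2$ still $200\tau$-degenerate, and with no triangle of $G_2$ meeting both sides. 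The point of this last property is that for every $a\in A_2$ the set $N_{G_2}(a)\cap B_2$ is independent in $G_2$ (an edge inside it, together with $a$, would be a triangle with two vertices in $B_2$), and symmetrically for every $b\in B_2$. Consequently, if $A_3\subseteq A_2$ and $B_3\subseteq B_2$ are both independent in $G_2$, then $G_2[A_3\cup B_3]$ has no edge within $A_3$ and none within $B_3$, hence coincides with $H[A_3\cup B_3]$. It therefore suffices to find such independent $A_3,B_3$ with $e_H(A_3,B_3)=\Omega\!\left(\tfrac{\log\tau}{\log\log\tau}\right)(|A_3|+|B_3|)$.

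To produce them I would run the $\ell$-round random sampling process of~\cite{KLST20,G20}, where $\ell=\lfloor\eps\log\tau/\log\log\tau\rfloor$ for a small absolute constant $\eps>0$. Fix an acyclic orientation of $G_2$ in which every out-degree is at most $D:=200\tau$. The two independent sides are grown simultaneously: one maintains a partial independent set on the $A_2$ side together with a shrinking pool of still-viable vertices on the $B_2$ side, and in each round one exposes a fresh random subset of the currently available vertices -- with an inclusion probability, decreasing from round to round, tuned so that the working degree drops by a factor of about $\log\tau$ -- commits the exposed vertices that have no neighbour among the previously committed or currently exposed vertices, and removes from the $B_2$-pool the vertices that just acquired a conflicting neighbour. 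The no-triangle condition enters because the $B_2$-neighbourhood of any committed vertex $a$ is an independent set, so committing $a$ cannot by itself create an edge inside the surviving $B_2$-pool; the $D$-degeneracy, through the out-degree bound, is what makes the union bounds over short ``obstruction'' configurations (paths and small trees of length $O(\ell)$) converge. A first- and second-moment analysis along the lines of~\cite{KLST20,G20} then shows that, with positive probability, the process terminates with independent sets $A_3,B_3$ and $e_H(A_3,B_3)=\Omega(\ell)(|A_3|+|B_3|)$, which is the required bound.

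The main obstacle -- exactly as in~\cite{KLST20,G20} -- is maintaining independence of \emph{both} sides at once while not sacrificing the average degree. Since $G_2$ has average degree roughly $\tau$ but the target is only $\log\tau/\log\log\tau$, the obvious shortcut (build $A_3$ first, then prune its $B_2$-neighbourhood down to an independent set $B_3$) is ruled out: that last pruning would cost a factor $\Theta(D)=\Theta(\tau)$, leaving nothing. Independence of $B_3$ must therefore be built into the sampling itself, which is what forces the iteration; and this is also where the quantitative bound comes from, since the process can shrink the working degree by only about a factor $\log\tau$ per round and so can be sustained for $\ell$ rounds only while $(\log\tau)^{\ell}\le\Theta(\tau)$, i.e.\ for $\ell=O(\log\tau/\log\log\tau)$. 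A secondary point, absent from the bounded-degree setting of~\cite{KLST20}, is that $G_2$ is only $O(\tau)$-degenerate, so all of the estimates have to be carried out with out-degrees from the degeneracy orientation in place of full degrees; this is routine bookkeeping but must be checked line by line.
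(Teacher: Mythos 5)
Your proposal takes a genuinely different route from the paper, and the sketch as written has substantive gaps.

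The paper's proof is \emph{not} an $\ell$-round iterative process. It is a single-shot random construction: pick $B'\subseteq B_2$ once with probability $p=1/(200\tau\ell)$, prune $B'$ to an independent set $B_3$ using the degeneracy orientation of $G_2[B_2]$ (keep $b\in B'$ iff $B'\cap N_X^+(b)=\varnothing$), fix for each $a\in A_2$ an arbitrary $\tau$-subset $N_a\subseteq N(a)\cap B_2$, and set $A'=\{a: |N_a\cap B_3|=\ell\}$ where $\ell$ is the largest integer with $(200\ell^2)^\ell\le\tau$. The structural observation that drives the whole argument is a different consequence of the no-triangle condition than the one you invoke: if $a,a'\in A_2$ are adjacent, then $N(a)\cap N(a')\cap B_2=\varnothing$ (a common neighbour $b$ would give a triangle $a,a',b$ with vertices in both sides), so $N_a\cap N_{a'}=\varnothing$ and the events $\{|N_a\cap B'|\ge\ell\}$ and $\{|N_{a'}\cap B'|\ge\ell\}$ are \emph{independent}. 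This makes $\E[e(G_2[A'])]$ quadratically small in $q=\binom{\tau}{\ell}p^\ell$, and then a single greedy pass through the degeneracy order of $G_2[A']$ extracts the independent $A_3$. Your sketch uses only the weaker fact that each $N(a)\cap B_2$ is independent, and never exploits the codegree disjointness, which is the paper's central idea here.

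Concerning feasibility of your route: you propose to port the KLST/Glock $\ell$-round iteration, with degeneracy replacing bounded degree, and you characterize the adaptation as ``routine bookkeeping.'' I don't think that is justified. The KLST analysis crucially uses max-degree bounds when controlling the shrinkage of working degrees and when union-bounding over short obstruction configurations; here $G_2$ can have vertices of unbounded degree, and a degeneracy orientation only bounds \emph{out}-degrees. The random exposure of a vertex's neighbourhood in your process depends on its actual degree, not its out-degree, so the degree-shrinkage calculus does not transfer directly. You acknowledge the asymmetry of the two sides and the need to build independence of $B_3$ into the sampling, but you don't say what replaces the degree bound in the first/second-moment computations, and you don't address how edges inside the committed $A_2$-set are controlled (the no-triangle condition, as you use it, only protects the $B_2$-pool). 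So while the direction is plausibly salvageable, the sketch leaves exactly the hard part open, and the paper's one-round argument is both shorter and avoids the issue entirely.
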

\begin{proof}
We may assume that each vertex of the bipartite graph between $A_2$ and $B_2$ has degree at least $\tau$. Otherwise, removing vertices with degree less than $\tau$ results in a non-empty subgraph which still satisfies the conditions of the theorem. Furthermore, assume that $|A_2|\geq |B_2|$.

Let $\ell$ be the largest integer satisfying $(200\ell^2)^\ell\leq \tau$, so $\ell=\Theta(\log\tau/\log\log \tau)$. Let $B'\subseteq B_2$, where each vertex of $B_2$ is included independently with probability $p=\frac{1}{200\tau \ell}$. Since $G_2[B_2]$ is a $200\tau$-degenerate graph, there exists an orientation of this graph, denoted by $X$, in which the outdegree of every vertex $b\in B_2$ is at most $200\tau$. Given the set $B'$, we define $B_3$ to be the set of vertices $b\in B'$ satisfying $|B'\cap N^{+}_X(b)|=0$. By construction, $B_3$ is an independent set. Furthermore, associate to every $a\in A_2$ an arbitrary set $N_a\subseteq N(a)\cap B_2$ of size $\tau$. Then, we define $A'$ to be the set of vertices $a\in A_2$ for which $|N_a\cap B_3|= \ell$.

First, we show that $\mathbb{E}\big[|A'|\big]\geq \binom{\tau}{\ell}\cdot p^\ell/4\cdot |A_2|$. For every $a\in A_2$, there are $\binom{\tau}{\ell}$ sets of size $\ell$ in $N_a$, and they are all independent sets since there are no triangles containing vertices of both $A_2$ and $B_2$. For each such set $I$, we will show in the next paragraph that $I=N_a\cap B_3$ with probability at least $p^\ell/4$. The desired inequality follows by the linearity of expectation. For simplicity, we write $q=\binom{\tau}{\ell}\cdot p^\ell$.

For an arbitrary independent set $I\subseteq N_a$ of size $\ell$, we have $I\subseteq B_3$ precisely when $I\subseteq B'$ and $B'\cap \bigcup_{b\in I} N_X^+(b)=0$, where the second condition ensures that no vertex of $I$ is deleted from $B'$. Since $\Big|\bigcup_{b\in I} N_X^+(b)\Big|\leq 200\tau|I|=200\tau \ell$ and each vertex is included in $B'$ randomly and independently with probability $p$, we have that
\[
\mathbb{P}[I\subset B_3]\geq p^\ell (1-p)^{200\tau \ell}\geq p^\ell \Big(1-\frac{1}{200\tau\ell}\Big)^{200\tau \ell}\geq p^\ell/e\geq \frac{1}{3}p^\ell.
\]
Hence,
\[
\mathbb{P}[I = N_a\cap B_3]\geq \mathbb{P}[I \subseteq B_3] - \mathbb{P}[I \subsetneq N_a\cap B_3] \geq \frac{1}{3}p^\ell - \tau p^{\ell+1} \geq \frac{1}{4}p^\ell.
\]
Here, the inequality $\mathbb{P}[I\subsetneq N_a\cap B_3]\leq \mathbb{P}[I\subsetneq N_a\cap B'] \leq \tau p^{\ell+1}$ follows as $|N_a| = \tau$, so by the union bound, the probability that $B'$ contains $I$ and an additional element of $N_a$ is at most $\tau p^{\ell+1}$.

\medskip

The next step is to show $\E\big[e(G_2[A'])\big]\leq 10^3 q\tau \E\big[|A'|\big]$. If $A''$ is the set of vertices $a\in A_2$ with $|N_a\cap B'| \geq \ell$, then $A'\subseteq A''$ since every vertex of $A'$ has $\ell= |N_a\cap B_3|\leq |N_a\cap B'|$. In order to bound $e(G_2[A''])$, we pick two adjacent vertices $a, a'\in A_2$, and observe that they do not have any common neighbours in $B_2$, since $N(b)\cap A_2$ is an independent set for any $b\in B_2$. In particular, $N_a\cap N_{a'}=\varnothing$ and so the events $a\in A''$ and $a'\in A''$ are independent. Thus, the probability that $a, a'\in A''$ is at most $\mathbb{P}[a, a'\in A'']= \mathbb{P}\big[|B'\cap N_a|\ge \ell\big]\cdot\mathbb{P}\big[|B'\cap N_{a'}|\ge \ell\big] \le \binom{\tau}{\ell}^2 p^{2\ell}=q^2$.

Furthermore, since $G_2$ is $200\tau$-degenerate, $A_2$ spans at most $200\tau|A_2|$ edges, and so
\[\E\big[e(G_2[A'])\big]\leq \E\big[e(G_2[A''])\big] \le  q^2 e(G_2[A_2]) \le 200q^2 \tau |A_2|\leq 800q\tau \E[|A'|],\] 
where we used $q|A_2|\leq 4\E[|A'|]$. We are now essentially done. Observe that
\begin{align*}
\E\left[|A'|-\frac{e(G_2[A'])}{10^4q\tau} - 10^4q\tau |B_3|\right]&\geq \frac{1}{2}\E\big[|A'|\big] - 10^4q\tau \E\big[|B_3|\big]\\
&\geq \frac{q}{8}|A_2| - 10^4q\tau p|B_2|\geq \frac{q}{8}|A_2| - \frac{10^4q}{200\ell}|B_2|>0.
\end{align*}

Hence, there is an outcome of the random process in which $10^4q\tau |A'|\geq e(G_2[A'])$ and $|A'|> 10^4q\tau |B_3|$. Furthermore, since $q\tau=p^\ell\binom{\tau}{\ell}\tau\geq \Big(\frac{1}{200\tau \ell}\Big)^\ell \Big(\frac{\tau}{\ell}\Big)^\ell \tau=\frac{\tau}{(200\ell^2)^\ell}\geq 1$, we can find an independent set $A_3\subseteq A'$ of size $|A_3|\geq |A'|/10^4q\tau>|B_3|$. Coupled with the fact that every vertex of $A_3$ has exactly $\ell$ neighbours in $B_3$, we have that the average degree of $G_2[A_3\cup B_3]$ is at least $\frac{2e(A_3, B_3)}{|A_3|+|B_3|}\geq \frac{2\ell|A_3|}{2|A_3|}=\ell$. This completes the proof.
\end{proof}

In the fourth step,  we  use the following result of Du, Hunter, Gir\~ao, McCarty, and Scott \cite{DGHMS} to pass from a $K_{v+1, v+1}$-free graph to a $C_4$-free subgraph of high average degree.

\begin{theorem} \label{thm:degree-bounded}
    There exists an absolute constant $C_0$ such that for all $s,k\ge 2$, the following holds. If $G_3$ is a $K_{s,s}$-free graph such that $d(G_3)\ge k^{C_0s^3}$, then $G_3$ contains a $C_4$-free induced subgraph of average degree at least $k$.
\end{theorem}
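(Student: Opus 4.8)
The plan is to prove Theorem~\ref{thm:degree-bounded} by an induction whose single step reduces the biclique index \emph{multiplicatively} while losing only a bounded power in the average degree (the full argument is carried out in \cite{DGHMS}; here we only outline its shape). Concretely, I would first establish the following descent: there is an absolute constant $c$ such that every $K_{s,s}$-free graph of average degree at least $D$ contains an induced $K_{s',s'}$-free subgraph of average degree at least $D^{1/c}$, where $s'\le s/2$. Iterating this roughly $\log_2 s$ times brings the index down to $2$, i.e.\ yields an induced $C_4$-free subgraph, of average degree at least $D^{1/c^{\log_2 s}}=D^{1/s^{\log_2 c}}$; taking $C_0=\log_2 c$ (and arranging $c\le 8$ by reducing $s$ by a larger constant factor $c^{1/3}$ per step if the per-step loss is larger, which keeps the total exponent $\le s^3$) then gives average degree at least $k$ whenever $d(G_3)\ge k^{C_0 s^3}$. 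It is essential that $s$ drop by a constant factor rather than by $1$: a step-by-step recursion would involve $\sim s$ lossy rounds and hence a dependence exponential in $s$. Before running the descent I would pass to a subgraph of minimum degree $\ge D/2$ and then to an almost-regular induced subgraph, losing only a lower-order factor.

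For the descent step, the natural tool is a codegree dichotomy at a threshold $\theta$ of the form $D^{\Theta(1/s)}$. If some set of $\Theta(s)$ vertices has a common neighbourhood $W$ with $|W|\ge\theta$, an extension argument shows $G[W]$ is $K_{s',s'}$-free --- a copy of $K_{s',s'}$ in $G[W]$ together with the chosen vertices can be completed to a $K_{s,s}$ in $G$, rerouting the non-adjacent cases through the many spare vertices of $W$ --- and the task becomes to choose those $\Theta(s)$ vertices so that $W$ is also still reasonably dense, e.g.\ by iterating the ``pass to a common neighbourhood'' move and bounding the number of iterations purely in terms of $s$. In the complementary case every pair has fewer than $\theta$ common neighbours, so $G$ has very few labelled copies of $C_4$, and one extracts an induced $C_4$-free subgraph of average degree $D^{\Omega(1/s)}$ directly, by keeping each vertex independently with a suitable probability and then deleting the few surviving vertices that still lie in a $C_4$.

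The step I expect to be the main obstacle is this last one, and more broadly the insistence on an \emph{induced} subgraph. The cheap moves --- deleting one edge per $C_4$, or randomly sparsifying the edges --- only produce a $C_4$-free \emph{subgraph}, which is hopelessly weak here: $K_n$ has a $C_4$-free subgraph of average degree $\sqrt n$, yet its only $C_4$-free induced subgraphs are cliques of size at most $3$. So the $K_{s,s}$-free hypothesis must be exploited precisely to stop the retained vertex set from inducing a dense ``shadow''. Doing this while losing only a $D^{\Omega(1/s)}$ factor --- so that the total exponent stays $s^{O(1)}$, matching the stated $k^{C_0 s^3}$ --- and simultaneously balancing the threshold $\theta$ so that both branches of the dichotomy output the same polynomial-in-$D$ average degree, and controlling the length of the common-neighbourhood iteration in the dense branch, are the technical heart of the argument.
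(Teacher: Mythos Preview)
The paper does not prove Theorem~\ref{thm:degree-bounded} at all: it is quoted verbatim as a black-box result of Du, Gir\~ao, Hunter, McCarty, and Scott~\cite{DGHMS} and then invoked in the fourth step of the proof of Lemma~\ref{lemma:1subdivision}. So there is no ``paper's own proof'' to compare against; your proposal actually goes further than the paper by attempting to sketch the argument behind the cited result.

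As for the sketch itself, the broad shape --- a codegree dichotomy plus an induction that lowers the biclique index while losing a bounded power of the average degree --- is in the right spirit, and you correctly identify that the induced (as opposed to merely edge-deleted) requirement is where the work lies. One point that is not fully convincing is your claim that the common neighbourhood $W$ of $\Theta(s)$ vertices is automatically $K_{s',s'}$-free with $s'\le s/2$: the straightforward argument (adding the chosen vertices to one side of a biclique found inside $W$) only yields $K_{s-t,s}$-freeness after passing to the common neighbourhood of $t$ vertices, which is an asymmetric condition and does not by itself close an induction on the symmetric parameter $s$. The actual proof in~\cite{DGHMS} handles this more carefully, and your ``rerouting the non-adjacent cases through the many spare vertices of $W$'' is doing a lot of unexplained work at exactly this point. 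Since the paper treats the theorem as a citation, this is not a defect relative to the paper, but if you intend the sketch to stand on its own you would need to address how the induction parameter actually decreases geometrically.
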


Now we are ready to put all the ingredients together and prove Lemma~\ref{lemma:1subdivision}.

\begin{proof}[Proof of Lemma~\ref{lemma:1subdivision}.]
Let $k_4=k$ and choose parameters $k_3, k_2, k_1$, each being sufficiently large as a function of the previous one (and as a function of $v, h$). Our goal is to apply the four steps we described in the introduction, obtaining induced subgraphs $G\supseteq G_1\supseteq G_2\supseteq G_3\supseteq G_4$ with the described properties and such that $G_i$ has average degree at least $k_i$.

Setting $K=k_1$ and starting from a $(c, t)$-sparse graph of average degree $d(G)\geq k_1t$, we apply Lemma~\ref{lemma:step 1} to find an integer $d\geq d(G)\geq k_1t$ and an induced subgraph $G_1\subseteq G$ on the vertex set $V(G_1)=A_1\cup B_1$ such that $|A_1|\geq |B_1|/2$, $G_1$ is $d$-degenerate, $|N(a)|\leq 4d$ and $|N(a)\cap B_1|\geq d/4$ for all $a\in A_1$. Furthermore, since $G$ does not contain an induced copy of $H_v^{(h)}$, the same holds for $G_1$.

As $G_1$ is also $(c, t)$-sparse, we can apply Lemma~\ref{lemma:step 2} with $\tau=k_2$ to find an induced subgraph $G_2\subset G_1$ with the vertex set $V(G_2)=A_2\cup B_2$ such that $G_2$ is $200k_2$-degenerate, $e(A_2, B_2)\geq k_2(|A_2|+|B_2|)$, there are no triangles containing vertices of both $A_2$ and $B_2$, and there are no copies of $K_{2, v+1}$ with the vertex class of size two in $A_2$, and the vertex class of size $v+1$ in $B_2$. 

Then, we apply Lemma~\ref{lemma:step 3} to the graph $G_2$. If $k_2$ is sufficiently large with respect  to $k_3$, then there exist independent sets $A_3\subset A_2$ and $B_3\subset B_2$ such that $G_3=G_2[A_3\cup B_3]$ is a bipartite graph of average degree at least $k_3$. Note that $G_3$ does not contain any copies of $K_{v+1, v+1}$.

Finally, Theorem~\ref{thm:degree-bounded} shows that if $k_3\geq k_4^{C_0(v+1)^3}$, then there is an induced subgraph $G_4\subseteq G_3$ such that $G_4$ is $C_4$-free and has average degree at least $k_4=k$. Note that $G_4$ is also bipartite, so this completes the proof.
\end{proof}

We finish this section by showing how Theorem~\ref{thm:main2} follows directly from our main technical result, Lemma~\ref{lemma:1subdivision}.

\begin{proof}[Proof of Theorem \ref{thm:main2}]
    For a prime power $q$, let $\Gamma_q$ be the incidence graph of points and lines in the finite projective plane over the $q$-element field. Then $\Gamma_q$  is a $(q+1)$-regular bipartite graph with vertex classes of size $q^2+q+1$. 
    
    Set $h=2k$, $v = 10k^2$, and let $K$ be the constant given by Lemma~\ref{lemma:1subdivision}. If $q$ is a prime power between $k$ and $2k-1$, then $H_{v}^{(h)}$ contains $\Gamma_q$ as an induced subgraph. But $\Gamma_q$ is a $C_4$-free graph of average degree at least $k$. Hence, if $G$ contains $H_{v}^{(h)}$ as induced subgraph, it contains $\Gamma_q$ as well, and we are done. Otherwise,  we can apply Lemma \ref{lemma:1subdivision} to find an induced bipartite $C_4$-free subgraph of $G$ with average degree at least $k$, thus completing the proof again.
\end{proof}

\section{Families of curves with bounded complexity}\label{sec:curves}

\subsection{Dense intersection graphs of $k$-intersecting families}\label{sect:k_int}

In this section, we prove Theorem~\ref{thm:stEH_kint}, which states that if $\cA, \cB$ are two $k$-intersecting families of curves of size $n$, such that at least $cn^2$ pairs $(a, b)\in \cA\times \cB$ intersect, then there are subfamilies $\cA'\subseteq \cA, \cB'\subseteq \cB$ such that all strings $a\in \cA', b\in \cB'$ intersect. Also, recall that we say $\cA$ is a $k$-intersecting family of curves if any two (distinct) curves $a, a'\in \cA$ have at most $k$ points of intersection.

We make the following standard non-degeneracy assumptions on our curves $\cA$ and $\cB$: for any $a\in A, b\in B$, the set $a\cap b$ is finite, for any $a_1, a_2\in A, b\in B$, the set $a_1\cap a_2\cap b$ is empty and all curves of $\cA, \cB$ are non-self-intersecting and smooth. Further, we assume that there are no tangencies between the curves, i.e. that when curves $a$ and $b$ cross, the curve $a$ passes from one side of $b$ to the other. These non-degeneracy assumptions can be achieved using standard arguments by perturbing the curves from the family slightly (and potentially slightly increasing $k$). We also give a direction to each curve, so that it has a starting point and an ending point, and we order the points of the curve accordingly. 

Before jumping into the proof, we establish some notation and give a few preliminary observations. Let $G$ be the bipartite intersection graph of $\cA$ and $\cB$, which has vertex classes of size $n$ and at least $cn^2$ edges. Remove all vertices of $G$ with less than $cn/4$ neighbours, and let $G'$ be the resulting graph. Fewer than $c n^2/2$ edges are removed by this, and so $e(G')\geq cn^2/2$. Let $\cA'\subset \cA,\cB'\subset \cB$ be the vertex classes of $G'$, and let $c'=c/4$. Then  $G'$ has minimum degree at least $c'n$, which also implies $|\cA'|,|\cB'|\geq c'n$. Let $n'=c'n$, and let $\cA'',\cB''$ be random $n'$ sized subsets of $\cA',\cB'$, respectively. Then by standard concentration arguments, with high probability, the bipartite intersection graph $G''$ between $\cA''$ and $\cB''$ has minimum degree at least $\frac{1}{2}(n'/n) (c'n)=c'n'/2$.  In what follows, we work with only $(\cA'',\cB'')$, so with slight abuse of notation, we write $\cA$ instead of $\cA''$, $\cB$ instead of $\cB''$, $n$ instead of $n'$, and $c$ instead of $c'/2$. Thus, $|\cA|=|\cB|=n$ and the bipartite intersection graph $G$ between $\cA,\cB$ has minimum degree $cn$.

For each curve $b\in \cB$, we define the set of points 
$$P_b=\bigcup_{a\in \cA} b\cap a.$$
We \textit{color} the point $x\in P_b$ using the color $a$, where $a\in \cA$ is the curve containing $x$. Since each $b\in \cB$ intersects at least $c n$ different curves from $\cA$, we may partition $b$ into $k+1$ closed subcurves $I_1(b), \dots, I_{k+1}(b)$, overlapping only in endpoints, such that the points in $P_b\cap I_j(b)$ receive at least $\frac{c}{2k} n$ different colors for each $1\leq j\leq k+1$. 

We now state one of the key definitions of the proof. For integers $1\leq j\leq k+1 $ and $\ell\geq 1$, we say that an $\ell$-tuple of curves $(b_1, \dots, b_\ell)\in \cB^\ell$ is \textit{avoiding the index $j$} if for all pairs $1\leq r< s\leq \ell$, and all intersection points $x\in b_{r}\cap b_{s}$, we have that $x\notin I_j(b_r)$. 

The following two lemmas are crucial. First, we show that if $\ell$ is a constant, then a positive fraction of all $\ell$-tuples of curves in $\cB$ are avoiding the index $j$, for some $1\leq j\leq k+1$. After this, we  show a general lemma about colorings of finite intervals, which then is applied to the coloring of $P_b$, for various curves $b$.

\begin{lemma}\label{lemma:avoiding ell-tuples}
For every integer $\ell\geq 1$, there is an index $j$ such that the number of $\ell$-tuples $(b_1, \dots, b_\ell)\in \cB^\ell$ avoiding the index $j$ is at least $\Omega_{k, c, \ell}(n^\ell)$.
\end{lemma}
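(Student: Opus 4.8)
\textbf{Proof plan for Lemma~\ref{lemma:avoiding ell-tuples}.}
The plan is to fix an arbitrary $\ell$-tuple $(b_1,\dots,b_\ell)\in\cB^\ell$ and estimate, over a uniformly random choice of such a tuple, the expected number of ``bad'' triples $(r,s,j)$ for which there is an intersection point of $b_r$ and $b_s$ lying in $I_j(b_r)$; then average over $j$ to find a good index. First I would observe that for a fixed pair $r<s$, the number of intersection points in $b_r\cap b_s$ is at most $k$ (since $\cB$ is $k$-intersecting), and each such point $x$ lies in $I_j(b_r)$ for at most $2$ values of $j$ (the intervals $I_1(b_r),\dots,I_{k+1}(b_r)$ overlap only in endpoints, and $x$ is a genuine crossing so generically it lies in the interior of a unique interval — the factor $2$ covers the degenerate endpoint case, or one perturbs it away). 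Hence for each ordered pair $(r,s)$ and each curve $b_r$, the set of indices $j$ that are ``spoiled'' by the pair $\{b_r,b_s\}$ has size at most $2k$.

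The key point is a counting/averaging argument. For an index $j\in[k+1]$, call a tuple \emph{$j$-bad} if it is \emph{not} avoiding the index $j$, i.e.\ there exist $r<s$ and $x\in b_r\cap b_s$ with $x\in I_j(b_r)$. I would bound the total count $\sum_{j=1}^{k+1}\#\{j\text{-bad }\ell\text{-tuples}\}$ from above. Swapping the order of summation, this equals $\sum_{(b_1,\dots,b_\ell)} \#\{j : (b_1,\dots,b_\ell)\text{ is }j\text{-bad}\}$. For a fixed tuple, the number of bad indices $j$ is at most $\sum_{r<s}(\text{number of }j\text{ spoiled by the pair }r,s \text{ via }b_r) \le \binom{\ell}{2}\cdot 2k$ by the previous paragraph. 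Wait — I need to be slightly more careful: what I actually want is an \emph{upper bound on the fraction} of $\ell$-tuples that are $j$-bad for \emph{every} $j$ simultaneously, or rather a lower bound on the number avoiding \emph{some} $j$. The cleanest route: since each tuple is $j$-bad for at most $2k\binom{\ell}{2}$ values of $j$, and there are $k+1$ values of $j$ in total, a tuple fails to avoid \emph{all} indices only if $2k\binom{\ell}{2}\ge k+1$. This inequality does hold for $\ell\ge 2$, so this crude bound is not enough on its own, and I cannot conclude that a single fixed tuple avoids some index.

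\textbf{The main obstacle and how to get around it.} The difficulty is exactly that a worst-case tuple might be $j$-bad for all $j$, so one genuinely needs the \emph{quantitative} input that for most tuples, most pairs $b_r,b_s$ are simply disjoint or interact mildly — this is where the $\Omega_{k,c,\ell}$ dependence (and the constant-$\ell$ hypothesis) must enter. The fix is to not demand that a \emph{fixed} tuple avoid an index, but to count: let $N_j$ be the number of $\ell$-tuples that are $j$-bad. For a fixed index $j$, a tuple is $j$-bad only if some ordered pair $(b_r,b_s)$ has a crossing in $I_j(b_r)$; but by construction $I_j(b_r)$ contains points of at least $\tfrac{c}{2k}n$ colors, so $b_r$ is ``responsible'' for many curves through $I_j(b_r)$ — however this cuts the wrong way. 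Instead I would exploit that crossings are rare in aggregate: summing over all $b,b'\in\cB$ the number of crossings in a \emph{fixed} interval $I_j(b)$ is at most... here one should use that $\cB$ is $k$-intersecting, so $\sum_{b\ne b'}|b\cap b'|\le k n^2$, hence the expected number of pairs $(r,s)$ in a random tuple with a crossing in $I_j(b_r)$ is $O_{k}(\ell^2/n)\cdot n^{?}$ — one must track this carefully, but the upshot is that the \emph{expected} number of $j$-bad events per tuple is $o(1)$ as $n\to\infty$ for each fixed $j$, hence $\sum_j N_j = o(n^\ell)\cdot(k+1)$, so for \emph{some} $j$ we get $N_j \le o(n^\ell)$ and the number of $\ell$-tuples avoiding index $j$ is $(1-o(1))n^\ell=\Omega_{k,c,\ell}(n^\ell)$. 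The delicate step to get right is the bound $\sum_{b,b'\in\cB}|b\cap b'| \le kn^2$ combined with the fact that each crossing point lies in at most one or two of the $k+1$ intervals of $b$, which together force the average (over $j$) number of spoiling pairs to be small; making the error term genuinely $o(n^\ell)$ rather than just $<n^\ell$ is what requires the $k$-intersecting hypothesis on $\cB$, and this is the heart of the argument.
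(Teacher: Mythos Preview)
Your averaging approach has a genuine gap. The claim that ``the expected number of $j$-bad events per tuple is $o(1)$ as $n\to\infty$'' is false: the bound $\sum_{b\neq b'}|b\cap b'|\le kn(n-1)$ only tells you that a \emph{random ordered pair} $(b,b')\in\cB^2$ has $O(k)$ crossings on average, which is $\Theta(1)$, not $o(1)$. Indeed, nothing prevents a constant fraction of all pairs from intersecting (the intersection graph of $\cB$ may well have density bounded away from zero), so for any fixed $j$ the number of pairs $(b,b')$ with a crossing in $I_j(b)$ can be $\Theta(n^2)$. Then for a random $\ell$-tuple the probability of being $j$-bad is $1-o(1)$, and your conclusion $N_j=o(n^\ell)$ simply does not follow. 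The averaging $\sum_j N_j\le\sum_{\text{tuples}}\#\{j:\text{bad}\}$ gives nothing better than $(k+1)n^\ell$, which is trivial.

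The paper's argument bypasses this entirely via Ramsey's theorem. For each ordered pair $(b,b')$ with $b$ preceding $b'$ in some fixed ordering of $\cB$, pigeonhole on the at most $k$ points of $b\cap b'$ against the $k+1$ intervals $I_1(b),\dots,I_{k+1}(b)$ gives some index $j$ with $I_j(b)\cap b'=\varnothing$; colour the edge $bb'$ of the complete graph on $\cB$ by this $j$. A monochromatic $\ell$-clique of colour $j$ is then an $\ell$-tuple avoiding index $j$. With $r=r_{k+1}(\ell)$ the $(k+1)$-colour Ramsey number, every $r$-subset of $\cB$ contains such a clique, and double counting gives at least $\binom{n}{r}/\binom{n-\ell}{r-\ell}=\Omega_{k,\ell}(n^\ell)$ monochromatic $\ell$-cliques; the most popular colour $j$ then works. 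The key point you are missing is that the problem is intrinsically Ramsey-theoretic: a single $\ell$-tuple can be $j$-bad for every $j$ simultaneously, so no first-moment argument over $j$ can succeed, and one must instead locate tuples whose pairwise ``safe indices'' all coincide.
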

\begin{proof}
Order $\cB$ in an arbitary manner and consider only the $\ell$-tuples whose elements respect this ordering. For any pair of curves $(b, b')\in \cB^2$, such that $b$ comes before $b'$, there are at most $k$ intersection points in $b\cap b'$, and therefore there exists $j\in [k+1]$ such that $I_j(b)$ does not contain any of the points from $b\cap b'$. Consider the complete graph $\Gamma$ on the vertex set $\cB$, and color the edge $bb'$ with color $j$. Observe that the $\ell$-tuple $(b_1, \dots, b_\ell)$ avoids the index $j$ if it induces a monochromatic clique of color $j$ in $\Gamma$. 

 Let $r=r_{k+1}(\ell)$ be the $(k+1)$-color Ramsey number of the clique of size $\ell$, i.e. the smallest number such that any edge-coloring of $K_{r}$ using $k+1$ colors contains a monochromatic $\ell$-clique. Then any subset of $r$ curves of $\cB$ contains a monochromatic copy of $K_\ell$. Moreover, any copy of $K_\ell$ is contained in at most $\binom{n-\ell}{r-\ell}$ sets of size $r$. Hence, the number of monochromatic cliques of size $\ell$ in $\Gamma$ is at least $\binom{n}{r}/\binom{n-\ell}{r-\ell}=\Omega_{k, c, \ell}(n^\ell)$. Picking the color $j\in [k+1]$ which is shared by the largest number of these cliques gives a set of $\Omega_{k, c, \ell}(n^\ell)$ $\ell$-tuples $(b_1, \dots, b_\ell)\in \cB^\ell$ avoiding the index $j$. This completes the proof.
\end{proof}

\begin{lemma}\label{lemma:coloring lemma}
Assume that the elements of $[N]$ are colored with $m$ colors, all colors being used at least once. Say that a pair of colors $(c, d)$ is \textit{good} if there exist $1\leq i< j\leq N$ such that the interval $[i, j]$ receives at least $m/3$ distinct colors, the only element of $[i, j]$ receiving color $c$ is $i$ and the only element of $[i, j]$ receiving color $d$ is $j$. 
Then, there are at least $m^2/9$ good pairs of colors $(c, d)$.
\end{lemma}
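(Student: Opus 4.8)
The plan is to set up a natural "first occurrence / last occurrence" encoding of the coloring and then extract good pairs by a pigeonhole/counting argument over all colors. For each color $c$ used on $[N]$, let $f(c)$ denote the position of its first occurrence and $l(c)$ the position of its last occurrence. Order the colors so that $f(c_1) < f(c_2) < \dots < f(c_m)$ (these are distinct). Intuitively, if I walk from left to right, then colors "turn on" at their first occurrences in this order; the interval between the first occurrence of an early color and the last occurrence of a late color will see many distinct colors. The subtlety the definition of "good" is really forcing is that I want $i$ to be the \emph{only} place color $c$ appears in $[i,j]$ and $j$ the \emph{only} place color $d$ appears there — so I cannot just take $i = f(c)$, $j = l(d)$ blindly; I have to be slightly cleverer about which witness interval to use.

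The key step is the following. For a pair of colors $(c,d)$, say it is a \emph{candidate} if $f(c) < f(d)$ and, moreover, between $f(c)$ and $f(d)$ the color $c$ does not reappear before $f(d)$ — wait, more robustly: given any color $c$ and the position $p = f(c)$, look at the maximal interval $[p, q_c]$ to the right of $p$ on which $c$ does not reoccur, i.e. $q_c = (\text{next occurrence of } c) - 1$, or $q_c = N$ if $c$ occurs only once. Symmetrically, for a color $d$ with last occurrence $p' = l(d)$, let $[r_d, p']$ be the maximal interval ending at $p'$ containing no earlier occurrence of $d$. Now for a pair $(c,d)$ with $q_c \geq r_d$ and $p = f(c) \le r_d \le q_c \le p' = l(d)$ — hmm, I need the witness interval $[i,j]$ with $i$ carrying $c$ uniquely and $j$ carrying $d$ uniquely; taking $i = f(c)$ (so $c$ occurs only at $i$ in $[i, q_c]$) and $j = l(d)$ (so $d$ occurs only at $j$ in $[r_d, j]$), I need $[i,j] \subseteq [f(c), q_c] \cup$ ... this is getting tangled. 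Let me instead organize the count directly: I claim that for \emph{every} ordered pair $(c,d)$ of distinct colors with $f(c) < f(d)$ and $l(d) > l(c)$, a suitable witness exists \emph{unless} one of a small number of "bad" structural conditions holds, and those bad conditions can be charged to few pairs.

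Cleaner approach, which I'd actually write: induct on $m$ by peeling colors. Let $a$ be the color with the earliest first occurrence, occurring first at position $1$ (WLOG, by deleting a prefix — actually $f$ of the first color need not be $1$, but relabel positions so it is, deleting the possibly-empty prefix changes nothing since those positions have colors appearing later too). Similarly let $b$ be the color with the latest last occurrence, at position $N$. If $a \neq b$, consider deleting all occurrences of $a$ and of $b$, and all positions between consecutive surviving positions collapsing — by induction the reduced string on $m-2$ colors has $\geq (m-2)^2/9$ good pairs, all of which remain good in the original (a witness interval in the reduced string lifts to one in the original, since adding back occurrences of $a,b$ only can only be arranged at the two ends — here is where I must be careful about where $a$'s and $b$'s other occurrences sit). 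Then I must produce roughly $2m/3$ \emph{new} good pairs involving $a$ or $b$: for the color $a$, as I scan rightward from position $1$, let $j$ be the first position at which the interval $[1,j]$ has accumulated $\geq m/3$ colors; then for each color $d$ whose \emph{only} occurrence in $[1,j]$ is at position $j$ (there are at least... hmm, I need $\geq m/3$ such), the pair $(a,d)$ is good. The hard part — and the main obstacle — is exactly this: ensuring enough colors $d$ have their unique-in-$[1,j]$ occurrence at the right endpoint, which requires choosing $j$ to be a position of "many simultaneous last-appearances-so-far", and balancing this against the inductive deletion so that pairs are not double-counted. I expect the honest argument to choose, for each color $c$, the position $\lambda(c)$ = first time $[1,\lambda(c)]$ sees $\ge m/3$ colors \emph{and} $c$ is the unique color of $[f(c), \lambda(c)]$-type appearing only at its start; a double counting over the $m$ choices of "left color" against the $m$ choices of "right color" then yields $\ge m^2/9$ good pairs, the $1/9 = (1/3)^2$ reflecting that a $1/3$-fraction works on each side. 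I would present the final argument as this clean double count rather than the induction, using the induction only as a sanity check on the constant.

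<br>

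Actually, let me commit to the double-count phrasing since it is what the statement's constant $m^2/9 = (m/3)^2$ is begging for, and I would write it roughly as follows. For a color $c$, let $g(c)$ be the smallest index such that $[\,f(c),\,g(c)\,]$ contains at least $m/3$ distinct colors \emph{and} $c$ appears in $[\,f(c),\,g(c)\,]$ only at the position $f(c)$; such $g(c)$ exists because the full interval $[f(c),N]$ has all $m$ colors but $c$ may reoccur — to handle reoccurrence, truncate at (next occurrence of $c$) $-1$ first and note that even this truncated interval still has $\ge m/3$ colors for all but fewer than... no. The real fix: define things at the level of the interval $[1,N]$ after deleting a color's reoccurrences is wrong; instead I simply observe that there are at least $2m/3$ colors $c$ such that $[f(c), N]$ minus $c$'s later occurrences still carries $\ge m/3$ colors, call these the \emph{left-candidates}; dually $\ge 2m/3$ \emph{right-candidates} $d$ via last occurrences. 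For a left-candidate $c$ pick the minimal witness endpoint $j_c$; the set of colors appearing uniquely at $j_c$ within $[f(c), j_c]$ has size $\ge 1$ but I want its total contribution summed over $c$ to be $\ge m^2/9$. I will make this rigorous by the standard trick: order colors by first occurrence as $c_1, \dots, c_m$; for each $t$ with $1 \le t \le \lceil 2m/3\rceil$, the prefix-type interval starting at $f(c_t)$ needs $m/3$ more colors, which are among $c_{t+1}, \dots, c_m$, at least $m/3$ of which have their first occurrence inside the witness interval and (after another truncation at their own reoccurrences) at least $m/3 - o(m)$ appear uniquely-at-the-right-end for a suitable choice of endpoint. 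Carrying the $\lfloor \cdot \rfloor$'s and the $-1$'s carefully gives the stated $m^2/9$. I expect the endpoint-selection/truncation bookkeeping — making sure a color counted on the "left" is not needed as the boundary on the "right" of the same pair, and that reoccurrences do not eat into the $m/3$ count — to be the only genuinely delicate point; everything else is pigeonhole.
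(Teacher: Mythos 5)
Your write-up is an extended brainstorm, not a proof: you yourself abandon each line of attack mid-stream (``this is getting tangled'', ``no'', ``at least $m/3 - o(m)$ appear uniquely-at-the-right-end for a suitable choice of endpoint''), and the final paragraph explicitly concedes that the ``endpoint-selection/truncation bookkeeping'' — which is exactly the content of the lemma — is left unresolved. No complete argument is extracted, so there is a genuine gap.

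The decisive idea you never land on concerns the \emph{orientation} of the witness interval. You repeatedly try to take $i$ to be a first occurrence of $c$ and $j$ a last occurrence of $d$, and then struggle with reoccurrences of $c$ inside $[i,j]$. The right choice is the mirror image: take $j$ to be the \emph{first} occurrence of a color $d$ that does not appear in an initial segment, and take $i$ to be the \emph{last} occurrence of $c$ in $[1,j]$. Then minimality of $j$ makes $d$ automatically unique at $j$ in $[i,j]$, and maximality of $i$ makes $c$ automatically unique at $i$ — both uniqueness conditions come for free, and no truncation is needed. Concretely, the paper's proof lets $t$ be the smallest index with $[1,t]$ seeing $\ge 2m/3$ colors, sets $C$ to be those colors and $D$ the remaining $\ge m/3$ colors (which all first appear after $t$); for each $d \in D$ with first occurrence $j$, it then takes $s$ maximal with $[s,j]$ seeing $\ge m/3$ colors, and shows that every $c\in C$ not present in $[s,j]$ (of which there are $\ge m/3$) yields a good pair via $i = $ last occurrence of $c$ in $[1,j]$, which is necessarily $< s$. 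This gives $\ge (m/3)(m/3)$ good pairs. Your sketch circles around the $2m/3$ threshold, the ordering by first occurrence, and the pigeonhole, but without the last-occurrence-of-$c$-before-$j$ device the ``unique at the left endpoint'' requirement never gets discharged, and the count never closes.
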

\begin{proof}
Let $1\leq t\leq N$ be the smallest integer such that the interval $[1, t]$ receives $2m/3$ different colors. Let $C$ be the set of colors received by this interval, and let $D=[m]\backslash C$ be the set of colors not appearing in this interval.

 We claim that for any color $d\in D$, there are at least $m/3$ other colors $c$ such that the pair $(c, d)$ is good. Fixing color $d$, let $j$ be the smallest element of $[N]$ receiving color $d$, and let $s$ be the maximal element $1\leq s\leq j$ such that the interval $[s,j]$ receives $m/3$ colors. Then, any color $c\in C$ not appearing in the interval $[s, j]$ works. Indeed, if $i$ is the largest element of $[1, j]$ receiving color $c$, then $i< s$. The interval $[i, j]$ then receives at least as many colors at $[s, j]$, i.e. at least $m/3$ colors. Moreover, no element of $[i+1, j]$ receives color $c$, by the maximality of $i$, and no element of $\{i, \dots, j-1\}$ receives color $d$, by minimality of $j$. This gives at least $m/3 \cdot m/3\geq m^2/9$ good pairs $(c, d)$.
\end{proof}

We introduce another essential definition for the proof. Recall that the curves of $\cA, \cB$ are directed. We say that a pair of curves $(a,b)$ intersect \textit{positively} at the point $p$ if $p\in a\cap b$ and the tangent vectors to $a$ and $b$ at $p$ form a positive basis of $\bR^2$. Informally, this means that if one walks along $a$, at the point $p$ the curve $b$ crosses from the right side of $a$ to the left side of $a$.  Also, if $(a, b)$ do not intersect positively at $p$, we say that they intersect negatively, and we note that reversing the direction of either curve switches the sign of their intersection.
Then, for an integer $1\leq j\leq k+1$ and curves $a_1, a_2\in \cA$ and $b\in \cB$, say that the curve $b$ is \textit{$j$-good} with respect to the pair $(a_1, a_2)$ if the following conditions are satisfied: there exists a directed subcurve $I\subseteq I_j(b)$, inheriting the direction from $b$, such that
\begin{itemize}
    \item $I$ starts at $p\in a_1\cap b$, ends at $q\in a_2\cap b$, $(a_1, b)$ intersect positively at $p$, and $(a_2, b)$ intersect positively at $q$
    \item $I$ is disjoint from $a_1\cup a_2$, except at endpoints, and
    \item $I$ intersects at least $\frac{c}{12k} n$ distinct curves from $\cA$.
\end{itemize}
In what follows, we fix a pair of curves $(a_1, a_2)$ and index $j$. Let $\cB_{\rm good}$ be the set of $j$-good curves with respect to $(a_1,a_2)$. If $b\in \cB_{\rm good}$, we denote by $I(b)$ the subcurve $I$, by $p(b)$ the point $p$, and by $q(b)$ the point $q$.

We introduce the final definition needed for the proof. For a parameter $\mu>0$, define a \textit{$(\mu, t)$-ladder} (with respect to the pair of curves $(a_1,a_2)$ and index $j$) as a $(t+1)$-tuple of curves $b_1, b_3, \dots, b_{2t+1}\in \cB_{\rm good}$, together with a $t$-tuple of sets $\cB_2, \cB_4, \dots, \cB_{2t}\subset \cB_{\rm good}$ of size at least $\mu n$, such that the following properties hold.
\begin{itemize}
    \item There is a set $\cA'\subset \cA$ of at least $\mu n$ curves such that for any $a\in \cA'$ and any $b\in \{b_1, b_3, \dots, b_{2t+1}\}$, the subcurve $I(b)$ intersects $a$.
    \item For any  $b_2\in \cB_2, \dots, b_{2t}\in \cB_{2t}$, the intersection points $p(b_1), p(b_2), \dots, p(b_{2t+1})$ appear in order of indices on $a_1$, and also $q(b_1), q(b_2), \dots, q(b_{2t+1})$ appear in order of indices on $a_2$.
    \item For any $b_2\in \cB_2, \dots, b_{2t}\in \cB_{2t}$, the subcurves  $I(b_1),  I(b_2), \dots, I(b_{2t+1})$ are disjoint.
    \item  $a_1$ does not contain any points of $a_1\cap a_2$ between $p(b_1)$ and $p(b_{2t+1})$, and $a_2$ does not contain any points of $a_1\cap a_2$ between $q(b_1)$ and $q(b_{2t+1})$.
\end{itemize}
This definition is illustrated in Figure~\ref{fig:ladder}. 

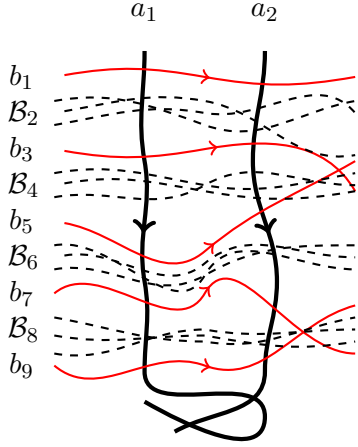
\begin{figure}

\vspace{-0.5cm}
    \centering
        \begin{tikzpicture}[scale=0.8]
            \node[label={[label distance=0]90:$a_1$}] (a1) at (-1, 3) {};
            
            \draw [ultra thick,black,
                postaction={decorate},
                decoration={markings, mark=at position 0.3 with {\arrow[scale=1.2]{>}}}]
            (a1)
            to[out=-90, in=100] ($(a1)+(0, -2)$)
            to[out=-80, in=100] ($(a1)+(0, -4)$)
            to[out=-80, in=90] ($(a1)+(0, -5.5)$)
            to[out=-90, in=90] ($(a1)+(2, -6.25)$)
            to[out=-90, in=-30] ($(a1)+(0, -6)$);
    
            \node[label={[label distance=0]90:$a_2$}] (a2) at (1, 3) {};
            
            \draw [ultra thick,black,
                postaction={decorate},
                decoration={markings, mark=at position 0.42 with {\arrow[scale=1.2]{>}}}]
            (a2)
            to[in=90,out=-90] ($(a2)+(-0.2, -2)$)
            to[in=90,out=-90] ($(a2)+(0.2, -4)$)
            to[in=90,out=-90] ($(a2)+(0, -5.5)$)
            to[in=30,out=-90] ($(a2)+(-1.5, -6.5)$);
    
            \node[label={[label distance=0]180:$b_1$}] (b1) at ($(a1)+(-1.5, -0.6)$) {};
            \draw[thick, red,
                postaction={decorate},
                decoration={markings, mark=at position 0.5 with {\arrow[scale=1.2]{>}}}] (b1) to[in=170, out=10] ($(b1)+(2.5, 0)$) to[in=-170, out=-10] ($(b1)+(5, 0)$);
    
            \node[label={[label distance=0]180:$b_3$}] (b2) at ($(a1)+(-1.5, -1.8)$) {};
            \draw[thick, red,
                postaction={decorate},
                decoration={markings, mark=at position 0.5 with {\arrow[scale=1.2]{>}}}] (b2) to[in=-170, out=-10] ($(b2)+(2.5, 0)$) to[in=120, out=10] ($(b2)+(5, -0.7)$);
    
            \node[label={[label distance=0]180:$b_5$}] (b3) at ($(a1)+(-1.5, -3)$) {};
            \draw[thick, red,
                postaction={decorate},
                decoration={markings, mark=at position 0.5 with {\arrow[scale=1.2]{>}}}] (b3) to[in=-140, out=-10] ($(b3)+(2.5, -0.5)$) to[in=-150, out=40] ($(b3)+(5, 1)$);
            
            \node[label={[label distance=0]180:$b_7$}] (b4) at ($(a1)+(-1.5, -4.2)$) {};
            \draw[thick, red,
                postaction={decorate},
                decoration={markings, mark=at position 0.5 with {\arrow[scale=1.2]{>}}}] ($(b4)+(0, 0)$) to[in=-120, out=40] ($(b4)+(2.5, 0)$) to[in=180, out=60] ($(b4)+(5, -1)$);
    
            \node[label={[label distance=0]180:$b_9$}] (b5) at ($(a1)+(-1.5, -5.4)$) {};
            \draw[thick, red,
                postaction={decorate},
                decoration={markings, mark=at position 0.5 with {\arrow[scale=1.2]{>}}}] ($(b5)+(0, 0)$) to[in=160, out=-40] ($(b5)+(2.5, 0)$) to[in=195, out=-20] ($(b5)+(5, 1)$);

            \node[label={[label distance=-2]180:$\cB_2$}] (b1') at ($(b1)+(0, -0.6)$) {}; 
            \draw[thick, dashed, black] ($(b1')+(0, 0.2)$) to[in=160, out=10] ($(b1')+(2.5, -0.2)$) to[in=-170, out=-20] ($(b1')+(5, 0.2)$);
            \draw[thick, dashed, black] (b1') to[in=170, out=10] ($(b1')+(2.5, 0.2)$) to[in=-170, out=-10] ($(b1')+(5, -0.7)$);
            \draw[thick, dashed, black] ($(b1')+(0, -0.2)$) to[in=170, out=10] ($(b1')+(2.5, 0)$) to[in=130, out=-10] ($(b1')+(5, 0)$);
    
            \node[label={[label distance=-2]180:$\cB_4$}] (b2') at ($(b2)+(0, -0.6)$) {}; 
            \draw[thick, dashed, black] ($(b2')+(0, 0.2)$) to[in=170, out=10] ($(b2')+(2.5, 0)$) to[in=-170, out=-10] ($(b2')+(5, 0.2)$);
            \draw[thick, dashed, black] (b2') to[in=170, out=10] ($(b2')+(2.5, -0.1)$) to[in=-170, out=-10] ($(b2')+(5, -0.1)$);
            \draw[thick, dashed, black] ($(b2')+(0, -0.2)$) to[in=-150, out=10] ($(b2')+(2.5, 0)$) to[in=180, out=30] ($(b2')+(5, 0)$);
    
            \node[label={[label distance=-2]180:$\cB_6$}] (b3') at ($(b3)+(0, -0.6)$) {}; 
            \draw[thick, dashed, black] ($(b3')+(0, 0.2)$) to[in=-120, out=10] ($(b3')+(2.5, -0.3)$) to[in=180, out=60] ($(b3')+(5, -0.2)$);
            \draw[thick, dashed, black] (b3') to[in=-140, out=10] ($(b3')+(2.5, -0.1)$) to[in=180, out=40] ($(b3')+(5, 0)$);
            \draw[thick, dashed, black] ($(b3')+(0, -0.2)$) to[in=-150, out=10] ($(b3')+(2.5, -0.3)$) to[in=180, out=30] ($(b3')+(5, 0.2)$);
    
            \node[label={[label distance=-2]180:$\cB_8$}] (b4') at ($(b4)+(0, -0.6)$) {}; 
            \draw[thick, dashed, black] ($(b4')+(0, 0.2)$) to[in=180, out=-10] ($(b4')+(2.5, -0.2)$) to[in=-170, out=-10] ($(b4')+(5, 0.2)$);
            \draw[thick, dashed, black] ($(b4')+(0, -0.1)$) to[in=180, out=-10] ($(b4')+(2.5, -0.1)$) to[in=-170, out=-10] ($(b4')+(5, -0.2)$);
            \draw[thick, dashed, black] ($(b4')+(0, -0.4)$) to[in=180, out=-10] ($(b4')+(2.5, 0)$) to[in=180, out=-10] ($(b4')+(5, 0)$);
        \end{tikzpicture}
\vspace{-0.4cm}
    \caption{Illustration of a ladder}
    \label{fig:ladder}
\vspace{-0.5cm}
\end{figure}

\begin{lemma}\label{lemma:ladder}
For every $t\geq 1$, there exists $\mu=\mu(t,k,c)>0$ such that the following holds. After possibly switching the directions of some curves in $\cA\cup \cB$, there exists a $(\mu, t)$-ladder with respect to some pair of curves $(a_1,a_2)\in\mathcal{A}^2$ an index $j$. 
\end{lemma}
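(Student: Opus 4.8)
The goal is to produce a pair $(a_1,a_2)\in\cA^2$, an index $j$, and an orientation of $\cA\cup\cB$ so that linearly many curves of $\cB$ cross from $a_1$ to $a_2$ inside their $j$-th piece in a ``monotone'' fashion, and then to thin this family into a ladder. First I would fix the index and the pair. Apply Lemma~\ref{lemma:avoiding ell-tuples} with $\ell=2t+1$ to get an index $j$ for which $\Omega_{k,c,t}(n^{2t+1})$ of the ordered $(2t+1)$-tuples of $\cB$ avoid $j$; this is the reservoir of disjointness for the rungs. For this same $j$ each piece $I_j(b)$ still receives at least $\frac{c}{2k}n$ colours (the colour of a point of $P_b\cap I_j(b)$ recording which curve of $\cA$ passes through it), so Lemma~\ref{lemma:coloring lemma}, applied to $I_j(b)$ identified with an interval, yields $\Omega(n^2)$ \emph{good colour pairs} $(a_1,a_2)$ for each $b$: each such pair comes with a subarc of $I_j(b)$ running from the unique point of colour $a_1$ to the unique point of colour $a_2$, whose interior avoids $a_1\cup a_2$ and which still meets $\Omega(n)$ curves of $\cA$. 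Double counting over the at most $n^2$ pairs pins down a single pair $(a_1,a_2)$ and a set $\cB^{(1)}\subseteq\cB$ of size $\Omega(n)$, each of whose curves carries such a subarc with endpoints $\hat p(b)\in a_1\cap b$ and $\hat q(b)\in a_2\cap b$.

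Next I would orient, localize, and apply dependent random choice. Pigeonholing over the four possible signs of the crossings of $b$ at $\hat p(b)$ and $\hat q(b)$ (reversing $a_1$ if convenient) keeps a linearly large $\cB^{(2)}\subseteq\cB^{(1)}$ on which both crossings are positive, i.e.\ whose members are $j$-good for $(a_1,a_2)$. Since $|a_1\cap a_2|\le k$, a further pigeonhole over the at most $k+1$ arcs of $a_1$ and of $a_2$ between consecutive points of $a_1\cap a_2$ keeps a linearly large $\cB^{(3)}$ with all $p(b)$ in one common arc $J_1\subseteq a_1$ and all $q(b)$ in one common arc $J_2\subseteq a_2$ (this is exactly what will make the last bullet of the ladder automatic). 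Finally, as each $I(b)$ meets $\Omega(n)$ curves of $\cA$, dependent random choice supplies a linearly large $\cB^{(4)}\subseteq\cB^{(3)}$ in which any $t+1$ curves have at least $\mu' n$ common neighbours in $\cA$; that common set will be $\cA'$.

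The topological heart of the argument comes next, and I expect it to be the \textbf{main obstacle}. If $b,b'$ both avoid $j$ then $I(b)\cap I(b')\subseteq b\cap b'\subseteq I_j(b)\cap I_j(b')=\varnothing$; and two disjoint arcs that run from $J_1\subseteq a_1$ to $J_2\subseteq a_2$, cross both positively, and have interiors avoiding $a_1\cup a_2$ must be \emph{co-monotone} (their endpoints occur in the same order along $J_1$ and along $J_2$) and \emph{nested}: if such an arc $I(b)$ is disjoint from two others $I(c),I(c')$ whose endpoints bracket those of $I(b)$ on both $J_1$ and $J_2$, then $I(b)$ is trapped inside the closed region cut off by the curve $I(c)\cup(\text{subarc of }a_1)\cup I(c')\cup(\text{subarc of }a_2)$ --- a genuine Jordan curve precisely because $J_1,J_2$ avoid $a_1\cap a_2$, with the ``which side'' questions settled by the positive-crossing normalization, and with the regions cut off by consecutive rungs pairwise disjoint. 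The payoff is that for an entire chain it suffices that each middle curve be disjoint from its two immediate neighbours: pairwise disjointness of the whole chain, and its co-monotonicity, then follow for free.

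Finally I would extract the ladder. By the previous paragraph each avoiding-$j$ $(2t+1)$-tuple inside $\cB^{(4)}$, re-sorted by its $J_1$-coordinate, is a co-monotone sequence $c_1,\dots,c_{2t+1}$ with consecutive terms adjacent in the graph $\Gamma_j$ on $\cB$ (``$bb'$ an edge'' meaning the pair avoids $j$). Freeze the odd-position vertices and average: the number of such sequences with a prescribed odd part $(b_1,b_3,\dots,b_{2t+1})$ equals $\prod_{i=1}^{t}N_i$, where $N_i$ counts the $b\in\cB^{(4)}$ that are $\Gamma_j$-adjacent to both $b_{2i-1}$ and $b_{2i+1}$ and whose coordinates $(p(b),q(b))$ lie in the box bracketed by those of $b_{2i-1},b_{2i+1}$; since there are at most $n^{t+1}$ odd parts, some odd part has $\prod_i N_i=\Omega(n^t)$, and as each $N_i\le n$ this forces every $N_i=\Omega(n)$. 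Take $\cB_{2i}$ to be the corresponding set of size $N_i$ and $\cA'$ the common $\cA$-neighbourhood of $b_1,b_3,\dots,b_{2t+1}$ from the dependent-random-choice step; all four conditions of a $(\mu,t)$-ladder then hold for $\mu=\mu(t,k,c)>0$ --- co-monotonicity and the interval orderings from the averaging, chain disjointness from the nesting lemma, and $a_1\cap a_2$-freeness from the arc restriction. The one genuinely delicate bookkeeping point is that the $\Omega(n^{2t+1})$ avoiding-$j$ tuples promised by Lemma~\ref{lemma:avoiding ell-tuples} must survive inside $\cB^{(4)}$: a plain pigeonhole does not guarantee this, so one must schedule the Ramsey step of Lemma~\ref{lemma:avoiding ell-tuples} in tandem with (rather than before) the geometric restrictions, using that \emph{every} piece of the partition carries the colour bound in order to keep the chosen index consistent with Lemma~\ref{lemma:coloring lemma}.
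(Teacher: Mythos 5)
Your high-level outline correctly identifies the structural components the argument needs — fixing $j$ and $(a_1,a_2)$, orienting the curves to make crossings positive, confining $p(b),q(b)$ to arcs of $a_1,a_2$ avoiding $a_1\cap a_2$, imposing a compatible order, and extracting a common $\cA$-neighbourhood — and the topological ideas (positive crossings, nesting, Jordan curves) are all on the right track. However, the scheduling of the steps has a gap that you yourself flag but do not resolve, and it is the crux of the matter. Your plan first prunes $\cB$ down to a linearly sized set $\cB^{(4)}$ (orientation restriction, arc restriction, dependent random choice) and only then wants to count avoiding-$j$ $(2t+1)$-tuples inside $\cB^{(4)}$. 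But Lemma~\ref{lemma:avoiding ell-tuples} gives $\eta n^{\ell}$ such tuples in $\cB^{\ell}$ for some $\eta=\eta(k,c,\ell)$, while $|\cB^{(4)}|$ is only $\delta n$ for some $\delta=\delta(k,c,t)$. There is no a priori relation between $\eta$ and $\delta^{\ell}$, so it is entirely possible that none of the guaranteed tuples lie in $\cB^{(4)}$, and reapplying Lemma~\ref{lemma:avoiding ell-tuples} to $\cB^{(4)}$ is circular because $\cB^{(4)}$ was defined using the previously chosen index $j$. Your proposed repair (``schedule the Ramsey step in tandem with the geometric restrictions, using that every piece of the partition carries the colour bound'') is not worked out and does not obviously close this.

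The paper avoids this problem by never restricting to a smaller vertex set at all. Instead it works with a cascading hierarchy of tuple lengths $\ell_1\gg\ell_2\gg\cdots\gg\ell_5\gg t$: start with $\Omega(n^{\ell_1})$ avoiding-$j$ $\ell_1$-tuples, observe (via the colouring lemma and a random choice of $(a_1,a_2)$) that a constant fraction of the coordinates of such tuples are $j$-good, and prune \emph{inside each tuple} to a $j$-good subtuple of length $\ell_2$; then prune again to coordinates whose $p$- and $q$-endpoints all lie on common intersection-free arcs ($\ell_3$), then to a both-sided monotone subtuple via Erd\H{o}s--Szekeres and the three-curve Jordan argument ($\ell_4$), and only then fix the odd-indexed coordinates and collect the $\cB_{2i}$'s. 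Because the avoiding-$j$ property is inherited by subtuples, every pruning step stays inside the original count, and a standard projection/counting argument maintains $\Omega(n^{\ell_i})$ tuples at each stage. This is the key technical device your plan is missing.

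A secondary point: you claim co-monotonicity (same order on $J_1$ and $J_2$) ``follows for free'' after sorting by $J_1$-coordinate, and therefore skip Erd\H{o}s--Szekeres. The paper's three-curve topological argument only rules out a fully reversed order; it does not, by itself, rule out arbitrary non-monotone permutations of the $q$-endpoints. The paper therefore needs two applications of Erd\H{o}s--Szekeres to first pass to a subtuple that is monotone on $a_1$ and on $a_2$ separately (either increasing or decreasing), and \emph{then} invokes the three-curve impossibility to rule out the anti-monotone case. If you want to bypass Erd\H{o}s--Szekeres you would have to prove a genuinely stronger statement — that pairwise disjoint positively-crossing arcs between two arcs of $a_1,a_2$ with no $a_1\cap a_2$ points between them must have endpoints in the same cyclic order — which requires confining the arcs inside a common Jordan domain, a step you have not established. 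I'd recommend simply following the paper here.

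Finally, your use of dependent random choice to obtain $\cB^{(4)}$ is more machinery than the paper uses: once the odd-indexed rungs $(b_1,b_3,\dots,b_{2\ell_5+1})$ are fixed, the paper just pigeonholes over the (at most $\binom{\ell_5}{t+1}$) possible $(t+1)$-subsets to find one with a common $\cA$-neighbourhood of linear size, which is simpler and sidesteps the interaction between the DRC-chosen set and the tuple-counting.
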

\begin{proof}
To simplify notation, $\Omega(\cdot)$ hides any constant which may depend on $t,k$ and $c$, but no other parameter. Let $\ell_5=\lceil 12k(t+1)/c\rceil$, $\ell_4=2\ell_5+1, \ell_3=\ell_4^4, \ell_2=(k+1)^2\ell_3$ and $\ell_1=(12k/c)^2\ell_2$. By Lemma \ref{lemma:avoiding ell-tuples}, there exists an index $j\in [k+1]$ with the property that at least $\Omega(n^{\ell_1})$ $\ell_1$-tuples of curves from $\cB$ avoid index $j$. We fix such a choice of $j$. The rest of the proof is divided into five steps.

\medskip\noindent
\textbf{Step 1: Finding tuples of $j$-good curves which avoid index $j$.}
Pick a random pair of curves $a_1, a_2\in \cA$. Fixing a curve $b\in \cB$, the probability that $b$ is $j$-good with respect to $(a_1, a_2)$, i.e. $\Pb[b\in \cB_{\rm good}]$ can be lower bounded as follows.

Recall that the elements of $P_b=\bigcup_{a\in \cA}b\cap a$ are colored by the curve $a$ containing them. Also, $I_j(b)\cap P_b$ receives at least $\frac{c}{2k} n$ different colors. The intersection points in $I_j(b)\cap P_b$ are either positive or negative. By the Pigeonhole principle, one of these two groups receives at least $\frac{c}{4k}n$ different colors. If it is the negative group, we reverse the direction of $b$ -- which switches the positive and the negative intersections, meaning that we can assume that the positive intersections contribute at least $\frac{c}{4k}n$ different colors.

Let $N=|P_b\cap I_j(b)|$ and let $m\geq \frac{c}{4k} n$ be the number of colors received by these $N$ points. By Lemma~\ref{lemma:coloring lemma}, there exist at least $m^2/9$ pairs $a_1, a_2\in \cA$, for which there are intersection points $p\in a_1\cap b, q\in a_2\cap b$ such that the colors $a_1, a_2$ do not appear between $p$ and $q$, and such that at least $m/3$ colors appear between $p$ and $q$. In other words, this means that there are at least $m^2/9$ pairs $a_1, a_2\in \cA$ such that $b$ is $j$-good with respect to $(a_1, a_2)$. Hence, for a fixed curve $b\in \cB$, we have \[\Pb\big[b\in \cB_{\rm good}\big]\geq \frac{m^2/9}{n^2}\geq \Big(\frac{c}{6k}\Big)^2.\]

We denote this probability by $\tau=(c/6k)^2$, and we denote by $\cL_1$ the set of $\ell_1$-tuples $(b_1, \dots, b_{\ell_1})\in \cB^{\ell_1}$ avoiding the index $j$. By linearity of expectation, $\E\big[|\{b_1, \dots, b_{\ell_1}\}\cap \cB_{\rm good}|\big]\geq \tau \ell_1$. Since $\ell_2=\tau \ell_1/4$, we must have 
\[\Pb\Big[\big|\{b_1, \dots, b_{\ell_1}\}\cap \cB_{\rm good}\big|\geq \ell_2\Big]\geq \tau /2.\]
Indeed, if this was not true, we would get a contradiction from \[\E\big[|\{b_1, \dots, b_{\ell_1}\}\cap \cB_{\rm good}|\big]\leq \ell_1\cdot \Pb\Big[\big|\{b_1, \dots, b_{\ell_1}\}\cap \cB_{\rm good}\big|\geq \ell_2\Big]+\ell_2\cdot 1<  \ell_1\tau/2+ \tau\ell_1/4<\tau\ell_1.\]

This implies the existence of a pair $a_1, a_2\in \cA$ for which there are at least $\Omega(\tau n^{\ell_1})=\Omega(n^{\ell_1})$ $\ell_1$-tuples of $\cL_1$ with at least $\ell_2$ curves belonging to $\cB_{\rm good}$. At the same time, any $\ell_2$-tuple of curves from $\cB_{\rm good}$ is contained in at most $n^{\ell_1-\ell_2}$ $\ell_1$-tuples of $\cL_1$, and therefore there must be at least $\frac{\Omega(n^{\ell_1})}{n^{\ell_1-\ell_2}}=\Omega(n^{\ell_2})$ $\ell_2$-tuples of curves from $\cB_{\rm good}$ avoiding index $j$. We  denote this set of $\ell_2$-tuples by $\cL_2$. 

\medskip\noindent
\textbf{Step 2: Avoiding intersections of $a_1$ and $a_2$.} The curve $a_1$ contains at most $k$ points of intersection with $a_2$, which partition $a_1$ into at most $k+1$ subcurves. We denote the subcurves by $I_1, \dots, I_{r+1}$, where $r=|a_1\cap a_2|$. Similarly, we can partition $a_2$ into $r+1$ subcurves $J_1, \dots, J_{r+1}$. For each $(b_1, \dots, b_{\ell_2})\in \cL_2$, there are intervals $I_s$ and $J_{s'}$ such that at least $\ell_2/(r+1)^2\geq \ell_3$ curves $b\in \{b_1, \dots, b_{\ell_2}\}$ satisfy $p(b)\in I_s$ and $q(b)\in J_{s'}$. In other words, for each $(b_1, \dots, b_{\ell_2})\in \cL_2$,  there is a subtuple $(b_{i_1}, \dots, b_{i_{\ell_3}})$ with the property that $a_1$ does not intersect $a_2$ between in the interval containing $\{p(b_{i_1}), \dots, p(b_{i_{\ell_3}})\}$ and similarly for $a_2$. If we denote the set of such $\ell_3$-tuples by $\cL_3$, the usual counting argument gives $|\cL_3|\geq |\cL_2|/n^{\ell_2-\ell_3}=\Omega(n^{\ell_3})$.

\medskip\noindent
\textbf{Step 3: Ordering the tuples.} For each $(b_1, \dots, b_{\ell_3})\in \cL_3$,  consider the order in which the points $p(b_1), \dots, p(b_{\ell_3})$ appear on $a_1$. By the Erd\H{o}s-Szekeres theorem, there exists a set of $r=\ell_2^{1/2}$ curves among them, say $b_{i_1}, \dots, b_{i_{r}}$, such that $p(b_{i_1}), ..., p(b_{i_r})$ appear on $a_1$ in the increasing or decreasing order. Similarly, among those $r$ curves, there is a set of $r^{1/2}=\ell_3^{1/4}=\ell_4$ curves which appear on $a_2$ in either increasing or decreasing order. 

Note that if $(b_{1}, \dots, b_{\ell_4})$ is any sequence of $j$-good curves, then it is impossible that $p(b_{1}), ..., p(b_{\ell_4})$ appear on $a_1$ in an increasing order, but $q(b_{1}), ..., q(b_{\ell_4})$ appear on $a_2$ in decreasing order, or vice versa. Indeed, it is impossible even to have three curves, say $b_1, b_2, b_3$, which all intersect $a_1, a_2$ positively and such that $p(b_1), p(b_2), p(b_3)$ appear in the increasing order on $a_1$, while $q(b_1), q(b_2), q(b_3)$ appear in the decreasing order on $a_2$. To see why, consider the Jordan curve $C$ formed by concatenating the following four arcs: subcurve $a_1$ between $p(b_1)$ and $p(b_3)$, subcurve of $b_3$ from $p(b_3)$ to $q(b_3)$, subcurve of $a_2$ from $q(b_3)$ to $q(b_1)$ and subcurve of $b_1$ from $q(b_1)$ to $p(b_1)$. The curve $C$ splits the plane into two regions. When walking along the curve $a_1$ between $p(b_1)$ and $p(b_3)$, one of these regions, which we call $L$, is on the left side of $a_1$, and the other one, which we call $R$, is on the right side. Due to the positive intersection property, $L$ is also on the left side of $b_3$ when we walk along it from $p(b_3)$ to $q(b_3)$, and thus $L$ is on the left side of $a_2$ when traversed from $q(b_3)$ to $q(b_1)$. However, the curve $b_2$ intersects the boundary of $L$ exactly twice, once on $a_1$ and once on $a_2$, and due to the positive intersection property, it is directed from $R$ to $L$ at both of these intersection points. But this is impossible, and thus we derive a contradiction. 

Therefore, we conclude that for each $(b_1, \dots, b_{\ell_3})\in \cL_3$, there are $\ell_4$ curves $b_{i_1}, \dots, b_{i_{\ell_4}}$ such that $p(b_{i_1}), ..., p(b_{i_{\ell_4}})$ appear on $a_1$ in the increasing or decreasing order and $q(b_{i_1}), ..., q(b_{i_{\ell_4}})$ appear on $a_2$ in the same order. If for at least half of $\ell_3$-tuples of $\cL_3$,  we have that $p(b_{i_1}), ..., p(b_{i_{\ell_4}})$ appear on $a_1$ in the decreasing order, we reverse the directions of all curves in $\cA$ and $\cB$, and we switch the roles of $a_1$ and $a_2$. This does not affect the positive intersection condition, and thus does not change the set $\cB_{\rm good}$, but it ensures that for at least half of $\ell_3$-tuples $(b_1, \dots, b_{\ell_3})\in \cL_3$, there are $\ell_4$ curves $b_{i_1}, \dots, b_{i_{\ell_4}}$ such that $p(b_{i_1}), ..., p(b_{i_{\ell_4}})$ appear on $a_1$ in the increasing order. 

We denote the set of such $\ell_4$-tuples by $\cL_4$, and note that each of them still avoids the index $j$ and has elements from $\cB_{\rm good}$. Finally, since each $\ell_4$-tuple is contained in at most $n^{\ell_3-\ell_4}$ $\ell_3$-tuples of $\cL_3$, we have $|\cL_4|\geq |\cL_3|/n^{\ell_3-\ell_4}=\Omega(n^{\ell_4})$.

\medskip\noindent
\textbf{Step 4: Fixing the rungs of the ladder.} For each $\ell_4=(2\ell_5+1)$-tuple $(b_1, \dots, b_{2\ell_5+1})\in \cL_4$, consider only the odd-indexed elements, i.e. the  $(\ell_5+1)$-tuple of curves $(b_1, b_3, \dots, b_{2\ell_5+1})$. Since there are at most $n^{\ell_5+1}$ possibilities for this $(\ell_5+1)$-tuple,  some $(\ell_5+1)$-tuple appears in at least $|\cL_4|/n^{\ell_5+1}=\Omega(n^\ell_5)$ pieces of $(2\ell_5+1)$-tuples of $\cL_4$. Fix the most popular $(\ell_5+1)$-tuple $(b_1,b_3,\dots,b_{2\ell_5+1})$, and denote by $\cL_5$ elements of $\cL_4$ which have this $(\ell_5+1)$-sequence as their odd-indexed elements. For $i\in \{2, 4, \dots, 2\ell_5\}$, let $\cB_i$ be the set of curves which  appear as $i$-th $b_i$ in some $(2\ell_5+1)$-tuple $(b_1,\dots,b_{2\ell_5+1})\in \cL_5$. Then, we have $|\cB_2||\cB_4|\dots|\cB_{2\ell_5}|\geq |\cL_5|=\Omega(n^{\ell_5})$, which implies that  $|\cB_i|\geq \Omega(n)$ for every $i\in\{2,4,\dots,2\ell_5\}$. Also, for any $b_2\in \cB_2, \dots, b_{2\ell_5}\in \cB_{2\ell_5}$, the subcurves $I(b_1), I(b_2), \dots, I(b_{2\ell_5+1})$ are disjoint. Note that we do not claim that \textit{every} $(2\ell_5+1)$-tuple $(b_1, \dots, b_{2\ell_5+1})$ with $b_i\in \cB_i$ is in $\cL_4$.

\medskip\noindent
\textbf{Step 5: Finding $\cA'$.}
For $i\in \{1, 3, \dots, 2\ell_5+1\}$, let  $\cA_i$ be the set of curves intersecting $b_i$ in $I(b_i)$, then $|\cA_i|\geq \frac{c}{12k}n$. The goal is to find a $(t+1)$-tuple $(b_{i_1}, \dots, b_{i_{t+1}})$ for which $\big|\bigcap_{s=1}^{t+1} \cA_{i_s}\big|\geq \Omega(n)$. For a curve $a \in \cA$, let $d_a$ denote the number of sets $\cA_i$ containing $a$. Then $\sum_{a \in \cA} d_a = \sum_{i=1}^{\ell_5+1} |\cA_{2i-i}| \ge \ell_5\cdot \frac{c}{12k} n$. As $d_a\leq \ell_5$, there is a set $\cA^*\subset \cA$ of at least $\frac{c}{24k}n$  curves  that are contained in at least $\frac{c}{24k}\ell_5\geq t+1$ sets $\cA_i$. There are $\binom{\ell_5}{t+1}$ different $(t+1)$-tuples, so there is a $(t+1)$-tuple  $(b_{i_1}, \dots, b_{i_{t+1}})$ and a set $\cA'\subset \cA^*$ of size at least $|\cA'|\geq |\cA^*|/\binom{\ell_5}{t+1}\geq cn/(24k\binom{\ell_5}{t+1})$ such that $\cA'\subset \bigcap_{s=1}^{t+1} \cA_{i_s}$. But then the $(t+1)$-tuple of curves $b_{i_1},\dots,b_{i_{t+1}}$ together with the $t$-tuple of sets $\cB_{i_1+1},\dots,\cB_{i_{t}+1}$ is a $(\mu,t)$-ladder for some $\mu=\mu(t,k,c)>0$.
\end{proof}

\begin{proof}[Proof of Theorem \ref{thm:stEH_kint}]
Let $t=20k$, then by Lemma \ref{lemma:ladder}, there exists $\mu=\mu(k,c)$ and a $(\mu,t)$-ladder with respect to some pair $(a_1,a_2)$ and index $j$ consisting of curves $b_1,b_3, \dots, b_{2t+1}\in\cB$ and sets $\cB_2,\cB_4, \dots, \cB_{2t}\subset \cB$, each of size at least $\mu n$. In this proof, we only consider the subcurve $I(b)$ for each curve $b\in \{b_1,b_3,\dots,b_{2t+1}\}\cup \cB_2\cup \cB_4\cup\dots\cup \cB_{2t}$. Therefore, we replace each of these curves by this subcurve.  Then the curves $b_1,\dots, b_{2t+1}$ are disjoint for any choice $b_2\in \cB_2,\dots,b_{2t}\in \cB_{2t}$. Furthermore, there exists  a set $\cA'\subset \cA$ of size at least $\mu n$ such that every $a\in \cA'$ intersects all curves $b_1,b_3 \dots, b_{2t+1}$. 

For  $i\in \{1, \dots, t\}$, let $S_i$  be the Jordan region whose boundary consists of $b_{2i-1}$, the subcurve of $a_2$ from $q(b_{2i-1})$ to $q(b_{2i+1})$, $b_{2i+1}$, and the subcurve of $a_1$ from $p(b_{2i+1})$ to $p(b_{2i-1})$. Note that these four subcurves do not intersect each other, so their concatenation is indeed a simple closed Jordan curve. This is illustrated in Figure \ref{fig:regions Si}. 

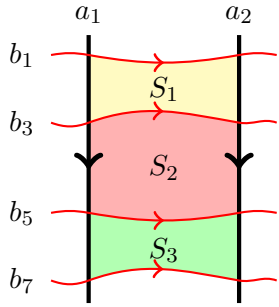
\begin{figure}
    \centering
    \begin{tikzpicture}[scale=1]
        \node[label={[label distance=-7]90:$a_1$}] (a1) at (-1, 3) {};
        \node[label={[label distance=-7]90:$a_2$}] (a2) at (1, 3) {};
        \node[label={[label distance=5]0:$b_1$}] (b1) at ($(a1)+(-1.5, -0.4)$) {};
        \node[label={[label distance=5]0:$b_3$}] (b2) at ($(a1)+(-1.5, -1.3)$) {};
        \node[label={[label distance=5]0:$b_5$}] (b3) at ($(a1)+(-1.5, -2.5)$) {};
        \node[label={[label distance=5]0:$b_7$}] (b4) at ($(a1)+(-1.5, -3.4)$) {};

        \fill[yellow!60, opacity=0.5]
            ($(b1)+(1.5, 0)$) to[out=-10, in=-170] ($(b1)+(3.5, 0)$) to[out=-90, in=90] ($(b2)+(3.5, 0)$) to[out=170, in=20] ($(b2)+(1.5, 0)$) -- cycle;
        \fill[red!60, opacity=0.5]
            ($(b2)+(1.5, 0)$)  
            to[out=20, in=170] ($(b2)+(3.5, 0)$) 
            to[out=-90, in=90] ($(b3)+(3.5, 0)$) 
            to[out=-170, in=-10] ($(b3)+(1.5, 0)$) --  cycle;
        \fill[green!60, opacity=0.5]
            ($(b3)+(1.5, 0)$)  
            to[out=-10, in=-170] ($(b3)+(3.5, 0)$) 
            to[out=90, in=-90] ($(b4)+(3.5, 0)$)
            to[out=170, in=20] ($(b4)+(1.5, 0)$)  
            --  cycle;

        \draw [ultra thick,black,
                postaction={decorate},
                decoration={markings, mark=at position 0.5 with {\arrow[scale=1.5]{>}}}]
        (a1)
        to[out=-90, in=90] ($(a1)+(0, -2)$)
        to[out=-90, in=90] ($(a1)+(0, -3.7)$);
        
        \draw [ultra thick,black,
                postaction={decorate},
                decoration={markings, mark=at position 0.5 with {\arrow[scale=1.5]{>}}}]
        (a2)
        to[in=90,out=-90] ($(a2)+(0, -2)$)
        to[in=90,out=-90] ($(a2)+(0, -3.7)$);
        
        \draw[thick, red,
                postaction={decorate},
                decoration={markings, mark=at position 0.5 with {\arrow[scale=1.5]{>}}}]
                ($(b1)+(1, 0)$) to[out=10, in=170] ($(b1)+(1.5, 0)$) to[out=-10, in=-170] ($(b1)+(3.5, 0)$) to[out=10, in=170] ($(b1)+(4, 0)$);
        \draw[thick, red,
                postaction={decorate},
                decoration={markings, mark=at position 0.5 with {\arrow[scale=1.5]{>}}}]
                ($(b2)+(1, 0)$) to[out=-20, in=-160] ($(b2)+(1.5, 0)$) to[out=20, in=170] ($(b2)+(3.5, 0)$) to[out=-10, in=160] ($(b2)+(4, 0)$);
        \draw[thick, red,
                postaction={decorate},
                decoration={markings, mark=at position 0.5 with {\arrow[scale=1.5]{>}}}]
                ($(b3)+(1, 0)$) to[out=10, in=170] ($(b3)+(1.5, 0)$) to[out=-10, in=-170] ($(b3)+(3.5, 0)$) to[out=10, in=170] ($(b3)+(4, 0)$);
        \draw[thick, red,
                postaction={decorate},
                decoration={markings, mark=at position 0.5 with {\arrow[scale=1.5]{>}}}]
                ($(b4)+(1, 0)$) to[out=-20, in=-160] ($(b4)+(1.5, 0)$) to[out=20, in=170] ($(b4)+(3.5, 0)$) to[out=-10, in=160] ($(b4)+(4, 0)$);

        \node[] at (0, 2.15) {$S_1$};
        \node[] at (0, 1.1) {$S_2$};
        \node[] at (0, 0) {$S_3$};
    \end{tikzpicture}

\vspace{-0.2cm}
    \caption{Illustration of the regions $S_i$ defined in the proof of Theorem~\ref{thm:stEH_kint}.}
    \label{fig:regions Si}
\vspace{-0.3cm}
\end{figure}

Let $\cA_i$ be the set of curves $a\in \cA'$ which either have an endpoint in $S_i$, or intersects the boundary of $S_i$  on $a_1$ or $a_2$. Since each $a\in \cA'$ intersects $a_1, a_2$ in at most $k$ points, and has two endpoints, it can appear in at most $2k+2$ different sets $\cA_i$. Therefore, we have $\sum_{i=1}^t |\cA_i|\leq (2k+2)|\cA'|$. Thus, there exists an index $i\in \{2, 4, \dots, t-2\}$ such that $|\cA_i|+ |\cA_{i+1}|\leq \frac{2k+2}{t/2-1} |\cA'|\leq \frac{1}{3}|\cA'|$. Fix such an index $i$, and let $\cA''=\cA'\backslash (\cA_i\cup \cA_{i+1})$.

The set $\cA''$ has size at least $\frac{2}{3}|\cA''|\geq \frac{2}{3}\mu n$. Moreover, a curve $a\in \cA''$ has no endpoints in $S_i\cup S_{i+1}$, which is a Jordan region bounded by $b_{2i-1}, b_{2i+3}$, the subcurve of $a_1$ from $p(b_{2i-1})$ to $p(b_{2i+3})$ and the subcurve of $a_2$ from $q(b_{2i-1})$ to $q(b_{2i+3})$. Also, $a$ does not intersect the parts of the boundary of $S_i\cup S_{i+1}$ on $a_1$ and $a_2$. Let $x_1,\dots,x_s$ be the intersection points of $a$ with $b_{2i-1}\cup b_{2i+1}\cup b_{2i+3}$, where the indices follow the ordering of $a$. Then $x_1,x_s\in b_{2i-1}\cup b_{2i+3}$ and there is some $1<p<s$ such that $x_p\in b_{2i+1}$. Hence, there exists an index $1<q<s$ such that $x_q\in b_{2i+1}$ and $x_{q+1}\in b_{2i-1}\cup b_{2i+3}$. Without loss of generality, assume that $x_{q+1}\in b_{2i-1}$, the other case can be handled similarly. Then the subcurve of $a$ between $x_q$ and $x_{q+1}$ is contained in $S_i$, and it intersects every element of $\cB_{2i}$. Here, we are using that $a_1$ intersects the elements of $\cB_{2i}$ positively, and thus the curves of $\cB_{2i}$ pass through $S_i$, joining $a_1$ and $a_2$.

In conclusion, there is a set $\cA'''\subset \cA''$ of size at least $|\cA''|/2\geq \mu n/3$ such that that either every element of $\cA'''$ intersects every element of $\cB_{2i}$, or every element of $\cA'''$ intersects every element of $\cB_{2i+2}$. This finishes the proof.
\end{proof}

\subsection{Intersection graphs of pseudodisks}

In this section, we prove Theorem \ref{thm:pseudodisk optimal}. Our strategy is to show that the family $\cF_{\text{pdisk}}$ of bipartite intersection graphs of two families of $y$-monotone pseudodisks is degree-bounded and has the density-EH property. The degree-boundedness already follows from the result of Keller and Smorodinsky \cite{KS}, who show that the $K_{s,s}$-free members of $\cF_{\text{pdisk}}$  have average degree $O(s^6)$. In particular, a result of Keszegh \cite{Keszegh} implies that every member of $\cF_{\text{pdisk}}$   avoids an induced subdivision of $K_5$, where all root vertices are in one of the parts, thus they avoid induced subdivisions of $K_9$, so the theorem of K\"uhn and Osthus \cite{KO04} is also applicable to this end. Therefore, it remains to show that $\cF_{\text{pdisk}}$ has the density-EH property. In order to prove this, we use a combination of Theorem \ref{thm:stEH_kint} and the following cutting lemma of \cite{CCH12}. To be precise, the following lemma is stated for certain ``well-behaved'' families of shapes in $\mathbb{R}^d$, but $y$-monotone pseudodisk families fall into this category, see \cite{CH23}.

\begin{lemma}[Cutting lemma for pseudodisks \cite{CCH12}]\label{lemma:cutting pseudodisks}
Let $\cA$ be a family of $y$-monotone pseudodisks in the plane of size $n$ and let $1\leq r\leq n$. Then there exists a decomposition of the plane into $O(r^2)$ connected regions such that any region intersects the boundaries of  at most $n/r$  of pseudodisks in $\cA$.
\end{lemma}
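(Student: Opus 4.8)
The statement is the cutting lemma of \cite{CCH12}, so the plan is to recall the standard $\eps$-cutting construction for families of curves of bounded complexity and indicate why $y$-monotone pseudodisk boundaries fit into it. First I would pass from pseudodisks to arcs: since each $A\in\cA$ is $y$-monotone, its boundary $\partial A$ is the union of an upper arc and a lower arc, each being the graph of a continuous function of $x$ over an interval, so altogether we obtain a family $\cC$ of at most $2n$ such graphs. Any two members of $\cC$ cross at most twice --- two arcs of the same pseudodisk touch only at its two $x$-extreme points, while two arcs coming from distinct pseudodisks $A,A'$ cross at most twice since $|\partial A\cap\partial A'|\le 2$. Hence it suffices to decompose the plane into $O(r^2)$ cells each meeting at most $2n/r$ arcs of $\cC$; replacing $r$ by $2r$ then gives the claimed bound for pseudodisk boundaries.

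The central object is the \emph{vertical decomposition} $\mathrm{VD}(S)$ of a subfamily $S\subseteq\cC$: from every vertex of the arrangement of $S$ and every endpoint of an arc of $S$, extend vertical rays upward and downward until they first meet another arc of $S$. Because any two arcs cross at most twice, the arrangement of $S$ has $O(|S|^2)$ vertices and $O(|S|)$ arc endpoints, so $\mathrm{VD}(S)$ has $O(|S|^2)$ ``pseudo-trapezoidal'' cells, each bounded by at most two arcs and at most two vertical walls, hence determined by a bounded number of arcs of $S$ and crossed by no other arc of $S$ in its interior. This is exactly a Clarkson--Shor configuration space of bounded dimension, which is the input required by the cutting machinery.

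With this in hand I would build the cutting in two stages. In the first stage, take a uniform random sample $R\subseteq\cC$ of size $m=\Theta(r\log r)$; by the $\eps$-net theorem applied to the above configuration space, with positive probability every cell of $\mathrm{VD}(R)$ is crossed by $O(\tfrac{n}{m}\log m)$ arcs of $\cC$, which can be made $\le n/r$ by taking the constant in $m$ large enough, while $|\mathrm{VD}(R)|=O(m^2)=O(r^2\log^2 r)$; this already proves a version of the lemma with an extra polylogarithmic factor. In the second stage I would remove this factor via the hierarchical Chazelle--Friedman refinement: fix a large constant $\rho$ and construct cuttings $\Xi_0,\dots,\Xi_k$ with $k=\lceil\log_\rho r\rceil$, where $\Xi_0$ is the whole plane and $\Xi_{i+1}$ is obtained by subdividing each cell $\Delta$ of $\Xi_i$ (which meets at most $n/\rho^i$ arcs) using a random sample of constant size of the arcs meeting $\Delta$; the $\eps$-net theorem with a constant sample size guarantees that each of the $O_\rho(1)$ subcells of $\Delta$ meets at most $n/\rho^{i+1}$ arcs, and the Chazelle--Friedman exponential-decay bound gives $|\Xi_k|=O(r^2)$ with a constant independent of $\rho$ once $\rho$ is large enough. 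Every cell of $\Xi_k$ then meets at most $n/\rho^{k}\le n/r$ arcs, completing the construction.

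The hard part is not the overall strategy but the routine-looking technical bookkeeping: verifying the Clarkson--Shor axioms for pseudo-trapezoids (bounded defining-set size, $O(m^2)$ cells, good behaviour under restriction to a cell $\Delta$) and tracking constants through the hierarchical construction so that the final cell count is genuinely $O(r^2)$ rather than $O(r^2\,\mathrm{polylog}\,r)$. Since these details are precisely what is carried out in \cite{CCH12}, in the write-up I would invoke that reference rather than reproduce them, and use the lemma only as a black box in the sequel.
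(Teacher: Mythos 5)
The paper does not prove Lemma~\ref{lemma:cutting pseudodisks}; it is imported as a black box from \cite{CCH12}, with a one-line remark (pointing to \cite{CH23}) that $y$-monotone pseudodisk families fall into the class of ``well-behaved'' shapes to which the cutting machinery of \cite{CCH12} applies. So there is no in-paper proof to compare against. Your sketch is the standard and correct route: split each $y$-monotone pseudodisk boundary into an upper and a lower $x$-monotone arc to obtain at most $2n$ arcs pairwise crossing at most twice, take their vertical (trapezoidal) decomposition so that each cell has a constant-size defining set and the whole decomposition has size $O(m^2)$, and then run the Clarkson--Shor sampling plus Chazelle--Friedman hierarchical refinement to obtain a $(1/r)$-cutting of size $O(r^2)$ rather than $O(r^2\,\mathrm{polylog}\,r)$. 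One small point worth making explicit: the two arcs of a single pseudodisk share their two $x$-extreme endpoints, so they ``touch'' rather than cross there; either perturb them apart or observe that such degenerate contacts contribute only $O(n)$ vertices and do not affect the $O(|S|^2)$ bound on the vertical decomposition. Apart from that, the outline has no gap, and deferring the technical bookkeeping to \cite{CCH12} is exactly what the paper itself does.
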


\begin{lemma}\label{lemma:point-pseudodisk dense}
Let $\eps>0$, then there exists $c>0$ such that the following holds. Let $\cA$ and $\cB$ be two $n$-element families of $y$-monotone pseudodisks. Let $G$ be the bipartite intersection graph between $\cA$ and $\cB$. If $G$ has at least $\eps n^2$ edges, then $G$ contains a biclique of size at least $cn$.
\end{lemma}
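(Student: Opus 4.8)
The plan is to prove Lemma~\ref{lemma:point-pseudodisk dense} by combining the cutting lemma with a dichotomy on how the $\eps n^2$ edges of $G$ are realized geometrically. Two $y$-monotone pseudodisks $A\in\cA$ and $B\in\cB$ intersect if and only if either (i) their boundaries $\partial A$ and $\partial B$ cross, or (ii) one contains the other (say $A\subseteq B$ or $B\subseteq A$) without the boundaries meeting. Since each pair falls into at least one of these two types, one of them accounts for at least $\eps n^2/2$ edges. We handle the two cases separately, in each case extracting a biclique of size $\Omega_\eps(n)$, which suffices.

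First I would handle the \emph{containment} case, say the subcase where $A\subseteq B$ for at least $\eps n^2/4$ pairs. Pick for each pseudodisk a representative point (e.g. the topmost point of its boundary); then $A\subseteq B$ implies this point of $A$ lies in $B$, so we get a point--region incidence graph between $|\cA|=n$ points and $|\cB|=n$ pseudodisks with $\ge \eps n^2/4$ incidences. This is a semialgebraic graph of bounded description complexity (membership of a point in a $y$-monotone pseudodisk described by a bounded Boolean combination of bounded-degree polynomial inequalities — or one may first pass to the fact that a bounded number of such graphs suffices, and reduce to the case of genuinely semialgebraic description), so Lemma~\ref{lemma:semialgrabraic} immediately yields a biclique of size $\Omega_\eps(n)$. (The symmetric subcase $B\subseteq A$ is identical.) The only mild subtlety is checking that $y$-monotone pseudodisks really can be encoded with bounded-complexity semialgebraic predicates; this is standard and is precisely the kind of ``well-behaved family'' remark quoted before Lemma~\ref{lemma:cutting pseudodisks}, so I would just cite \cite{CH23} for it.

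Next, the \emph{boundary-crossing} case: at least $\eps n^2/4$ pairs $(A,B)$ have $\partial A\cap\partial B\neq\varnothing$. Apply the cutting lemma (Lemma~\ref{lemma:cutting pseudodisks}) to the combined family $\cA\cup\cB$ of $2n$ pseudodisks with a constant parameter $r=r(\eps)$ to be chosen: we obtain $O(r^2)$ cells, each meeting the boundaries of at most $2n/r$ of the $2n$ pseudodisks. If $\partial A$ and $\partial B$ cross, they do so inside some cell, and inside that cell both $\partial A$ and $\partial B$ appear; so by averaging over the $O(r^2)$ cells, some single cell $\Delta$ contains crossings of at least $\eps n^2/(4\cdot O(r^2))$ of our pairs. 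Restrict to the families $\cA_\Delta=\{A\in\cA:\partial A\text{ meets }\Delta\}$ and $\cB_\Delta=\{B\in\cB:\partial B\text{ meets }\Delta\}$, each of size at most $2n/r$, and note these still have $\Omega_{\eps,r}(n^2)$ crossing pairs between them. The key point is that \emph{within} a cell, a $y$-monotone pseudodisk boundary restricted to $\Delta$ looks like (a bounded number of) curves, and two pseudodisk boundaries restricted to a cell cross in at most $2$ points; choosing $r$ large enough that $2n/r\le n$ (any $r\ge2$), we view the crossing graph between $\cA_\Delta$ and $\cB_\Delta$ as a bipartite intersection graph of two $O(1)$-intersecting families of curves of size $\le 2n/r$ with edge-density $\Omega_{\eps,r}(1)$, and apply Theorem~\ref{thm:stEH_kint} to get a biclique of size $\Omega_{\eps,r}(n/r)=\Omega_\eps(n)$.

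I expect the main obstacle to be the bookkeeping in the boundary-crossing case: making precise that the boundary of a $y$-monotone pseudodisk intersected with a cell of the cutting decomposes into a bounded number of arcs that are pairwise $2$-intersecting, so that Theorem~\ref{thm:stEH_kint} genuinely applies (one may need to further subdivide each cell into $y$-monotone pieces, at the cost of another constant factor in the number of regions, and to be careful that a ``crossing inside $\Delta$'' becomes an actual intersection of the restricted arcs). A secondary point is that Theorem~\ref{thm:stEH_kint} as stated takes two equal-sized families; here one passes to equal size by trivial padding or by working with the $\Omega_\eps(n)$-sized common lower bound on $|\cA_\Delta|,|\cB_\Delta|$ (first discarding low-degree vertices as in the proof of that theorem). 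None of these steps is deep, but together they constitute the bulk of the argument; the containment case is essentially immediate from Lemma~\ref{lemma:semialgrabraic}.
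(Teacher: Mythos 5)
Your overall strategy -- split the edges of $G$ into boundary-crossing pairs and containment pairs, and handle each case with a biclique-finding tool -- is the paper's strategy. But you have deployed the two tools in exactly the opposite cases from the paper, and in one of them this creates a genuine gap.

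\textbf{Containment case: a real gap.} You reduce to a point--pseudodisk incidence graph and invoke Lemma~\ref{lemma:semialgrabraic}. But that lemma requires the graph to be semialgebraic of bounded description complexity $(d,D,m)$, and a pseudodisk is a purely topological object -- a Jordan region whose boundary crosses other boundaries at most twice and whose vertical sections are intervals. There is no bound on its description complexity by polynomial inequalities; a $y$-monotone pseudodisk need not be semialgebraic at all. The ``well-behaved family'' remark preceding Lemma~\ref{lemma:cutting pseudodisks} concerns a combinatorial condition under which the \emph{cutting lemma} holds (bounded union complexity, etc.); it is not a claim that pseudodisks admit semialgebraic encodings, and citing \cite{CH23} will not supply one. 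This is precisely why the paper does \emph{not} use Lemma~\ref{lemma:semialgrabraic} here. Instead, the paper handles the containment case with the \emph{cutting lemma}: replace each $A\in\cA$ with a point $p\in A$, discard low-degree points, cut the plane into $O(\eps^{-2})$ cells each meeting few boundaries of $\cB$, and find a cell $\Delta$ containing $\Omega_\eps(n)$ surviving points; then the pseudodisks of $\cB$ through any one such point that avoid $\partial\Delta$ must contain all of $\Delta$ and hence all the points, giving the biclique. You should use the cutting lemma here, not in the other case.

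\textbf{Boundary-crossing case: correct but overcomplicated.} You apply the cutting lemma and then Theorem~\ref{thm:stEH_kint} inside a cell. This can be made to work (the restricted boundary arcs within a cell are still $2$-intersecting within each family, and a cell containing a crossing of $\partial A$ and $\partial B$ contains a genuine intersection of the restricted arcs), but the detour through the cutting lemma is unnecessary. The two families of full boundaries $\partial\cA=\{\partial A:A\in\cA\}$ and $\partial\cB=\{\partial B:B\in\cB\}$ are each $2$-intersecting \emph{by the definition of a pseudodisk family}, and Theorem~\ref{thm:stEH_kint} was set up precisely to allow curves from $\cA$ and curves from $\cB$ to cross arbitrarily often. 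So the paper simply applies Theorem~\ref{thm:stEH_kint} directly to $\partial\cA,\partial\cB$ when the boundary-crossing case dominates.

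In short: move the cutting lemma to the containment case and drop Lemma~\ref{lemma:semialgrabraic}, and drop the cutting lemma from the boundary-crossing case in favor of a direct application of Theorem~\ref{thm:stEH_kint}.
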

\begin{proof}
Let $G_0$ be the subgraph of $G$ composed of those edges $\{A,B\}$ with $A\in \cA$ and $B\in \cB$ for which the boundaries of $A$ and $B$ intersect. Let $G_1$ be the subgraph of those edges for which $\{A,B\}$ for which $A\subset B$. Finally, let $G_2$ be the rest of the edges, that it, those edges $\{A,B\}$ where $B\subset A$. As $G$ is the union of $G_0,G_1,G_2$, we have $e(G_i)\geq \eps n^2/3$ for some $i\in \{0,1,2\}$.
   
First, consider the case $e(G_0)\geq \eps n^2/3$. Let $\cA'$ be the family of boundaries of the elements of $\cA$, and define $\cB'$ analogously. Then $G_0$ is the bipartite intersection graph between $\cA'$ and $\cB'$, and $\cA'$ and $\cB'$ are 2-intersecting families curves. Therefore, Theorem \ref{thm:stEH_kint} implies the existence of some $c>0$ such that $G_0$ contains a biclique of size at least $cn$.

Now consider the case $e(G_1)\geq \eps n^2/3$, and note that the case $e(G_2)\geq \eps n^2/3$ is essentially identical. Replace every element $A\in \cA$ by an arbitary point $p\in A$, and let $\cP$ be the set of resulting points. Then $G_1$ is the incidence graph of $P$ and $\cB$. First, delete every element of $\cP$ that is contained in fewer than $\eps n/6$ elements of $\cB$. This removes at most $\eps n^2/6$ incidences, let $\cP'$ denote the set of remaining points. Note that $|\cP'|\geq \eps n/6$.

Apply Lemma~\ref{lemma:cutting pseudodisks} to the family  $\cB$ with the parameter $r=\eps /12$. This produces a set of at most $\ell\leq O(\eps^{-2})$ regions in the plane, denoted by $\Delta_1, \dots, \Delta_\ell$, where the region $\Delta_i$ intersects at most $\eps n/12$ boundaries of pseudodisks in $\cB$. By the Pigeonhole principle, there is a region $\Delta_i$ containing a set $\cP''$ of at least $|\cP'|/\ell\geq \Omega(\eps^3 n)$ points of $\cP'$. Choose any point $p\in \cP'\cap \Delta_i$, and oberve that $p$  is contained in at least $\eps n/2$ pseudodisks, while at most $\eps n/12$ pseudodisk boundaries intersect $\Delta_i$. Hence, at least $\eps n/12$ pseudodisks of $\cB$ contain $\Delta_i$ completely. This gives a set of $\eps n/12$ pseudodisks, which all contain $\cP''$. Thus we found a complete bipartite graph in $G_1$ with vertex classes of size $\eps n/12$ and $\Omega(\eps^3 n)$. This finishes the proof. 
\end{proof}

\section{Semilinear graphs}\label{sec:semilinear}

In this section, we prove Theorem \ref{thm:semilinear zarankiewicz}. We take a geometric approach by representing semilinear graphs as an incidence graph between points and polytopes. We prove the following proposition, from which Theorem~\ref{thm:semilinear zarankiewicz} follows almost immediately.

\begin{proposition}\label{prop:point-polytope incidence graphs}
Let $\cH$ be a collection of $h$ halfspaces in $\bR^d$ and let ${\rm POL}(\cH)$ be the collection of polytopes formed as intersections of translates of members of $\cH$. If $X\subseteq {\rm POL}(\cH)$ and $Y\subseteq \bR^d$ are sets of size $n$ whose incidence graph is $K_{s, s}$-free, then there are at most $O_{d, h}\Big(sn \Big(\frac{\log n}{\log\log n}\Big)^{d-1}\Big)$ incidences.
\end{proposition}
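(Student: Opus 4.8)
Proposition~\ref{prop:point-polytope incidence graphs} is the geometric core, so I would prove it first and then deduce Theorem~\ref{thm:semilinear zarankiewicz} by a layer decomposition. For the deduction, write a semilinear graph $\Gamma$ of dimension $(d_x,d_y)$ with $d_x\ge d_y$ and complexity $(h,t)$ via the $ht$ linear functions $f_{i,j}(x,y)=\langle u_{i,j},x\rangle+\langle v_{i,j},y\rangle+c_{i,j}$, so that $E(\Gamma)=\bigcup_{j\in[t]}E(\Gamma_j)$, where $\Gamma_j$ is the ``$j$-th layer'' $x\sim y\iff\bigwedge_{i\in[h]}\{f_{i,j}(x,y)\le 0\}$. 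Each $\Gamma_j$ is a subgraph of $\Gamma$, hence $K_{s,s}$-free, and the neighbourhood of a fixed $x$ in $\Gamma_j$ is the polytope $P^{(j)}_x=\{y\in\bR^{d_y}:\langle v_{i,j},y\rangle\le -\langle u_{i,j},x\rangle-c_{i,j},\ i\in[h]\}$, a member of ${\rm POL}(\cH_j)$ for the fixed family $\cH_j=\{\langle v_{i,j},\cdot\rangle\le 0:i\in[h]\}$ of $h$ halfspaces in $\bR^{d_y}$. Thus $\Gamma_j$ is the incidence graph of $\le n$ points in $\bR^{d_y}$ with $\le n$ polytopes from ${\rm POL}(\cH_j)$; applying Proposition~\ref{prop:point-polytope incidence graphs} with $d=d_y$ to each layer and summing over $j$ gives $e(\Gamma)\le O_{d_y,h}\big(tsn(\log n/\log\log n)^{d_y-1}\big)$, which is the theorem.

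For the Proposition I would prove the slightly more flexible statement allowing $m$ points and $n$ polytopes, aiming for a bound of the form $O_{d,h}\big(s\min(m,n)(\log(m+n)/\log\log(m+n))^{d-1}\big)$, by induction on $d$. The base case $d=1$ is where Theorem~\ref{thm:main} enters: a member of ${\rm POL}(\cH)$ in $\bR^1$ is a half-line or an interval, and a short sweep argument shows a $C_4$-free point-interval incidence graph has only $O(m+n)$ edges (charge each interval to its leftmost contained point; $C_4$-freeness forces the non-singleton intervals to be almost disjoint), so this family is weakly degree-bounded. It is also semialgebraic of bounded description complexity, so by Lemma~\ref{lemma:semialgrabraic} it has the density-EH property, and Corollary~\ref{thm:master} yields the $O(s(m+n))$ bound, matching the $d=1$ case since $(\cdot)^0=1$.

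For the inductive step I would run a divide-and-conquer in the spirit of Chan--Har-Peled. Passing to the lift $x\mapsto(\langle a_i,x\rangle)_{i\in[h]}$, where $\cH=\{\langle a_i,\cdot\rangle\le 0\}$, and flipping signs of coordinates so that the subspace containing the (images of the) points carries a strictly positive vector, I may assume every polytope is monotone in a fixed direction $\nu$. Sort the points by their $\nu$-coordinate and build a partition tree of branching $u\approx\log(m+n)$ and depth $\approx\log(m+n)/\log\log(m+n)$, whose nodes are $\nu$-slabs. Each polytope $Q$ touches only $O(u)$ ``canonical'' slabs per level, whose disjoint union captures $Q$'s $\nu$-extent, and inside a canonical slab $\Sigma$ the polytope $Q$ should agree, on the points of $\Sigma$, with a polytope in the $(d-1)$-dimensional hyperplane of $\Sigma$ cut out by $h$ fixed halfspaces. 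The incidences of $Q$ then split over its canonical slabs; for a fixed $\Sigma$ the polytopes having $\Sigma$ canonical, together with the points in $\Sigma$, form a $(d-1)$-dimensional point-polytope incidence graph that is still $K_{s,s}$-free (a $K_{s,s}$ inside a slab is one in $\Gamma$). Applying the inductive hypothesis in each slab and summing, using $\sum_\Sigma|\Sigma\cap Y|\le n\cdot(\mathrm{depth})$, multiplies the dimension-$(d-1)$ bound by the depth, producing exactly the dimension-$d$ bound.

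The main obstacle is the clause that inside a canonical slab $Q$ agrees with a $(d-1)$-dimensional polytope. For axis-parallel boxes this is immediate, since the $d$ coordinate constraints decouple, but a general polytope has facets not parallel to $\nu$ that ``tilt'' across a slab, so the restriction of $Q$ to $\Sigma$ is only approximately $(d-1)$-dimensional. Resolving this --- either by taking slabs thin enough and charging the ``boundary-layer'' incidences (points lying $\nu$-close to a tilted facet of $Q$) to a secondary recursion, or by sandwiching $Q$ between the two $(d-1)$-dimensional polytopes obtained by sliding its tilted facets onto the two bounding hyperplanes of $\Sigma$ and controlling the discrepancy --- is the technical heart of the argument and the place where the extra generality over the box case is paid for. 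Throughout, whenever a dense auxiliary incidence graph is produced it is semialgebraic of bounded complexity, so Lemma~\ref{lemma:semialgrabraic} extracts a linear-size biclique from it; this is the mechanism converting the $K_{s,s}$-free hypothesis into the clean linear dependence on $s$, both in the base case and for the small leaves of the recursion.
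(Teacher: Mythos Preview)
Your deduction of Theorem~\ref{thm:semilinear zarankiewicz} from Proposition~\ref{prop:point-polytope incidence graphs} is correct and is exactly what the paper does: decompose $\Gamma$ into $t$ layers $\Gamma_j$, realise each $\Gamma_j$ as a $K_{s,s}$-free incidence graph of points in $\bR^{d_y}$ with polytopes in ${\rm POL}(\cH_j)$, apply the proposition with $d=d_y$, and sum.

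For the proposition itself, however, your approach diverges from the paper's and contains a real gap --- the one you yourself flag. In your divide-and-conquer, you need that a polytope $Q\in{\rm POL}(\cH)$, restricted to a canonical $\nu$-slab $\Sigma$, \emph{coincides on the points of $\Sigma$} with a polytope in ${\rm POL}(\cH'')$ for a fixed $(d-1)$-dimensional family $\cH''$. For boxes this holds because the axial constraints decouple; for general polytopes it simply fails, since a facet not parallel to $\nu$ sweeps across the slab and its trace on the bounding hyperplane of $\Sigma$ depends on the $\nu$-coordinate. Neither of your suggested fixes is workable as stated: ``thin slabs plus boundary-layer charging'' would force the branching parameter $u$ to depend on the geometry of individual polytopes (not just on $d,h$), destroying the uniform depth bound; and the ``sandwich'' between the two cross-sections of $Q$ can differ from $Q\cap\Sigma$ on a set of points of uncontrolled size, so the discrepancy term is as hard as the original problem. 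You also lose control of the number of halfspace directions in the cross-sections after repeated slicing.

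The paper avoids this obstacle entirely and never runs an induction on $d$. Its key lemma (Lemma~\ref{lemma:polytope decomposition}) shows that for every $\cH$ there is a fixed collection $\cH'$ of $O_{d,h}(1)$ halfspaces such that every $Q\in{\rm POL}(\cH)$ is a union of $O_{d,h}(1)$ parallelotopes from ${\rm POL}(\cH')$. There are then at most $B=\binom{|\cH'|}{d}$ ``types'' of parallelotope (one per set of $d$ facet directions). Picking a random type $\alpha$ and, independently, a random parallelotope $T_Q$ from the cover of each $Q$, and keeping only those $T_Q$ of type $\alpha$, one obtains a $K_{s,s}$-free subgraph $G'$ with $e(G')\ge e(G)/O_{d,h}(1)$ in expectation. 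After a single affine transformation all type-$\alpha$ parallelotopes become axis-parallel boxes, and the Chan--Har-Peled point-box bound gives $e(G')=O_d\big(sn(\log n/\log\log n)^{d-1}\big)$ directly. Thus the tilted-facet issue is absorbed once, in the geometric decomposition lemma, rather than fought at every level of a recursion.

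Two further remarks. First, Theorem~\ref{thm:main} does not enter the paper's proof of Proposition~\ref{prop:point-polytope incidence graphs} at all; the linear dependence on $s$ here is inherited from Chan--Har-Peled, not from the weakly-degree-bounded\,/\,density-EH machinery. Second, your base case $d=1$ is fine but unnecessarily heavy: a $K_{s,s}$-free point-interval incidence graph has $O(sn)$ edges by an elementary counting argument, without invoking Corollary~\ref{thm:master}.
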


\begin{proof}[Proof of Theorem \ref{thm:semilinear zarankiewicz} assuming Proposition \ref{prop:point-polytope incidence graphs}] Let $\Gamma$ be an $n$-vertex semilinear graph of dimension $(d_x,d_y)$ and complexity $(h,t)$ which is $K_{s,s}$-free. Let $X\subset \mathbb{R}^{d_x}$ and $Y\subset \mathbb{R}^{d_y}$ be the vertex classes of $\Gamma$. By the definition of semilinear graphs, there exist linear functions $f_{i,j}:\mathbb{R}^{d_x}\times \mathbb{R}^{d_y}\rightarrow\mathbb{R}$ such that $x\in X$ and $y\in Y$ are joined by an edge if and only if there exists $j\in [t]$ such that $$\bigwedge_{i\in [h]}\{f_{i,j}(x,y)\leq 0\}=\mbox{true}.$$
Observe that there exist $j=j_0\in [t]$ such that at least $1/t$-fraction of the edges $\{x,y\}$ satisfy the previous boolean expression. 

Let $\mathcal{H}$ be the collection of $h$ halfspaces in $\mathbb{R}^{d_y}$ defined by $\{y\in \mathbb{R}^{d_y}:f_{i,j_0}(0,y)\leq 0\}$. Then, to every $x\in X$ we can associate a polytope $P_x\subseteq \bR^{d_y}$ containing those points $y\in \bR^{d_y}$ which satisfy the inequalities $f_{i,j_0}(x, y)\leq 0$ for all $i=1, \dots, h$. Note that the facets of this polytope are parallel to the halfspaces of $\cH$, and so $P_x\in {\rm POL}(\cH)$. 

Let $G$ be the incidence graph of $Y$ and the collection of polytopes $P_x$ for each $x\in X$. Then $G$ is a subgraph of $\Gamma$ with $e(G)\geq e(\Gamma)/t$. Since $G$ is also $K_{s,s}$-free, Proposition \ref{prop:point-polytope incidence graphs} implies that $e(G)=O_{d_y, h}\Big(sn \Big(\frac{\log n}{\log\log n}\Big)^{d_y-1}\Big)$. This gives the required upper bound $e(\Gamma)=O_{d_y, h}\Big(tsn \Big(\frac{\log n}{\log\log n}\Big)^{d_y-1}\Big)$.
\end{proof}

If $\cH$ is the collection of halfspaces whose defining hyperplanes are orthogonal to the coordinate axes, then ${\rm POL}(\cH)$ is simply the collection of axis-aligned boxes. In this case, Proposition~\ref{prop:point-polytope incidence graphs} is equivalent to the theorem of Chan and Har-Peled \cite{CH23} which solves the Zarankiewicz problem for point-box incidence graphs. The main idea of our proof is to show that an incidence graph between $Y\subseteq \bR^d$ and $X\subseteq {\rm POL} (\cH)$ can be written as a union of constantly many point-box incidence graphs. The key result which allows us to do this is Lemma~\ref{lemma:polytope decomposition}, which shows that a polytope in ${\rm POL}(\cH)$ is the union  of $O_{d,|\cH|}(1)$ parallelotopes, whose face hyperplane directions are determined by $\cH$. Before we state this lemma, we briefly recall some basic terminology related to polytopes. 

A \emph{polytope} $Q$ is an intersection of finitely many halfspaces in $\bR^d$. If such an intersection is bounded, it is a convex hull of finitely many points in $\bR^d$. We may assume that our polytopes are bounded, as we can intersect our configuration with a large box without changing the incidence structure, and adding only constantly many new hyperplanes to $\cH$.  Furthermore, a \emph{parallelotope} in $\bR^d$ is a polytope which can be obtained from a unit cube by a nondegenerate affine map. Equivalently, $P\subseteq \bR^d$ is a parallelotope if it has $2^d$ vertices of the form $\{x+\sum_{i\in I} v_i | I\subseteq [d]\}$, for some $x, v_1, \dots, v_d\in \bR^d$, where $v_1,\dots,v_d$ are linearly independent.

To simplify  terminology, the last coordinate direction $x_d$ is called the \textit{vertical} direction. By extension, a hyperplane which contains a line parallel to the coordinate axis $x_d$ is also called \textit{vertical}. Note that for $d\geq 3$, there are infinitely many non-parallel vertical hyperplanes in $\bR^d$.

If $Q$ is a polytope, the upper envelope of $Q$ consists of those boundary points $x$ that are the highest points of the intersection of $Q$ and a vertical line, while the lower envelope contains those points that are the lowest points. In case a facet of $H$ is vertical, it is contained neither in the upper nor in the lower envelope.

\begin{lemma}\label{lemma:polytope decomposition}
For every collection $\cH$ of $h$ halfspaces in $\bR^d$, there is a collection $\cH'$ of at most $O_{d, h}(1)$ halfspaces, such that the following holds. For any polytope $Q\in {\rm POL}(\cH)$, there is a collection $\cP$ of $O_{d, h}(1)$  parallelotopes such that $Q=\bigcup_{P\in \cP} P$ and $\cP\subseteq {\rm POL}(\cH')$.
\end{lemma}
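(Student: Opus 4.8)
The plan is to decompose an arbitrary polytope $Q \in {\rm POL}(\cH)$ into parallelotopes by induction on the dimension $d$, peeling off one coordinate direction at a time. First I would set up the induction: the base case $d=1$ is trivial, since any polytope in $\bR^1$ is an interval, which is already a parallelotope. For the inductive step, fix the vertical direction $x_d$. The facets of $Q$ are parallel to the hyperplanes bounding the halfspaces in $\cH$, so each is either vertical or not; write the non-vertical facets as parts of the upper and lower envelopes of $Q$. The upper envelope is the graph of a piecewise-linear concave function $u$ over the projection $\pi(Q) \subseteq \bR^{d-1}$, and the lower envelope is the graph of a piecewise-linear convex function $\ell$ over $\pi(Q)$; each linear piece of $u$ or $\ell$ is supported on a hyperplane direction coming from $\cH$, of which there are at most $h$.

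Next I would partition $\pi(Q)$ into the common refinement of the polyhedral subdivisions induced by $u$ and $\ell$. Over each cell $C$ of this refinement, both $u$ and $\ell$ restrict to a single affine function, so the preimage $\pi^{-1}(C) \cap Q$ is a ``prism-like'' region: it is $\{(y, z) : y \in C,\ \ell|_C(y) \le z \le u|_C(y)\}$. The number of cells $C$ is bounded by a function of $d$ and $h$, since the subdivisions of $\pi(Q)$ are cut out by at most $O_{d,h}(1)$ hyperplane directions (the vertical facet directions of $Q$ together with the projections of the ``creases'' of $u$ and $\ell$, all of which are intersections of pairs of $\cH$-directions). Each cell $C$ is itself a polytope in $\bR^{d-1}$ whose facet directions are determined by a fixed collection $\cH''$ of $O_{d,h}(1)$ halfspace directions in $\bR^{d-1}$; by the inductive hypothesis, $C$ decomposes into $O_{d-1,h''}(1)$ parallelotopes in $\bR^{d-1}$, with face directions from a further collection $\cH'''$. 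Taking the ``extrusion'' of each such $(d-1)$-dimensional parallelotope $P'$ between $\ell|_C$ and $u|_C$ — i.e. the set $\{(y,z) : y \in P',\ \ell|_C(y) \le z \le u|_C(y)\}$ — gives a $d$-dimensional parallelotope (its $2^d$ vertices are the $2^{d-1}$ vertices of $P'$ paired with their images under the two affine maps $\ell|_C$ and $u|_C$, and linear independence of the edge vectors is inherited from $P'$ plus a vertical-ish direction). The resulting collection $\cP$ has size $O_{d,h}(1)$ and its members all lie in ${\rm POL}(\cH')$ for a suitable fixed $\cH'$ obtained by adding the vertical lifts of $\cH'''$ to the directions of $u$ and $\ell$.

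I expect the main obstacle to be \emph{bookkeeping the hyperplane directions uniformly over all $Q \in {\rm POL}(\cH)$}: the ``crease'' hyperplanes of $u$ and $\ell$ — where two facets of the upper (resp. lower) envelope meet — and their projections to $\bR^{d-1}$, must be shown to lie among a fixed finite set of directions depending only on $\cH$, not on the particular translates used to build $Q$. The key observation making this work is that a crease direction of $u$ is the intersection of two of the (at most $h$) hyperplane directions of $\cH$, so it ranges over a set of size at most $\binom{h}{2}$; projecting to $\bR^{d-1}$ along $x_d$ keeps this finite. One must also be slightly careful that ``the refinement of $\pi(Q)$'' is cut out by directions that do not depend on $Q$: this again follows since both the vertical-facet directions and the projected crease directions are drawn from a $Q$-independent finite set. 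Once this is set up, the recursion on $d$ closes and the bounds $O_{d,h}(1)$ follow by unwinding the (bounded-depth) recursion.
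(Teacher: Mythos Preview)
Your inductive plan matches the first half of the paper's argument, but there is a genuine gap at the extrusion step. You claim that $\{(y,z) : y \in P',\ \ell|_C(y) \le z \le u|_C(y)\}$ is a $d$-parallelotope whenever $P'$ is a $(d-1)$-parallelotope and $\ell|_C, u|_C$ are affine. This is false in general: if $v_1,\dots,v_{d-1}$ are the edge vectors of $P'$ and $a_\ell, a_u$ the gradients of $\ell|_C, u|_C$, then the bottom face has edge vectors $(v_i,\langle a_\ell,v_i\rangle)$ while the top face has edge vectors $(v_i,\langle a_u,v_i\rangle)$, and these coincide only when the upper and lower facets of $Q$ above $C$ happen to be parallel. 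Equivalently, the vertical edges of your solid have lengths $u|_C(y)-\ell|_C(y)$ that vary as $y$ ranges over the vertices of $P'$, whereas in a parallelotope all edges in a given direction are translates of one another. A concrete counterexample in $\bR^3$: take $P'=[0,1]^2$, $\ell\equiv 0$, $u(x,y)=1+x+y$; the resulting solid has eight vertices but vertical edges of lengths $1,2,2,3$.

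What your construction actually produces is exactly what the paper calls a \emph{slanted prism with a parallelotope base}, and two further (nontrivial) steps are needed. First, each such prism is cut by a single well-chosen hyperplane so that in each of the two resulting pieces some edge direction of the top face coincides with one of the bottom face (the ``parallel edges'' condition). Second, one projects along this shared edge direction onto a vertical facet $F$, recursively decomposes $F$ into $(d-1)$-parallelotopes, and extrudes back along that direction --- and \emph{now} the extrusion is a genuine parallelotope, precisely because the extrusion direction lies in both the top and bottom facets. The direction-bookkeeping you correctly flagged as the main obstacle extends to these extra steps without new difficulties, but the steps themselves are not optional.
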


Lemma~~\ref{lemma:polytope decomposition} is proved algorithmically, by constructing the required decomposition explicitly. Our algorithm is recursive, and its base case are planar polygons, whose decomposition into parallelograms can be constructed by hand. The main geometric idea behind this algorithm is to use vertical decomposition. This well-known decomposition technique is used in a variety of contexts across discrete and computational geometry. We start by describing our algorithm in the case of planar polygons.

\begin{lemma}\label{lemma:plane decomposition}
Let $Q$ be a convex polygon with $h$ sides. Then $Q$ is the union of at most $4h$ parallelograms such that each side of every parallelogram is either vertical or parallel to a side of $Q$.
\end{lemma}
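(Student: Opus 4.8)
The plan is to build the decomposition from the classical \emph{vertical decomposition} of $Q$. First I would project the $h$ vertices of $Q$ to the $x$-axis; their $x$-coordinates take at most $h$ distinct values $x_1<\dots<x_m$ with $m\le h$, splitting the vertical slab containing $Q$ into $m-1\le h-1$ vertical strips. For each strip $[x_i,x_{i+1}]\times\bR$, the piece $Q_i:=Q\cap([x_i,x_{i+1}]\times\bR)$ is bounded on the left and right by (possibly degenerate) vertical segments, and, since no vertex of $Q$ lies strictly between $x_i$ and $x_{i+1}$, the upper and lower envelopes of $Q$ are linear over the strip, so $Q_i$ is bounded above and below by single edges of $Q$. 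Hence $Q_i$ is a trapezoid with two vertical sides whose two ``legs'' lie on edges of $Q$ — or, for the leftmost or rightmost strip, possibly a triangle with one vertical side and two sides on edges of $Q$, in case the extreme vertical side collapses to a point. Since $Q=\bigcup_{i}Q_i$, it suffices to write each $Q_i$ as a union of at most $4$ parallelograms whose sides are vertical or parallel to an edge of $Q$; this gives at most $4(h-1)\le 4h$ parallelograms in total.

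The elementary fact I would isolate first is that \emph{every triangle is the union of three parallelograms whose sides are parallel to the sides of the triangle}. For a triangle with vertices $A,B,C$ and edge midpoints $M_{AB},M_{BC},M_{CA}$, the quadrilaterals $AM_{AB}M_{BC}M_{CA}$, $BM_{BC}M_{CA}M_{AB}$ and $CM_{CA}M_{AB}M_{BC}$ are parallelograms: in each, opposite sides are midpoint segments of the triangle, hence parallel and of equal length, and in the directions of the sides of the triangle. They lie in the triangle (all their vertices do), and they cover it; this is a one-line check in affine coordinates $(s,t)$ based at $A$ (so the triangle is $\{s,t\ge 0,\ s+t\le 1\}$), where the three parallelograms are $\{0\le s,t\le\tfrac12\}$, $\{0\le t\le\tfrac12,\ \tfrac12-t\le s\le 1-t\}$, and the reflection of the second across $s=t$, whose union is the whole triangle.

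It remains to handle a genuine trapezoid $Q_i$, with vertical sides on $x=x_i$ and $x=x_{i+1}$, a top edge $AB$ and a bottom edge $DC$ lying on edges of $Q$; we treat the case where the right vertical side is at least as long as the left one, the other case being symmetric (use the bottom-right vertex and draw the auxiliary line to the left). Draw the line through the bottom-left vertex $D$ parallel to $AB$; the length comparison guarantees it meets the right vertical side, at a point $E$, and $ABED$ is then a parallelogram with two vertical sides and two sides parallel to the edge $AB$ of $Q$, contained in $Q_i$. The complement $Q_i\setminus ABED$ is the triangle $DEC$, whose three sides are vertical ($EC$), parallel to the top edge of $Q$ ($DE$), and parallel to the bottom edge of $Q$ ($DC$), so applying the triangle fact to it yields three more parallelograms of the desired type. (If this triangle is degenerate, then $Q_i$ was already a parallelogram of the required type.) Thus a trapezoidal piece uses at most $4$ parallelograms and a triangular end-piece uses $3$, which finishes the count.

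The only real obstacle is respecting the constraint that every parallelogram side be vertical or parallel to a side of $Q$ — this is exactly what forbids cutting $Q$ along arbitrary diagonals — and it is handled by keeping all slanted sides on edges of $Q$ throughout: the vertical decomposition produces pieces whose non-vertical sides lie on edges of $Q$; the peeling step above leaves a triangle all of whose sides are vertical or edge-parallel; and the midpoint construction introduces only directions that are already sides of that triangle. Degenerate pieces have zero area and can simply be omitted, and the final tally is at most $4(h-1)\le 4h$ parallelograms of the required type.
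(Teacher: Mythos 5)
Your proof is correct and follows essentially the same route as the paper: vertical decomposition into trapezoids and triangles, peeling a parallelogram off each trapezoid to leave a triangle, and covering each triangle with the three medial parallelograms. The only cosmetic difference is that you peel the trapezoid by a line through a bottom vertex parallel to the top edge, while the paper translates the bottom edge upward; both yield the same triangle-plus-parallelogram split.
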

\begin{proof}
Draw a vertical line from every vertex of $Q$. This set of vertical lines cuts the polygon $Q$ into a collection of cells, which we denote by $\cC_Q$. Note that each member of $\cC_Q$ is either a triangle or a trapezoid, see Figure~\ref{fig:vertical decomposition}. 

\begin{figure}
\centering
\begin{minipage}{.3\textwidth}
  \centering
  \begin{tikzpicture}
    \draw (-1, 0) -- (1, 0.3) -- (1.4, 1) -- (0.3, 2) -- (-1.4, 2.2) -- cycle;
    \draw[dashed] (-1, 0) -- (-1, 2.15);
    \draw[dashed] (0.3, 2) -- (0.3, 0.2);
    \draw[dashed] (1, 0.3) -- (1, 1.37);
\end{tikzpicture}
  \captionof{figure}{Cutting a pentagon using vertical lines}
  \label{fig:vertical decomposition}
\end{minipage}
\hspace{0.5cm}
\begin{minipage}{.3\textwidth}
  \centering
  \begin{tikzpicture}[scale=2]

      \coordinate (A) at (-1, 0) {};
      \coordinate (B) at (-0.5, 1) {};
      \coordinate (C) at (1, 0) {};

      \coordinate (D) at ($(C)!0.5!(B)$) {};
      \coordinate (E) at ($(A)!0.5!(C)$) {};
      \coordinate (F) at ($(A)!0.5!(B)$) {};

      \node[vtx, label=below:$a$] at (A) {};
      \node[vtx, label=$b$] at (B) {};
      \node[vtx, label=below:$c$] at (C) {};
      \node[vtx, label=$d$] at (D) {};
      \node[vtx, label=below:$e$] at (E) {};
      \node[vtx, label=$f$] at (F) {};
      
      \coordinate (sa) at ($0.03*(D)-0.03*(A)$) {};
      \coordinate (sb) at ($0.03*(E)-0.03*(B)$) {};
      \coordinate (sc) at ($0.03*(F)-0.03*(C)$) {};
      \coordinate (sd) at ($0.03*(D)-0.03*(E)$) {};
      \coordinate (se) at ($0.03*(E)-0.03*(F)$) {};
      \coordinate (sf) at ($0.03*(F)-0.03*(D)$) {};
    
        \draw[pattern= horizontal lines,pattern color=red, xscale=10, yscale=10] (D) -- (E) -- (A) -- (F) -- cycle;
        \draw[pattern=vertical lines,pattern color=green, xscale=2] (D) -- (C) -- (E)  -- (F) -- cycle;
        \draw[pattern=dots,pattern color=blue, xscale=2] (D) -- (E) -- (F) -- (B) -- cycle;

        \draw (A) -- (B) -- (C) -- cycle;
        \draw[dashed] (D) -- (E) -- (F) -- cycle;  
  \end{tikzpicture}
  \captionof{figure}{Covering a triangle with three parallelograms}
  \label{fig:decomposing triangles}
\end{minipage}
\hspace{0.5cm}
\begin{minipage}{.3\textwidth}
  \centering
  \begin{tikzpicture}
    \draw (0, 0) -- (1, 1) -- (1, 2) -- (0, 2.5) -- cycle;
    \draw[dashed] (1, 2) -- (0, 1);
    \draw[pattern=dots, pattern color=blue]  (0, 0) -- (1, 1) -- (1, 2) -- (0, 1) -- cycle;
    \draw[pattern=horizontal lines, pattern color=red] (1, 2) -- (0, 1) -- (0, 2.5) -- cycle;
\end{tikzpicture}
  \captionof{figure}{Decomposition of a trapezoid into a parallelogram and a triangle}
  \label{fig:decomposing trapezoids}
\end{minipage}
\end{figure}

Both triangles and trapezoids are easily decomposed into parallelograms. A triangle with vertices $a, b, c$ can be covered by three parallelograms as follows: denote the midpoints of segments $bc, ac, ab$ by $d, e, f$, respectively, and note that the triangle $abc$ is a union of parallelograms $aedf,  bdef$ and $cdfe$, as illustrated in Figure~\ref{fig:decomposing triangles}. Furthermore, each trapezoid can be partitioned into a triangle and a parallelogram by translating its bottom edge upwards until it meets the top edge, as illustrated in Figure~\ref{fig:decomposing trapezoids}. Then, the triangle can be covered by three parallelograms in the way previously described.

Observe that each side of the triangles and trapezoids in $\mathcal{C}_Q$ are either vertical or equal to one of the sides of $Q$. Furthermore, each element $C\in \mathcal{C}_Q$ is covered by at most 4 parallelograms, whose sides are parallel to some side of $C$. This finishes the proof.
\end{proof}

\medskip\noindent
\textbf{Covering polytopes in higher dimensions.} To cover polytopes in higher dimensions with a union of parallelotopes, we follow a four-step algorithm, which we outline now.

\begin{itemize}
    \item In Step 1, we decompose the given polytope $Q$ into a set of cells $\cP_1$, where each cell of $\cP_1$ has only one face in its upper and lower envelope. We call such cells \textit{slanted prisms}. 
    \item In Step 2, we decompose each slanted prism of $\cP_1$ into a collection $\cP_2$ of smaller slanted prisms, where the projection of each prism in $\cP_2$ to the horizontal hyperplane is a $(d-1)$-parallelotope.
    \item In Step 3, we decompose each slanted prism with a parallelotope base from $\cP_2$ into two slanted prisms with parallelotope bases, with the additional property that some edges of the top and the bottom face are parallel. We call the arising collections of prisms $\cP_3$.
    \item In Step 4, we recursively partition one of the vertical facets $F$ of each prism $P\in \cP_3$, and extend the resulting $(d-1)$-parallelotopes covering $F$ to $d$-parallelotopes covering $P$. This results in a collection $\cP_4$ of parallepipeds covering $Q$, achieving our goal.
\end{itemize}

We now give a detailed description of each of the steps.

\medskip\noindent
\textit{Step 1.} Let $Q$ be a polytope with at most $h$ facets. For each $(d-2)$-dimensional face $f$, let $H_f$ be the vertical hyperplane containing $f$. Then the hyperplanes $H_f$ divide $Q$ into cells. Here, each cell has the property that its upper and lower envelope are fully contained in a facet of $Q$, while all other facets of the cell are vertical. We call a polytope with this property a \textit{slanted prism}. Hence, we got a decomposition of $Q$ into a union of slanted prisms, whose collection we denote by $\cP_1$. 

 \begin{figure}[ht]
 \begin{minipage}{0.5\textwidth}
    \centering
\tdplotsetmaincoords{107}{53}
\begin{tikzpicture} [scale=1.4, tdplot_main_coords,
full/.style={black,very thick},
dots/.style={dotted,black,thick},
]

\coordinate (Al) at (0, 0, -1);
\coordinate (Bl) at (1, 0, -1);
\coordinate (Cl) at (1, 1, -1);
\coordinate (Dl) at (0, 1, -1);
\coordinate (B1l) at (2, 0, -1);
\coordinate (D1l) at (0, 2, -1);

\coordinate (A) at (0, 0, -0.5);
\coordinate (B) at (1, 0, 0);
\coordinate (C) at (1, 1, 0.5);
\coordinate (D) at (0, 1, 0);

\coordinate (A') at (0, 0, 2);
\coordinate (B') at (1, 0, 1.7);
\coordinate (C') at (1, 1, 1.5);
\coordinate (D') at (0, 1, 1.8);

\node[vtx] at (A) {};
\node[vtx] at (B) {};
\node[vtx] at (C) {};
\node[vtx] at (D) {};
\node[vtx] at (A') {};
\node[vtx] at (B') {};
\node[vtx] at (C') {};
\node[vtx] at (D') {};

\node[vtx] at (Al) {};
\node[vtx] at (Bl) {};
\node[vtx] at (Cl) {};
\node[vtx] at (Dl) {};
\node[vtx] at (B1l) {};
\node[vtx] at (D1l) {};

\node[label=left:$C$] at ($0.5*(A)+0.5*(A')$) {}; 
\node[label=left:$C'$] at (Al) {}; 

\coordinate (B1) at (2, 0, 0.5);
\node[vtx] at (B1) {};
\coordinate (D1) at (0, 2, 0.5);
\node[vtx] at (D1) {};
\coordinate (B1') at (2, 0, 1.4);
\node[vtx] at (B1') {};
\coordinate (D1') at (0, 2, 1.6);
\node[vtx] at (D1') {};

\draw[full] (A') -- (B1') -- (D1') -- (A');
\draw[full] (A) -- (A');
\draw[dots] (B1) -- (B1');
\draw[dots] (A) -- (B1) -- (D1);
\draw[full] (A) -- (D1);
\draw[full] (D1) -- (D1');

\draw[thick] (B') -- (C') -- (D');
\draw[dashed, thick] (A) -- (B) -- (C) -- (D);
\draw[dashed, thick] (B) -- (B');
\draw[dashed, thick] (C) -- (C');
\draw[dashed, thick] (D) -- (D');

\draw[very thick, ->] (1.5, 1.5, 1) -- (1.5, 1.5, -1);
\draw[full] (Al) -- (D1l) -- (B1l) -- cycle;
\draw[dashed, thick] (Bl) -- (Cl) -- (Dl);
\filldraw[opacity=0.1] (Al) -- (Bl) -- (Cl) -- (Dl) -- cycle;

\filldraw[opacity=0.1] (A) -- (A') -- (B') -- (B) -- cycle;
\filldraw[opacity=0.1] (A) -- (A') -- (D') -- (D) -- cycle;
\filldraw[opacity=0.1] (C) -- (C') -- (B') -- (B) -- cycle;
\filldraw[opacity=0.1] (C) -- (C') -- (D') -- (D) -- cycle;
\filldraw[opacity=0.1] (A) -- (B) -- (C) -- (D) -- cycle;
\filldraw[opacity=0.1] (A') -- (B') -- (C') -- (D') -- cycle;
\end{tikzpicture}

 \end{minipage}
 \begin{minipage}{0.5\textwidth}
    \captionof{figure}{Step 2: Illustration of a slanted prism with a triangular base, out of which a slanted prism with a parallelogram base has been cut out.}
    \label{fig:cutting a slanted prism}
 \end{minipage}
\end{figure}
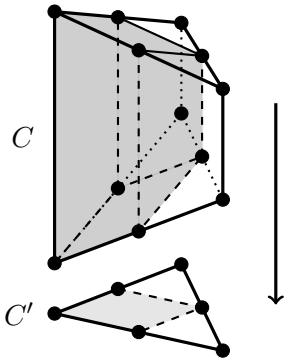

\noindent
\textit{Step 2.} Pick a slanted prism $C\in \cP_1$ and consider its projection along the $x_d$ coordinate. Note that the projections of the upper and lower face of $C$ coincide. The resulting shape is a $(d-1)$-dimensional polytope $C'$, which can be thought of as a ``shadow'' of $C$. We apply our algorithm recursively to cover $C'$ with parallelotopes, giving us a collection $\cP_{C'}$. Each of these $(d-1)$-dimensional parallelotopes lies in the horizontal hyperplane and has $(d-2)$-dimensional facets. For each facet $f$, take the vertical hyperplane $H_f$ containing $f$, then the hyperplanes $H_f$  further subdivide $C$. Each of the resulting cells of $C$ is a slanted prism with a parallelotope base: both the upper and the lower envelope of the subcell are parallelotopes, whose projections along the $x_d$ coordinate coincide. See Figure~\ref{fig:cutting a slanted prism} for an illustration. The conclusion of Step 2 is that we obtain a partition of $Q$ into slanted prisms with a parallelotope base, whose collection we denote by $\cP_2$.

\bigskip
\noindent
\textit{Step 3:} Let $P\in \cP_2$ be a slanted prism with a parallelotope base, and  denote the upper and the lower facet of $P$ by $F^+$ and $F^-$. Since $F^+$ and $F^-$ are parallelotopes with identical projections onto the horizontal hyperplane, their vertices are in one-to-one correspondance, defined by those pairs of vertices which have the same projection. Pick the closest pair of such vertices, and denote them by $x^-\in F^-$ and $x^+\in F^+$  (note that we may have $x^-=x^+$). Since both $F^+$ and $F^-$ are $(d-1)$-dimensional parallelotopes, there are vectors $v_1^-, \dots, v_{d-1}^-, v_1^+, \dots, v_{d-1}^+\in \bR^d$ such that the vertices of $F^-$ are $\{x^- +\sum_{i\in I} v_i^-:I\subseteq[d-1]\}$, and the vertices of the $F^+$ are $\{x^+ + \sum_{i\in I} v_i^+:I\subseteq[d-1]\}$. Moreover, since $F^+$ and $F^-$ have the same projections on the horizontal hyperplane, the vectors $v_i^-$ and $v_i^+$ can be chosen to be nonvertical and such that they differ only in the $x_d$ coordinate. We say that $P$ has \textit{parallel edges} if $v_i^-=v_i^+$ for some $i\in [d-1]$. The goal of Step 3 is to partition each $P\in \cP_2$ into two slanted prisms with parallel edges. 

\bigskip
If $v_i^-=v_i^+$ for some $i\in [d-1]$, we are done, so assume that $v_i^-\neq v_i^+$ for all $i\in [d-1]$. Let $H$ be the hyperplane through $x^+$, spanned by the vectors $v_1^-, v_2^+, \dots, v_{d-1}^+$. Let $H^+$ be the set of points on or above $H$, and let $H^{-}$ be the set of points on or below $H$. If we define $P^+=P\cap H^+$ and $P^-=P\cap H^-$, we claim that both $P^+$ and $P^-$ are slanted prisms with parallel edges.

\begin{figure}[ht]
\begin{minipage}{0.5\textwidth}
    \centering
    \tdplotsetmaincoords{120}{55}
    \begin{tikzpicture} [scale=2.5, tdplot_main_coords,
    full/.style={black,very thick},
    dots/.style={dotted,black,thick},
    ]
        \coordinate (A) at (0, 0, -1.3);
        \coordinate (B) at (1, 0, -0.55);
        \coordinate (C) at (1, 1, 0.2);
        \coordinate (D) at (0, 1, -0.55);
        
        \coordinate (A') at (0, 0, 2);
        \coordinate (B') at (1, 0, 1.7);
        \coordinate (C') at (1, 1, 1.5);
        \coordinate (D') at (0, 1, 1.8);
        
        \coordinate (A1) at (0, 0, 0.95);
        \coordinate (D1) at (0, 1, 0.75);
        
        \draw[dashed, very thick, blue] (A) -- (B);
        \draw[dots] (B) -- (C);
        \draw[dots] (B) -- (B');
        \draw[dashed, very thick, blue] (A1) -- (B');
        
        \filldraw[opacity=0.1] (A) -- (A') -- (B') -- (B) -- cycle;
        \filldraw[opacity=0.1] (A) -- (A') -- (D') -- (D) -- cycle;
        \filldraw[opacity=0.1] (C) -- (C') -- (B') -- (B) -- cycle;
        \filldraw[opacity=0.1] (C) -- (C') -- (D') -- (D) -- cycle;
        \filldraw[opacity=0.1] (A) -- (B) -- (C) -- (D) -- cycle;
        \filldraw[opacity=0.1] (A') -- (B') -- (C') -- (D') -- cycle;
        \filldraw[opacity=0.3, green] (A1) -- (B') -- (C') -- (D1) -- cycle;
        
        \node[vtx, label=left:$x^-\!\!+\!v_1^-\!\!+\!v_2^-$] at (A) {};
        \node[vtx, label=left:$x^-\!\!+\!v_2^-\!\!$] at (B) {};
        \node[vtx, label=right:$x^-$] at (C) {};
        \node[vtx, label=right:$x^-\!\!+\!v_1^-$] at (D) {};
        \node[vtx, label=left:$x^+ \!\! + \! v_1^+\!\! +\! v_2^+$] at (A') {};
        \node[vtx, label=above:$x^+\!\!+ \!v_2^+$] at (B') {};
        \node[vtx, label=right:$x^+$] at (C') {};
        \node[vtx, label=above:$x^+\!\!+\!v_1^+\!\!$] at (D') {};
        
        \node[vtx, label=left:$x^+\!\!+\!v_1^-\!\!+\!v_2^+$] at (A1) {};
        \node[vtx, label=right:$x^+\!\!+\!v_1^-$] at (D1) {};
        
        \node[label=30:$\,\,H$] at (A1) {};
        
        \draw[full] (A') -- (B');
        \draw[full] (C') -- (D');
        \draw[red, very thick] (A') -- (D');
        \draw[red, very thick] (C') -- (B');
        
        \draw[very thick, blue] (C) -- (D);
        \draw[full] (D) -- (A);
        \draw[full] (A) -- (A');
        \draw[full] (C) -- (C');
        \draw[full] (D) -- (D');
        
        \draw[very thick, blue] (C') -- (D1);
        \draw[red, very thick] (A1) -- (D1);
    \end{tikzpicture}
    
\end{minipage}
\begin{minipage}{0.5\textwidth}
    \captionof{figure}{Cutting the slanted prism using a hyperplane $H$ (highlighted in green), which is spanned by $v_1^-, v_2^+$ and passes through $x^+$. The top cell contains a pair of parallel edges, which a highlighted in red. The parallel edges of the bottom cell are highlighted in blue.}
    \label{fig:final step}
\end{minipage}
\end{figure}
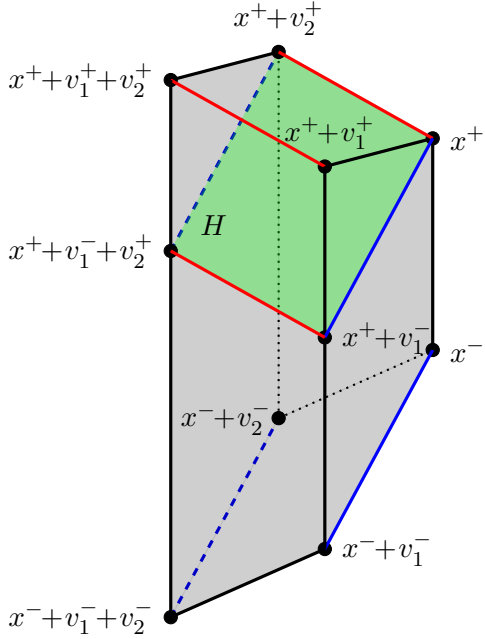

\begin{lemma}
The intesection $H\cap P$ is a parallelotope, and it lies (not necessarily strictly) below $F^+$ and above $F^-$.
\end{lemma}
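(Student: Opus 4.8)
The plan is to flatten the picture: view $P$, $F^+$, $F^-$ and the cutting hyperplane $H$ all as graphs over the common horizontal shadow $B$ of $F^+$ and $F^-$, after which the whole statement reduces to comparing three affine functions on the unit cube $[0,1]^{d-1}$.

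\textbf{Setting up coordinates.} Let $\pi:\bR^d\to\bR^{d-1}$ be the projection forgetting the vertical coordinate $x_d$, and put $B=\pi(F^+)=\pi(F^-)$. Since the chosen edge vectors $v_i^-,v_i^+$ are nonvertical and differ only in the $x_d$-coordinate, the vectors $w_i:=\pi(v_i^-)=\pi(v_i^+)$ are linearly independent in $\bR^{d-1}$, and $\pi(x^-)=\pi(x^+)$ because $x^-,x^+$ are corresponding vertices. Write $h^\pm$ for the $x_d$-coordinate of $x^\pm$ and $\alpha_i,\beta_i$ for the $x_d$-coordinate of $v_i^-,v_i^+$. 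Parametrising $B$ by $\phi(t)=\pi(x^+)+\sum_{i=1}^{d-1}t_iw_i$ for $t\in[0,1]^{d-1}$ (a bijection onto $B$), the facets $F^+$ and $F^-$ — being the upper and lower envelope of the convex polytope $P$ and having nonvertical edges — are the graphs over $B$ of the affine functions $h^+(t)=h^++\sum_i t_i\beta_i$ and $h^-(t)=h^-+\sum_i t_i\alpha_i$; hence $P=\{(\phi(t),z):t\in[0,1]^{d-1},\ h^-(t)\le z\le h^+(t)\}$. Finally $H$ passes through $x^+$ and is spanned by $v_1^-,v_2^+,\dots,v_{d-1}^+$, which are independent since their $\pi$-images are $w_1,\dots,w_{d-1}$; so $H$ really is a hyperplane, and it is the graph over $\bR^{d-1}$ of $g(t)=h^++t_1\alpha_1+\sum_{i\ge 2}t_i\beta_i$.

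\textbf{The two inequalities and conclusion.} Since $F^+$ lies above $F^-$, the vertical gap between the vertices indexed by $I\subseteq[d-1]$, namely $(h^+-h^-)+\sum_{i\in I}(\beta_i-\alpha_i)$, is nonnegative for every $I$; as $(x^-,x^+)$ was chosen as the \emph{closest} corresponding pair, this gap is minimized at $I=\varnothing$, which forces $h^+\ge h^-$ and $\beta_i\ge\alpha_i$ for all $i$ (in the case $v_i^-\ne v_i^+$ even $\beta_i>\alpha_i$). Then for every $t\in[0,1]^{d-1}$,
\[
g(t)-h^+(t)=t_1(\alpha_1-\beta_1)\le 0,\qquad g(t)-h^-(t)=(h^+-h^-)+\sum_{i\ge 2}t_i(\beta_i-\alpha_i)\ge 0,
\]
so $h^-(t)\le g(t)\le h^+(t)$; thus $(\phi(t),g(t))\in P$ for every such $t$, and this point lies on or below $F^+$ and on or above $F^-$. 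Conversely, every point of $H\cap P$ has $\pi$-image in $B$, hence equals $(\phi(t),g(t))$ for some $t\in[0,1]^{d-1}$. Therefore $H\cap P=\{x^++t_1v_1^-+\sum_{i\ge 2}t_iv_i^+:t\in[0,1]^{d-1}\}$, a parallelotope, since its defining data $(\pi(x^+),h^+),(w_1,\alpha_1),(w_i,\beta_i)$ recombine exactly into $x^+,v_1^-,v_i^+$ and these edge vectors are linearly independent. This gives the lemma.

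\textbf{Where the work is.} There is essentially no obstacle once the configuration is flattened; the one genuine point is recognizing that the hypothesis "closest corresponding pair of vertices'' is exactly what pins down the signs $\beta_i\ge\alpha_i$, after which both comparisons are one-line computations. The remaining items are bookkeeping: that $F^\pm$ are affine graphs over $B$ (uses convexity of $P$ and nonverticality of the $v_i^\pm$), that $H$ is a genuine hyperplane (uses independence of the $w_i$), and the degenerate reading of "below/above'' when $x^+=x^-$, which is why the statement is phrased "not necessarily strictly'' (one may also assume $P$ is full-dimensional, else the claim is trivial). The same coordinate description also makes it transparent why $P\cap H^+$ and $P\cap H^-$ are again slanted prisms with parallelotope bases and each has a pair of parallel edges, which is what Step~3 goes on to use.
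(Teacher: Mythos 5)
Your proof is correct and is essentially the same argument as the paper's, just coordinatized: the paper compares $H$, $F^+$, $F^-$ along the vertical line through each corresponding vertex pair $\bigl(x^- + \sum_{i\in I} v_i^-,\ x^+ + \sum_{i\in I} v_i^+\bigr)$, whereas you encode the same comparison as sign conditions on the coefficients of the three affine functions over the shadow cube, using exactly the same two inputs ($F^+$ above $F^-$ and the closest-pair choice of $x^\pm$). Your presentation is somewhat cleaner and makes the explicit parametrization $H\cap P = \{x^+ + t_1 v_1^- + \sum_{i\ge 2} t_i v_i^+ : t\in[0,1]^{d-1}\}$ transparent, but the underlying idea is identical.
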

\begin{proof}
We use the fact that the vertices of $F^+$ and $F^-$ are paired. Pick a pair of  vertices $y^-=x^-+\sum_{i\in I} v_i^-$ and $y^+=x^++\sum_{i\in I} v_i^+$, for some $I\subseteq [d-1]$, and  consider the vertical line $\ell$ passing through these two vertices. We show that $H$ intersects $\ell$ above $y^-$ and below $y^+$. If $1\notin I$, then $y^+$ lies on $H$. Hence, in this case $H$ intersects $\ell$ above $y^-$, namely at $y^+$.

If $1\in I$, then $H$ intersects $\ell$ in $x^+ + v_1^- +\sum_{i\in I\backslash \{1\}} v_i^+$, since $v_1^-$ and $v_1^+$ only differ in the last coordiante. The point $x^+ + v_1^- +\sum_{i\in I\backslash \{1\}} v_i^+$ is above $y^-$ since $x^+ + \sum_{i\in I\backslash \{1\}} v_i^+$ is above $x^- + \sum_{i\in I\backslash \{1\}} v_i^-$. Also, $x^+ + v_1^- +\sum_{i\in I\backslash \{1\}} v_i^+$ is below $y^+ $ if and only if $v_1^-$ is below $v_1^+$, which is true since $x^- x^+$ is the closest pair. More precisely, if $v_1^-$ was strictly above $v_1^+$, then the vertical segment between $x^-+v_1^-$ and $x^+ + v_1^+$ would be shorter than the segment between $x^-$ and $x^+$.

The above discussion shows that $H$ intersects the vertical faces of $P$ between $F^+$ and $F^-$, as claimed. Moreover, this means that $H\cap P$ also projects to the same parallelotope in the horizontal plane as $F^+, F^-$, so $H\cap P$ also must be a parallelotope.
\end{proof}

By the previous lemma, the cells $P^-$ and $P^+$ are slanted prisms with parallelotope bases, since their upper and lower faces are parallelotopes. Observe that both $P^-$ and $P^+$ have parallel edges: in $P^-$, both the bottom face $F^-$ and the top face $H\cap P$ have an edge parallel to $v_1^-$, and in $P^+$ both the bottom face $H\cap P$ and the top face $F^+$ have an edge parallel to $v_2^+$. Thus, Step 3 is completed by putting, for each $P\in \cP_2$, the two cells $P^-$ and $P^+$ into $\cP_3$ 

\bigskip
\noindent
\textit{Step 4.} In the final step of the algorithm, we take a slanted prism with parallel edges $P\in \cP_3$, and we cover it with parallelotopes. We may assume that $v_{d-1}^-=v_{d-1}^+$, and denote this vector simply by $v_{d-1}$. Consider the projection of the polytope $P$ along $v_{d-1}$. The polytope then projects to the vertical facet $F$ of $P$ with vertices \[\left\{x^- +\sum_{i\in I}v_i^- : I\subseteq[d-2] \right\}\cup \left\{x^+ + \sum_{i\in I}v_i^+ : I\subseteq [d-2]\right\},\]
i.e. all vertices which do not include $v_{d-1}^-, v_{d-1}^+$. Since $F$ is $(d-1)$-dimensional, we apply our algorithm to cover it with $(d-1)$-dimensional parallelotopes, whose collection we denote by $\cP_P$. From each parallelotope in $\cP_P$, we construct a $d$-parallelotope, by adding to all of its vertices the vector $v_{d-1}$, let $\cP_P'$ be the collection of these parallelotopes. Clearly, $\cP_P'$ is a covering of $P$. Finally, let $\cP_4=\bigcup_{P\in \cP_3}\cP_P'$.

 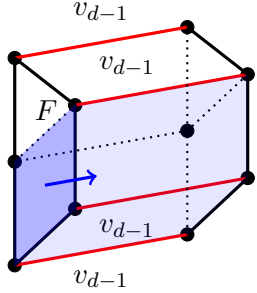
\begin{figure}[ht]
 \begin{minipage}{0.5\textwidth}
   \centering
\tdplotsetmaincoords{80}{35}
\begin{tikzpicture} [scale=1.4, tdplot_main_coords,
full/.style={black,very thick},
dots/.style={dotted,black,thick},
]

\coordinate (A) at (0, 0, -0.4);
\coordinate (D) at (0, 1, 0);
\coordinate (A') at (0, 0, 1.6);
\coordinate (D') at (0, 1, 1);

\coordinate (B) at (2, 0, 0.1);
\coordinate (C) at (2, 1, 0.5);
\coordinate (B') at (2, 0, 2.1);
\coordinate (C') at (2, 1, 1.5);

\coordinate (S) at (0, 0, 0.6);
\coordinate (T) at (2, 0, 1.1);

\node[vtx] at (A) {};
\node[vtx] at (B) {};
\node[vtx] at (C) {};
\node[vtx] at (D) {};
\node[vtx] at (A') {};
\node[vtx] at (B') {};
\node[vtx] at (C') {};
\node[vtx] at (D') {};
\node[vtx] at (S) {};
\node[vtx] at (T) {};

\node[label=right:$F$] at ($0.5*(S)+0.5*(A')$) {};
\node[label=below:$v_{d-1}$] at ($0.5*(A)+0.5*(B)$) {};
\node[label=below:$v_{d-1}$] at ($0.3*(C)+0.7*(D)$) {};
\node[label=above:$v_{d-1}$] at ($0.5*(A')+0.5*(B')$) {};
\node[label=above:$v_{d-1}$] at ($0.3*(C')+0.7*(D')$) {};

\draw[full, red] (A) -- (B);
\draw[full, red] (A') -- (B');
\draw[full, red] (C) -- (D);
\draw[full, red] (C') -- (D');

\draw[full] (A) -- (A') -- (D') -- (D) -- cycle;
\draw[full] (B) -- (C) -- (C') -- (B');
\draw[dots] (B) -- (B');
\draw[dots] (C') -- (T) -- (S) -- (D');

\draw[full, blue, ->] ($0.5*(D) + 0.5*(S)$) -- ($0.35*(D) + 0.35*(S) + 0.15*(C) + 0.15*(T)$);

\filldraw[opacity=0.3, blue] (A) -- (D) -- (D') -- (S) -- cycle;
\filldraw[opacity=0.1, blue] (D) -- (C) -- (C') -- (D') -- cycle;
\filldraw[opacity=0.1, blue] (A) -- (B) -- (C) -- (D) -- cycle;

\end{tikzpicture}
\end{minipage}
 \begin{minipage}{0.5\textwidth}
    \captionof{figure}{Decomposing a slanted prism in case $v_{d-1}^-=v_{d-1}^+$, by decomposing the vertical side $F$. The blue parallelogram participating in the decomposition of $F$ is extended to a parallelopiped in the direction of the blue vector.}
    \label{fig:recursively decomposing one side}
 \end{minipage}
\end{figure}
\medskip

\bigskip\noindent
\textbf{Analysis of the algorithm.} We now show that the proposed algorithm indeed produces a collection of parallelotopes $\cP=\cP_4$ which satisfies the conditions of Lemma~\ref{lemma:polytope decomposition}. First, we observe that the elements of $\cP$ cover $Q$ trivially.
Next, we show that $|\cP|=O_{d, h}(1)$.

\begin{lemma}\label{lemma:few parallelotopes}
The collection $\cP$ contains at most $O_{d, h}(1)$ parallelotopes.
\end{lemma}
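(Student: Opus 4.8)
The plan is to prove the bound by induction on the dimension $d$, tracking how many cells are created at each of the four steps of the algorithm. Let $N(d,m)$ denote the maximum number of parallelotopes the algorithm outputs when run on a polytope in $\bR^d$ with at most $m$ facets, and let $M(d,m)$ denote the maximum number of facets of any cell that appears during such a run. I would show by simultaneous induction on $d$ that both quantities are finite; the base case $d=2$ is exactly Lemma~\ref{lemma:plane decomposition}, which gives $N(2,m)\le 4m$ and $M(2,m)\le m$. The lemma itself then follows by applying this with $m=h+2d$, to account for the bounding box used to make $Q$ bounded, so that $|\cP|=|\cP_4|\le N(d,h+2d)=O_{d,h}(1)$.

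For the inductive step I would bound the four collections in turn. In Step~1 the polytope $Q$ has at most $m$ facets, hence at most $2^m$ faces of dimension $d-2$ (each face of a polytope being the intersection of the facets containing it, so determined by that set); together with the at most $m$ facet hyperplanes of $Q$, the resulting at most $m+2^m$ hyperplanes cut $\bR^d$ into $O_d((m+2^m)^d)$ regions, so $|\cP_1|=O_{d,m}(1)$ and every slanted prism in $\cP_1$ has at most $m+2^m$ facets. In Step~2, each $C\in\cP_1$ projects along $x_d$ to a $(d-1)$-dimensional polytope $C'$ with at most as many facets as $C$; by the inductive hypothesis $C'$ is covered by at most $N(d-1,M(d,m))$ parallelotopes, each having $2(d-1)$ facets, so at most $2(d-1)N(d-1,M(d,m))$ vertical hyperplanes are used to subdivide $C$, producing $O_{d,m}(1)$ slanted prisms with parallelotope base; summing over $C\in\cP_1$ gives $|\cP_2|=O_{d,m}(1)$, again with the facet count of the cells bounded in terms of $d$ and $m$. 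Step~3 at most doubles the number of cells, $|\cP_3|\le 2|\cP_2|$, and adds at most one facet to each cell. Finally, in Step~4 each $P\in\cP_3$ projects along $v_{d-1}$ onto a $(d-1)$-dimensional facet $F$ whose facet count is again $M(d,m)$-bounded, which by induction is covered by at most $N(d-1,M(d,m))$ parallelotopes, each extended to exactly one $d$-parallelotope; hence $|\cP_4|\le|\cP_3|\cdot N(d-1,M(d,m))=O_{d,m}(1)$, which is the required finite bound on $N(d,m)$. At each step one simultaneously checks that the facet counts of the new cells are bounded by a fixed function of $d$ and the previous facet bound, keeping $M(d,m)$ finite as well.

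The only real obstacle is the bookkeeping: one must verify that the facet counts of the intermediate cells never grow in a way depending on anything beyond $d$ and $h$, since the recursion descends exactly $d-2$ levels in the ambient dimension (Steps~2 and~4 each drop the dimension by one) and at each level the relevant counts are iterated polynomials of the facet bound. All of the \emph{geometric} content — that each step genuinely produces slanted prisms of the claimed type and that the cells cover $Q$ — was already established when the algorithm was described, so nothing beyond careful counting is needed here. The resulting bound on $N(d,h)$ is a tower-type function of $d$ times a polynomial in $h$, but any finite bound depending only on $d$ and $h$ suffices for our purposes.
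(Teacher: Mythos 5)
Your proof is correct and essentially mirrors the paper's argument: an induction on the dimension $d$ that bounds the cell count through each of the four steps, with the base case given by the planar lemma and Steps 2 and 4 invoking the inductive hypothesis in dimension $d-1$. The only differences are cosmetic — you track the auxiliary facet bound $M(d,m)$ explicitly where the paper folds it into $O_{d,h}(1)$ notation, you use a coarser $2^m$ bound on the number of $(d-2)$-faces in Step 1 where the paper uses $\binom{h}{2}$, and you correctly write the Step 4 recursion as $N(d-1,\cdot)$ (the paper has a small typo, writing $f(d,2(d-1))$ where $f(d-1,2(d-1))$ is meant).
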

\begin{proof}
We show, by induction on $d$, that there exists a function $f(d, h)$ such that, for any set of $h$ halfspaces $\cH$ in $\bR^d$ and any polytope $Q\in {\rm POL}(\cH)$ we have $|\cP|\leq f(d, h)$. For the base case $d=2$,  Lemma~\ref{lemma:plane decomposition} implies that $f(2, h)=4h$ suffices.

Assume that $d\geq 3$. First, observe that  $|\cP_1|\leq O_{h}(1)$. Indeed, the number of hyperplanes $H_f$ dividing $Q$ is the number of $(d-2)$-dimensional facets, which is at most $\binom{h}{2}$. Hence, these hyperplanes cut $Q$ into $O_{d,h}(1)$ cells.

To analyze Step 2, take any cell $C\in \cP_1$ and consider how many cells it contributes to $\cP_2$. Recall that in Step 2, we project $C$ to the horizontal hyperplane and decompose its shadow. The $(d-1)$-dimensional shadow of $C$, denoted by $C'$, has at most $O_{d,h}(1)$ facets and therefore the inductive hypothesis implies that the number of parallelotopes in the decomposition of $C'$ is at most $f(d-1, O_{d,h}(1))$. This shows that $|\cP_2|\leq f(d-1, O_{d,h}(1)) |\cP_1|\leq O_{d,h}(1)$.

In Step 3, we  subdivide each prism $P\in \cP_2$ into at most $2$ prisms, so we have $|\cP_3|\leq 2|\cP_2|$. Finally, Step 4, we apply induction on a $(d-1)$-dimensional vertical face $F$ of the prism $P$ which has parallel edges. The face $F$ has at most $2^{d-1}$ vertices and at most $2(d-1)$ facets, since $P$ is a slanted prism with a parallelotope base. Therefore, $F$ is covered by  at most $f(d, 2(d-1))$ parallelotopes and we conclude $|\cP|=|\cP_4|\leq f(d, 2(d-1)) \cdot |\cP_3|= O_{d,h}(1)$. This finishes the proof of the lemma.
\end{proof}

The last ingredient needed for the proof of the Lemma~\ref{lemma:polytope decomposition} is to show that the set of directions of facets of parallelotopes is contained in a set of size $O_{d,h}(1)$, independently of which element $Q\in {\rm POL}(\cH)$ we picked.

\begin{lemma}\label{lemma:bounding the directions}
For every collection $\cH$ of $h$ halfspaces in $\bR^d$, there exists a collection $\cH'$ of at most $O_{d, h}(1)$ halfspaces, such that the following holds. For any  $Q\in {\rm POL}(\cH)$, the collection of parallelotopes in $\cP$ produced by the above algorithm are intersections of translates of halfspaces of $\cH'$.    
\end{lemma}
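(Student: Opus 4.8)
The plan is to prove, by induction on $d$, a slightly sharper statement that makes the induction self-supporting: for every finite set $\mathcal{N}$ of directions in $\bR^d$ there is a finite set $\Phi_d(\mathcal{N})$ of directions in $\bR^d$, of size bounded in terms of $d$ and $|\mathcal{N}|$, such that whenever $Q$ is a bounded polytope all of whose facet normals lie in $\mathcal{N}$, every parallelotope produced by the algorithm has all of its facet normals in $\Phi_d(\mathcal{N})$. Granting this, Lemma~\ref{lemma:bounding the directions} follows by taking $\mathcal{N}$ to be the set of facet normals occurring among the halfspaces of $\cH$ and letting $\cH'$ consist, for each $w\in\Phi_d(\mathcal{N})$, of the two halfspaces $\{x:\langle x,w\rangle\le 0\}$ and $\{x:\langle x,w\rangle\ge 0\}$: then $|\cH'|=O_{d,h}(1)$, and a parallelotope with facet normals $w_1,\dots,w_d$ is the intersection of $2d$ halfspaces, each a translate of a member of $\cH'$. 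The whole point of this reformulation is that $\Phi_d(\mathcal{N})$ will depend only on $\mathcal{N}$ (and $d$), never on the particular translates defining $Q$; this uniformity over $Q$ is exactly what the four steps of the algorithm respect.

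The base case $d=2$ is Lemma~\ref{lemma:plane decomposition}: every side of every parallelogram is either vertical or parallel to a side of $Q$, so one may take $\Phi_2(\mathcal{N})=\mathcal{N}\cup\{e_1\}$, where $e_1$ is the normal of a vertical line in the plane. For $d\ge 3$ I would follow the four steps and enlarge the current set of admissible directions at each one. In Step~1 the only new hyperplanes are the vertical hyperplanes $H_f$ through the $(d-2)$-faces $f$ of $Q$; each $f$ is an intersection of two facets with normals $n_1,n_2\in\mathcal{N}$, and the normal of $H_f$ spans the line $\langle n_1,n_2\rangle\cap e_d^{\perp}$, hence is determined by the pair $\{n_1,n_2\}$, adding at most $\binom{|\mathcal{N}|}{2}$ directions. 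In Step~2 we project a slanted prism $C$ along $e_d$; its shadow $C'$ is a $(d-1)$-polytope whose facet normals are the (already horizontal) normals of the vertical facets of $C$, so they lie in the current set restricted to $e_d^{\perp}\cong\bR^{d-1}$. Running the algorithm recursively on $C'$, with $e_{d-1}$ as its vertical direction, the induction hypothesis provides a set $\Phi_{d-1}$ of admissible directions for the $(d-1)$-parallelotopes covering $C'$; lifting each of their $(d-2)$-faces to a vertical hyperplane leaves the (horizontal) normal unchanged, so the vertical facets of the prisms in $\cP_2$ have normals in this $\Phi_{d-1}$, while their top and bottom facets keep normals in $\mathcal{N}$.

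Steps~3 and~4 rely on one linear-algebra observation: for a prism $P\in\cP_2$ with parallelotope faces $F^{\pm}$ of normals $n^{\pm}$ and vertical-facet normals $m_1,\dots,m_{d-1}$ (which are the same for $F^{+}$ and $F^{-}$), the generating edge vector $v_i^{\pm}$ of $F^{\pm}$ spans the line $\{n^{\pm}\}^{\perp}\cap\bigcap_{j\ne i}\{m_j\}^{\perp}$, so it is determined by $n^{\pm}$ and $\{m_j:j\ne i\}$. Hence the cutting hyperplane $H$ of Step~3, spanned by $v_1^-,v_2^+,\dots,v_{d-1}^+$ and passing through $x^+$, has a normal lying in a finite set determined by $n^-,n^+$ and $m_1,\dots,m_{d-1}$ (together with which index is paired with the ``$-$''); we adjoin these normals, and note that the prisms in $\cP_3$ still have all facet normals in the enlarged set. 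In Step~4, the facet $F$ of $P\in\cP_3$ along which we project is one of the vertical facets of $P$, so its normal $w$ is already admissible and, being vertical, $F$ can be identified with $\bR^{d-1}$ so that $e_d$ becomes its vertical direction; the facet normals of $F$ are the orthogonal projections onto $w^{\perp}$ of the finitely many facet normals of $P$, so the induction hypothesis again yields a controlled set $\Phi_{d-1}$ of directions for the $(d-1)$-parallelotopes covering $F$, and adjoining the extra generating vector $v_{d-1}$ to each of them produces a $d$-parallelotope whose facet normals are determined by $w$, $v_{d-1}$, and $\Phi_{d-1}$. Collecting the directions added across Steps~1--4 gives $\Phi_d(\mathcal{N})$, of size bounded in $d$ and $|\mathcal{N}|$ and manifestly independent of $Q$, completing the induction. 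The main obstacle I anticipate is purely the bookkeeping of Steps~3 and~4: one must check that each recursive call is applied to a polytope whose facet directions have already been pinned down in a $Q$-independent way, and that the affine identifications of $F$ (and of the shadow $C'$) with $\bR^{d-1}$ send the designated vertical direction to a coordinate axis; once this is arranged, every individual claim reduces to a one-line computation.
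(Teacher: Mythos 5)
Your proposal is correct and follows essentially the same inductive, four-step bookkeeping as the paper's proof, just phrased in terms of admissible facet-normal directions $\Phi_d(\mathcal{N})$ rather than the paper's nested halfspace collections $\cH_1\subset\cH_2\subset\cH_3\subset\cH_4$. The paper's Step 3 is formulated a bit more coarsely (it takes $\cH_3$ to be all hyperplanes spanned by $d-1$ independent vectors from the edge-direction set $\cV$), whereas you track the specific generating vectors $v_i^{\pm}$ and the cutting hyperplane they span, but this is only a cosmetic difference in how generously one overcounts, and both yield the same $O_{d,h}(1)$ bound.
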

\begin{proof}
We describe the elements of $\cH'$ by induction on $d$, noting that the base case $d=2$ is addressed in Lemma~\ref{lemma:plane decomposition}. In the inductive step, we consider the four steps of the algorithm separately.

\medskip
\noindent
\textit{Step 1.} First, let $\cH_1$ be the collection of halfspaces bounded by the vertical hyperplanes through the $(d-2)$-flats formed by the intersections $h_1\cap h_2$ for distinct $h_1, h_2\in \cH$. The observation is that each cell $C\in \cP_1$ is an intersection of translates of halfspaces of $\cH_1$, by construction. Furthermore, note that $\cH_1$ contains at most $2\binom{h}{2}=O_h(1)$ halfspaces, since each vertical hyperplane bounds two halfspaces.

\medskip
\noindent
\textit{Step 2.} Now consider all $(d-2)$-flats of the form $h_1\cap h_2$ for some $h_1, h_2\in \cH_1$, and denote by $\cF$ the set of $(d-1)$-dimensional halfspaces bounded by the projections of these flats to the horizontal hyperplane. The elements of $\cF$ are codimension 1 subspaces of the horizontal hyperplane, and thus we can apply the inductive hypothesis. Thus, if a cell $C\in \cP_1$ projects to a cell $C'\in {\rm POL}(\cF)$, then $C'$ can be covered with some parallelotopes  belonging to ${\rm POL}(\cF')$, for some $\cF'$ depending only on $\cF$ and having size $O_{d, h}(1)$. Let $\cH_2$ be the union of $\cH_1$ and the set of all halfspaces we get by extending the halfspaces of $\cF'$ in the vertical direction. Note that each prism in $\cP_2$ is an intersection of translates of halfspaces in $\cH_2$.

\medskip
\noindent
\textit{Step 3.} Let $\cV$ be the set of direction vectors of lines formed by the intersection of $d-1$ hyperplanes of $\cH_2$. Note that each $1$-dimensional face of a cell in $\cP_2$ is parallel to some $v\in \cV$. Now, consider the hyperplanes spanned by $d-1$ linearly independent  vectors from $\cV$ and let $\cH_3$ be the collection of halfspaces determined by these hyperplanes. The size of $\cH_3$ can be bounded as follows: first note that $|\cV|\leq \binom{|\cH_2|}{d-1}=O_{d, h}(1)$ and $|\cH_3|\leq 2\binom{|\cV|}{d-1}=O_{d, h}(1)$. Furthermore, each cell of $\cP_3$ is an intersection of translates of halfspaces from $\cH_3$.

\medskip
\noindent
\textit{Step 4.} For each vertical hyperplane in $h\in \cH_3$, let $\cF(h)$ be the set of intersections of all other hyperplanes in $\cH_3$ with $h$. This collection of $(d-2)$-dimensional hyperplanes bounds each vertical face $F$ of a cell $P\in \cP_3$. Therefore, there exists a collection of halfspaces $\cF'(h)$, of size $O_{d, h}(1)$, such that each parallelotope in the subdivision of $F$ belongs to ${\rm POL}(\cF'(h))$. Now, for each vector $v$ from the set of directions $\cV$ defined in Step 3, consider the hyperplane determined by $v$ and each element of $\cF'(h)$, and add the corresponding halfspaces to $\cH_4$. By construction, each parallelotope from $\cP=\cP_4$ is bounded by translates of halfspaces from $\cH_4$, and so it suffices to set $\cH'=\cH_4$. The size of $\cH_4$ is bounded by $|\cH_4|\leq \sum_{h\in \cH_3} |\cF'(h)| |\cV|=|\cH_3| \cdot |\cV| \cdot O_{d, h}(1) = O_{d, h}(1)$.
\end{proof}

Lemma~\ref{lemma:polytope decomposition} is now a direct corollary of the previous  results. It only remains to prove Proposition~\ref{prop:point-polytope incidence graphs}.

\begin{proof}[Proof of Proposition~\ref{prop:point-polytope incidence graphs}.]
Let $G$ be the incidence graph of the set of $n$ polytopes $X\subset \mbox{POL}(\cH)$ and set of $n$ points $Y\subset \mathbb{R}^d$.

Apply Lemma~\ref{lemma:polytope decomposition} to the set of halfspaces $\cH$, thus obtaining the set of halfspaces $\cH'$. Each polytope $Q\in X\subseteq {\rm POL}(\cH)$ is the union of a collection of parallelotopes $\cP_Q\subseteq {\rm POL}(\cH')$ of size  $|\cP_Q|\leq A$ for some $A=O_{d,h}(1)$. Say that the \emph{type} of a parallelotope  $T\in {\rm POL}(\cH')$ is the set of $d$ linear hyperplanes parallel to some face of $T$. Then there are at most $\binom{|\cH'|}{d}\leq B$ different types, where $B=O_{d,h}(1)$. Pick a type $\alpha$ randomly from the uniform distribution, and for every $Q\in X$, pick an element of $T_Q\in \cP_Q$ independently from the uniform distribution. Let $X'$ be the collection of parallelotopes $T_Q$ that have type $\alpha$, and let $G'$ be the indicedence graph of $(X',Y)$. Observe that $G'$ is isomorphic to a subgraph of $G$, so if $G$ is $K_{s,s}$-free, then $G'$ is $K_{s,s}$-free as well.

Given an edge $\{y,Q\}$ of $G$, there is at least one $T\in \cP_Q$ such that $y\in T$. Therefore, the probability that $\{y,T_Q\}$ is an edge of $G'$ is at least $1/AB$. Hence, the expected number of edges of $G'$ is at least $e(G)/AB$. Thus, there is a choice of $\alpha$ and $\{T_Q:Q\in Y\}$ such that $G'$ has at least $e(G)/AB$ edges, fix such a choice. After an appropriate affine transformation, each $\alpha$-type parallelotope can be made an axis-parallel box. Therefore, $G'$ is an incidence graph of  $n$ points and at most $n$ axis-parallel boxes in $\mathbb{R}^d$. By the result of Chan and Har-Peled \cite{CH23}, if $G'$ is $K_{s,s}$-free, then $e(G')=O_d(s(\log n/\log \log n)^{d-1})$. As $AB=O_{d,h}(1)$, we get $e(G)=O_{d,h}(s(\log n/\log \log n)^{d-1})$.
\end{proof}

\section{Polygon visibility graphs and ordered graphs}\label{sec:polygon visibility}

In this section, we prove Theorem \ref{thm:polygon_visibility}. We prepare the proof with a number of  general results, starting with a result about graphs of bounded VC-dimension. Then, we will present some results about the degree-boundedness of ordered graphs avoiding some fixed ordered matching, which might be of independent interest. 

Given a graph $G$, its \emph{VC-dimension} is the largest positive integer $d$ with the following property. There is a $d$-element subset $U\subset V(G)$ such that for every $T\subset U$, there is some vertex $v\in V(G)$ with $N(v)\cap U=T$. Lov\'asz and Szegedy \cite{LSz} proved that graphs of bounded VC-dimension admit a so called \emph{ultra-strong regularity lemma}. This lemma states that if $G$ has VC-dimension at most $d$ and given $\eps\in (0,1/2)$, then there is a partition of $V(G)$ into sets $V_1,\dots,V_t$ such that (i) $t=(1/\eps)^{O(d^2)}$, (ii) $|V_1|,\dots,|V_t|\in \{\lfloor|V(G)|/t\rfloor,\lceil |V(G)|/t\rceil\}$, (iii) all but $\eps t^2$ of the pairs $(i,j)$ with $1\leq i<j\leq t$ satisfy that either $e(G[V_i,V_j])<\eps |V_i||V_j|$ or $e(G[V_i,V_j])\geq (1-\eps)|V_i||V_j|$. We only use the following immediate qualitative consequence of this lemma.

\begin{lemma}\label{lemma:VC}
Let $d$ be an integer and $\beta\in (0,1]$, then for every $\gamma\in (0,1/2)$ there exists $\alpha=\alpha(d,\beta,\gamma)$ such that the following holds. Let $G$ be an $n$-vertex graph of VC-dimension at most $d$. If $e(G)\leq (1-\beta) \binom{n}{2}$, then there exist disjoint $A,B\subset V(G)$ such that $|A|=|B|\geq \alpha n$, and the bipartite graph between $G[A,B]$ has maximum degree $\gamma |A|$.
\end{lemma}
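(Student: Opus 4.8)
The plan is to derive Lemma~\ref{lemma:VC} from the Lov\'asz--Szegedy ultra-strong regularity lemma, quoted in the sentence just before the statement, by a standard cleaning argument on the reduced (cluster) graph. First I would apply the regularity lemma with a parameter $\eps$ to be chosen small enough in terms of $d,\beta,\gamma$, obtaining a partition $V(G)=V_1\cup\dots\cup V_t$ with $t=(1/\eps)^{O(d^2)}$, nearly equal parts of size $\approx n/t$, and at most $\eps t^2$ \emph{irregular} pairs $(i,j)$, where we call a pair \emph{dense} if $e(G[V_i,V_j])\ge(1-\eps)|V_i||V_j|$ and \emph{sparse} if $e(G[V_i,V_j])<\eps|V_i||V_j|$, and every non-irregular pair is one of these two. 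Form the reduced graph $R$ on vertex set $[t]$ by placing an edge $ij$ exactly when the pair $(V_i,V_j)$ is dense. The goal is to find a \emph{non-edge} $ij$ of $R$ (so $(V_i,V_j)$ is sparse, not merely non-dense), and then pass to $A\subseteq V_i$, $B\subseteq V_j$ killing the few high-degree vertices.

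The key step is a counting argument showing $R$ has a non-edge, in fact many. If almost all pairs were dense, then $e(G)$ would be close to $\binom{n}{2}$, contradicting the hypothesis $e(G)\le(1-\beta)\binom{n}{2}$. Quantitatively: edges of $G$ within a single part contribute at most $t\binom{n/t+1}{2}\le n^2/t$, which is $o(n^2)$ once $t$ is large; edges across irregular pairs contribute at most $\eps t^2\cdot(n/t)^2\le \eps n^2$; edges across sparse pairs contribute at most $\eps\binom{n}{2}$; and edges across dense pairs contribute at most $(\#\text{dense pairs})\cdot(n/t)^2$. Hence if the number of dense pairs exceeded $(1-\beta/2)\binom{t}{2}$, say, then $e(G)\ge(1-\beta/2)\binom{t}{2}(n/t)^2-\text{(lower-order)}\ge(1-\beta)\binom{n}{2}$ for $\eps$ small and $t$ large, a contradiction. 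Therefore at least $(\beta/2)\binom{t}{2}$ pairs are not dense; since only $\eps t^2$ of them can be irregular, and $\eps t^2<(\beta/2)\binom{t}{2}$ for $\eps$ small, at least one pair $(V_i,V_j)$ is genuinely sparse, i.e. $e(G[V_i,V_j])<\eps|V_i||V_j|$.

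Finally, fix such a sparse pair $(V_i,V_j)$ with $|V_i|,|V_j|\ge\lfloor n/t\rfloor$. By Markov, the number of vertices of $V_i$ with more than $\gamma|V_j|$ neighbours in $V_j$ is at most $\eps|V_i||V_j|/(\gamma|V_j|)=(\eps/\gamma)|V_i|$, and symmetrically for $V_j$. Choosing $\eps\le\gamma/4$, deleting these vertices leaves sets $A_0\subseteq V_i$, $B_0\subseteq V_j$ of size at least $(1-\eps/\gamma)\lfloor n/t\rfloor\ge \tfrac12\lfloor n/t\rfloor$, and the bipartite graph $G[A_0,B_0]$ has maximum degree at most $\gamma|V_j|$; truncating to make $|A_0|=|B_0|$ and noting $\gamma|V_j|\le\gamma|A_0|\cdot(|V_j|/|A_0|)$ — here a little care is needed, so I would instead simply re-run Markov with threshold $\gamma|A_0|$ (using $|A_0|\ge\tfrac12|V_j|$, which only costs a factor $2$ in $\eps$) to get maximum degree $\le\gamma|A|$ with $|A|=|B|\ge\alpha n$ for $\alpha=\alpha(d,\beta,\gamma):=c/t=(1/\eps)^{-O(d^2)}$. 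The main obstacle is purely bookkeeping: chasing the dependence of $\eps$ on $d,\beta,\gamma$ through the regularity lemma's tower-type $t$ and making sure each of the "lower-order'' contributions to $e(G)$ is genuinely dominated — there is no conceptual difficulty, and no step is delicate beyond choosing the constants in the right order ($\gamma,\beta$ given; then $\eps\le\eps_0(d,\beta,\gamma)$ small; then $t$ determined by the lemma; then $n$ large, the small-$n$ case being handled by taking $\alpha$ small enough that $\alpha n<1$).
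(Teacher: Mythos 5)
Your proof is correct and takes essentially the same approach as the paper's: apply the Lov\'asz--Szegedy ultra-strong regularity lemma, locate a genuinely sparse pair $(V_i,V_j)$ by a counting argument (the paper phrases this dually, by lower-bounding non-edges rather than upper-bounding edges, but it is the same calculation), and then prune the high-degree vertices of the sparse pair. Your noted subtlety about the threshold $\gamma|A|$ versus $\gamma|V_j|$ is real, and your fix (Markov at threshold $\gamma|V_j|/2$, costing a factor $2$ in $\eps$) works; the paper handles it slightly more cleanly by simply taking the $|V_i|/2$ lowest-degree vertices of each side, which directly gives maximum degree at most $\gamma|A|$.
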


\begin{proof}
Let $\eps=\min\{\gamma/4,\beta/4\}$, and let $V_1,\dots,V_t$ be a partition of $V(G)$ given by the ultra-strong regularity lemma. Say that a pair $(i,j)$ with $1\leq i<j\leq t$ is \emph{bad} if $e(G[V_i,V_j])> \eps|V_i||V_j|$. Then the number of non-edges of $G$ contained in some bad pair is at most $\eps (n/t)^2\cdot t^2+(n/t)^2\cdot \eps t^2=2\eps n^2$. Hence, as $2\eps<\beta/2$, there must exist a good pair $(i,j)$. Then there are at most $\eps |V_i||V_j|\leq (\gamma/4)|V_i||V_j|$ edges in the bipartite graph $H=G[V_i,V_j]$. 

Let $A, B$ be the sets of $|V_i|/2$ vertices of lowest degree from both $V_i$ and $V_j$, and note that all of them must have degree at most $(\gamma/2)|V_i|$. Otherwise, the $|V_i|/2$ vertices of higest degree would each have degree larger than $(\gamma/2)|V_i|$, giving more than $(\gamma/4)|V_i||V_j|$ edges in $H$, contradiction.
\end{proof}

We now discuss some results about ordered graphs avoiding a fixed ordered matching. An \emph{ordered graph} is a graph with a linear ordering on its vertex set, denoted by $<$. An ordered graph $G$ is \emph{ordered bipartite} if we can write $V(G)=A\cup B$ such that $A<B$ (that is, $a<b$ for every $a\in A$ and $b\in B$), and every edge of $G$ has one endpoint in $A$ and in $B$. The celebrated Marcus-Tardos theorem \cite{MT04} is equivalent to the statement that if $M$ is an ordered bipartite matching and $G$ is ordered bipartite with both vertex classes of size $n$ not containing $M$, then $G$ has at most $O_M(n)$ edges. Here, the condition that $G$ is ordered bipartite can be relaxed as follows. If $G$ is an $n$ vertex ordered graph not containing $M$ as an ordered subgraph, then $G$ has $O_M(n)$ edges, see \cite{Tardos}. We prove that the following strengthening also holds.

\begin{theorem}\label{thm:matching_degree_bounded}
Let $M$ be an ordered bipartite matching. If $G$ is an ordered graph not containg an induced copy of $M$, and $G$ contains no $K_{s,s}$, then $G$ has $O_{M,s}(n)$ edges.
\end{theorem}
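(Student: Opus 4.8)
The plan is to reduce Theorem~\ref{thm:matching_degree_bounded} to the already-established machinery of this paper by showing that the family of ordered graphs with no induced copy of a fixed ordered bipartite matching $M$ is (weakly) degree-bounded and has the density-EH property, and then to invoke Corollary~\ref{thm:master}. First, one should observe that this family is hereditary under induced ordered subgraphs, so the framework applies. The two halves of the argument are the following.

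\textbf{Weak degree-boundedness (the $C_4$-free case).} Here I would prove directly that a $C_4$-free ordered graph $G$ on $n$ vertices with no induced copy of $M$ has $O_M(n)$ edges. The non-induced version of this statement — any ordered graph with no (not necessarily induced) copy of $M$ has $O_M(n)$ edges — is exactly the theorem of Tardos \cite{Tardos} cited in the excerpt, a relaxation of Marcus--Tardos. The point is that in a $C_4$-free host graph, a \emph{non-induced} copy of the matching $M$ can almost be upgraded to an induced one: the only way an extra ``non-edge of $M$'' becomes an actual edge of $G$ is if it creates a short cycle, and $C_4$-freeness severely limits this. More precisely, I would first pass to a bipartite induced subgraph of $G$ of average degree $\Omega(d(G))$ (since the matching is ordered bipartite, it suffices to work with ordered bipartite host graphs after losing a constant factor and splitting at a well-chosen vertex), and then apply the Marcus--Tardos / Tardos bound after a cleaning step. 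In a $C_4$-free bipartite graph, if we take a large enough non-induced copy of $M$ guaranteed by Marcus--Tardos (using that $e(G)$ is large), the chords that could spoil inducedness are of bounded structure, and a bounded-size Ramsey-type argument inside the copy extracts a genuinely induced one. Carrying out this cleaning cleanly is the main obstacle: one must be careful that ``chords'' of an ordered matching landing at wrong places are controlled — this is where $C_4$-freeness, which forbids two vertices having two common neighbours, is used decisively to bound how the vertices of the (would-be) induced $M$ can be adjacent to each other.

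\textbf{Density-EH property.} For this half I would use bounded VC-dimension together with Lemma~\ref{lemma:VC}. A graph with no induced copy of a fixed ordered matching $M$ on $2m$ vertices has bounded VC-dimension: if a set $U$ of size $d$ were shattered, then for $d$ large in terms of $m$ one finds inside $U$ (and the vertices realizing its traces) an induced ordered copy of $M$, by a Ramsey/Dilworth argument on the relative order of the $d$ ``left'' vertices and their $\le d$ ``witness'' neighbours. Hence these graphs have VC-dimension $O_M(1)$. Now given $G$ in the family with edge density at least $c$, suppose for contradiction it has \emph{no} biclique of size $\delta n$, i.e. it is $K_{\delta n,\delta n}$-free; then in particular $e(\overline{G})$ is not too small, so I'd rather argue as follows: first, by the density assumption $e(G)\ge c\binom n2$. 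If $G$ already contains a biclique of size $\Omega_c(n)$ we are done; otherwise $G$ is $K_{t,t}$-free for $t = o(n)$, and I apply Lemma~\ref{lemma:VC} to $\overline{G}$ (which has at most $(1-c)\binom n2$ edges and the same bounded VC-dimension, since complementation preserves VC-dimension up to a constant) to find disjoint $A,B$ with $|A|=|B|\ge \alpha n$ such that the bipartite graph of $\overline G$ between $A$ and $B$ has maximum degree $\gamma|A|$ — that is, the bipartite graph of $G$ between $A$ and $B$ has \emph{minimum} degree $\ge (1-\gamma)|A|$. Such a near-complete bipartite graph, being also $K_{t,t}$-free with $t=o(n)$, contains a genuine biclique of size $\Omega(\alpha n)$ by a greedy/Kővári–Sós–Turán argument (a bipartite graph on parts of size $N$ with minimum degree $(1-\gamma)N$ and $\gamma$ small contains $K_{\ell,\ell}$ with $\ell=\Omega(N)$ unless it has many codegree-deficient pairs, which would force a large independent set on one side contradicting the degree condition). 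This gives the density-EH property with $\delta=\delta(c)>0$.

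\textbf{Conclusion.} With both hypotheses of Corollary~\ref{thm:master} verified for the hereditary family of ordered graphs omitting an induced copy of the fixed ordered bipartite matching $M$, the corollary yields a constant $C=C(M)$ such that every $K_{s,s}$-free member has average degree at most $Cs$, i.e. at most $\tfrac12 Csn$ edges, which is the desired $O_{M,s}(n)$ bound. The first half — upgrading Marcus--Tardos from non-induced to induced copies in the $C_4$-free regime — is where I expect essentially all the work to lie; the VC-dimension half is comparatively soft once Lemma~\ref{lemma:VC} is in hand.
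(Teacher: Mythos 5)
Your overall plan---verifying the hypotheses of Corollary~\ref{thm:master} (weak degree-boundedness plus density-EH) for the family of ordered graphs avoiding an induced copy of $M$---cannot succeed, because that family does \emph{not} have the density-EH property. The paper states this explicitly in the paragraph immediately following the theorem: for $M$ with at least two edges, $K_{s,s}$-free ordered graphs avoiding an induced $M$ need not have $O_M(sn)$ edges, ``the reason for this is that the family of ordered graphs avoiding an induced copy of $M$ does not have the density-EH property.'' If your two hypotheses held, Corollary~\ref{thm:master} would yield the linear-in-$s$ bound $O_M(sn)$, which is exactly the bound the paper says is false; the theorem deliberately claims only $O_{M,s}(n)$, with an $s$-dependence that cannot be made linear.

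The concrete error is in your density-EH half. You assert that an ordered graph with no \emph{induced} copy of $M$ has bounded VC-dimension, but what the paper uses (in the proof of Theorem~\ref{thm:matching_EH}) is the much stronger hypothesis of avoiding a \emph{bipartite-induced} copy: one forbids the bipartite pattern between two intervals $A<B$ with no constraint at all on edges inside $A$ or $B$. Forbidding a single induced copy of $M$ (where \emph{all} $\binom{2k}{2}$ pairs must have the prescribed pattern) is far weaker and does not bound VC-dimension. In addition, your last step---that a bipartite graph on parts of size $N$ with minimum degree $(1-\gamma)N$ and no $K_{t,t}$ with $t=o(N)$ must contain a biclique of size $\Omega(N)$---is false in general; a dense random bipartite graph is a counterexample (it has minimum degree close to $N$ but its largest biclique has logarithmic size). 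Bounded VC-dimension would be needed to rescue this, but you never invoke it in that step, and as noted it is not available anyway. Your first half is also only a sketch: you concede that upgrading non-induced copies of $M$ to induced ones under $C_4$-freeness ``is where essentially all the work lies,'' but you do not carry it out.

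The paper's actual proof is shorter and avoids Corollary~\ref{thm:master} entirely, going directly through Tardos's theorem on ordered matchings. Blow up $M$ to an ordered matching $M^{(\ell)}$ on $k$ intervals of size $\ell$, where $\ell=\ell(s,k)$ is chosen (via K\H{o}v\'ari--S\'os--Tur\'an) so that every $K_{s,s}$-free bipartite graph on parts of size $\ell$ has at most $\ell^2/k^2$ edges. If $G$ has more than $c_\ell n$ edges, Tardos gives a (not necessarily induced) copy of $M^{(\ell)}$, i.e.\ $\ell$-sets $J_1<\cdots<J_k$ with perfect matchings $N_{ab}$ between $J_a,J_b$ for $ab\in E(M)$. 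The $K_{s,s}$-freeness guarantees each $G[J_i,J_j]$ has density at most $1/k^2$, and choosing one edge uniformly at random from each $N_{ab}$, a union bound over the $\binom{k}{2}$ non-edges of $M$ shows the chosen vertices induce a copy of $M$ with positive probability. This is in the same spirit as your ``cleaning a non-induced copy to an induced one'' idea, but it is applied globally with a blow-up parameter $\ell$ depending on $s$---which is precisely why the resulting bound is $O_{M,s}(n)$ rather than $O_M(sn)$.
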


\begin{proof}
Let $V(M)=\{1,\dots,k\}$ with the natural ordering. For an integer $\ell$, let $M^{(\ell)}$ be an arbitrary ordered bipartite matching satisfying the following. The vertex set of $M^{(\ell)}$ can be partitioned into intervals $I_1,\dots,I_{k}$ of size $\ell$ such that $I_a<I_b$ if $a<b$, and there is a perfect matching between $I_a$ and $I_b$ if $ab\in E(M)$, otherwise there is no edge between $I_a$ and $I_b$. For every $\ell$, $M^{(\ell)}$ is clearly an ordered bipartite matching, so there exists $c_{\ell}$ such that every $n$-vertex ordered graph which does not contain $M^{(\ell)}$ as an ordered subgraph has at most $c_{\ell}n$ edges, by \cite{Tardos}.

Choose $\ell$ sufficiently large such that every bipartite graph with vertex classes of size $\ell$ and no copy of $K_{s,s}$ has at most $\ell^2/k^2$ edges. Such a choice is possible by the K\H{o}v\'ari-S\'os-Tur\'an theorem. We show that if $G$ does not contain $K_{s,s}$, but it contains $M^{(\ell)}$, then $G$ contains an induced copy of $M$. If this is true, then every $K_{s,s}$ and induced $M$-free ordered graph has at most $c_{\ell}n$ edges, finishing the proof.

If $G$ contains $M^{(\ell)}$ as an ordered subgraph, we can find $\ell$-element sets $J_1<\dots<J_k$ in $V(G)$ such that there is a complete matching $N_{ab}$ between $J_a$ and $J_b$ if $ab\in E(M)$. Note that if $G$ contains no $K_{s,s}$, then the number of edges between $J_i$ and $J_j$ for every $1\leq i<j\leq k$ is at most  $\ell^2/k^2$.  For every $ab\in E(M)$,  pick an edge $x_ax_b\in E(N_{ab})$ randomly among the $\ell$ edges from the uniform distribution. We show that with positive probability, $\{x_1,\dots,x_k\}$ induces of copy of $M$. For every $1\leq i<j\leq k$ for which $ij\not\in E(M)$, 
$$\mathbb{P}(x_ix_j\in E(G))=\frac{e(G[J_i,J_j])}{\ell^2}\leq \frac{1}{k^2}.$$
Hence, by the union bound,
$$\mathbb{P}(\exists ij\not\in E(M): x_ix_j\in E(G))\leq \frac{\binom{k}{2}}{k^2}<1.$$
Thus, there is a choice of $\{x_1,\dots,x_k\}$ inducing a copy of $M$ in $G$.
\end{proof}

Unfortunately, if $M$ has at least 2 edges, it is not true that $K_{s,s}$-free ordered graphs avoiding an induced copy of $M$ must have $O_M(sn)$ edges, i.e. the hidden constant in $O_{M, s}(n)$ may not be linear in $s$, which can be seen by considering, say $s=n^{1/2}$, and very dense random ordered graphs. The reason for this is that the family of ordered graphs avoiding an induced copy of $M$ does not have the density-EH property. 

However, if the complement of $G$ avoids bipartite-induced copies of some ordered bipartite matching, then we can establish the density-EH property. Given an ordered bipartite graph $H$, say that an ordered graph $G$ contains a \emph{bipartite-induced} copy of $H$ if there exists $A,B\subset V(G)$ such that $A<B$, and the bipartite subgraph of $G$ between $A$ and $B$ is isomorphic to $H$ (note that we do not require $A$ and $B$ to be independent sets). A similar result is established in \cite{KPT}, but we present a proof for completeness as well. 

\begin{theorem}\label{thm:matching_EH}
Let $G$ be an ordered graph on $n$ vertices, and let $c>0$. Let $M$ be an ordered bipartite matching, then there exists $\eps=\eps(c,M)>0$ such that the following holds. If $G$ has at least $cn^2$ edges, and the complement of $G$ contains no bipartite-induced copy of $M$, then $G$ contains a biclique of size at least $\eps n$.
\end{theorem}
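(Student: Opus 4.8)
The plan is to reduce the statement to the case of a \emph{dense ordered bipartite} graph, and then to extract, inside its bipartite complement, a linear-size empty biclique by combining a regularity argument with the blow-up trick from the proof of Theorem~\ref{thm:matching_degree_bounded}. First I would do the reduction. Write $V(G)=\{v_1<\dots<v_n\}$ and run the following median-cut recursion: given the current vertex interval, split it at its median into a left half $L$ and a right half $R$; if at least a quarter of the current edges run between $L$ and $R$, stop, and otherwise recurse into whichever of $G[L],G[R]$ carries the larger number of edges. Passing to a half of the current vertex set that retains at least a $3/8$-fraction of its edges multiplies the edge density by a factor of at least $3/2$, and since the density can never exceed $1$, the recursion terminates after $O(\log(1/c))$ steps. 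At the stopping point one obtains consecutive intervals $A<B$ with $|A|=|B|=m=\Omega_c(n)$ and $e_G(A,B)\ge c_1m^2$ for some $c_1=\Omega_c(1)$. Put $\overline H:=\overline G[A,B]$; this is an ordered bipartite graph with parts $A<B$, it has at least $c_1m^2$ non-edges, and it has no bipartite-induced copy of $M$ because $\overline G$ does not. It then suffices to find $A^*\subseteq A$ and $B^*\subseteq B$ with $|A^*|=|B^*|\ge\delta m$ and no edge of $\overline H$ between them, since this is exactly a copy of $K_{\delta m,\delta m}$ inside $G[A,B]\subseteq G$; the theorem holds with $\eps=\delta\cdot\Omega_c(1)$.

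For the second step I would apply Szemer\'edi's regularity lemma to $\overline H$ with a tiny regularity parameter, obtaining an induced partition of $A$ into consecutive intervals $A_1<\dots<A_r$ and of $B$ into $B_1<\dots<B_r$, and then analyse the reduced weighted graph (vertices the parts, weight on a regular pair its $\overline H$-density) by a recursion of bounded depth. At each stage one either finds a large, almost-empty combinatorial rectangle of blocks -- which, after discarding the few irregular pairs, the few high-density parts, and the bounded number of high-degree vertices, is cleaned to a genuinely empty biclique of $\overline H$ of linear size -- or one locates rows $i_1<\dots$ and columns $j_1<\dots$ of the reduced graph on which the density pattern agrees with the edge-pattern of $M$, in which case a direct random-selection argument, identical to the one in the proof of Theorem~\ref{thm:matching_degree_bounded}, produces a bipartite-induced copy of $M$ in $\overline H$ and hence a contradiction. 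It is crucial here that $M$ is a \emph{matching}: the blocks indexed by distinct edges of $M$ are then pairwise disjoint, so the corresponding ``is an edge'' events in the random selection are independent and the success probability stays bounded away from $0$. A convenient elementary input to the cleaning step is the observation that any bipartite graph with both parts of size $N$ and maximum degree $\Delta$ contains an empty $K_{t,t}$ with $t\ge N/(2\Delta)$ (take any $N/(2\Delta)$ vertices on one side and delete their at most $N/2$ neighbours from the other side).

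The delicate point -- and the place where the argument parallels that of Kor\'andi, Pach, and Tomon~\cite{KPT} -- is guaranteeing that the empty biclique one finally extracts has size \emph{linear} in $m$ rather than merely constant. A naive one-shot approach fails badly: deleting high-degree vertices from an almost-empty rectangle and greedily assembling an empty biclique only yields size $O_{c_1,M}(1)$, because a sparse quasirandom bipartite graph has no large empty biclique, so having small edge-\emph{density} is never by itself enough. What must be exploited, through the recursion above, is the ordered structure forced by the absence of an induced $M$: each time one passes to a sub-rectangle the graph $G$ becomes denser, and one has to arrange the bookkeeping so that after boundedly many rounds the bipartite complement has genuinely bounded maximum degree on a set of vertices that is still of size $\Omega(m)$, at which point the elementary fact above finishes the job. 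Carrying out this recursion while keeping all the relevant vertex sets linear in $n$ -- which is exactly what pins down $\eps=\eps(c,M)$ -- is the main obstacle and the heart of the proof.
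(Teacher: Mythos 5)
Your first step (the median-cut reduction to a dense ordered bipartite pair $A<B$) is sound and close to the paper's Claim~1, which uses a random cut instead. Your closing observation about matchings -- that the independence of the selections $x_a x_b$ over $ab\in E(M)$ keeps the union bound small -- is also the right mechanism, and matches the paper's Claim~3. But there is a genuine gap in the middle, and it is precisely the place you flag as ``the main obstacle and the heart of the proof.''

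The issue is that Szemer\'edi's regularity lemma is not the right tool here. After cleaning a Szemer\'edi partition you can at best say that the complement $\overline H[A,B]$ has a regular pair of low \emph{density}, but, as you correctly point out, low density never by itself yields an $\Omega(m)$-size empty biclique: a sparse quasirandom bipartite graph has none. You then assert that ``boundedly many rounds'' of your recursion will produce a linear-size set on which $\overline H$ has genuinely \emph{bounded maximum degree}, but you never justify this, and in fact it is not what happens (bounded maximum degree would force $\overline H$ to have $O(m)$ edges on that set, which is far stronger than needed and not what the structure gives you). The ingredient you are missing is VC-dimension: an ordered graph whose complement contains no bipartite-induced copy of a fixed ordered bipartite graph has \emph{bounded VC-dimension}, so one can invoke the Lov\'asz--Szegedy ultra-strong regularity lemma (this is the paper's Lemma~\ref{lemma:VC}). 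Ultra-strong regularity gives a density dichotomy -- each regular pair has density either $<\eps$ or $>1-\eps$ -- and consequently a pair of linear-size subsets $A_1,B_1$ on which $\overline H$ has maximum degree at most $\lambda|A_1|$, where $\lambda$ can be taken arbitrarily small but is still a \emph{constant fraction}, not a bounded integer. Standard Szemer\'edi regularity cannot produce this dichotomy, so your recursion has nothing to drive it to termination.

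The second issue is how the max-degree bound is used. You propose the elementary observation ``max degree $\Delta$ on parts of size $N$ gives an empty $K_{t,t}$ with $t\ge N/(2\Delta)$,'' but with $\Delta=\lambda|A_1|$ this only gives $t\ge 1/(2\lambda)$, a constant, not $\Omega(m)$. The paper avoids this: it does not try to extract the biclique from the degree bound directly. Instead it assumes for contradiction that no empty $\lambda|A_1|$-sized biclique exists, deduces via K\H{o}nig's theorem that every pair of $\frac{1}{k}$-intervals in $A_1\times B_1$ spans a large matching, and then uses the $\lambda|A_1|$ max-degree bound only to control the probability that a random selection of one edge from each matching of $M$ accidentally spawns an extra edge. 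So the degree bound serves as a probability estimate for random selection, not as a route to a degenerate bipartite complement. Without replacing Szemer\'edi regularity by the bounded-VC ultra-strong regularity lemma, and without restructuring the closing argument around K\H{o}nig's theorem plus random selection, your sketch cannot be completed.
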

\begin{proof}
We divide the proof into three simple claims.

\begin{claim}
There exist $A,B\subset V(G)$ such that $A<B$, $|A|=|B|\geq \delta n$ for some $\delta=\delta(c)>0$, and there are at least $c^2|A||B|$ edges between $A$ and $B$.
\end{claim}

\begin{proof}
 Without loss of generality, let $\{1,\dots,n\}$ be the vertex set of $G$, endowed with the natural ordering. Let $A_0=\{1,\dots,k\}$ and $B_0=\{k+1,\dots,n\}$, where $k$ is randomly chosen from the uniform distribution. Say that an edge $f=\{x,y\}$, with $x<y$,  is \emph{cut} if $x\in A_0$ and $y\in B_0$, and let $X$ be the number of cut edges.  Then $\mathbb{P}(f\mbox{ is cut})=\frac{y-x}{n}$, so $$\mathbb{E}(X)=\sum_{\{x,y\}\in E(G)}\frac{y-x}{n}\geq \frac{cn^2}{2}\cdot \frac{c}{2}=\frac{c^2n^2}{4}.$$
Here, the first inequality follows as there are at most $cn^2/2$ edges $\{x,y\}$ with $y-x\leq cn/2$. Hence, there is a choice for $k$ such that $X\geq c^2n^2/4$. This is only possible if $\min\{k,n-k\}\geq c^2n/4$ also holds. Let $\delta=c^2/4$, and let $A\subset A_0$ and $B\subset B_0$ be random $\delta n$ size subsets chosen from the uniform distribution. Then the expected number of edges between $A$ and $B$ is at least $\frac{|A||B|}{|A_0||B_0|}\cdot c^2n^2/4\geq c^2|A||B|$.
\end{proof}

Let $H_0$ be the bipartite subgraph between $A$ and $B$, and let $H$ be the bipartite complement of $H_0$. Then $H$ has at most $(1-c^2)|A||B|$ edges, and $H$ contains no bipartite-induced copy of $M$.

\begin{claim}
For every $\lambda>0$, there exist $A_1\subset A$ and $B_1\subset B$ such that $|A_1|=|B_1|\geq \alpha |A|$ for some $\alpha=\alpha(M,c,\lambda)$, and the maximum degree of $H[A_1\cup B_1]$ is at most $\lambda |A_1|$.
\end{claim}

\begin{proof}
It is known (and easy to prove) that ordered graphs with no bipartite-induced copy of a given ordered bipartite graph $\Gamma$ have VC-dimension $O_\Gamma(1)$. Therefore,  Lemma \ref{lemma:VC} gives the desired result.
\end{proof}

Let $k$ be the number of edges of $M$, and set $\lambda=1/(8k^3)$. Let $A_1,B_1$ be the sets guaranteed by the previous claim with respect to $\lambda$, so $|A_1|=|B_1|\geq \alpha |A|\geq \alpha\delta n$ for some $\alpha=\alpha(M,c,\lambda)=\alpha(M,c)$.

\begin{claim}\label{claim:finding an empty biclique}
There exists $A_2\subset A_1$ and $B_2\subset B_1$ such that $|A_2|=|B_2|\geq \lambda |A_1|$, and there are no edges between $A_2$ and $B_2$ in $H$.
\end{claim}

\begin{proof}
Assume that there exist no such $A_2,B_2$, then we show that $H$ contains a bipartite induced copy of $M$. Let $V(M)=\{1,\dots,2k\}$, and let $I_1<\dots<I_k$ be a partition of $A_1$ into intervals of size $|A_1|/k$, and $I_{k+1}<\dots<I_{2k}$ be a partition of $B_1$ into intervals of size $|B_1|/k$. For every $ab\in E(M)$, there is a matching $N_{ab}$ of size $|I_a|/2=|A_1|/2k$ between $I_a$ and $I_b$. Otherwise, by K\"onig's theorem, there exist $X\subset I_a$ and $Y\subset I_b$ such that $|X|=|Y|=|A_1|/(2k)>\lambda|A_1|$ and with no edges between $X$ and $Y$. Let $V(N_{ab})=J_a\cup J_b$ with $J_a\subset I_a$ and $J_b\subset I_b$. For every $ab\in E(M)$, pick a random edge $x_ax_b$ of $N_{ab}$ independently from the uniform distribution. We show that $\{x_1,\dots,x_{2k}\}$ induces a copy of $M$ with positive probability. For every $ij\not\in E(M)$, $$\mathbb{P}(x_ix_j\in E(H))= \frac{e(H[J_i,J_j])}{|J_i||J_j|}\leq \frac{|J_i|(\lambda |A_1|)}{|J_i|(|A_1|/2k)}=\frac{1}{4k^2},$$
where  in the first inequality we used that the maximum degree of $H[A_1\cup B_1]$ is at most $\lambda n$. Hence, by the union bound, $$\mathbb{P}(\exists ij\not\in E(M): x_ix_j\in E(G))\leq \frac{\binom{2k}{2}}{4k^2}<1.$$
Thus, there is a choice of $\{x_1,\dots,x_{2k}\}$ inducing a copy of $M$ in $H$.
\end{proof}

As $\lambda|A_1|\geq \lambda \alpha |A|\geq \lambda\alpha\delta n$, the sets $A_2$ and $B_2$ from Claim~\ref{claim:finding an empty biclique} span a biclique of size at least $\lambda\alpha\delta n$ in $G$. Here, $\eps:=\lambda\alpha\delta>0$ only depends on $M$ and $c$, as promised.
\end{proof}

Given a Jordan curve $K$, say that two sets $A,B\subset K$ are \emph{separated} if $K$ can be cut into two subcurves such that one of the subcurves contains $A$, the other $B$. The following lemma sharpens Corollary 3.5 in \cite{AK25}.

\begin{figure}
\begin{center}
\begin{tikzpicture}[scale=0.8]
\node[vtx,label=below:$1$] (x1) at (-2.5,0) {};
\node[vtx,label=below:$2$] (x2) at (-1.5,0) {};
\node[vtx,label=below:$3$] (x3) at (-0.5,0) {};
\node[vtx,label=below:$4$] (x4) at (0.5,0) {};
\node[vtx,label=below:$5$] (x5) at (1.5,0) {};
\node[vtx,label=below:$6$] (x6) at (2.5,0) {};

\draw[dotted] (0,-1.5) -- (0,1.5); 
\draw (x2) to[out=60, in=120] (x4);
\draw (x2) to[out=60, in=120] (x6);
\draw (x1) to[out=60, in=120] (x5);
\draw (x3) to[out=60, in=120] (x5);
\draw[dashed] (x2) to[out=-60, in=-120] (x5);

\end{tikzpicture}
\caption{A double cherry. The dashed arc indicates a non-edge.}
\label{fig:double_cherry}
\end{center}
\end{figure}
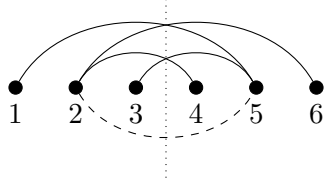

\begin{lemma}\label{lemma:visibility1}
Let $K$ be a Jordan curve, and let $A,B\subset K$ be finite sets such that $A$ and $B$ are separated. If $G$ is the visibility graph of $A\cup B$ with respect to $K$, and $G$ is $K_{s,s}$-free, then there are $O(s(|A|+|B|))$ edges between $A$ and $B$.
\end{lemma}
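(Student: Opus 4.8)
The plan is to derive the lemma from Corollary~\ref{thm:master}. First I would fix an orientation of $K$ so that the arc traversed first contains $A$ and the arc traversed second contains $B$; this induces linear orders on $A$ and on $B$. Let $H$ be the ordered bipartite graph with vertex classes $A<B$ whose edges are exactly the visibility edges of $G$ between $A$ and $B$, so that $e(H)$ is precisely the number of edges of $G$ between $A$ and $B$. The family $\mathcal{H}$ of all ordered bipartite graphs arising this way (over all Jordan curves $K$ and all separated pairs $A,B\subseteq K$) is hereditary, since deleting vertices just amounts to replacing $A,B$ by subsets, which are still separated on $K$. So it suffices to show that $\mathcal{H}$ is weakly degree-bounded and has the density-EH property and then quote Corollary~\ref{thm:master}.

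The geometric core is the following claim: for every $w\in A$ the set $\{b\in B: wb\in E(H)\}$ is an interval in the order of $B$, and symmetrically for every $w\in B$. Equivalently, $H$ has no induced ``cherry with a hole'': vertices $b_1<b_2<b_3$ in $B$ and $w\in A$ with $wb_1,wb_3\in E(H)$ but $wb_2\notin E(H)$; this is in particular a sub-configuration of the double cherry of Figure~\ref{fig:double_cherry}. To prove the claim, suppose $w$ sees $b_1$ and $b_3$ with $b_2$ lying between them on $B$'s arc. The segments $[w,b_1]$ and $[w,b_3]$ lie in the closed Jordan domain $\overline{K^*}$ and meet only at $w$, so they cut $\overline{K^*}$ into three pieces, exactly one of which, call it $P$, carries $b_2$ on its boundary arc. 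The segment $[w,b_2]$ emanates from the common corner $w$, cannot cross $[w,b_1]$ or $[w,b_3]$ except at $w$, and ends at $b_2$; hence $[w,b_2]\setminus\{w\}$ lies in the component of $\overline{K^*}$ containing $b_2$, namely $P$, so $[w,b_2]\subseteq\overline{P}\subseteq\overline{K^*}$ and $w$ sees $b_2$. (The same principle can be stated as: two crossing chords inside a Jordan domain have alternating endpoints on the boundary.) Routine non-degeneracy assumptions needed here, such as no two chords collinear and each segment meeting $K$ only in its endpoints, should be arrangeable by a limiting argument.

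Granting the claim, both properties follow quickly. For weak degree-boundedness, take a $C_4$-free $H\in\mathcal{H}$: the intervals $I_a:=N_H(a)\subseteq B$ pairwise share at most one element (a common pair of neighbours would produce a $C_4$), and likewise the intervals $J_b:=N_H(b)\subseteq A$. A short double count — for any $b$ with $|J_b|\ge 2$, at most two of the intervals $I_a$ with $a\in J_b$ can have size $\ge 2$, since two intervals both containing $b$ and meeting only in $b$ cannot both extend past $b$ on the same side — then gives $e(H)=O(|A|+|B|)$, so the average degree is bounded by an absolute constant (alternatively one can invoke the $s=2$ case of the Ackerman--Keszegh result that this lemma sharpens). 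For density-EH, let $M$ be the ordered matching on $\{a_1<a_2<a_3\}\cup\{b_1<b_2<b_3\}$ with edges $a_ib_i$; if the complement $\overline{H}$ contained a bipartite-induced copy of $M$, then in $H$ we would have $a_2b_1,a_2b_3\in E(H)$ and $a_2b_2\notin E(H)$, a cherry with a hole, which is impossible. Since $H$ has no edges inside $A$ or inside $B$, there is also no such copy straddling the classes, so $\overline{H}$ has no bipartite-induced copy of $M$, and Theorem~\ref{thm:matching_EH} shows that any $H\in\mathcal{H}$ with $\Omega(v(H)^2)$ edges contains a biclique of linear size. Thus $\mathcal{H}$ is weakly degree-bounded and has the density-EH property, and Corollary~\ref{thm:master} yields $e(H)=O(s\,v(H))=O(s(|A|+|B|))$.

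The main obstacle is the geometric claim, and specifically making the chord/region argument rigorous for an arbitrary Jordan curve: when one of the segments $[w,b_1],[w,b_3],[w,b_2]$ runs along $K$ or is tangent to it, the number of pieces into which $\overline{K^*}$ is cut can change, and one must still verify that $b_2$ lies in a single piece whose closure is contained in $\overline{K^*}$ and is separated from the other two rays at $w$. Everything after the claim is bookkeeping with the ordered-graph tools already in place (Theorem~\ref{thm:matching_EH}, which rests on Lemma~\ref{lemma:VC}, and Corollary~\ref{thm:master}).
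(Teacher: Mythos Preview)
The central geometric claim --- that for each $w\in A$ the set $N_H(w)\cap B$ is an interval in the order of $B$ --- is false, and the whole argument rests on it. Take $K^*$ to be the L-shaped polygon with vertices $(0,0),(10,0),(10,4),(4,4),(4,10),(0,10)$, let the $A$-arc be the bottom edge, and put $w=(8,0)$, $b_1=(10,2)$, $b_2=(4,8)$, $b_3=(0,2)$. Then $b_1<b_2<b_3$ along the $B$-arc, the segments $[w,b_1]$ and $[w,b_3]$ lie in $\overline{K^*}$, but $[w,b_2]$ exits $K^*$ through the re-entrant corner at $(4,4)$ (at height $y=4$ the segment is at $x=6>4$). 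So $w$ sees $b_1$ and $b_3$ but not $b_2$. The flaw in your region argument is the sentence ``hence $[w,b_2]\setminus\{w\}$ lies in the component of $\overline{K^*}$ containing $b_2$'': the segment $[w,b_2]$ is not confined to $\overline{K^*}$ a priori, and it can leave the region $P$ by crossing the arc of $K$ between $b_1$ and $b_3$, not just the two chords. This is not a degeneracy issue; the example above is generic.

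What actually holds (and what the paper uses, quoting \cite{AK25}) is the weaker ``no double cherry'' property: if \emph{in addition} there exist $a_1<w<a_3$ in $A$ that both see $b_2$, then $w$ sees $b_2$ as well. In the L-shape no such $a_3>w$ exists, so there is no contradiction. The paper exploits this weaker exclusion in a genuinely different way: it encodes each $a_i$ by a vertical segment recording the indices of its first and last $B$-neighbour, and each $b_j$ by a horizontal segment recording its first and last $A$-neighbour; the double-cherry property forces the intersection graph of these segments to coincide with $G[A,B]$, and then Proposition~\ref{prop:vertical vs horizontal} gives the $O(s(|A|+|B|))$ bound. Your degree-bounded/density-EH route via Corollary~\ref{thm:master} and Theorem~\ref{thm:matching_EH} is a natural idea, but both halves rely on the interval property: the $C_4$-free double-count uses that the $I_a$ are genuine intervals, and the ordered matching you feed into Theorem~\ref{thm:matching_EH} is not actually excluded from $\overline{H}$ once cherries-with-holes are permitted.
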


\begin{proof}
Order $A\cup B$ in clockwise order on the curve $K$ such that the points of $A$ are before the points of $B$, then this defines an ordering $<$ of $G$ with $A<B$. 

Say that an ordered bipartite graph $H$ is a \emph{double cherry} if it has the following form. The vertex classes of $H$ are $\{1,2,3\}$ and $\{4,5,6\}$, the pairs  $\{2,4\},\{2,6\},\{1,5\},\{3,5\}$ are edges, and $\{2,5\}$ is not an edge. The rest of the pairs between the two vertex classes can be either edges or non-edges. See Figure \ref{fig:double_cherry} for an illustration. It is proved in \cite{AK25} that a curve visibility graph does not contain a bipartite-induced copy of a double cherry.

 Define a collection $\mathcal{F}$ of horizontal and vertical segments in the plane as follows. Let $a_1<\dots<a_m$ be the elements of $A$, and $b_1<\dots<b_n$ be the elements of $B$. For every $k\in [m]$, add to $\mathcal{F}$ the closed vertical segment with endpoints $(k,\ell)$ and $(k,\ell')$, where $b_{\ell}\in B$ is the first neighbour of $a_k$, and $b_{\ell'}\in B$ is the last neighbour with respect to $<$. If $a_k$ has does not have neighbours in $B$, do not add anything. Also, for every $\ell\in [n]$, add to $\mathcal{F}$ the open horizontal segment with endpoints $(k,\ell)$ and $(k',\ell)$, where $a_{k}\in A$ is the first neighbour of $b_{\ell}$, and $a_{k'}\in A$ is the last neighbour with respect to $<$. Then $\mathcal{F}$ contains at most $|A|+|B|$ segments. 
 
 Let $Q$ be the intersection graph between the horizontal and vertical segments. Then $Q$ contains at least $e(G[A, B])$ edges. Indeed, if $a_ib_j$ is an edge of $G$, then there exist $a_{i_1},a_{i_2}\in A$ and $b_{j_1},b_{j_2}\in B$ with $i_1\leq i\leq i_2$ and $j_1\leq j\leq j_2$ such that $a_{i_1}$ is the first neighbour of $b_j$ and $a_{j_2}$ is the last, and similarly for $b_{j_1}$ and $b_{j_2}$. In this case, the edge $a_ib_j$ corresponds to the intersection of the horizontal segment with endpoints $(i_1,j), (i_2,j)$ and the vertical segment with endpoints $(i,j_1), (i,j_2)$, both of which belong to $\cF$. Note that these two segments intersect in the point $(i,j)$, and therefore every intersection of $Q$ corresponds to an edge of $G[A, B]$.
 
\begin{claim}
If a horizontal and a vertical segment in $\mathcal{F}$ intersect in some point $(i,j)$, then $(a_i,b_j)$ is an edge of $G$.
\end{claim}

\begin{proof}
Let $(i,j_1)$ and $(i,j_2)$ be the endpoints of the horizontal segment, and $(i_1,j)$ and $(i_2,j)$ be the endpoints of the vertical segment. By the definition of the segments, we know $a_{i_1}b_j,a_{i_2}b_j,a_ib_{j_1},a_{i}b_{j_2}$ are edges of $G$. If $a_ib_j$ is not an edge of $G$, then we must have $i_1<i<i_2$ and $j_1<j<j_2$. But then $G$ contains a bipartite-induced copy of a double cherry with vertices $a_{i_1}, a_i, a_{i_2}, b_{j_1}, b_j, b_{j_2}$, which is impossible.
\end{proof}

From the previous claim, we deduce that if $G$ is $K_{s,s}$-free, then $Q$ is also $K_{s,s}$-free. But then, $Q$ has $O(s(|A|+|B|))$ edges by Proposition \ref{prop:vertical vs horizontal}, which gives that $e(G[A,B])\leq e(Q)\leq O(s(|A|+|B|))$. 
\end{proof}

\begin{theorem}
Let $G$ be an $n$-vertex visibility graph with respect to an $x$-monotone Jordan curve $K$. If $G$ is $K_{s,s}$-free, then $G$ has $O(sn)$ edges.
\end{theorem}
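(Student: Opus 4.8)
The plan is to split the edge set of $G$ according to which chain of $K$ the two endpoints lie on. Since $K$ is $x$-monotone, write $p_0$ and $p_1$ for its leftmost and rightmost points; then $K$ is the union of an upper chain $U$ and a lower chain $L$, each the graph of a continuous function over the interval spanned by $p_0$ and $p_1$, with $U\cap L=\{p_0,p_1\}$. Set $P_U=V(G)\cap U$ and $P_L=(V(G)\cap L)\setminus\{p_0,p_1\}$, so that $V(G)=P_U\cup P_L$ is a partition with $|P_U|+|P_L|=n$, and decompose $e(G)=e(G[P_U])+e(G[P_L])+e(G[P_U,P_L])$. The three pieces will be bounded by $O(sn)$ separately.

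For the edges between $P_U$ and $P_L$, note that cutting the Jordan curve $K$ at the two points $p_0,p_1$ splits it into exactly the two subcurves $U$ and $L$, so $P_U$ and $P_L$ are separated in the sense of Lemma~\ref{lemma:visibility1}. As $G$ is the visibility graph of $P_U\cup P_L$ with respect to $K$ and is $K_{s,s}$-free, Lemma~\ref{lemma:visibility1} gives $e(G[P_U,P_L])=O\big(s(|P_U|+|P_L|)\big)=O(sn)$ directly.

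The work lies in bounding $e(G[P_U])$ and, symmetrically, $e(G[P_L])$. Order $P_U$ by $x$-coordinate, making $H:=G[P_U]$ an ordered graph; note that $H$ is again a curve visibility graph (with respect to $K$), and it is hereditary in $P_U$. The geometric heart of the proof is the structural claim that there are fixed ordered bipartite matchings $M_1,M_2$ (independent of $G$, $K$, and $s$) such that $H$ contains no induced copy of $M_1$, and the complement of $H$ contains no bipartite-induced copy of $M_2$. The reason such patterns are forbidden is that the upper chain is the graph of a function: for $p,q\in U$ the segment $\overline{pq}$ lies in $K^*$ exactly when it stays below the upper function and above the lower function on the interval between $x_p$ and $x_q$, a ``sandwich'' condition that, because $\overline{pq}$ is linear and its endpoints lie on the ceiling, forces rigid monotonicity relations among the visibility relations of any bounded number of points of $P_U$ read in $x$-order; combining these relations with the fact (proved in the proof of Lemma~\ref{lemma:visibility1}, after \cite{AK25}) that no curve visibility graph contains a bipartite-induced double cherry, a short case analysis should pin down suitable $M_1$ and $M_2$. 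I expect this case analysis to be the main obstacle; the rest is assembly.

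Granting the structural claim, the conclusion follows the template of Corollary~\ref{thm:master}. By Theorem~\ref{thm:matching_degree_bounded} with $s=2$, every $C_4$-free ordered graph with no induced $M_1$ has $O_{M_1}(n)$ edges, so the hereditary family of ordered graphs with no induced $M_1$ is weakly degree-bounded; by Theorem~\ref{thm:matching_EH}, the hereditary family of ordered graphs whose complement has no bipartite-induced $M_2$ has the density-EH property. Let $d$ be the average degree of $H$ and apply Theorem~\ref{thm:main} with a constant $k$ exceeding the weak degree-bound of the $M_1$-free family. A $C_4$-free induced subgraph of $H$ of average degree at least $k$ would still avoid an induced $M_1$, contradicting weak degree-boundedness; hence $H$ has an induced subgraph $H'$ on $d$ vertices with $\Omega_k(d^2)$ edges. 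The complement of $H'$ still has no bipartite-induced $M_2$, so by the density-EH property $H'$ contains a biclique of size $\Omega(d)$, and since $H$ is $K_{s,s}$-free this forces $d=O(s)$, i.e. $e(G[P_U])=O(s|P_U|)$. The identical argument bounds $e(G[P_L])=O(s|P_L|)$, and summing the three contributions yields $e(G)=O(sn)$, as claimed.
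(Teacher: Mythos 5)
Your overall plan matches the paper's exactly: split $K$ at its leftmost and rightmost points into upper and lower chains, bound the cross-chain edges via Lemma~\ref{lemma:visibility1}, and reduce the same-chain contributions to the ordered-graph machinery (Theorem~\ref{thm:matching_degree_bounded} for weak degree-boundedness, Theorem~\ref{thm:matching_EH} for the density-EH property, assembled via Theorem~\ref{thm:main} exactly as in Corollary~\ref{thm:master}). You also correctly observe that the family you need is the hereditary class of graphs admitting an ordering with no induced $M_1$ and whose complement has no bipartite-induced $M_2$, for two suitable ordered bipartite matchings.

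However, there is a genuine gap: you never actually identify $M_1$ and $M_2$, nor prove that they are forbidden. You state the structural claim, acknowledge it is ``the main obstacle,'' and assert that ``a short case analysis should pin down suitable $M_1$ and $M_2$.'' That case analysis is precisely the heart of this part of the proof, and it is not ``assembly.'' In the paper, it is Claim~\ref{claim:x monotone}: the same-chain visibility graph, ordered by $x$-coordinate, contains no bipartite-induced copy of any member of the family $\mathcal{M}$ of $3\times 3$ ordered bipartite patterns with edges $\{1,6\},\{2,4\},\{3,5\}$ and non-edge $\{2,5\}$. From this single claim one extracts both $M_0$ (the matching in $\mathcal{M}$, giving the induced-matching exclusion) and $M_2$ (the bipartite complement of the ``full'' member $M_1\in\mathcal{M}$, giving the complement exclusion). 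The geometric proof is not the sandwich/double-cherry argument you sketch: the double cherry governs the \emph{cross-chain} edges (and is already packaged inside Lemma~\ref{lemma:visibility1}), whereas the same-chain claim uses a different, convexity-style argument — if $p_1q_3$, $p_2q_1$, $p_3q_2$ are edges with $p_1<p_2<p_3<q_1<q_2<q_3$ on one chain, then the segment $p_2q_2$ is trapped below $p_1q_3$ and above $p_2q_1\cup p_3q_2$, so it cannot cross $K$ and hence must also be an edge. Without this (or an equivalent) concrete identification and proof of the forbidden patterns, the argument does not go through.
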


\begin{proof}
Let $P\subset K$ be the vertex set of $G$. If $K$ is an $x$-monotone Jordan curve, let us denote by $x_\ell, x_r$ (one of) its leftmost and rightmost points. Then, two internally disjoint arcs of $K$ join $x_\ell$ and $x_r$ -- let us denote them by $K_\ell$ and $K_{u}$, standing for the \textit{lower} and the \textit{upper} part of $K$, respectively. Then $K_{\ell}$ and $K_u$ are $x$-monotone curves, i.e. curves such that every vertical line intersects them in at most one point. Let $P_{\ell}=P\cap K_{\ell}$ and $P_u=P\cap K_u$. By Lemma \ref{lemma:visibility1}, we have $e(G[P_{\ell},P_u])=O(sn)$. We prove that $e(G[P_{\ell}])=O(sn)$ holds as well, which, by symmetry, implies $e(G[P_{u}])=O(sn)$ and finishes the proof.

Let $G'=G[P_{\ell}]$, and order the elements of $P_{\ell}$ from left to right, this gives an ordering of $G'$. Let $\mathcal{M}$ be the family of ordered bipartite graphs $M$ of the following form: the vertex classes of $M$ are $\{1,2,3\}$ and $\{4,5,6\}$, the pairs $\{1,6\}$, $\{2,4\}$, $\{3,5\}$ are edges of $M$, but $\{2,5\}$ is not an edge.

\begin{claim}\label{claim:x monotone}
    $G'$ contains no bipartite-induced copy of a member of $\mathcal{M}$. 
\end{claim}

\begin{proof}
Assume that $p_1<p_2<p_3<q_1<q_2<q_3$ induces a bipartite-induced copy of some $M\in\mathcal{M}$ in $G'$. Then $p_1q_3$ is an edge. This means that the part of $K_{\ell}$ between the points $p_1$ and $q_3$ is below the segment $p_1q_3$, and $K_{u}$ is above this segment. Furthermore, $p_2q_1$ is also an edge of $G'$, so the part of $K_{\ell}$ between $p_2$ and $q_1$ is below $p_2q_1$. This is also true for $p_3$ and $q_2$. But then the segment $p_2q_2$ cannot intersect $K_{\ell}$ as it is above every point of $p_2q_1\cup p_3q_2$, and it also cannot intersect $K_u$ as it is below $p_1q_3$. Hence, $p_2q_2$ is an edge as well, contradiction.
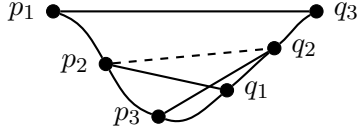
\begin{figure}[h]
    \centering
    \vspace{-0.2cm}
    \begin{tikzpicture}[scale=0.7]
        \node[vtx, label=left:$p_1$] (p1) at (0,3) {};
        \node[vtx, label=left:$p_2$] (p2) at (1,2) {};
        \node[vtx, label=left:$p_3$] (p3) at (2,1) {};

        \node[vtx, label=right:$q_3$] (q3) at (5,3) {};
        \node[vtx, label=right:$q_2$] (q2) at (4.2,2.3) {};
        \node[vtx, label=right:$q_1$] (q1) at (3.3,1.5) {};

        \draw[thick] (p1) to[out=-20, in=120] (p2) to[out=-60, in=160] (p3) to[out=-20, in=-140] (q1) to[out=40, in=-140] (q2) to[out=40, in=-150] (q3);

        \draw[thick] (p1) -- (q3);
        \draw[thick] (p2) -- (q1);
        \draw[thick] (p3) -- (q2);
        \draw[thick, dashed] (p2) -- (q2);
    \end{tikzpicture}
    \caption{Illustration of proof of Claim~\ref{claim:x monotone}.}
    \label{fig:enter-label}
\end{figure}
\end{proof}
\vspace{-0.3cm}

Let $M_0$ be the member of $\mathcal{M}$ in which the 5 unspecified pairs of vertices $\{i,j\}$ with $i\in \{1,2,3\}, j\in \{4,5,6\}$ are not edges. Then $M_0$ is a matching. Also, let $M_1$ be the member of $\mathcal{M}$ in which $\{1,4\}$, $\{2,5\}$, $\{3,6\}$ are not edges, and all other pairs $\{i,j\}$ with $i\in \{1,2,3\}, j\in \{4,5,6\}$ are edges. Moreover, let $M_2$ be the bipartite complement of $M_1$, that is, $\{1,4\},\{2,5\},\{3,6\}$ are edges, and the rest of the pairs are non-edges. See Figure \ref{fig:matching} for an illustration. Then $M_2$ is also a matching, and the complement of $G'$ contains no bipartite-induced copy of $M_2$. Let $\mathcal{F}$ be the family of graphs $H$ which have an ordering such that $H$ contains no bipartite-induced copy of $M_0$ and the complement of $H$ contains no bipartite-induced copy of $M_2$. Then $\mathcal{F}$ is a hereditary family of graphs and $G'\in\mathcal{F}$. Moreover, $\mathcal{F}$ is degree-bounded by Theorem \ref{thm:matching_degree_bounded}, and it has the density-EH property by Theorem \ref{thm:matching_EH}. Therefore, by Corollary \ref{thm:master}, every $n$-vertex $K_{s,s}$-free member of $\mathcal{F}$ has $O(sn)$ edges, so this holds for $G'$ as well. This finishes the proof.\end{proof}

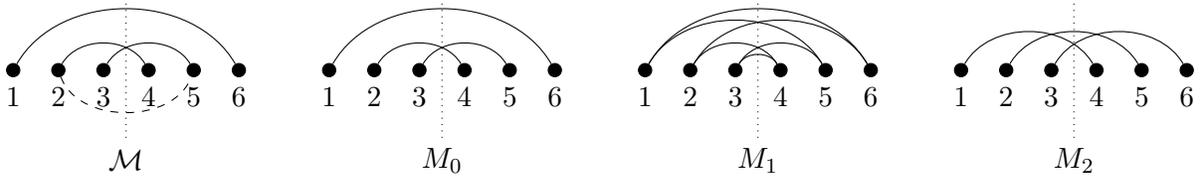
\begin{figure}[h]
\begin{center}\vspace{-0.3cm}
\begin{tikzpicture} [scale=0.6]
\node[vtx,label=below:$1$] (x1) at (-2.5,0) {};
\node[vtx,label=below:$2$] (x2) at (-1.5,0) {};
\node[vtx,label=below:$3$] (x3) at (-0.5,0) {};
\node[vtx,label=below:$4$] (x4) at (0.5,0) {};
\node[vtx,label=below:$5$] (x5) at (1.5,0) {};
\node[vtx,label=below:$6$] (x6) at (2.5,0) {};
\draw[dotted] (0,-1.5) -- (0,1.5); 
\draw (x1) to[out=60, in=120] (x6);
\draw (x2) to[out=60, in=120] (x4);
\draw (x3) to[out=60, in=120] (x5);
\draw[dashed] (x2) to[out=-70, in=-110] (x5);
\node[] at (0,-2) {$\mathcal{M}$};

\node[] (A) at (7,0) {};
\node[vtx,label=below:$1$] (x1) at ($(A)+(-2.5,0)$) {};
\node[vtx,label=below:$2$] (x2) at ($(A)+(-1.5,0)$) {};
\node[vtx,label=below:$3$] (x3) at ($(A)+(-0.5,0)$) {};
\node[vtx,label=below:$4$] (x4) at ($(A)+(0.5,0)$) {};
\node[vtx,label=below:$5$] (x5) at ($(A)+(1.5,0)$) {};
\node[vtx,label=below:$6$] (x6) at ($(A)+(2.5,0)$) {};
\draw[dotted] ($(A)+(0,-1.5)$) -- ($(A)+(0,1.5)$); 
\draw (x1) to[out=60, in=120] (x6);
\draw (x2) to[out=60, in=120] (x4);
\draw (x3) to[out=60, in=120] (x5);
\node[] at ($(A)+(0,-2)$) {$M_0$};

\node[] (B) at (14,0) {};
\node[vtx,label=below:$1$] (x1) at ($(B)+(-2.5,0)$) {};
\node[vtx,label=below:$2$] (x2) at ($(B)+(-1.5,0)$) {};
\node[vtx,label=below:$3$] (x3) at ($(B)+(-0.5,0)$) {};
\node[vtx,label=below:$4$] (x4) at ($(B)+(0.5,0)$) {};
\node[vtx,label=below:$5$] (x5) at ($(B)+(1.5,0)$) {};
\node[vtx,label=below:$6$] (x6) at ($(B)+(2.5,0)$) {};
\draw[dotted] ($(B)+(0,-1.5)$) -- ($(B)+(0,1.5)$); 
\draw (x1) to[out=60, in=120] (x6);
\draw (x1) to[out=60, in=120] (x5);
\draw (x2) to[out=60, in=120] (x4);
\draw (x2) to[out=60, in=120] (x6);
\draw (x3) to[out=60, in=120] (x4);
\draw (x3) to[out=60, in=120] (x5);
\node[] at ($(B)+(0,-2)$) {$M_1$};

\node[] (C) at (21,0) {};
\node[vtx,label=below:$1$] (x1) at ($(C)+(-2.5,0)$) {};
\node[vtx,label=below:$2$] (x2) at ($(C)+(-1.5,0)$) {};
\node[vtx,label=below:$3$] (x3) at ($(C)+(-0.5,0)$) {};
\node[vtx,label=below:$4$] (x4) at ($(C)+(0.5,0)$) {};
\node[vtx,label=below:$5$] (x5) at ($(C)+(1.5,0)$) {};
\node[vtx,label=below:$6$] (x6) at ($(C)+(2.5,0)$) {};
\draw[dotted] ($(C)+(0,-1.5)$) -- ($(C)+(0,1.5)$); 
\draw (x1) to[out=60, in=120] (x4);
\draw (x2) to[out=60, in=120] (x5);
\draw (x3) to[out=60, in=120] (x6);
\node[] at ($(C)+(0,-2)$) {$M_2$};
\end{tikzpicture}
\caption{
    The family $\mathcal{M}$, and ordered bipartite graphs $M_0, M_1, M_2$.
}\qedhere
\label{fig:matching}
\end{center}
\end{figure}

\vspace{-0.5cm}
\begin{theorem}
Let $G$ be an $n$-vertex visibility graph with respect to a star-shaped Jordan curve $K$. If $G$ is $K_{s,s}$-free, then $G$ has $O(sn)$ edges.
\end{theorem}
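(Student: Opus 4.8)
The strategy mirrors the $x$-monotone case: we want to realize $G$ (after removing a controlled portion of the edges) as a graph belonging to a hereditary family that is both weakly degree-bounded and has the density-EH property, so that Corollary~\ref{thm:master} applies. Let $P\subset K$ be the vertex set, let $c\in K^*$ be the star center, and introduce polar coordinates $(\theta,\rho)$ around $c$; since every half-line from $c$ meets $K$ exactly once, $K$ is the graph $\rho=f(\theta)$ of a function on the circle $[0,2\pi)$. Order $P$ by angle, which gives a \emph{cyclic} order; to get an honest linear order, first split: I would cut the circle of directions into two arcs $[0,\pi)$ and $[\pi,2\pi)$, let $P_1,P_2$ be the corresponding point sets, and note that $P_1$ and $P_2$ are ``separated'' in the sense needed for Lemma~\ref{lemma:visibility1} (the curve $K$ can be cut into two subcurves, one containing each). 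Lemma~\ref{lemma:visibility1} bounds the cross edges $e(G[P_1,P_2])$ by $O(sn)$, so it remains to bound $e(G[P_i])$ for each $i$, i.e. we may assume all points lie in an angular sector of opening $<\pi$ and are linearly ordered by angle $\theta$.

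The key structural observation, playing the role of Claim~\ref{claim:x monotone}, should be: in this angularly-ordered visibility graph there is a forbidden bipartite-induced ordered pattern analogous to the double cherry / the family $\cM$. Concretely, suppose $p_1<p_2<p_3$ and $q_1<q_2<q_3$ (by angle) with $p_i$'s before $q_j$'s, and suppose the ``outer'' and ``crossing'' visibility edges $p_1q_3$, $p_2q_1$, $p_3q_2$ are present (or the appropriate analogue). I claim $p_2q_2$ must then be an edge. The proof is a planarity/Jordan-curve argument: the segment $\overline{cp_2}$ and $\overline{cq_2}$ together with the visible segments $\overline{p_2q_1}$, $\overline{p_3q_2}$, $\overline{p_1q_3}$ bound regions forcing the segment $\overline{p_2q_2}$ to stay inside $K^*$ — any point of $K$ it could hit would have to lie on the ``far side'' of one of the already-present chords, contradicting star-shapedness relative to $c$ (every point of $K$ is ``radially visible'' from $c$, and the chords cut off the relevant angular wedge). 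This gives a fixed ordered bipartite matching $M_0$ (and its bipartite complement $M_2$) such that $G[P_i]$ contains no bipartite-induced copy of $M_0$, and its complement contains no bipartite-induced copy of $M_2$.

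Given this, the rest is automatic: let $\cF$ be the hereditary family of graphs admitting an ordering with no bipartite-induced $M_0$ and whose complement has no bipartite-induced $M_2$. By Theorem~\ref{thm:matching_degree_bounded}, $\cF$ is (weakly) degree-bounded, and by Theorem~\ref{thm:matching_EH}, $\cF$ has the density-EH property; hence Corollary~\ref{thm:master} gives $e(G[P_i])=O(sn)$. Summing the two internal bounds with the cross bound from Lemma~\ref{lemma:visibility1} yields $e(G)=O(sn)$.

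\textbf{Main obstacle.} The delicate point is isolating the correct forbidden ordered pattern and verifying the planarity argument that forces $p_2q_2$ to be an edge. In the $x$-monotone case one exploits that $K$ splits into a lower and upper $x$-monotone arc so that ``visible'' means ``the chord stays below the upper arc and above the lower arc''; here the analogue is that visibility from a radial viewpoint means the chord stays inside the star-shaped region, and one must carefully track how the three given chords partition $K^*$ into cells so that the chord $\overline{p_2q_2}$ is trapped in a cell not touching $K$. One must also be careful that after the initial angular split the two resulting point sets really satisfy the separation hypothesis of Lemma~\ref{lemma:visibility1} — this is where choosing the cut directions to avoid all points of $P$ (a generic rotation) is used. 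Everything else — concentration-free, purely combinatorial invocation of the ordered-graph machinery — is routine.
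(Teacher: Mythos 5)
Your overall strategy exactly mirrors the paper's: cut $K$ with a line through the star center $c$ into two arcs (so that Lemma~\ref{lemma:visibility1} handles the cross edges), order each piece angularly, identify a forbidden bipartite-induced ordered pattern, and feed the resulting family into Theorem~\ref{thm:matching_degree_bounded}, Theorem~\ref{thm:matching_EH}, and Corollary~\ref{thm:master}. Where you diverge is in the choice of the forbidden pattern. You default to the $3{+}3$ double-cherry pattern imported from the $x$-monotone case, saying the argument should be an ``appropriate analogue.'' That pattern \emph{is} forbidden here, but you miss that star-shapedness supports a much simpler and stronger $2{+}2$ obstruction, which is what the paper actually uses: if $p_1<p_2<q_1<q_2$ in angular order and $p_1q_1$, $p_2q_2$ are both visibility edges, then $p_1q_2$ must also be a visibility edge. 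The geometric reason is short: since $K$ is star-shaped from $c$, each chord $\overline{p_iq_j}$ lying inside $K^*$ crosses every intermediate ray from $c$ at a point strictly closer to $c$ than where that ray meets $K$; this forces $\overline{p_1q_1}$ and $\overline{p_2q_2}$ to cross at some $x\in K^*$, and then the quadrilateral $c\,p_1\,x\,q_2$ is convex and contained in $K^*$, so $\overline{p_1q_2}\subset K^*$. This $2{+}2$ family $\mathcal{M}$ is also algebraically cleaner: its matching representative $M$ and its bipartite complement are both matchings, so a single forbidden pattern simultaneously feeds both Theorem~\ref{thm:matching_degree_bounded} and Theorem~\ref{thm:matching_EH}, whereas your $3{+}3$ approach would require you to extract both $M_0$ and $M_2$ as in the $x$-monotone case.

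The genuine gap in your write-up is the geometric claim itself: the ``planarity/Jordan-curve argument'' is asserted, not proved, and the cell-decomposition you gesture at would be substantially more delicate to execute than the convex-quadrilateral argument above. In particular, the $x$-monotone argument crucially uses the lower/upper envelope structure, which has no direct analogue here; the correct replacement is the radial-intersection observation. Once you make that observation, the $3{+}3$ pattern's chord $p_1q_3$ plays no role at all, which should tell you the pattern is not the right one to isolate. Your point about choosing the cutting line generically (not through $P$) is correct and is indeed a needed technicality.
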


\begin{proof}
Let $P\subset K$ be the vertex set of $G$. Let $c\in K^*$ be a point such that every half-line starting at $c$ intersects $K$ in at most one point. Let $\ell$ be an arbitrary line through $c$, then $\ell$  intersects $K$ in exactly two points. Therefore, $\ell$ cuts $K$ into two parts, $K_1$ and $K_2$, let $P_i=K_i\cap P$ for $i=1,2$. By Lemma~\ref{lemma:visibility1}, we have $e(G[P_1,P_2])=O(sn)$. We prove that $e(G[P_{i}])=O(sn)$ holds as well for $i=1,2$, then we are done. By symmetry, it is enough to consider $P_1$.

Let $G_1=G[P_{1}]$, and order the elements of $P_{1}$ in clockwise direction starting from the one of the endpoints of $K_1$. This gives an ordering of $G_1$. Let $\mathcal{M}$ be the family of ordered bipartite graphs $M$ of the following form: the vertex classes of $M$ are $\{1,2\}$ and $\{3,4\}$, the pairs $\{1,3\}$ and $\{2,4\}$ are edges, but $\{1,4\}$ is not an edge.

\begin{claim}\label{claim:star-shaped}
    $G_1$ contains no bipartite-induced copy of a member of $\mathcal{M}$. 
\end{claim}

\begin{proof}
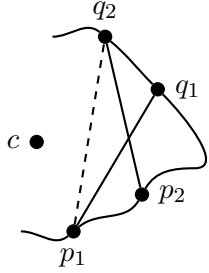
\begin{figure}[h]
    \centering
    \vspace{-0.2cm}
    \begin{tikzpicture}[scale=0.7]
        \node[vtx, label=left:$c$] (c) at (0, 0) {};
        \node[vtx, label=below:$p_1$] (p1) at (0.7,-1.7) {};
        \node[vtx, label=right:$p_2$] (p2) at (2,-1) {};
        \node[vtx, label=right:$q_1$] (q1) at (2.3,1) {};
        \node[vtx, label=above:$q_2$] (q2) at (1.3,2) {};
   
        \draw[thick] ($(p1)+(-1, 0)$) to[out=0, in=-140] (p1) to[out=40, in=-120] (p2) to[out=60, in=180] ($(p2)+(1, 0.5)$) to[out=0, in=-45] (q1) to[out=135, in=-40] (q2) to[out=140, in=0] ($(q2)+(-1, 0)$);
        
        \draw[thick] (p1) -- (q1);
        \draw[thick] (p2) -- (q2);
        \draw[thick, dashed] (p1) -- (q2);
    \end{tikzpicture}
    \vspace{-0.3cm}
    \caption{Illustration of proof of Claim~\ref{claim:star-shaped}.}
    \vspace{-0.3cm}
    \label{fig:star-shaped}
\end{figure}
Assume that $p_1<p_2<q_1<q_2$ induces a bipartite-induced copy of some $M\in\mathcal{M}$ in $G_1$. Then $p_1q_1$ and $p_2q_2$ are edges. Consider the 4 half-lines starting from $c$ and going through $p_1,p_2,q_1,q_2$, denote them by $\ell_1,\ell_2,\ell_3,\ell_4$. These half-lines follow a clockwise order. The intersection point of $\ell_2$ and the segment $p_1q_2$ must be before $p_2$, using that $K$ is star-shaped. Similarly, the intersection of $\ell_3$ and $p_2q_2$ is before $q_1$. This implies that $p_1q_1$ and $p_2q_2$ intersect in some point $x$, and the quadrilateral $cp_1xq_2$ is convex and is contained in $K^*$. But then $p_1q_2$ is contained in $K$, which means that $p_1q_2$ is an edge of $G_1$, contradiction.
\end{proof}

Let $M\in\mathcal{M}$ be the unique member with $\{2,3\}$ being a non-edge. Then $M$ is a matching, and the bipartite complement of $M$ is also a matching. Let $\mathcal{F}$ be the family of graphs $H$ which have an ordering such that  $H$ does not contain a bipartite-induced copy of $M$. Then $\mathcal{F}$ is a hereditary family of graphs and $G_1\in\mathcal{F}$. Moreover, $\mathcal{F}$ is degree-bounded by Theorem \ref{thm:matching_degree_bounded}, and it has the density-EH property by Theorem \ref{thm:matching_EH}. Therefore, by Theorem \ref{thm:master}, every $n$ vertex $K_{s,s}$-free member of $\mathcal{F}$ has $O(sn)$ edges, so this holds for $G'$ as well. This finishes the proof.
\end{proof}

\end{document}